\newtheorem{prop}{Proposition}[section]
\newtheorem{thm}[prop]{Theorem}
\newtheorem{conj}[prop]{Conjecture}
\newtheorem{lem}[prop]{Lemma}
\theoremstyle{definition}
\newtheorem{que}[prop]{Question}
\newtheorem{defn}[prop]{Definition}
\newtheorem{expl}[prop]{Example}
\newtheorem{rem}[prop]{\it Remark}
\newtheorem*{claim*}{Claim}
\newcommand{\bP}{\mathbb{P}}
\newcommand{\bC}{\mathbb{C}}
\newcommand{\bR}{\mathbb{R}}
\newcommand{\bA}{\mathbb{A}}
\newcommand{\bQ}{\mathbb{Q}}
\newcommand{\bZ}{\mathbb{Z}}
\newcommand{\bN}{\mathbb{N}}
\newcommand{\bG}{\mathbb{G}}
\newcommand{\bT}{\mathbb{T}}
\newcommand{\bk}{\mathbbm{k}}
\newcommand{\oX}{\overline{X}}
\newcommand{\cX}{\mathcal{X}}
\newcommand{\cO}{\mathcal{O}}
\newcommand{\cJ}{\mathcal{J}}
\newcommand{\cR}{\mathcal{R}}
\newcommand{\cS}{\mathcal{S}}
\newcommand{\fa}{\mathfrak{a}}
\newcommand{\fm}{\mathfrak{m}}
\newcommand{\ft}{\mathfrak{t}}
\newcommand{\fab}{\fa_{\bullet}}
\newcommand{\Spec}{\mathbf{Spec}}
\newcommand{\Proj}{\mathbf{Proj}}
\newcommand{\Hom}{\mathrm{Hom}}
\newcommand{\mult}{\mathrm{mult}}
\newcommand{\lct}{\mathrm{lct}}
\newcommand{\Ex}{\mathrm{Ex}}
\newcommand{\Aut}{\mathrm{Aut}}
\newcommand{\vol}{\mathrm{vol}}
\newcommand{\Vol}{\mathrm{Vol}}
\newcommand{\ord}{\mathrm{ord}}
\newcommand{\Val}{\mathrm{Val}}
\newcommand{\QM}{\mathrm{QM}}
\newcommand{\hvol}{\widehat{\rm vol}}
\newcommand{\wt}{\mathrm{wt}}
\newcommand{\mld}{\mathrm{mld}}
\newcommand{\mldk}{\mathrm{mld}^{\mathrm{K}}}
\newcommand{\an}{\mathrm{an}}
\newcommand{\Bl}{\mathrm{Bl}}
\newcommand{\LC}{\mathrm{LC}}
\newcommand{\gr}{\mathrm{gr}}
\newcommand{\Ric}{\mathrm{Ric}}
\numberwithin{equation}{section}
\title{Stability of klt singularities}
\author{Ziquan Zhuang}
\address{Department of Mathematics, Johns Hopkins University, Baltimore, MD 21218, USA}
\email{zzhuang@jhu.edu}
\date{}
\begin{document}

\maketitle

\begin{abstract}
    We survey some recent development in the stability theory of klt singularities. The main focus is on the solution of the stable degeneration conjecture.
\end{abstract}

\section{Introduction}

As Goresky and MacPherson put it in their famous monograph \cite{GM-Morse}*{p.26}, ``Philosophically, any statement about the projective variety or its embedding really comes from a statement about the singularity at the point of the cone. Theorems about projective varieties should be consequences of more general theorems about singularities which are no longer required to be conical''. In this expository article, we discuss this local and global correspondence in the context of K-stability, and survey some recent development in the local aspect of the stability theory.

\subsection{Motivation}

The local stability theory originates from questions in complex geometry. Recall that a K\"ahler-Einstein metric on a complex manifold is a K\"ahler metric $\omega$ with constant Ricci curvature. After appropriate rescaling, this means $\Ric(\omega)=\lambda\omega$ where $\lambda\in \{0,-1,1\}$. On a Fano manifold, we have $\lambda=1$. Consider a sequence of K\"ahler-Einstein Fano manifolds $(X_k,\omega_k)$ ($k=1,2,\dots$). By the convergence theory of Riemannian manifolds, specifically Gromov's compactness theorem, one can pass to a subsequence and extract a Gromov-Hausdorff limit $X_\infty$. In this context, Donaldson and Sun \cites{DS-GHlimit-1,DS-GHlimit-2} prove that the limit space $X_\infty$ is also a K\"ahler-Einstein Fano variety. In particular, it is algebraic (but may be singular). To analyze the singularities of $X_\infty$, they inspect the metric tangent cones of $X_\infty$, which are pointed Gromov-Hausdorff limits of $x\in (X_\infty,r_k \omega_\infty)$ for some fixed $x\in X_\infty$ and some increasing sequence of scaling factors $r_k\to\infty$. They find that the metric tangent cone again inherits some algebraic structure: it is a normal affine algebraic variety endowed with an effective torus action and a singular Ricci-flat K\"ahler cone metric. They also give a two-step degeneration description of the metric tangent cone, where the intermediate step (the K-semistable degeneration) is algebraic as well.

There should be an algebro-geometric explanation for the ubiquity of algebraic structures in these constructions, and this is achieved by the algebraic K-stability theory. The recent development in the (global) K-stability theory of Fano varieties culminates in the K-moduli theorem, which (among other things) provides an algebro-geometric construction of the Gromov-Hausdorff limit $X_\infty$. For those interested in this part of the theory, we recommend the survey \cite{Xu-K-stability-survey} and the upcoming book \cite{Xu-K-book} for a comprehensive and up-to-date account\footnote{The missing parts in \cite{Xu-K-stability-survey} are the properness of the K-moduli space and the surrounding higher rank finite generation theory. These topics are covered in \cite{Xu-K-book}.}. The local K-stability theory, which is the main topic of this survey article, will address Donaldson and Sun's conjecture that the two-step degeneration of $x\in X_\infty$ to its metric tangent cone should only depend on the algebraic structure of the singularity (rather than the metric). More generally, as we will explain in subsequent sections, every Kawamata log terminal (klt) singularity has a two-step degeneration to a uniquely determined K-polystable Fano cone singularity, and it seems likely that there is a K-moduli of klt singularities.

% We should mention that there is another source of inspiration for the development of the local K-stability theory, and that is the existence question of Sasaki-Einstein metrics \cites{MSY-SE-toric,MSY-SE-general,CS-Kss-Sasaki,CS-Sasaki-Einstein}. An excellent survey on this particular aspect of the backgrounds is \cite{LLX-nv-survey}.

\subsection{History}

Apart from Donaldson-Sun's conjecture mentioned above, another source of inspiration for the development of the local stability theory is the question on the existence of Sasaki-Einstein metrics. In \cites{MSY-SE-toric,MSY-SE-general}, Martelli, Sparks and Yau set up a variational problem on Sasaki-Einstein manifolds whose critical point determines the Reeb vector field of the Sasaki-Einstein metric. The volume functional they considered and the minimization phenomenon they discovered may be seen as the first prototype of the local stability theory. Later, Collins and Sz\'ekelyhidi \cites{CS-Kss-Sasaki,CS-Sasaki-Einstein} proved a Yau-Tian-Donaldson type criterion for the existence of a Ricci flat K\"ahler cone metric on an isolated cone singularity, or equivalently, a Sasaki-Einstein metric on the link of the singularity. In particular, they defined K-semi/polystability for Fano cones (by mimicking the definitions in the global case \cites{Tian-K-stability-defn,Don-K-stability-defn}), and related the existence of a Ricci flat K\"ahler cone metric to the algebro-geometric condition that the singularity is a K-polystable Fano cone.

The algebraic theory of local K-stability starts with Chi Li's introduction of the normalized volumes of valuations \cite{Li-normalized-volume}. Li's insight (partly inspired by Martelli-Sparks-Yau's work) is that valuations on the singularity represent algebraic ``rescalings'' of the singularity, and that the valuation with the smallest normalized volume represents an ``optimal rescaling'' that should be closely related to the metric tangent cone degeneration. Based on this philosophy, he proposed to attack Donaldson-Sun's conjecture by solving a series of conjectures regarding the minimizer of the normalized volume function.

The theory is further investigated in \cites{LX-stability-kc,LX-stability-higher-rank}. In particular, Li and Xu \cite{LX-stability-higher-rank} show that the K-semistable degeneration step in Donaldson-Sun's construction only depends on the algebraic structure of singularity, and is indeed induced by a minimizer of the normalized volume function. Later \cite{LWX-metric-tangent-cone} completes the proof of Donaldson-Sun's conjecture by proving the algebraicity of the other step (i.e. the K-polystable degeneration) of the metric tangent cone construction.

The proof \cite{LX-stability-higher-rank} of the algebraicity of the K-semistable degenerations assumes the existence of such degenerations, which in turn relies on deep analytic results \cites{CCT-sing-GH-limit,Tia-partial-C^0,DS-GHlimit-1,DS-GHlimit-2} and as such is restricted to singularities on Gromov-Hausdorff limits of K\"ahler-Einstein Fano manifolds. To give a purely algebraic construction of the two-step degeneration, and to extend the theory to arbitrary klt singularities, \cite{LX-stability-higher-rank} refines Li's original proposal, and put forth what is now called the \emph{Stable Degeneration Conjecture} (see Section \ref{ss:SDC statement}). It highlights a number of conjectural properties of the normalized volume minimizer, which, when put together, ensure that every klt singularity has a canonical stable degeneration induced by the said minimizer.

The Stable Degeneration Conjecture is subsequently proved by a series of works: the existence of the normalized volume minimizer is prove by Blum \cite{Blu-minimizer-exist}, the uniqueness is established in \cite{XZ-minimizer-unique} (later \cite{BLQ-convexity} gives another proof), Xu \cite{Xu-quasi-monomial} proves that the minimizer is quasi-monomial (in \emph{loc. cit.} he also gives another proof that the minimizer exists), while the finite generation property (known by itself as the local higher rank finite generation conjecture) is confirmed in \cite{XZ-SDC}. It is proved in \cite{LX-stability-higher-rank} that the induced degeneration is a K-semistable degeneration, and \cite{LWX-metric-tangent-cone} further gives a recipe for constructing the K-polystable degeneration. These complete the algebro-geometric construction of the two-step degeneration.

The development of the local stability theory intertwines with the study on the K-stability of Fano varieties. The local and the global theory often draw inspirations from each other. The uniqueness of the normalized volume minimizer implies (through the cone construction) that equivariant K-semistability of a Fano variety is equivalent to K-semistability, and the proof of the uniqueness \cite{XZ-minimizer-unique} is in turn inspired by the earlier work on equivariant K-stability of Fano varieties \cite{Z-equivariant}. The idea behind Xu's proof \cite{Xu-quasi-monomial} of the quasi-monomial property of the minimizer led to the proof of the openness of K-semistability in families of Fano varieties \cites{Xu-quasi-monomial,BLX-openness}. The finite generation part of the Stable Degeneration Conjecture is a local analog of the higher rank finite generation conjecture for Fano varieties, proved in \cite{LXZ-HRFG}. In the setting of Fano varieties, there is also an algebro-geometric construction of canonical two-step degenerations to Fano varieties with K\"ahler-Ricci solitons \cite{BLXZ-soliton}.

Inspired by the K-moduli theory of Fano varieties, the focus of the local stability theory recently shifts towards the boundedness of singularities, an important missing ingredient for the local K-moduli theory. This topic has been intensively studied in \cites{HLQ-vol-ACC,MS-bdd-toric,LMS-bdd-dim-3,Z-mld^K-1,Z-mld^K-2}, yet the general case remains wide open.

\subsection{Outline}

Here is a roadmap for this survey. In section \ref{s:SDC}, we define some basic objects in the local stability theory and state the Stable Degeneration Conjecture. In Section \ref{s:kc}, we introduce the notion of Koll\'ar components, which plays an important role in the study of klt singularities. The entire Section \ref{s:geometry of minimizer} is devoted to explaining some key ingredients in the proof of the Stable Degeneration Conjecture. Section \ref{s:bdd} surveys our current understanding on the boundedness of klt singularities. Finally we discuss some conjectures and open questions in Section \ref{s:question}.

Since our primary focus is on the stable degeneration and the boundedness of klt singularities, we have to leave out several other interesting topics such as the analytic aspect of the theory and further applications of the normalized volume. Some of these topics have been covered by the survey \cite{LLX-nv-survey}, which we recommend to the interested readers.

\subsection{Notation and conventions}

We always work over an algebraically closed field $\bk$ of characteristic $0$. A singularity $x\in X$ consists of a normal variety $X$ and a closed point $x\in X$. We will often assume that $X$ is affine and will freely shrink $X$ around $x$ as needed.

\subsection*{Acknowledgement}

The author is partially supported by the NSF Grants DMS-2240926, DMS-2234736 and a Clay research fellowship. He would like to thank Harold Blum, Chi Li, Yuchen Liu, Xiaowei Wang and Chenyang Xu for many helpful comments and conversations.

\section{Stable Degeneration} \label{s:SDC}

The main result of the local stability theory is that every klt singularity has a canonical two-step stable degeneration induced by the valuation that minimizes the normalized volume. In this section, we elaborate the content of this statement. 

\subsection{Valuation} \label{ss:valuation}

We start with the notion of valuations. 

\begin{defn}
A (real) valuation on a variety $X$ is a map $v\colon \bk(X)^*\to \bR$ (where $\bk(X)$ denotes the function field of $X$), satisfying:
\begin{itemize}
 \item $v(fg)=v(f)+v(g)$;
 \item $v(f+g)\geq \min\{v(f),v(g)\}$;
 \item $v(\bk^*)=0$.
\end{itemize}
By convention, we set $v(0)=+\infty$.
\end{defn} 

Let us explain how valuations naturally arise in our context, at least in hindsight. For singularities appearing on Gromov-Hausdorff limits of K\"ahler-Einstein Fano manifolds, the stable degenerations are supposed to algebro-geometrically recover Donaldson-Sun's two-step degeneration description of the metric tangent cones. Now there does exist a tangent cone construction in algebraic geometry: for any singularity $x\in X=\Spec(R)$, its tangent cone is defined as $X_0=\Spec(R_0)$ where
\begin{equation} \label{e:naive tangent cone}
    R_0:=\bigoplus_{k\in \bN} \fm_x^k / \fm_x^{k+1}.
\end{equation}
As a typical example, if $X=(f=0)\subseteq \bA^{n+1}$ is a hypersurface singularity of multiplicity $k$ at the origin, and we write
\[
f=f_k+(\mbox{terms of multiplicity}\ge k+1)
\]
where $f_k$ is homogeneous of degree $k$, then the tangent cone to $0\in X$ is the hypersurface singularity $(f_k=0)\subseteq \bA^{n+1}$. On the other hand, it is not hard to see that this is not the desired metric tangent cone in general. One reason is that the tangent cone can be reducible, while the metric tangent cone is always irreducible (it is an affine \emph{variety}). In fact, the tangent cone of a (klt) hypersurface singularity $(f=0)\subseteq \bA^{n+1}$ coincides with its metric tangent cone if and only if $(f_k=0)\subseteq \bP^n$ is a K-polystable Fano variety (see Proposition \ref{prop:hypsurf sing minimizer}). By the Yau-Tian-Donaldson correspondence, the latter condition is equivalent to the existence of a K\"ahler-Einstein metric on the Fano variety.

What are some variations of the ``na\"ive'' tangent cone construction? The first observation is that the same construction can be applied to any decreasing graded sequence of $\fm_x$-primary ideals. 

\begin{defn}[\cite{JM-val-ideal-seq}]
A graded sequence of ideals in a ring $R$ is a sequence of ideals $\fa_\bullet=(\fa_k)_{k\in \bN}$ such that $\fa_0=R$ and $\fa_m \fa_n\subseteq \fa_{m+n}$. We say it is decreasing if $\fa_{k+1}\subseteq \fa_k$ for all $k\in\bN$.
\end{defn}

Given a decreasing graded sequence $\fab$ of $\fm_x$-primary ideals on $X=\Spec(R)$, we can form the associated graded algebra
\[
\gr_{\fab} R:=\bigoplus_{k\in \bN} \fa_k / \fa_{k+1}.
\]
When $\fa_k=\fm_x^k$, this recovers the graded algebra \eqref{e:naive tangent cone} that defines the tangent cone. In general, if the algebra $\bigoplus_{k\in \bN} \fa_k$ is finitely generated, then so is $\gr_{\fab} R$, and we get an isotrivial degeneration of $X$ to $\Spec(\gr_{\fab} R)$ through the Rees construction. To see this, set $\fa_k=R$ for all $k<0$ and let
\[
\cR:=\bigoplus_{k\in \bZ} \fa_k t^{-k} \subseteq R[t].
\]
Then one can check that $\cX=\Spec(\cR)\to \bA^1_t$ is a flat family with general fiber $X$ and special fiber $\Spec(\gr_{\fab} R)$. The $\bZ$-grading of $\cR$ also induces a $\bG_m$-action on the total space $\cX$ that commutes with the usual $\bG_m$-action on $\bA^1$. Such a family is also called a \emph{test configuration} of the singularity $x\in X$. The $\fm_x$-primary condition further ensures that the closed point $x\in X$ on the general fiber specializes to a closed point in the central fiber (the fixed point of the $\bG_m$-action). For us, such isotrivial degenerations serve as the algebraic analog of Gromov-Hausdorff limits.

As a slight generalization, we also allow graded sequences of ideals that are indexed by $\bR_{\ge 0}$. These are called filtrations. A formal definition is as follows.

\begin{defn}
A filtration of a ring $R$ is a collection of ideals $\fab=(\fa_\lambda)_{\lambda\in \bR_{\ge 0}}$ that is
\begin{enumerate}
    \item decreasing: $\fa_\lambda\subseteq \fa_\mu$ when $\lambda\ge \mu$,
    \item multiplicative: $\fa_\lambda \fa_\mu \subseteq \fa_{\lambda+\mu}$,
    \item left-continuous: $\fa_{\lambda-\varepsilon}=\fa_\lambda$ when $\lambda>0$ is fixed and $0<\varepsilon\ll 1$, and
    \item exhaustive: $\fa_0 = R$,  $\bigcap_{\lambda\ge 0} \fa_\lambda = \{0\}$.
\end{enumerate}
\end{defn}

Denote by $\Val_X$ the set of valuations that has a center on $X$, and by $\Val_{X,x}$ the set of valuations centered at $x\in X=\Spec(R)$\footnote{We say that a valuation $v$ is centered at a scheme-theoretic point $\eta\in X$ if there is a local inclusion $\cO_{X,\eta}\hookrightarrow\cO_v$ into the valuation ring $\cO_v:=\{f\in \bk(X)\mid v(f)\geq 0\}$ of $v$.}. Every valuation $v\in \Val_{X,x}$ induces an $\fm_x$-primary filtration $\fab(v)$ by setting
\[
\fa_\lambda(v):=\{f\in R\mid v(f)\ge \lambda\}.
\]

Similar to the case of graded sequences of ideals, for any filtration defined above we can form the associated graded algebra $\gr_{\fab} R:=\bigoplus_{\lambda\in \bR_{\ge 0}} \fa_\lambda / \fa_{>\lambda}$, where $\fa_{>\lambda}=\bigcup_{\mu>\lambda} \fa_\mu$. If $\fab=\fab(v)$ for some valuation $v\in \Val_{X,x}$, we further denote $\gr_{\fab} R$ as $\gr_v R$. With this level of generality, Donaldson and Sun \cite{DS-GHlimit-2} show that the two-step degenerations to the metric tangent cones are both induced by some $\fm_x$-primary filtration. Usually the first step is called the K-semistable degeneration, while the second step is the K-polystable degeneration. Philosophically, the two steps can be seen as analogous to the Harder-Narasimhan and Jordan-H\"older filtrations of vector bundles, where the graded pieces are (slope) semistable and polystable bundles, respectively. For now, we focus on the K-semistable degeneration, deferring the discussion of the K-polystable degeneration to Section \ref{ss:SDC statement}.

Since the filtration in \cite{DS-GHlimit-2} that induces the K-semistable degeneration is constructed using the (singular) K\"ahler-Einstein metric on the Gromov-Hausdorff limit, an immediate question, especially if we want to generalize the construction to arbitrary klt singularities, is how to identify the filtration using algebraic geometry. A simple but crucial observation is that the filtration is necessarily induced by some valuation, as the central fiber of the K-semistable degeneration is irreducible.

\begin{lem} \label{lem:irred degen -> valuation}
Let $\fab$ be an $\fm_x$-primary filtration of $R$ such that $\gr_{\fab} R$ is an integral domain. Then $\fab=\fab(v)$ for some valuation $v\in \Val_{X,x}$.
\end{lem}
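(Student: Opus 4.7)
The plan is to construct $v$ directly from the filtration and then verify the valuation axioms, with the integral domain assumption doing the essential work in one specific place.

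First I would define a function $v\colon R\setminus\{0\}\to\bR_{\ge 0}$ by
\[
v(f):=\sup\{\lambda\in\bR_{\ge 0}\mid f\in\fa_\lambda\},
\]
setting $v(0)=+\infty$. The left-continuity axiom ensures that the set on the right is a closed interval $[0,\Lambda]$, so the supremum is attained, and the exhaustiveness axiom $\bigcap_{\lambda\ge 0}\fa_\lambda=\{0\}$ guarantees $v(f)<+\infty$ for $f\ne 0$. The decreasing property immediately yields the ultrametric inequality $v(f+g)\ge\min\{v(f),v(g)\}$, and the multiplicativity $\fa_\lambda\fa_\mu\subseteq\fa_{\lambda+\mu}$ gives at least the inequality $v(fg)\ge v(f)+v(g)$.

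The heart of the argument is upgrading the last inequality to an equality using that $\gr_\fab R$ is a domain. Given $f,g\ne 0$ with $v(f)=\lambda$ and $v(g)=\mu$, by construction $f\in\fa_\lambda\setminus\fa_{>\lambda}$ and $g\in\fa_\mu\setminus\fa_{>\mu}$, so their classes $[f]\in\fa_\lambda/\fa_{>\lambda}$ and $[g]\in\fa_\mu/\fa_{>\mu}$ are nonzero homogeneous elements of $\gr_\fab R$. Since $\gr_\fab R$ is an integral domain, their product $[f][g]=[fg]\in\fa_{\lambda+\mu}/\fa_{>\lambda+\mu}$ is also nonzero, which means $fg\in\fa_{\lambda+\mu}\setminus\fa_{>\lambda+\mu}$ and hence $v(fg)=\lambda+\mu$. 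This is the one step where the hypothesis is really used; everything else is formal.

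With $v$ now multiplicative on $R\setminus\{0\}$, I would extend it to $\bk(X)^*=\mathrm{Frac}(R)^*$ by the formula $v(f/g)=v(f)-v(g)$, which is well-defined by multiplicativity and inherits the three valuation axioms. To check $v\in\Val_{X,x}$, note that $v\ge 0$ on $R$ by construction; and since $\fab$ is $\fm_x$-primary, for any $f\in\fm_x$ there exists $n\ge 1$ with $f^n\in\fm_x^n\subseteq\fa_1$, which forces $v(f)\ge 1/n>0$. Hence the local ring $\cO_{X,x}=R_{\fm_x}$ embeds into $\cO_v$ with $\fm_x$ mapping into the maximal ideal, i.e.\ $v$ is centered at $x$.

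Finally, to confirm $\fab=\fab(v)$: the inclusion $\fa_\lambda\subseteq\fa_\lambda(v)$ is immediate from the definition of $v$. For the reverse, if $v(f)\ge\lambda$ then $f\in\fa_\mu$ for all $\mu<\lambda$, and left-continuity gives $\fa_\lambda=\bigcap_{\mu<\lambda}\fa_\mu$, so $f\in\fa_\lambda$. The main conceptual point to emphasize in the write-up is that, without the domain hypothesis, the construction still produces a filtration-preserving function obeying the ultrametric inequality, but it can fail to be multiplicative precisely when zero divisors appear in $\gr_\fab R$; the irreducibility of $\Spec(\gr_\fab R)$ is exactly what forces the filtration to come from a single valuation.
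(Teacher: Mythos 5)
Your proposal is correct and follows essentially the same route as the paper: define $v(f)=\sup\{\lambda\mid f\in\fa_\lambda\}$, use exhaustiveness and left-continuity to see that $f\in\fa_{v(f)}\setminus\fa_{>v(f)}$, and observe that the integral domain hypothesis on $\gr_{\fab}R$ is exactly the multiplicativity $v(fg)=v(f)+v(g)$. The extra details you supply (extension to the fraction field, the $\fm_x$-primary condition forcing the center to be $x$, and the verification $\fab=\fab(v)$) are the steps the paper leaves as ``clear,'' and they check out.
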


\begin{proof}
For any $0\neq f\in R$ we let $v(f):=\sup\{\lambda\ge 0\mid f\in \fa_\lambda\}$. Note that $v(f)<+\infty$ since $\fab$ is exhaustive. Left-continuity of the filtration implies that $f\in \fa_{v(f)}$, while by definition $f\not\in \fa_{>v(f)}$. The condition that $\gr_{\fab} R$ is an integral domain translates into the equality $v(fg)=v(f)+v(g)$. From here it is clear that $v$ defines a valuation and $\fab=\fab(v)$.
\end{proof}

Because of this fact, we may restrict our search to valuations. Before we discuss further constraints, there are two important classes of valuations we shall keep in mind.

\begin{expl}[divisorial valuations]
Consider a proper birational morphism (such as a log resolution) $\pi\colon Y\to X$ where $Y$ is normal and let $E\subseteq Y$ be a prime divisor. We call such a divisor $E$ a prime divisor over $X$. Then we get a valuation $\ord_E$ which assigns to each $f\in \bk(X)^*=\bk(Y)^*$ its order of zero (or pole) along $E$. Rescalings of such valuations (i.e. $\lambda\cdot \ord_E$ for some $\lambda>0$) are called divisorial valuations. Note that the center of $\ord_E$ is the generic point of $\pi(E)$; in particular, the valuation $\ord_E$ is centered at $x\in X$ if and only if $E\subseteq\pi^{-1}(x)$.
\end{expl}

\begin{expl}[quasi-monomial valuations]
A generalization of the above example is the class of quasi-monomial valuations. Consider a proper birational morphism $\pi\colon Y\to X$ as above and a reduced divisor $E$ with irreducible components $E_1,\dots,E_r$. Assume that $Y$ is smooth and $E$ is simple normal crossing (SNC) at a generic point $\eta$ of $\cap_{i=1}^r E_i$. Then we have local coordinates $y_1,\dots,y_r$ such that $E_i=(y_i=0)$ around $\eta\in Y$. Any $f\in \cO_{Y,\eta}$ has a Taylor expansion
\[
f=\sum c_{\beta}y^{\beta} \in \widehat{\mathcal{O}}_{Y,\eta}\cong \bk(\eta)[\![y_1,\dots,y_r ]\!].
\]
For any $\alpha=(\alpha_1,\dots,\alpha_r)\in \bR_{\ge 0}^r\setminus\{0\}$, we can thus define a valuation $v_{\eta,\alpha}$ (or simply denoted as $v_\alpha$) by setting
\[
v_{\eta,\alpha}(f)=\min \left\{\langle \alpha,\beta \rangle \,|\, c_{\beta}\neq 0\right\}.
\]
In other words, it calculates the $\alpha$-weighted multiplicity of $f$. Such valuations are called quasi-monomial valuations\footnote{The name stems from the fact that the valuation $v_\alpha$ is monomial with respect to the local coordinates $y_1,\dots,y_r$ on the birational model $Y$.}. The rational rank of a quasi-monomial valuation $v_\alpha$ is defined as the dimension of the $\bQ$-vector space 
\[\mathrm{span}_\bQ \{\alpha_1,\dots,\alpha_r\}\subseteq \bR.\]
Equivalently, it is the rank of the value group $\Gamma=v_\alpha(\bk(X)^*)\subseteq \bR$. Note that $v_\alpha$ is divisorial if and only if its rational rank is one.
\end{expl} 

For a fixed pair $(Y,E)$ and a generic point $\eta$ of some stratum, we denote the corresponding set of quasi-monomial valuations by $\QM_\eta(Y,E)$. We also set $\QM(Y,E)=\cup_\eta \QM_\eta (Y,E)$, where $\eta$ varies over the smooth points of $Y$ at which $E$ is SNC. A general valuation can be thought of as limits of quasi-monomial valuations, c.f. \cite{JM-val-ideal-seq}*{Section 4}. In fact, for any log resolution $\pi\colon Y\to X$ and any SNC divisor $E\subseteq Y$ (we will henceforth call such pair $(Y,E)$ a log smooth model of $X$), there is a natural retraction map $r_{Y,E}\colon \Val_X\to \QM(Y,E)$. As we vary the model $(Y,E)$, the images under the retraction maps give approximations of a given valuation.

\subsection{Fano cone singularities} \label{ss:fano cone}

The singularities that appear on Gromov-Hausdorff limits of K\"ahler-Einstein Fano manifolds are examples of Kawamata log terminal (klt) singularities. From an algebro-geometric perspective, it is more natural to set up a local stability theory for klt singularities. This class of singularities is particularly important in birational geometry, as they are also singularities of minimal models of algebraic varieties. The output of the stable degenerations  belong to a special class of klt singularities called Fano cone singularities. We next review some basics on this class of singularities. For the readers' convenience, we first recall some definitions for singularities of pairs. More details can be found in \cite{KM98}.

\begin{defn}
A pair $(X,D)$ consisting of a normal variety $X$ and an effective $\bQ$-divisor $D$ is said to be klt (resp. log canonical, or lc for short) if $K_X+D$ is $\bQ$-Cartier and for any log resolution $\pi\colon Y\to X$ we have
\[
K_Y=\pi^*(K_X+D)+\sum a_i E_i
\]
where the $E_i$'s are the components of $\pi_*^{-1}D+\Ex(\pi)$ and $a_i>-1$ (resp. $a_i\ge -1$). A singularity $x\in X$ is klt (resp. lc) if $X$ is klt (resp. lc) around $x$.
\end{defn}

In a very rough sense, the klt (resp. lc) condition says that the singularities of holomorphic $n$-forms ($n=\dim X$) on $X$ are better (resp. not worse) than poles.

It will be convenient to reformulate the above definition using log discrepancies. For any pair $(X,D)$ and any prime divisor $E$ on some log resolution $\pi\colon Y\to X$, the log discrepancy $A_{X,D}(E)$ is defined to be
\[
A_{X,D}(E):=1+\ord_E(K_Y-\pi^*(K_X+D)).
\]
Then the pair $(X,D)$ is klt (resp. lc) if and only if $A_{X,D}(E)>0$ (resp. $\ge 0$) for all prime divisor $E$ over $X$.

\begin{rem}
For simplicity, we only state results in the context of klt singularities in this survey, but it is worth pointing out that the entire local stability theory also works for klt pairs $x\in (X,D)$.
\end{rem}

Heuristically, klt singularities are the local analog of Fano varieties:

\begin{expl}[orbifold cones, \cite{Kol13}*{Section 3.1}] \label{exp:cone-defn}
Cones over Fano manifolds are typical examples of klt singularities. More generally, for any projective variety $V$ and any ample $\bQ$-Cartier Weil divisor $L$ such that $L\sim_\bQ -rK_V$ for some $r>0$, the orbifold cone singularity
\[
o\in C_a(V,L):=\Spec \bigoplus_{m\in\bN} H^0(V,mL)
\]
is klt if and only if $V$ is a Fano variety with only klt singularities. 
\end{expl}

A more general construction that will play a key role in the local stability theory is given by Fano cone singularities. By definition, these are klt singularities with a nontrivial good torus action, together with the choice of a Reeb vector (also called a polarization). Let us explain the terminology. 

We say a torus $\bT=\bG_m^r$-action on a singularity $x\in X=\Spec(R)$ is {\it good} if it is effective and $x$ is in the orbit closure of any $\bT$-orbit. Let $N:=N(\bT)=\Hom(\bG_m, \bT)$ be the co-weight lattice and $M=N^*$ the weight lattice. We have a weight decomposition 
\[
R=\oplus_{\alpha\in M} R_\alpha,
\]
and the action being good implies that $R_0=\bk$ and every $R_\alpha$ is finite dimensional. For $f\in R$, we denote by $f_\alpha$ the corresponding component in the above weight decomposition.

\begin{defn}
A \emph{Reeb vector} on $X$ is a vector $\xi\in N_\bR$ such that $\langle \xi, \alpha \rangle>0$ for all $0\neq \alpha\in M$ with $R_{\alpha}\neq 0$. The set $\ft^+_{\bR}$ of Reeb vectors is called the Reeb cone\footnote{The terminologies are borrowed from contact geometry: suppose that $x\in X\subseteq \bC^N$ is an isolated singularity, then the link $L(x,X)=X\cap \{|z|=\varepsilon\}\subseteq \bC^N$ ($0<\varepsilon\ll 1$) is a contact manifold, and Reeb vectors on $X$ (in our definition) induce Reeb vector fields on the link (in the sense of contact geometry).}.
\end{defn}

For later use, we also define the notion of toric valuations. For any singularity $x\in X=\Spec(R)$ with a good torus action as above and any $\xi\in \ft^+_{\bR}$, we define a valuation $\wt_\xi$ (called a toric valuation) by setting
\[
\wt_\xi (f):=\min\{\langle \xi, \alpha \rangle\mid \alpha\in M, f_\alpha\neq 0\}
\]
where $f\in R$. It is not hard to verify that $v:=\wt_\xi\in \Val_{X,x}$. We also see that $\gr_v R\cong R$, as both sides have the same weight decomposition. In other words, the toric valuation $v$ induces a degeneration of the singularity to itself.

A Fano cone singularity will be denoted as $x\in (X;\xi)$ where $\xi$ is the Reeb vector field. Through the inclusion $\bT\subseteq \Aut(x,X)$, we often view the Reeb vector $\xi$ as an element of the Lie algebra of $\Aut(x,X)$. The subtorus in $\bT$ generated by $\xi$ is independent of $\bT$, and can be characterized as the smallest torus in $\Aut(x,X)$ whose Lie algebra contains $\xi$. If we assume that the torus $\bT$ is generated by the Reeb vector $\xi$ (and we will often do), then we may recover $\bT$ from the data $x\in (X;\xi)$. This justifies the absence of $\bT$ in the notation. We will denote the torus generated by $\xi$ as $\langle \xi \rangle$.

Let us describe two extreme cases of Fano cone singularities in more details.

\begin{expl}[toric singularities]
Every toric singularity is given by a strongly convex rational polyhedral cone $\sigma\subseteq N_\bR$ (see e.g. \cite{Fulton-toric}), and the Reeb cone $\ft^+_{\bR}$ is the interior of $\sigma$. The singularity is klt if and only if it is $\bQ$-Gorenstein. If this is the case, we get a Fano cone singularity after fixing a Reeb vetor $\xi\in \mathrm{Int}(\sigma)$. 
\end{expl}

\begin{expl}[quasi-regular Fano cones] \label{exp:quasi-reg cone defn}
A Fano cone singularity $x\in (X;\xi)$ is quasi-regular if $\langle \xi \rangle\cong \bG_m$, i.e. $\xi$ generates a one parameter subgroup. In this case the weight decomposition becomes $R=\oplus_{m\in\bN} R_m$. We may form the Proj and get $V:=\Proj(R)$. The natural projection $X\setminus \{x\}\to V$ is a Seifert $\bG_m$-bundle in the sense of \cite{Kol-Seifert-bundle}; in particular, for every closed point of $V$, the $\bG_m$-action on the corresponding reduced fiber is isomorphic to the left $\bG_m$-action on $\bG_m/\mu_r$ for some positive integer $r$. This gives rise to an orbifold boundary $\Delta_V=\sum_r (1-\frac{1}{r})\Delta_r$ where $\Delta_r\subseteq V$ is the divisorial part of the locus where the $\bG_m$-action on the reduced fiber has stabilizer $\mu_r$. By the local calculation in \cite{Kol-Seifert-bundle}*{Section 4} (which generalizes \cite{Kol13}*{Section 3.1}), we know that the pair $(V,\Delta_V)$ is klt and log Fano (i.e., $-(K_V+\Delta_V)$ is ample).
\end{expl}

As we will see in Proposition \ref{prop:tc by kc}, every klt singularity has a degeneration by test configuration to some Fano cone singularity (the proof relies on the notion of Koll\'ar components). The local stability theory will allow us to find the ``optimal'' degeneration.

% \begin{prop}
% Every klt singularity has an isotrivial degeneration to some Fano cone singularities.
% \end{prop}

% The proof relies on the notion of Koll\'ar components, which will be discussed in \red{???}. Since metric tangent cones are special examples of Fano cone singularities, a natural question is whether there is a canonical (or ``optimal'') degeneration for general klt singularities, which specializes to the K-semi/polystable degeneration in the K\"ahler-Einstein case that Donaldson and Sun consider.

\subsection{Normalized volume}

Chi Li observes that the K-semistable degeneration from Donaldson-Sun's construction is induced by a valuation that minimizes what he calls the \emph{normalized volume}. The definition involves two more classical invariants of valuations: the log discrepancy and the volume.

\begin{defn}[log discrepancy]
For any klt singularity $x\in X$, the \emph{log discrepancy} function
\[
A_X\colon \Val_X\to (0,+\infty],
\]
is defined as follows (c.f. \cite{JM-val-ideal-seq} and \cite{BdFFU-log-discrepancy}*{Theorem 3.1}). 
\begin{enumerate}
    \item For divisorial valuations $\lambda\cdot \ord_E$ where $E$ is a divisor over $X$, we set 
    \[
    A_X(\lambda\cdot \ord_E):=\lambda \cdot A_X(E).
    \]
    \item For quasi-monomial valuations $v_\alpha\in \QM(Y,E)$ where $(Y,E=\sum_{i=1}^r E_i)$ is a log smooth model and $\alpha\in \bR_{\ge 0}^r\setminus \{0\}$, we set
    \[
    A_X(v_\alpha):=\sum_{i=1}^r \alpha_i A_X(E_i).
    \]
    When $v_\alpha$ is divisorial, this recovers the previous definition.
    \item For general valuations $v\in \Val_{X,x}$, we set
    \[
    A_X(v):=\sup_{(Y,E)} A_X\left(r_{Y,E}(v)\right)
    \]
    where the supremum runs over all log smooth models of $X$, and $r_{Y,E}\colon \Val_X\to \QM(Y,E)$ is the retraction map discussed at the end of Section \ref{ss:valuation}.
\end{enumerate}
\end{defn}

It can happen that $A_X(v)=+\infty$ for some valuation $v$. We denote by $\Val^*_{X,x}$ the set of valuations $v\in\Val_{X,x}$ with  $A_X(v)<+\infty$. 

\begin{defn}[volume]
For any graded sequence $\fab$ of $\fm_x$-primary ideals, the volume of $\fab$ is defined as
\[
\vol(\fab) :=  \limsup_{m \to \infty} \frac{  \mathrm{length} ( \cO_{X,x}/\fa_m )}{m^n /n!}
\]
where $n=\dim X$. A similar invariant is the multiplicity of $\fab$, which is defined as 
\[
\mult(\fab) = \lim_{m\to \infty} \frac{ \mult(\fa_m)}{m^n}.
\]
In the geometric setting we consider, we have
\[
\vol(\fab) = \mult(\fab)
\]
by \cites{ELS03, Mus-mult-ideal-seq, LM-Okounkov-body, Cut13}. The \emph{volume} of a valuation $v\in\Val_{X,x}$ is defined as
\[
\vol(v)=\vol_{X,x}(v):=\vol(\fab(v))=\mult(\fab(v)).
\] 
\end{defn}

A basic observation is that both log discrepancy and volume are homogeneous in the variable: if we rescale the valuation $v$ to $\lambda v$, we find
\[
\vol(\lambda v)=\lambda^{-n}\vol(v),\,\,\mathrm{and}\,\,A_X(\lambda v)=\lambda\cdot A_X(v).
\]
It follows that $A_X(v)^n\cdot \vol(v)$ is invariant under rescaling. % We call it the normalized volume of the valuation $v$.

\begin{defn}[normalized volume \cite{Li-normalized-volume}]
Let $x\in X$ be an $n$-dimensional klt singularity. For any $v\in \Val^*_{X,x}$, we define the \emph{normalized volume} of $v$ as 
\[
\hvol(v)=\hvol_X(v):=A_X(v)^n\cdot\vol(v).
\]
By convention, we also set $\hvol(v)=+\infty$ when $A_X(v)=+\infty$. The \emph{local volume} of the singularity $x\in X$ is defined as
\[
  \hvol(x,X):=\inf_{v\in\Val^*_{X,x}} \hvol_X(v).
\]
\end{defn}

\begin{rem}
In some literature, the valuation space $\Val_{X,x}$ is called the non-archimedean link of $x\in X$, since it can be thought of as a punctured neighbourhood of $x^{\an}$ in the Berkovich analytification $X^{\an}$ of $X$. We can form the normalized non-archimedean link $NL(x,X)$ as the quotient $\Val_{X,x}/\bR_+$ where $\bR_+$ acts by rescaling. Since normalized volume function is rescaling invariant, it descends to a function on the normalized non-archimedean link.
\end{rem}

The local volume of a klt singularity can also be computed using normalized multiplicities of ideals, as observed in \cite{Liu-vol-sing-KE}. This alternative approach offers a great deal of flexibility in the study of this invariant.

\begin{thm}[\cite{Liu-vol-sing-KE}*{Theorem 27}] 
For any klt singularity $x\in X$ of dimension $n$, we have
\begin{equation} \label{e:Liu's inequality}
    \hvol(x,X) = \inf_{\fa} \lct(\fa)^n \cdot \mult(\fa) = \inf_{\fab} \lct(\fab)^n \cdot \mult(\fab)
\end{equation}
where the first $($resp. second$)$ infimum runs over all $\fm_x$-primary ideals $($resp. graded sequences of ideals$)$.    
\end{thm}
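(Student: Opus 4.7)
The plan is to establish the chain of inequalities
\[
\hvol(x,X) \,\leq\, \inf_{\fa}\lct(\fa)^n\cdot\mult(\fa) \,\leq\, \inf_{\fab}\lct(\fab)^n\cdot\mult(\fab) \,\leq\, \hvol(x,X),
\]
which forces the three quantities to coincide. The middle inequality is immediate: for any $\fm_x$-primary ideal $\fa$, the graded sequence $(\fa^k)_{k\in\bN}$ satisfies $\lct(\fa^\bullet)=\lct(\fa)$ and $\mult(\fa^\bullet)=\mult(\fa)$. The two outer inequalities are dual to each other and rely on three standard inputs: (a) the existence of a prime divisor computing the lct of any $\fm_x$-primary ideal; (b) the Izumi-type bound $A_X(v)\geq \lct(\fa)\cdot v(\fa)$ valid for any $v\in\Val^*_{X,x}$ and any $\fm_x$-primary $\fa$ (extended from divisorial to general valuations in \cite{JM-val-ideal-seq, BdFFU-log-discrepancy}); (c) the coincidence of volume and multiplicity of $\fm_x$-primary graded sequences in the geometric setting, in particular the limit formulas $\mult(\fab)=\lim_k \mult(\fa_k)/k^n$ and $\lct(\fab)=\lim_k k\cdot\lct(\fa_k)=\sup_k k\cdot\lct(\fa_k)$ (\cite{ELS03, Mus-mult-ideal-seq, LM-Okounkov-body, Cut13}).

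For the first inequality, given an $\fm_x$-primary ideal $\fa$, I would pick a prime divisor $E$ over $X$ with $A_X(E)=\lct(\fa)\cdot\ord_E(\fa)$ and set $v:=\ord_E\in\Val^*_{X,x}$. Since $\fa^m\subseteq\fa_{m\cdot v(\fa)}(v)$ for all $m\ge 1$, the order-reversing property of $\mult$ gives $\mult(\fa_{m\cdot v(\fa)}(v))\leq m^n\cdot\mult(\fa)$, and hence
\[
\vol(v)=\lim_{m\to\infty}\frac{\mult(\fa_{m\cdot v(\fa)}(v))}{(m\cdot v(\fa))^n} \,\leq\,\frac{\mult(\fa)}{v(\fa)^n}.
\]
Combined with $A_X(v)=\lct(\fa)\cdot v(\fa)$, this yields $\hvol(v)\leq\lct(\fa)^n\cdot\mult(\fa)$. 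Taking the infimum over $\fa$ gives $\hvol(x,X)\leq \inf_{\fa}\lct(\fa)^n\cdot\mult(\fa)$.

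For the third inequality, given $v\in\Val^*_{X,x}$ I would consider its associated graded sequence $\fab(v)$, which is $\fm_x$-primary because $v(\fm_x)>0$. By definition $\mult(\fab(v))=\vol(v)$. Applying the Izumi bound to each $\fa_k(v)$ and using $v(\fa_k(v))\geq k$ yields $A_X(v)\geq \lct(\fa_k(v))\cdot v(\fa_k(v))\ge k\cdot\lct(\fa_k(v))$; passing to the supremum and invoking $\lct(\fab)=\sup_k k\cdot\lct(\fa_k)$ gives $A_X(v)\geq \lct(\fab(v))$. Therefore $\lct(\fab(v))^n\cdot\mult(\fab(v))\leq \hvol(v)$, and taking the infimum over $v$ closes the chain. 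The main technical content is packed into inputs (b) and (c); the rest of the argument is bookkeeping. The most subtle point to check carefully is the Izumi-type bound for arbitrary (not necessarily quasi-monomial) valuations $v\in\Val^*_{X,x}$, which requires approximating $v$ by its retractions $r_{Y,E}(v)$ on log smooth models and controlling both $A_X$ and $v(\fa)$ in the limit, as carried out in \cite{JM-val-ideal-seq, BdFFU-log-discrepancy}.
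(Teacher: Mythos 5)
Your outer two inequalities are correct and well justified: the argument $\hvol(x,X)\le\inf_{\fa}\lct(\fa)^n\mult(\fa)$ via a divisor $E$ computing $\lct(\fa)$ on a log resolution, and the argument $\inf_{\fab}\lct(\fab)^n\mult(\fab)\le\hvol(v)$ via the filtration $\fab(v)$, are exactly the two halves of the standard proof (the second is the paper's first half verbatim; the first replaces the paper's appeal to a valuation computing $\lct(\fab)$ of a \emph{graded sequence} — a nontrivial existence result from \cite{JM-val-ideal-seq} — by the elementary existence of a divisor computing $\lct(\fa)$ of a single ideal, which is a reasonable simplification). Also, the "Izumi-type bound" $A_X(v)\ge\lct(\fa)\cdot v(\fa)$ is immediate from the definition of $\lct$ used here (an infimum over all of $\Val^*_{X,x}$), so the caution at the end about extending it to non-quasi-monomial valuations is unnecessary.

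The genuine problem is the middle link. You need $\inf_{\fa}\le\inf_{\fab}$ for the chain to close, but the justification you give — that each single ideal $\fa$ yields the graded sequence $(\fa^k)_k$ with the same normalized multiplicity — shows only that the infimum over graded sequences is taken over a \emph{larger} collection, i.e.\ it proves $\inf_{\fab}\le\inf_{\fa}$, the reverse inequality. With what you have actually established, namely $\inf_{\fab}\le\hvol(x,X)\le\inf_{\fa}$ and $\inf_{\fab}\le\inf_{\fa}$, nothing forces the three quantities to coincide. The fix is short and uses input (c), which you already cite: for any graded sequence $\fab$ with $\lct(\fab)<\infty$ one has
\[
\lct(\fab)^n\cdot\mult(\fab)=\lim_{m\to\infty}\bigl(m\cdot\lct(\fa_m)\bigr)^n\cdot\frac{\mult(\fa_m)}{m^n}=\lim_{m\to\infty}\lct(\fa_m)^n\cdot\mult(\fa_m)\ \ge\ \inf_{\fa}\lct(\fa)^n\cdot\mult(\fa),
\]
which is the inequality $\inf_{\fa}\le\inf_{\fab}$ you actually need. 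With this substitution the proof is complete.
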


Here $\lct(\fa)$ is the log canonical threshold (lct) of the ideal $\fa$, defined as
\[
\lct(\fa):=\inf_{v\in \Val_{X,x}^*} \frac{A_X(v)}{v(\fa)}.
\]
It is also the largest number $\lambda>0$ such that $(X,\fa^\lambda)$ is log canonical. The log canonical threshold of a graded sequence $\fab$ of ideals is defined in a similar manner, replacing $\fa$ by $\fab$ in the above formula. By \cite{JM-val-ideal-seq}, the infimum is in fact a minimum.

The proof of the formula \eqref{e:Liu's inequality} is quite straightforward. On one hand, we have $A_X(v)\ge \lct(\fab(v))$ and $\vol(v)=\mult(\fab(v))$ for any valuation $v\in \Val_{X,x}^*$, hence
\[
\hvol(v)\ge \lct(\fab(v))^n \cdot \mult(\fab(v)).
\]
On the other hand, for any valuation $v\in \Val_{X,x}^*$ that computes $\lct(\fab)$ we may rescale it so that $v(\fab)=1$. By definition this gives $\lct(\fab)=A_X(v)$ and $\fab\subseteq \fab(v)$, hence $\mult(\fab)\ge \vol(v)$ and
\[
\lct(\fab)^n \cdot \mult(\fab)\ge \hvol(v).
\]

It turns out that the local volume of a klt singularity is always positive \cite{Li-normalized-volume} (we will sketch a proof in Section \ref{s:kc}) and thus becomes an interesting invariant of the singularity. If $x\in X$ lives on some Gromov-Hausdorff limit of K\"ahler-Einstein Fano manifolds, then the local volume $\hvol(x,X)$ has the following differential geometric interpretation (see \cite{LLX-nv-survey}*{Theorem 5.6}):
\[
\frac{\hvol(x,X)}{\hvol(0,\bA^n)} = \lim_{r\to 0} \frac{\Vol(B_r(x,X))}{r^{2n}\Vol(B_1(0,\bA^n))},
\]
where the right hand side is the volume density (in the sense of geometric measure theory) of the K\"ahler-Einstein limit metric. An interesting question is the distribution of the possible values of local volumes, see Conjecture \ref{conj:ACC}. 

A guiding principle of the local stability theory, put forward by Li \cite{Li-normalized-volume}, is that the K-semistable degeneration of a klt singularity is induced by the valuation with the smallest normalized volume. For singularities on Gromov-Hausdorff limits of K\"ahler-Einstein Fano manifolds, this is confirmed in \cite{LX-stability-higher-rank}*{Section 3.1}. Here we illustrate this connection through a few examples.

\begin{expl}[smooth point] \label{exp:nv sm pt}
Consider $0\in X=\bA^n$ and let $v_\alpha$ ($\alpha\in \bR^n_+$) be a monomial valuation with respect to the coordinates $x_1,\dots,x_n$. We have $A_X(v_\alpha)=\alpha_1+\dots+\alpha_n$ and $\vol(v_\alpha)=(\alpha_1\dots\alpha_n)^{-1}$ by direct calculations, thus
\begin{equation*}
    \hvol(v_\alpha) = \frac{(\alpha_1+\dots+\alpha_n)^n}{\alpha_1\dots\alpha_n}.
\end{equation*}
In particular, we see that $\hvol(v_\alpha)\ge n^n$, with equality if and only if all the weights $\alpha_i$ are equal, i.e. $v_\alpha = c\cdot \mult_0$ for some $c>0$. It is slightly harder to compute the local volume of a smooth point using the valuative definition. Instead we resort to normalized multiplicities \eqref{e:Liu's inequality}. Using toric degeneration, it is shown in \cite{dFEM-mult-and-lct} that 
\[
\lct(\fa)^n \cdot \mult(\fa)\ge n^n
\]
for any $\fm_x$-primary ideal $\fa$ when $x\in X$ is smooth. This implies that $\hvol(0,\bA^n)=n^n$ and that $\mult_0$ is a minimizer of the normalized volume function.
\end{expl}

\begin{expl}[toric singularities]
The argument in the above example can be generalized to show that on any klt toric singularity $x\in X$ the normalized volume function is minimized by some toric valuation $v_\xi$, where $\xi\in N_\bR$; we leave the details to the reader. From the discussions in Section \ref{ss:fano cone}, we know that the toric minimizer induces a degeneration of the toric singularity to itself. This is compatible with the differential geometric picture: the toric singularity admits a Ricci-flat K\"ahler cone metric, and the metric tangent cone is the toric singularity itself. Moreover, the vector field on $X$ that gives the homothetic scaling along the rays of the K\"ahler cone is naturally identified with $\xi\in N_{\bR}$, see \cites{MSY-SE-toric,FOW-toric-Sasaki}.
\end{expl}

\begin{expl}[cone singularities] \label{exp:cone minimizer}
Consider an orbifold cone singularity $o\in X:=C_a(V,L)$ as in Example \ref{exp:cone-defn}. The exceptional divisor of the orbifold blowup at $o$ gives a divisorial valuation $v=\ord_o$ on $X$. It is also characterized by the condition that it is invariant under the natural $\bG_m$-action and that $v(s)=m$ for all $s\in H^0(V,mL)$. If $L$ is Cartier and sufficiently ample, then we also have $\fa_k(v)=\fm_o^k$ and hence $v$ induces the degeneration to the tangent cone (which in this case is isomorphic to $o\in X$ itself). However, it is not always the case that $v$ minimizes the normalized volume function: it is proved in \cites{Li-equivariant,LL-vol-minimizer-KE,LX-stability-kc} that this happens if and only if the Fano variety $V$ is K-semistable. The latter is a necessary condition for $X$ to admit a Ricci-flat K\"ahler cone metric \cite{CS-Kss-Sasaki}. This gives another strong evidence that the minimizing valuations of the normalized volume function contains rich information about the local stability of the singularities.
\end{expl}

The local volumes of klt singularities also enjoy some nice properties. We only list some of them here, referring to \cite{LLX-nv-survey} for a more thorough discussion.

\begin{thm}[lower semi-continuity, \cite{BL-vol-lsc}] \label{thm:vol lsc}
For any $\bQ$-Gorenstein family $B\subseteq \cX\to B$ of klt singularities, the function
\[
b\in B\mapsto \hvol(b,\cX_b)
\]
on $B$ is lower semi-continuous with respect to the Zariski topology.
\end{thm}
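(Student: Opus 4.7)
The strategy I would take combines Liu's characterization of $\hvol$ via graded sequences in \eqref{e:Liu's inequality} with a spreading-out of valuation ideals through scheme-theoretic closure. By the standard characterization of Zariski-closed subsets as specialization-closed subsets, lower semicontinuity in $B$ reduces to the following DVR statement: for $T=\Spec R$ the spectrum of a DVR with closed point $0$ and generic point $\eta$, a $\bQ$-Gorenstein flat family $\cX\to T$ of klt singularities, and a section $s\colon T\to \cX$, one has
\[
\hvol(s(0),\cX_0)\le \hvol(s(\eta),\cX_\eta).
\]

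For the DVR statement, I would fix any valuation $v\in \Val^*_{\cX_\eta, s(\eta)}$. For each $k\ge 0$, let $\bar\fa_k\subseteq \cO_\cX$ denote the scheme-theoretic closure of $\fa_k(v)\subseteq \cO_{\cX_\eta}$, and put $\fab_0:=\{\bar\fa_k\cdot \cO_{\cX_0}\}_{k\ge 0}$. Three structural facts drive the argument. First, $\cO_\cX/\bar\fa_k$ is $T$-flat since it is torsion-free over the DVR, so fiberwise colengths are preserved, giving $\mathrm{length}(\cO_{\cX_0}/(\bar\fa_k)_0) = \mathrm{length}(\cO_{\cX_\eta}/\fa_k(v))$ and hence $\mult(\fab_0)=\mult(\fab(v))=\vol(v)$. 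Second, the inclusion $\bar\fa_k\cdot\bar\fa_l\subseteq\bar\fa_{k+l}$ (checked on the generic fiber) shows $\fab_0$ is a graded sequence of $\fm_{s(0)}$-primary ideals, primary-ness being automatic since $V(\bar\fa_k)$ is supported on the section $s(T)$. Third, the standard lower semicontinuity of $\lct$ in $\bQ$-Gorenstein flat families applied to each $\bar\fa_k$ yields $\lct(\cX_0,(\bar\fa_k)_0)\le \lct(\cX_\eta,\fa_k(v))$; multiplying by $k$ and passing to $k\to\infty$ (using $\lct(\fab)=\lim_k k\cdot\lct(\fa_k)$ for graded sequences of $\fm$-primary ideals) together with the valuative bound $\lct(\fab(v))\le A_{\cX_\eta}(v)$ gives $\lct(\cX_0,\fab_0)\le A_{\cX_\eta}(v)$. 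Feeding these into Liu's inequality on $\cX_0$,
\[
\hvol(s(0),\cX_0)\le \lct(\cX_0,\fab_0)^n\mult(\fab_0)\le A_{\cX_\eta}(v)^n\vol(v)=\hvol(v),
\]
and taking the infimum over $v$ completes the proof.

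The main technical subtlety is that the natural semicontinuity direction of multiplicity for individual ideals in flat families goes \emph{against} us: a naive scheme-theoretic closure of a near-minimizing single ideal $\fa_\eta$ alone would produce $\mult(\fa_0)\ge \mult(\fa_\eta)$, rendering the product $\lct^n\mult$ uncontrolled. The way around, and the heart of the proof, is to degenerate the entire valuation filtration $\fab(v)$ term-by-term; the $T$-flatness of every $\cO_\cX/\bar\fa_k$ then pins down each fiberwise length exactly, and the normalized multiplicity is controlled through the graded sequence rather than through any single ideal. The remaining ingredient—lower semicontinuity of $\lct$ for ideals in $\bQ$-Gorenstein flat families—is the classical one, obtained via inversion of adjunction and the stability of log canonicity under $\bQ$-Gorenstein deformation.
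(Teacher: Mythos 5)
Your DVR argument itself is essentially correct, and it is a clean way to prove that the local volume does not increase under specialization: degenerating the entire valuation filtration by scheme-theoretic closure, using $R$-flatness (torsion-freeness over the DVR) of each $\cO_{\cX}/\bar\fa_k$ to pin down all fiberwise colengths exactly (hence $\mult(\fab_0)=\vol(v)$), and then combining lower semicontinuity of the lct with Liu's formula \eqref{e:Liu's inequality} on the central fiber is exactly the right way to sidestep the fact that the multiplicity of a single ideal jumps the wrong way under specialization. You correctly identified that subtlety and your fix for it is sound.

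The genuine gap is the very first reduction. A subset of $B$ that is stable under specialization need not be Zariski closed (the set of closed points of $\bA^1$ is specialization-stable and not closed), so the DVR statement only shows that $\{b\in B: \hvol(b,\cX_b)\le c\}$ is specialization-closed; it does not show that this set is closed, which is what lower semicontinuity with respect to the Zariski topology means. Upgrading ``specialization-closed'' to ``closed'' requires constructibility of the volume function, which is Theorem \ref{thm:constructible} --- a substantially harder and historically later result proved by entirely different means (boundedness of complements and deformation invariance of log plurigenera) --- so invoking it here would import far more than the theorem at hand. The proof in \cite{BL-vol-lsc}, as sketched in the paper, circumvents this by a different mechanism: one shows that $\hvol(b,\cX_b)$ is a \emph{uniform} limit over $m$ of the functions $b\mapsto\inf\{\lct(\fa)^n\cdot\ell(\cO_{\cX_b}/\fa)\,:\,\fm_b^m\subseteq\fa\}$, each of which is honestly Zariski lower semicontinuous because the ideals containing $\fm_b^m$ form a proper family over $B$ on which the colength is locally constant and the lct is lower semicontinuous; a uniform limit of lower semicontinuous functions is lower semicontinuous. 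The uniformity of the approximation is the hard point, and it rests on Izumi-type constants (Lemma \ref{lem:Izumi}) that are uniform in the family, obtained from a family version of Koll\'ar components. So your filtration-degeneration argument is a correct and genuinely useful ingredient --- it recovers the specialization inequality --- but on its own it proves a strictly weaker statement than the theorem.
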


Here we call $B\subseteq \cX\to B$ a $\bQ$-Gorenstein family of klt singularities if $\cX$ is flat over $B$, $B\subseteq \cX$ is a section of the projection, $K_{\cX/B}$ is $\bQ$-Cartier and $b\in \cX_b$ is klt for any $b\in B$.

\begin{thm}[largest volume, \cite{LX-cubic-3fold}*{Appendix}] \label{thm:largest vol}
For any klt singularity $x\in X$ of dimension $n$, we have $\hvol(x,X)\le n^n$, with equality if and only if $x\in X$ is smooth.
\end{thm}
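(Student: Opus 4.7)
The theorem has two parts: the inequality $\hvol(x,X)\le n^n$ and the characterization of equality. The inequality follows cleanly from the lower semi-continuity of the local volume. Applying Theorem~\ref{thm:vol lsc} to the trivial $\bQ$-Gorenstein family $p_2\colon X\times X\to X$ equipped with the diagonal section $\Delta\colon X\hookrightarrow X\times X$ (whose fiber over $b$ is $X$ with marked point $b$) yields that $y\mapsto \hvol(y,X)$ is Zariski lower semi-continuous on the klt locus of $X$. Were $\hvol(x,X)>n^n$, then $\{y:\hvol(y,X)>n^n\}$ would be a Zariski-open neighborhood of $x$; but it must meet the dense smooth locus $X^{\mathrm{sm}}$, on which $\hvol$ is identically $n^n$ by Example~\ref{exp:nv sm pt}---a contradiction. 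Hence $\hvol(x,X)\le n^n$.

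An alternative (and historically earlier) route proceeds via Liu's formula \eqref{e:Liu's inequality}: it suffices to exhibit an $\fm_x$-primary graded sequence $\fab$ with $\lct(\fab)^n\cdot\mult(\fab)\le n^n$. A natural construction uses a Koll\'ar component $E$ over $x$ (see Section~\ref{s:kc}); the divisorial valuation $\ord_E$ has normalized volume expressible in terms of the anti-canonical volume of the associated log Fano pair $(V,\Delta_V)$, and the bound reduces to a Fujita-type volume inequality $\vol(-K_V-\Delta_V)\le n^{n-1}$ for log Fano pairs.

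For the equality characterization, the direction ``smooth $\Rightarrow \hvol(x,X)=n^n$'' is Example~\ref{exp:nv sm pt}. For the converse, suppose $x$ is singular with $\hvol(x,X)=n^n$; the plan is to derive a contradiction by producing a valuation $v\in\Val^*_{X,x}$ with $\hvol(v)<n^n$. Using the Koll\'ar component approach above, $v=\ord_E$ works whenever the associated log Fano pair $(V,\Delta_V)$ is not isomorphic to $(\bP^{n-1},0)$---which can be arranged at a singular klt point---since then the Fujita-type inequality becomes strict and yields $\hvol(\ord_E)<n^n$.

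The main obstacle is the equality case, as the LSC argument produces only a non-strict inequality. Producing a witnessing valuation at an arbitrary singular klt point requires the detailed geometry of Koll\'ar components, together with a strict Fujita-type anti-canonical volume bound for the associated log Fano pair.
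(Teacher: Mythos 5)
Your treatment of the inequality is correct and is exactly the route the survey points to: applying Theorem \ref{thm:vol lsc} to the trivial family $X\times X\to X$ with the diagonal section (after shrinking $X$ to its klt locus) makes $y\mapsto\hvol(y,X)$ Zariski lower semicontinuous, so the open set $\{\hvol>n^n\}$ would have to meet the dense smooth locus, where $\hvol\equiv n^n$ by Example \ref{exp:nv sm pt}. Note that the survey itself supplies nothing beyond this remark and the citation to \cite{LX-cubic-3fold}*{Appendix}, stating explicitly that ``the equality case requires more work.''

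The equality case is where your proposal has a genuine gap, and the intermediate claim you rely on is not correct as stated. For a Koll\'ar component $E$ with plt blowup $\pi\colon Y\to X$, adjunction gives $-(K_E+\Delta_E)=-A_X(E)\cdot E|_E$ and $\vol(\ord_E)=(-E|_E)^{n-1}$, hence
\[
\hvol(\ord_E)=A_X(E)^n\,(-E|_E)^{n-1}=A_X(E)\cdot\vol\bigl(-(K_E+\Delta_E)\bigr).
\]
So even granting a sharp Fujita-type bound $\vol(-(K_E+\Delta_E))\le n^{n-1}$ with equality only for $(\bP^{n-1},0)$, you cannot conclude $\hvol(\ord_E)<n^n$ without also controlling the factor $A_X(E)$, which is not bounded by $n$ in general. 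Moreover, the Fujita bound is a statement about \emph{K-semistable} log Fano pairs: the pair $(E,\Delta_E)$ attached to an arbitrarily chosen Koll\'ar component need not be K-semistable (Theorem \ref{thm:kc minimizer K-ss}), and a K-semistable Koll\'ar component may fail to exist altogether, since the normalized volume minimizer can have higher rational rank (e.g.\ the cone over $\Bl_p\bP^2$ mentioned in Section \ref{ss:div minimizer}). Your proposed dichotomy is also not the right one: for the singular point $x\in X=\bA^2/\{\pm1\}$, the exceptional $(-2)$-curve of the minimal resolution is a Koll\'ar component with $(E,\Delta_E)\cong(\bP^1,0)$ and $A_X(E)=1$, giving $\hvol(\ord_E)=2<4=n^n$; so the isomorphism type of $(E,\Delta_E)$ alone does not govern whether $\hvol(\ord_E)=n^n$, and the assertion that $(E,\Delta_E)\not\cong(\bP^{n-1},0)$ ``can be arranged at a singular klt point'' is left unproved. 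The substantive content of \cite{LX-cubic-3fold}*{Appendix} is precisely the argument that $\hvol(x,X)=n^n$ forces smoothness, and your sketch does not supply it.
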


Note that the inequality part is also a consequence of the lower semi-continuity of local volumes, but the equality case requires more work.

\begin{prop}[behavior under birational morphism, \cite{LX-cubic-3fold}*{Lemma 2.9}] \label{prop:vol under birational map}
Let $\pi\colon Y\to X$ be a proper birational morphism between klt varieties. Assume that $K_Y\le \pi^* K_X$. Then $\hvol(y,Y)\ge \hvol(x,X)$ for any $x\in X$ and any $y\in \pi^{-1}(x)$.
\end{prop}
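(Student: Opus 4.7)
The plan is to work valuation by valuation. Given any $v\in\Val^*_{Y,y}$, the proper morphism $\pi$ sends the center $y$ of $v$ on $Y$ to $x$, so $v$ can simultaneously be viewed as an element of $\Val_{X,x}$. I will then compare the two ingredients of the normalized volume---log discrepancy and volume---separately on $X$ and on $Y$, show that each is not increased in passing from $Y$ to $X$, and conclude by taking the infimum over such $v$.

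For the log discrepancy, write $\pi^* K_X - K_Y = F$ where $F$ is an effective $\pi$-exceptional $\bQ$-divisor on $Y$, by hypothesis. Choose a log resolution $\mu\colon \widetilde{Y}\to Y$ such that $\pi\circ\mu$ is simultaneously a log resolution of $X$. For any prime divisor $E$ on $\widetilde{Y}$ the definition gives
\[
A_X(E) - A_Y(E) = \ord_E\bigl(\mu^*(K_Y - \pi^*K_X)\bigr) = -\ord_E(\mu^* F)\le 0,
\]
so $A_X\le A_Y$ on divisorial valuations. Via the formula $A(v_\alpha)=\sum\alpha_i A(E_i)$ the inequality extends to all quasi-monomial valuations supported on a common model dominating $Y$, and hence to arbitrary valuations via the retraction maps $r_{Y',E'}$. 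In particular $\Val^*_{Y,y}\subseteq \Val^*_{X,x}$.

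For the volume, the morphism $\pi$ yields an injective local homomorphism $\cO_{X,x}\hookrightarrow \cO_{Y,y}$ between two $n$-dimensional Noetherian local domains with the same fraction field (using that $X$ is normal and $\pi$ is proper birational). For any $v$ centered at $y$, the valuation ideals satisfy $\fa_m(v)_X=\fa_m(v)_Y\cap \cO_{X,x}$, so the induced map
\[
\cO_{X,x}/\fa_m(v)_X\hookrightarrow \cO_{Y,y}/\fa_m(v)_Y
\]
is injective. Both sides have finite $\bk$-dimension equal to their lengths (residue field $\bk$), so comparing $\bk$-dimensions, dividing by $m^n/n!$ and taking limsups gives $\vol_{X,x}(v)\le \vol_{Y,y}(v)$. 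Notice this step requires no hypothesis on $K_Y-\pi^*K_X$ whatsoever.

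Combining the two inequalities, $\hvol_X(v)\le \hvol_Y(v)$ for every $v\in\Val^*_{Y,y}$, and taking infima yields
\[
\hvol(x,X)=\inf_{v\in\Val^*_{X,x}}\hvol_X(v)\le\inf_{v\in\Val^*_{Y,y}}\hvol_X(v)\le\hvol(y,Y),
\]
as desired. I expect no serious obstacle. The only point that deserves a little care is the extension of the log discrepancy inequality from the divisorial case to arbitrary valuations; this is routine once one observes that, after further blowup, every log smooth model of $X$ can be assumed to dominate $Y$, so the retraction maps on the two sides can be computed on a common cofinal family of models.
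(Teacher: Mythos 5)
Your argument is correct and is essentially the standard proof of this statement (the paper itself only cites \cite{LX-cubic-3fold}*{Lemma 2.9}, whose proof proceeds exactly this way): view each $v\in\Val^*_{Y,y}$ as a valuation centered at $x$, get $A_X(v)\le A_Y(v)$ from the effectivity of $\pi^*K_X-K_Y$, get $\vol_{X,x}(v)\le\vol_{Y,y}(v)$ from the inclusion $\cO_{X,x}\hookrightarrow\cO_{Y,y}$ and $\fa_m(v)_X=\fa_m(v)_Y\cap\cO_{X,x}$, and take infima. The one point you flag---extending $A_X\le A_Y$ from quasi-monomial to arbitrary valuations via a cofinal family of common log smooth models---is handled exactly as you indicate, so there is no gap.
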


In particular, local volumes are non-increasing under small birational morphisms. On the other hand, it is less clear how they behave under flips.

\subsection{Stable Degeneration Conjecture} \label{ss:SDC statement}

We now introduce the Stable Degeneration Conjecture, which gives a recipe for constructing the K-semistable degenerations of klt singularities using the minimizers of the normalized volume function.

\begin{conj}[\cites{Li-normalized-volume,LX-stability-higher-rank}] \label{conj:SDC}
Let $x\in X=\Spec(R)$ be a klt singularity. Then:
\begin{enumerate}
    \item \emph{(Existence of minimizer).} There exists a valuation $v_0\in \Val_{X,x}$ such that \[\hvol(v_0)=\hvol(x,X).\]
    \item \emph{(Uniqueness).} The normalized volume minimizer $v_0$ is unique up to rescaling.
    \item \emph{(Quasi-monomial).} The minimizer $v_0$ is a quasi-monomial valuation.
    \item \emph{(Finite generation).} The associated graded algebra $\gr_{v_0} R$ is finitely generated.
    \item \emph{(Stability).} The quasi-monomial minimizer $v_0$ induces a natural Reeb vector $\xi_0$ on $X_0:=\Spec(\gr_{v_0}R)$, and $x_0\in (X_0;\xi_0)$ is a K-semistable Fano cone singularity.
\end{enumerate}
\end{conj}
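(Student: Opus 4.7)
The plan is to establish the five parts in the order they are listed, with each step feeding into the next. For \emph{existence} (1), I would work with Liu's reformulation $\hvol(x,X)=\inf_{\fab} \lct(\fab)^n\cdot\mult(\fab)$ from \eqref{e:Liu's inequality}. Take a minimizing sequence $\fab^{(k)}$ of graded sequences of $\fm_x$-primary ideals, and for each let $v_k\in\Val^*_{X,x}$ be a valuation computing $\lct(\fab^{(k)})$, normalized by $A_X(v_k)=1$. Combined with Theorem \ref{thm:largest vol}, the multiplicities $\mult(\fab(v_k))$ are then uniformly bounded. Fixing a log smooth model $(Y,E)$ and retracting $v_k$ to $r_{Y,E}(v_k)\in\QM(Y,E)$, one lands in a compact simplex after normalization. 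The technical heart is to pass to the limit while varying the model $(Y,E)$ and show upper semi-continuity of $\vol$ along the retractions, which reduces to comparing the multiplicities of $\fab(v_k)$ against a common graded sequence.

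For \emph{uniqueness} (2), I would use a convexity approach. Given two minimizers $v_0$ and $v_1$ with $A_X(v_0)=A_X(v_1)=1$, construct an interpolating family of filtrations $\fab(t)$ for $t\in[0,1]$ (for instance by a sup-convolution of the $\fab(v_i)$'s) and show that $t\mapsto\lct(\fab(t))^n\cdot\mult(\fab(t))$ is convex, with strict convexity unless $v_0$ and $v_1$ are proportional. The \emph{quasi-monomial} property (3) would follow from a rigidity argument of the retraction: replacing a minimizer $v_0$ by $r_{Y,E}(v_0)$ on a sufficiently fine log smooth model preserves $\fab(v_0)$ up to a small tail and does not increase the normalized volume, so minimality forces $v_0\in \QM(Y,E)$ for some model.

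The hardest step, and the main obstacle of the whole program, is \emph{finite generation} (4). Given the quasi-monomial minimizer $v_0$ in some simplex $\QM_\eta(Y,E)$, the induced filtration is multigraded by a lattice of rank equal to the rational rank of $v_0$, and I would try to realize it as a limit of filtrations coming from Koll\'ar components (to be introduced in Section \ref{s:kc}), which are divisorial and whose associated graded algebras are automatically finitely generated. The aim is to prove that, after possibly restricting to the face of the simplex consisting of minimizers, the resulting $\bZ^r$-graded algebra on $R$ is finitely generated; this is a genuinely new input, a local counterpart of higher-rank finite generation for Fano varieties, and seems to demand a subtle rationality statement about the ``face of minimizers'' through $v_0$. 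Once (4) is secured, $X_0:=\Spec(\gr_{v_0}R)$ is an affine klt variety carrying a torus action $\bT=\langle\xi_0\rangle$ whose Reeb vector $\xi_0$ is determined by $v_0$. For \emph{stability} (5), any special test configuration destabilizing $(X_0;\xi_0)$ would, after concatenation with the degeneration $X\rightsquigarrow X_0$, produce a filtration on $R$ whose leading valuation has strictly smaller $\hvol$ than $v_0$, via a derivative-of-volume computation inside the Reeb cone; this contradicts minimality, so K-semistability is forced once finite generation is in hand.
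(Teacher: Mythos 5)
Your outline contains genuine gaps at three of the five steps, and in each case the missing ingredient is precisely the hard input of the actual proofs. For \emph{existence}, your plan is to take a minimizing sequence of valuations, retract to a log smooth model, and ``pass to the limit while varying the model,'' invoking semi-continuity of the volume along retractions. This is exactly the naive strategy that does not work: lower semi-continuity of $\hvol$ on $\Val_{X,x}$ is an open problem (see the discussion before Theorem \ref{thm:minimizer exist} and the question in Section \ref{s:question}), and the retraction $r_{Y,E}$ moves $A_X$ down while moving $\vol$ up, so $\hvol$ is not monotone under retraction. The known proofs avoid limits in the valuation space altogether: Blum takes a \emph{generic limit} of the truncated ideal sequences $\fa_m(v_k)$ in Hilbert schemes and proves a uniform approximation $\mult(\fa_m(v_k))/m^n\le(1+\varepsilon)\mult(\fab(v_k))$ via subadditivity of asymptotic multiplier ideals; Xu instead uses boundedness of complements. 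The same objection defeats your argument for the \emph{quasi-monomial} property: retracting the minimizer to a fixed $(Y,E)$ does not obviously decrease $\hvol$, and even if it did, no single finite model need capture $v_0$ exactly. The actual proof needs Birkar's boundedness of $N$-complements to force the approximating valuations $v_m$ into a \emph{bounded} family of simplices $\QM(Y,E)$ (Theorem \ref{thm:lc place N-comp}); without that input the limit is just an arbitrary valuation. Your uniqueness step, by contrast, is essentially the argument of \cites{XZ-minimizer-unique,BLQ-convexity} (geodesics of filtrations plus convexity of $\lct$ and concavity of $\mult^{-1/n}$), and your stability step is close in spirit to the Li--Xu argument, so those two sketches are fine modulo details.

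The most serious gap is \emph{finite generation}, where you propose to realize $v_0$ as a limit of Koll\'ar components and then concede that a ``genuinely new input'' is needed. Be aware that the natural version of your plan provably fails: there are quasi-monomial valuations over $0\in\bA^3$ that are lc places of complements, lie in simplices $\QM(Y,E)$ entirely consisting of lc places of complements (hence are limits of divisorial lc places, each with finitely generated graded algebra), and yet have non-finitely-generated $\gr_v R$ (Example \ref{exp:non f.g.}). So neither ``limit of Koll\'ar components'' nor ``lc place of complement'' is the right hypothesis. What the proof of Theorem \ref{thm:HRFG} actually requires is the notion of a \emph{Koll\'ar model} $(Y,E)\to X$ (a dlt, log Fano type extraction of the whole simplex), the characterization of Koll\'ar valuations by stability of the lc-place-of-complement property under perturbation by $\varepsilon D$ (Proposition \ref{prop:QM(km) perturb boundary}, which for the minimizer is verified using its K-semistability and the ACC of log canonical thresholds), and the geodesic argument showing that lines in $\QM(Y,E)$ for a Koll\'ar model are geodesics in the space of filtrations. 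None of this is visible in your sketch, and the ``rationality of the face of minimizers'' you gesture at is not how the argument goes (the minimizer can genuinely have higher rational rank; the point is not to reduce to the divisorial case but to control the geodesic between the divisorial vertices of the Koll\'ar model).
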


Let us elaborate the various parts of the above conjecture. Since the K-semistable degeneration of a klt singularity eventually comes from the minimizing valuation of the normalized volume function, the existence of the minimizer is a necessary condition to begin with. The uniqueness part can be reformulated as saying that the normalized volume function has a unique minimizer on the normalized non-archimedean link. It implies the uniqueness of the K-semistable degeneration, since rescaling the valuation does not change the isomorphism class of the associated graded algebra.

Assuming that there exists a unique minimizer $v_0$, the natural candidate of the K-semistable degeneration (as we have discussed in Section \ref{ss:valuation}) is $\Spec(\gr_{v_0} R)$. But there is a serious issue here, since a priori the algebra $\gr_{v_0} R$ need not be finitely generated. An obvious necessary condition is that the value semigroup $v_0(R\setminus\{0\})$ is finitely generated. With a bit more work, one can show that it is also necessary that the minimizer $v_0$ is a quasi-monomial valuation. This justifies the third item of the conjecture.

Unfortunately, there are still many quasi-monomial valuations whose associated graded algebras are not finitely generated, see Example \ref{exp:non f.g.}. The finite generation part (also called the local higher rank finite generation conjecture) of the Stable Degeneration Conjecture turns out to be quite subtle.

Taking (1)--(4) for granted, let us elaborate the precise content of item (5). First we need to explain where the Reeb vector comes from. Denote by $r$ the rational rank of the quasi-monomial minimizer $v_0$. By choosing a (non-canonical) isomorphism $\Gamma\cong \bZ^r$, we may replace the $\Gamma$-grading on $\gr_{v_0} R$ by a $\bZ^r$-grading. In particular, we get a $\bT=\bG^r_m$-action on $\gr_{v_0} R$. Since $v_0$ takes the same positive value on each $\gr^\lambda_{v_0} R$, it induces a toric valuation on $\gr_{v_0} R$ and hence a Reeb vector $\xi_0$ on $X_0=\Spec(\gr_{v_0}R)$. The grading also determines a closed point $x_0$ that is the unique closed orbit of the torus action, thus we get a Fano cone singularity $x_0\in (X_0;\xi_0)$.

Next we shall define K-semistability for Fano cone singularities. The original definition from \cites{CS-Kss-Sasaki,CS-Sasaki-Einstein} is via the non-negativity of generalized Futaki invariants. We choose the following definition, which is more convenient for our purpose.

\begin{defn}
We say a Fano cone singularity $x\in (X;\xi)$ is \emph{K-semistable} if 
\[
\hvol(x,X)=\hvol_X (\wt_\xi),
\]
i.e. the toric valuation $\wt_\xi$ minimizes the normalized volume.
\end{defn}

Its equivalence with the original definition is shown in \cite{LX-stability-higher-rank}*{Theorem 2.34}. Intuitively, the generalized Futaki invariants of the Fano cone singularity are ``directional derivatives'' of the normalized volume function at $\wt_\xi$, hence they are non-negative if $\wt_\xi$ is a minimizer. There is also a local-to-global correspondence: by \cites{Li-equivariant,LL-vol-minimizer-KE,LX-stability-kc} (see the discussions in Example \ref{exp:cone minimizer}), a cone singularity $o\in C_a(V,L)$ is K-semistable if and only if the Fano base $V$ is K-semistable. 

The stable degeneration of a klt singularity is a two-step process. Conjecture \ref{conj:SDC} takes care of the first step, the K-semistable degeneration. The other step, the K-polystable degeneration, can be done using the following theorem.

\begin{thm}[\cite{LWX-metric-tangent-cone}*{Theorem 1.2}] \label{thm:K-ps degeneration}
Given a K-semistable Fano cone singularity $x_0\in (X_0;\xi_0)$, there always exists a special test configuration that degenerates $x_0\in (X_0;\xi_0)$ to a K-polystable Fano cone singularity $y\in (Y;\xi_Y)$. Moreover, such a K-polystable degeneration $y\in (Y;\xi_Y)$ is uniquely determined by $x_0\in (X_0;\xi_0)$ up to isomorphism.
\end{thm}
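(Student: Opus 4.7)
The proof naturally splits into existence and uniqueness, both relying on the interplay between equivariant K-stability and the variational theory of the normalized volume on Fano cones.

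For existence, the plan is to argue by an iterative enlargement of the acting torus. Starting with $x_0 \in (X_0; \xi_0)$, let $\bT_0 = \langle \xi_0 \rangle$ and fix a maximal torus $\bT \supseteq \bT_0$ inside the centralizer of $\bT_0$ in $\Aut(x_0, X_0)$. If $(X_0; \xi_0)$ is already K-polystable, there is nothing to do. Otherwise, unwinding the definition yields a non-product $\bT$-equivariant special test configuration of $(X_0; \xi_0)$ whose generalized Futaki invariant vanishes; its central fiber $(X_1; \xi_1)$ carries a torus $\bT_1 \supsetneq \bT_0$ obtained by adjoining the $\bG_m$ from the test configuration. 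The key verification is that $(X_1; \xi_1)$ is again K-semistable, i.e.\ that $\wt_{\xi_1}$ remains a normalized volume minimizer on $X_1$. This should follow from the lower semicontinuity of local volumes (Theorem \ref{thm:vol lsc}) combined with the fact that $\hvol_X(\wt_{\xi})$ is preserved along a special test configuration whose Futaki invariant vanishes, so that $\hvol_{X_1}(\wt_{\xi_1}) = \hvol_{X_0}(\wt_{\xi_0}) = \hvol(x_0, X_0) \geq \hvol(x_1, X_1)$, forcing equality. Iterating produces a sequence of K-semistable Fano cones with strictly increasing torus dimensions, and since $\dim \bT_k \leq \dim X_0$, the process must terminate at a K-polystable cone $y \in (Y; \xi_Y)$.

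For uniqueness, suppose $y_i \in (Y_i; \xi_{Y_i})$ ($i=1,2$) are two K-polystable Fano cones, each arising as the central fiber of a special test configuration of $(X_0; \xi_0)$. The idea is to construct a common further degeneration: form an equivariant two-parameter test configuration over $\bA^2$ whose restrictions to the two coordinate axes recover the given degenerations, so that its central fiber $Z_0$ over the origin is a K-semistable Fano cone admitting a torus that contains (images of) both $\xi_{Y_1}$ and $\xi_{Y_2}$. Running the existence procedure from $Z_0$ yields a K-polystable cone $Z$ that specializes to both $Y_1$ and $Y_2$. A separatedness-type observation — namely that a special test configuration between K-polystable Fano cones with vanishing Futaki invariant must be a product, which is a consequence of the uniqueness of the normalized volume minimizer up to rescaling (Conjecture \ref{conj:SDC}(2)) applied to the cones $Y_i$ and $Z$ — then forces $Y_1 \cong Z \cong Y_2$ as Fano cone singularities.

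The main obstacle, in my view, is the construction of the common degeneration in the uniqueness step. In the global K-moduli setting this is furnished by separatedness of the moduli stack, but no such ambient moduli is available in the local setting, and one must build the $\bA^2$-family by hand while controlling the interaction of two a priori unrelated $\bG_m$-actions on $X_0$. A secondary difficulty in the existence step is that verifying the central fiber $(X_1; \xi_1)$ is K-semistable is not purely formal: it requires that the minimizer behaves well under the degeneration, which at a technical level draws on the finite generation established in Conjecture \ref{conj:SDC}(4) applied to $\wt_{\xi_1}$ on $X_1$, so that the relevant Rees-type family interpolating the minimizers is genuinely algebraic.
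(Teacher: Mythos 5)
Your overall architecture (iterative torus enlargement for existence, a common further degeneration for uniqueness) matches the paper's, and you correctly sense that building the common degeneration is the technical heart. But there are two genuine gaps. First, in the existence step you assert that ``unwinding the definition yields a non-product $\bT$-equivariant special test configuration'' for a \emph{maximal} torus $\bT$. This is not what the definition gives: failure of K-polystability only produces some $\langle\xi_0\rangle$-equivariant special test configuration with K-semistable central fiber, and there is no a priori reason it should be equivariant for the full maximal torus. Upgrading to $\bT$-equivariance is exactly where the paper invokes the common K-semistable degeneration theorem (Theorem \ref{thm:common K-ss degen}): one takes the given degeneration $X_i\rightsquigarrow X_{i+1}$ together with the \emph{product} test configuration attached to a one-parameter subgroup $\rho(\bG_m)\subseteq\bT_i$, forms their common K-semistable degeneration, and iterates over generators of $\bT_i$. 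Without this, the inequality $\dim\bT_{i+1}>\dim\bT_i$ — and hence termination — is unsupported. So the common degeneration theorem is needed for \emph{both} halves of the proof, not just uniqueness.

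Second, you leave the common degeneration itself as an acknowledged ``obstacle'' without an argument, but this is the main content of the theorem. The paper's route: the two degenerations correspond to filtrations $\fa_{\bullet,1},\fa_{\bullet,2}$ of $R_0$ induced by divisorial valuations $v_i=\ord_{E_i}$; these admit equivalent refinements, producing a doubly graded algebra $R'\cong\gr_{\fa_{\bullet,2}}\gr_{\fa_{\bullet,1}}R_0$, and the whole issue is finite generation of $R'$. This is obtained by realizing $R'$ as a quotient of the Cox ring $\bigoplus_{m_1,m_2}\pi_*\cO_Y(-m_1E_1-m_2E_2)$ and applying \cite{BCHM}, which requires an auxiliary lc pair $(X_0,D)$ with $A_{X_0,D}(E_i)<1$ so that both $E_i$ can be extracted on a Fano type model; that divisor $D$ is produced from the K-semistability hypothesis via the identity $A_{X_0}(v_i)=S(\wt_{\xi_0};v_i)$ and torus-invariant basis type divisors compatible with both filtrations. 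Your two remaining deviations are harmless but worth noting: the verification that a Futaki-zero special degeneration has K-semistable central fiber is packaged into the paper's definition of K-semistable degeneration (and your semicontinuity argument is a reasonable substitute), and your ``separatedness-type observation'' in the uniqueness step is an overcomplication — once $Y_1$ and $Y_2$ admit a common K-semistable degeneration $y$, the definition of K-polystability immediately gives $Y_1\cong y\cong Y_2$, with no appeal to uniqueness of the normalized volume minimizer.
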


Let us clarify some of the terminologies in the above statement. 

\begin{defn}
A special test configuration of a klt singularity $x\in X$ is a test configuration with klt central fiber. % We call $x_0\in X_0$ a special degeneration of $x\in X$. The special test configuration is volume preserving if $\hvol(x,X)=\hvol(x_0,X_0)$.

A special test configuration of a Fano cone singularity $x\in (X;\xi)$ is a $\bT=\langle\xi\rangle$-equivariant special test configuration of the klt singularity $x\in X$. The central fiber is also a Fano cone singularity $y\in (Y;\xi_Y)$\footnote{In fact using the fiberwise $\bT$-action we can identify $\xi_Y$ with $\xi$ in $N(\bT)_\bR$.}. If it is K-semistable, we call it a K-semistable degeneration of $x\in (X;\xi)$\footnote{This should not be confused with the K-semistable degeneration of the klt singularity $x\in X$ in the Stable Degeneration Conjecture.}.
\end{defn}

Next we define K-polystability. Again, the original definition involves generalized Futaki invariants, but we choose the following more convenient definition. They are equivalent by \cite{LWX-metric-tangent-cone}. By \cites{CS-Kss-Sasaki,CS-Sasaki-Einstein,Hua-thesis}, we also know that a Fano cone singularity is K-polystable if and only if it admits a Ricci flat K\"ahler cone metric.

\begin{defn}
We say a Fano cone singularity $x\in (X;\xi)$ is \emph{K-polystable} if it is K-semistable, and any K-semistable degeneration is isomorphic to $x\in (X;\xi)$.
\end{defn}

The intuition behind this definition is a notion of $S$-equivalence: two semistable objects are considered $S$-equivalent if one of them isotrivially degenerates to the other, and polystable objects are the ones without any further $S$-equivalent degenerations\footnote{The definition of polystable vector bundle and GIT-polystable point can both be formulated this way: two semistable vector bundles are $S$-equivalent if they have the same Jordan-H\"older factors, and a vector bundle is polystable if it is a direct sum of its Jordan-H\"older factors; similarly in GIT (geometric invariant theory), two GIT-semistable points are $S$-equivalent if their orbit closure intersect, and the GIT-polystable point represents the unique closed orbit in this $S$-equivalence class.}.

The proofs of Conjecture \ref{conj:SDC} and Theorem \ref{thm:K-ps degeneration} will be sketched in Section \ref{s:geometry of minimizer}. 

\section{Koll\'ar components} \label{s:kc}

In this section, we highlight an important tool in the study of klt singularities: Koll\'ar components. This notion was originally introduced in \cite{Xu-pi_1-finite} to study the local fundamental groups of klt singularities (see also \cites{Pro-plt-blowup,Kud-plt-blowup} for some precursors), and has since found many other applications. While the cone construction provides one direction of the local-to-global correspondence, Koll\'ar components work in the opposite direction: it often helps to reduce questions about klt singularities to questions about Fano varieties. In the K-stability context, Koll\'ar components also serve as the local analog of special test configurations \cite{LX-stc}, which play a key role in the K-stability theory of Fano varieties.

\begin{defn}[Koll\'ar component] \label{def:kc}
Let $x\in X$ be a klt singularity and let $E$ be a prime divisor over $X$. If there exists a proper birational morphism $\pi\colon Y\to X$ such that $\pi$ is an isomorphism away from $x$, $E=\pi^{-1}(x)$, $(Y,E)$ is plt and  $-(K_Y+E)$ is $\pi$-ample, we call $E$ a \emph{Koll\'ar component} over $x\in X$ and call $\pi\colon Y\to X$ the plt blowup of $E$.
\end{defn}

Intuitively, a Koll\'ar component is the exceptional divisor of a partial resolution that is also a Fano variety. In fact, by adjunction (c.f. \cite{Kol13}*{Section 4.1}), we may write 
\begin{equation} \label{e:Delta_E}
    (K_Y+E)|_E = K_E+\Delta_E
\end{equation}
for some effective divisor $\Delta_E$ (called the different) on $E$, and the condition that $E$ is a Koll\'ar component implies that $(E,\Delta_E)$ is a klt log Fano pair. Since $K_Y+E=\pi^*K_X+A_X(E)\cdot E$ and $A_X(E)>0$, we also see that $-E$ is $\pi$-ample and this implies that the plt blowup is uniquely determined by the Koll\'ar component $E$.

\begin{expl}
If $x\in X$ is the orbifold cone over a klt Fano variety as in Example \ref{exp:cone-defn}, then the exceptional divisor of the orbifold blowup at the vertex $x$ is a Koll\'ar component. More generally, for any quasi-regular Fano cone singularity $x\in (X;\xi)$, the zero section of the corresponding Seifert $\bG_m$-bundle $X\setminus\{x\}\to V$ (see Example \ref{exp:quasi-reg cone defn}) is a Koll\'ar component over $x\in X$.
\end{expl}

By \cite{Xu-pi_1-finite}, every klt singularity has at least one Koll\'ar component. In fact, the proof in \emph{loc. cit.} shows that every log canonical threshold is computed by some Koll\'ar component. % This fact will be very useful as we explore the geometry of the normalized volume minimizer.

\begin{thm}[\cite{Xu-pi_1-finite}*{Lemma 1}] \label{thm:kc compute lct}
Let $x\in X$ be a klt singularity and let $\fa$ be an $\fm_x$-primary ideal. Then there exists a Koll\'ar component $E$ over $x\in X$ such that 
\[
\lct(\fa)=\frac{A_X(E)}{\ord_E(\fa)}.
\]
\end{thm}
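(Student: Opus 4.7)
The plan is to begin from the observation that, for $c=\lct(\fa)$, the pair $(X,c\cdot\fa)$ is lc but not klt, with non-klt locus equal to $\{x\}$ (since $\fa$ is $\fm_x$-primary); consequently every lc place of $(X,c\fa)$ has center $x$. Any log resolution of $(X,\fa)$ produces at least one such lc place: a prime exceptional divisor $F$ with $A_X(F)=c\cdot\ord_F(\fa)$. The task is to promote some such $F$ to a Koll\'ar component, that is, to extract it as the unique exceptional divisor of a plt blowup.

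First I would tie-break. If $(X,c\fa)$ has multiple lc places over $x$, choose $0<\epsilon\ll 1$ and an auxiliary $\fm_x$-primary ideal $\fb$ with a small coefficient $\delta>0$ so that the perturbed pair $(X,(c-\epsilon)\fa+\delta\fb)$ is still lc but has a \emph{unique} lc place $E$ over $x$. Since the perturbation is small, $E$ must be one of the original lc places of $(X,c\fa)$, hence $A_X(E)=c\cdot\ord_E(\fa)$. A convenient choice of $\fb$ can be arranged as a general combination of valuation ideals associated to the lc places of $(X,c\fa)$, chosen so as to break any ties among them.

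Next I would extract $E$ using the MMP. By a standard consequence of the Birkar--Cascini--Hacon--McKernan MMP (the extraction theorem for a unique lc place), there exists a projective birational morphism $\pi\colon Y\to X$, an isomorphism away from $x$, with $E$ as the unique $\pi$-exceptional divisor, such that $(Y,E+(c-\epsilon)\pi^{-1}_*\fa+\delta\pi^{-1}_*\fb)$ is plt and $-(K_Y+E+(c-\epsilon)\pi^{-1}_*\fa+\delta\pi^{-1}_*\fb)$ is $\pi$-ample. Removing the effective perturbation preserves the plt property, so $(Y,E)$ is itself plt. Because $E$ is the unique exceptional divisor, the relative Picard group of $\pi$ has rank one, and the negativity lemma then forces $-E$ to be $\pi$-ample. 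Writing $K_Y+E=\pi^*K_X+A_X(E)\cdot E$ with $A_X(E)>0$ (as $X$ is klt), we conclude that $-(K_Y+E)$ is also $\pi$-ample, verifying that $E$ is a Koll\'ar component with $A_X(E)/\ord_E(\fa)=c=\lct(\fa)$.

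The main obstacle is the tie-breaking step: we need the perturbation $\delta\fb$ to simultaneously (a) single out exactly one lc place over $x$ and (b) preserve the property that this place computes the original lct of $\fa$. The extraction step itself rests on the full machinery of the MMP, but given the tie-breaking it is essentially formal.
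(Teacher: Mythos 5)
Your argument is correct and is essentially the proof of the result cited in the statement (Xu's Lemma 1): the survey does not reprove the theorem, but the route you describe --- tie-breaking to isolate a single lc place of $(X,\lct(\fa)\cdot\fa)$ over $x$ and then extracting it by the MMP as a plt blowup --- is exactly the standard one, and it is the same mechanism the survey itself invokes in the proof of Lemma \ref{lem:kc perturb boundary}. The only step deserving slightly more care is the $\pi$-ampleness of $-E$ when $X$ is not $\bQ$-factorial: rather than deducing it from a relative Picard-rank count, one should take $Y$ to be the relative ample model $\Proj_X\bigoplus_{m}\pi_*\cO_Y(-mE)$ of the extraction, after which your computation $K_Y+E=\pi^*K_X+A_X(E)\cdot E$ finishes the proof as stated.
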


The existence of Koll\'ar component already has the following consequence.

\begin{prop} \label{prop:tc by kc}
Every klt singularity has a degeneration by test configuration to some Fano cone singularity.
\end{prop}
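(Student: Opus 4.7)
The strategy is to build the test configuration from the divisorial valuation attached to a Koll\'ar component. By the existence theorem for Koll\'ar components (the argument of \cite{Xu-pi_1-finite}, also underlying Theorem \ref{thm:kc compute lct}), pick any Koll\'ar component $E$ over $x\in X$ with plt blowup $\pi\colon Y\to X$, and consider the divisorial valuation $v=\ord_E\in\Val_{X,x}$. The induced $\fm_x$-primary filtration satisfies $\fa_k(v)=\pi_*\cO_Y(-kE)$, and since $-E$ is $\pi$-ample the $\cO_X$-algebra $\bigoplus_{k\ge 0}\fa_k(v)$ is finitely generated; hence so is the associated graded algebra $\gr_v R$. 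The Rees construction recalled in Section \ref{ss:valuation} then produces a test configuration $\cX\to\bA^1$ whose general fiber is $X$ and whose central fiber is $X_0:=\Spec(\gr_v R)$, equipped with a $\bG_m$-action from the $\bZ$-grading under which the closed point $x$ specializes to a $\bG_m$-fixed closed point $x_0\in X_0$.

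\textbf{Identifying $X_0$.} By adjunction \eqref{e:Delta_E} together with $K_Y+E=\pi^*K_X+A_X(E)\cdot E$, one obtains $K_E+\Delta_E=A_X(E)\cdot E|_E$, so $L:=-E|_E$ is an ample $\bQ$-Cartier divisor on $E$ with $L\sim_\bQ -\tfrac{1}{A_X(E)}(K_E+\Delta_E)$, where $(E,\Delta_E)$ is a klt log Fano pair by definition of a Koll\'ar component. Combining the short exact sequences
\[
0\to\cO_Y(-(k+1)E)\to\cO_Y(-kE)\to\cO_E(kL)\to 0
\]
with the vanishing $R^1\pi_*\cO_Y(-kE)=0$ (relative Kawamata--Viehweg, using $\pi$-ampleness of $-E$ and plt-ness of $(Y,E)$) and pushing forward to $X$, one identifies
\[
\gr_v R\;\cong\;\bigoplus_{k\ge 0}H^0(E,\,kL).
\]
Thus $x_0\in X_0$ is precisely the orbifold cone over the klt log Fano pair $(E,\Delta_E)$ with polarization $L$, in the sense of Example \ref{exp:cone-defn} (extended to log pairs), and is in particular klt. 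The $\bZ$-grading makes the $\bG_m$-action good (it is effective and every orbit contains $x_0$ in its closure), and its generator defines a Reeb vector $\xi$, so $x_0\in(X_0;\xi)$ is a Fano cone singularity, completing the proof.

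\textbf{Expected difficulty.} The only non-formal step is verifying the klt property of $X_0$. An alternative to the explicit orbifold-cone identification is to argue that the total-space pair $(\cX,X_0)$ is plt, using that $(Y,E)$ is plt and that the Rees family $\cX\to\bA^1$ is, up to its fiberwise $\bG_m$-action, a degeneration of the plt blowup; inversion of adjunction then yields klt-ness of $X_0$. Either way, the technical heart is transporting the plt/klt property from $(Y,E)$ down to the central fiber, which in the form above boils down to a standard relative vanishing for Koll\'ar components.
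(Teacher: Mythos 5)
Your proposal is correct and follows essentially the same route as the paper: take a Koll\'ar component $E$, degenerate along $\ord_E$ via the Rees construction, use relative Kawamata--Viehweg vanishing for the plt blowup to identify the graded pieces with $H^0(E,-mE|_E)$, and recognize the central fiber as the orbifold cone over the klt log Fano pair $(E,\Delta_E)$, which is klt by the orbifold cone criterion of Example \ref{exp:cone-defn}. The only additions you make (explicitly noting finite generation from $\pi$-ampleness of $-E$, and computing $L\sim_\bQ -\tfrac{1}{A_X(E)}(K_E+\Delta_E)$) are correct and fill in steps the paper leaves implicit.
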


\begin{proof}
Take a Koll\'ar component $E$ over $x\in X=\Spec(R)$ and consider the induced degeneration to $X_0:=\Spec(\gr_E R)$\footnote{This is just a shorthand notation for $\gr_{\ord_E}R$.}. It suffices to show that $X_0$ is a Fano cone singularity. For simplicity, assume that $E$ is Cartier on the associated plt blowup $\pi\colon Y\to X$. Then $\Delta_E=0$ in the adjunction formula \eqref{e:Delta_E} and
\[
\gr^m_E R\cong \pi_*\cO_Y(-mE)/\pi_*\cO_Y(-(m+1)E)
\]
can be identified with $H^0(E,-mE|_E)$, as the next term in the long exact sequence is 
\[
R^1\pi_*\cO_Y(-(m+1)E)=0
\]
by Kawamata-Viehweg vanishing. Hence $X_0$ is a cone over the klt Fano variety $E$. In the general case, the central fiber $X_0$ is only an orbifold cone over $(E,\Delta_E)$ polarized by the $\bQ$-line bundle $-E|_E$, but the basic idea is the same, see e.g. \cite{LZ-Tian-sharp}*{Proposition 2.10}.
\end{proof}

We can also use Koll\'ar component to show that the local volume of a klt singularity is always positive. This is originally proved in \cite{Li-normalized-volume}. The argument we present here is slightly different, but the main ideas are the same. The key ingredient is an Izumi type inequality: % Naturally, we would like to compare the normalized volume of an arbitrary valuation with $\hvol(\ord_E)$.

\begin{lem} \label{lem:Izumi}
Let $x\in X$ be a klt singularity. Then there exists some constant $C>0$ such that
\[
v(\fm_x)\ord_x\le v \le C \cdot A_X(v) \ord_x
\]
for any valuation $v\in \Val_{X,x}^*$.
\end{lem}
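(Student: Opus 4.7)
The lower bound is a direct consequence of the definitions: if $k := \ord_x(f)$, then $f \in \fm_x^k$, so $v(f) \ge v(\fm_x^k) = k \cdot v(\fm_x) = \ord_x(f) \cdot v(\fm_x)$.

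The upper bound is a uniform form of Izumi's inequality. My plan is to first handle quasi-monomial valuations on a fixed log resolution, then extend by approximation. Fix a log resolution $\pi \colon Y \to X$ of $(X, \fm_x)$ with SNC reduced exceptional divisor $E = \sum_{i=1}^r E_i$, and set $a_i := A_X(E_i) > 0$ (by the klt assumption). For a quasi-monomial valuation $v = v_\alpha \in \QM_\eta(Y, E)$ and any $f \in R$, expand $\pi^* f = \sum_\beta c_\beta y^\beta$ in local coordinates $y_1, \ldots, y_n$ at a closed point $y$ of the stratum through $\eta$. Picking the multi-index $\beta$ minimizing $\sum \beta_i$ among monomials appearing in $\pi^* f$ yields
\[
v_\alpha(f) \le \sum_{i} \alpha_i \beta_i \le (\max_i \alpha_i) \cdot \ord_y(\pi^* f),
\]
where $\ord_y$ denotes the $\fm_y$-adic valuation on $\cO_{Y,y}$. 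The classical Izumi inequality for the proper birational morphism $\pi$ provides a constant $C_0 > 0$, depending only on $(Y, E)$ and uniform for $y$ in the compact locus $\pi^{-1}(x)$, such that $\ord_y(\pi^* f) \le C_0 \cdot \ord_x(f)$. Combined with the elementary estimate $A_X(v_\alpha) = \sum \alpha_i a_i \ge (\max_i \alpha_i)(\min_i a_i)$, this yields
\[
v_\alpha(f) \le \frac{C_0}{\min_i a_i} \cdot A_X(v_\alpha) \cdot \ord_x(f).
\]

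To extend the bound to an arbitrary $v \in \Val_{X,x}^*$, I would choose a log resolution $(Y', E')$ of $(X, \fm_x + (f))$ depending on the particular $f$. On such a resolution, $(\pi')^* f$ is a monomial times a unit at the center of $v$ on $Y'$, hence $v(f) = r_{Y',E'}(v)(f)$; applying the quasi-monomial bound to $r_{Y',E'}(v)$ on $(Y', E')$ and invoking $A_X(r_{Y',E'}(v)) \le A_X(v)$ then yields the desired inequality for $v$. The main obstacle is ensuring that the constant $C$ can be chosen uniformly, independent of the model $(Y', E')$ that varies with $f$. This is where the klt assumption enters more substantially: one anchors the argument to a Koll\'ar component $E^*$ provided by Theorem \ref{thm:kc compute lct}, which controls the constant in terms of the single invariant $\lct(\fm_x) > 0$, or one argues inductively on the dimension using the klt log Fano structure carried by a Koll\'ar component to reduce the uniform Izumi bound to a lower-dimensional instance.
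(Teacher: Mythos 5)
Your lower bound and your fixed-model estimate for quasi-monomial valuations are fine as far as they go, but the proposal does not close the one step that carries all the content, and you say so yourself. The constant you produce for $v\in\QM(Y',E')$ is $C_0/\min_i A_X(E_i')$, where $C_0$ is an Izumi-type constant for the model $(Y',E')$ and the model must vary with $f$. The denominator is in fact uniformly bounded below (the total discrepancy of a klt variety exceeds $-1$, so $A_X(F)\ge\delta>0$ for every divisor $F$ over $X$), but $C_0$ is not: already for iterated point blowups of $\bA^2$ over the origin, the order of vanishing of $\pi^*f$ at a closed point of the $N$-th exceptional divisor grows linearly in $N$ relative to $\ord_0(f)$. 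So the scheme ``prove it on each model, then approximate'' reduces Lemma \ref{lem:Izumi} to a uniform-in-the-model Izumi inequality, which is essentially the lemma itself; even your step 1 quietly assumes such a uniform statement when it asks for $\ord_y(\pi^*f)\le C_0\cdot\ord_x(f)$ uniformly over all closed points $y\in\pi^{-1}(x)$. Of your two proposed repairs, the first does not work as stated: knowing that $\ord_{E^*}(D)$ is small for one Koll\'ar component $E^*$ says nothing, via $\lct(\fm_x)$ alone, about the singularities of $D$ away from the generic point of $E^*$, which is exactly what must be controlled.

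The second repair is the right one --- it is what the paper does --- but it has to be executed, and once it is, the approximation apparatus becomes unnecessary. Fix a single Koll\'ar component $E$ with plt blowup $\pi\colon Y\to X$. Since $-E$ is $\pi$-ample, the algebra $\bigoplus_m\pi_*\cO_Y(-mE)$ is finitely generated, which yields $\ord_E\le C''\cdot\ord_x$ directly (for suitable $m$ one has $\fa_{mk}(\ord_E)=\fa_m(\ord_E)^k\subseteq\fm_x^k$); so it suffices to prove $v\le C\cdot A_X(v)\,\ord_E$ for all $v\in\Val_{X,x}^*$ simultaneously. Dualizing, this is the statement that $(X,D)$ is log canonical for every effective $\bQ$-Cartier divisor $D$ with $\ord_E(D)\le\varepsilon$, i.e.\ the positivity of an alpha invariant of the log Fano pair $(E,\Delta_E)$. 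That is proved by inversion of adjunction on the plt pair $(Y,E)$ together with induction on the dimension, and it is there --- not in any estimate on a log resolution --- that the klt hypothesis does its real work.
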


Here $\ord_x(f):=\max\{k\in \bN\,|\,f\in \fm_x^k\}$ for $f\in \cO_X$. To see why this lemma implies the positivity of the local volume, note that 
\[
\hvol(x,X)=\inf_{v:\,A_X(v)=1} \vol(v)
\]
by the rescaling invariance of $\hvol$. For such valuations $v$, the Izumi inequality above implies $v\le C\cdot \ord_x$ and hence $\vol(v)\ge C^{-n}\cdot \mult_x X$. This gives $\hvol(x,X)\ge C^{-n}\cdot \mult_x X>0$.

\begin{proof}[Proof of Lemma \ref{lem:Izumi}]
The first inequality is definitional. For the second inequality, it is enough to prove
\[
v\le C\cdot A_X(v)\ord_E
\]
for some Koll\'ar component $E$ over $x\in X$. We can reformulate this statement as $A_X(v)\ge v(D)$ for all $\bQ$-Cartier divisor $D$ on $X$ with $\ord_E(D)\le C^{-1}$. Thus the question is equivalent to finding some constant $\varepsilon>0$ such that $(X,D)$ is lc
whenever $\ord_E(D)\le \varepsilon$. On the plt blowup $\pi\colon Y\to X$ of $E$ we have
\[
\pi^*(K_X+D) \le  K_Y + \pi^{-1}_* D + E
\]
as long as $\varepsilon\le A_X(E)$. By inversion of adjunction (see e.g. \cite{Kol13}*{Theorem 4.9}), the pair $(X,D)$ is lc if and only if $(E,\Delta_E+\pi^{-1}_*D|_E)$ is lc. Since $(E,\Delta_E)$ is klt, we essentially reduce to a similar question in lower dimension. By induction on the dimension, we may assume there exists some $0<\varepsilon\ll 1$\footnote{In fact, we can choose $\varepsilon$ to be the alpha invariant \cites{Tia-alpha,CS-alpha-Fano3} of the log Fano pair $(E,\Delta_E)$. The proof we sketch here is essentially the proof that alpha invariants are positive.} such that $(E,\Delta_E+\Gamma)$ is lc for all effective $\bQ$-divisor $\Gamma\sim_\bQ -\varepsilon E|_E$. This gives the desired constant as $\pi^{-1}_*D\sim_\bQ -\varepsilon E$.
\end{proof}

\subsection{Divisorial minimizer} \label{ss:div minimizer}

Using Koll\'ar component, we now discuss a special case of the Stable Degeneration Conjecture, namely when the minimizer is a divisorial valuation. It is worth noting that, in general, the minimizer can be a valuation of higher rank; one such example is the cone over $\Bl_p \bP^2$, see \cite{Blu-minimizer-exist}*{Section 8.3}. Nevertheless, the divisorial case will provide some intuition for our understanding of the higher rank case.

The key observation is that divisorial minimizers are necessarily Koll\'ar components.

\begin{thm} \label{thm:div minimizer is kc}
Any divisorial minimizer of the normalized volume is of the form $v=\lambda\cdot\ord_E$ for some Koll\'ar component $E$.
\end{thm}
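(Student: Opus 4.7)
My plan is to establish the two nontrivial conditions in Definition \ref{def:kc}: extracting $E$ as a single exceptional divisor on a proper birational model $\pi\colon Y\to X$, and verifying that the resulting pair $(Y,E)$ is plt. The $\pi$-ampleness of $-(K_Y+E)$ is then automatic, since $K_Y+E=\pi^{\ast}K_X+A_X(E)\cdot E$, and $-E$ is $\pi$-ample with $A_X(E)>0$.

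Normalize so that $v=\ord_E$. For the extraction step I would invoke the relative MMP (as in \cite{Xu-pi_1-finite}, and implicit in the proof of Theorem \ref{thm:kc compute lct}): since $A_X(E)<+\infty$, there exists a projective birational morphism $\pi\colon Y\to X$, isomorphic over $X\setminus\{x\}$, whose exceptional locus is precisely $E$, with $-E$ being $\pi$-ample. This reduces the theorem to proving the plt assertion.

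For the plt step I would argue by contradiction. If $(Y,E)$ is not plt, then there exists a prime divisor $F$ over $Y$ with center contained in $E$ and $A_{(Y,E)}(F)=A_X(F)-A_X(E)\ord_F(E)\le 0$. Pass to a log smooth model $(\tY,\tE+\tF)$ dominating $Y$ on which $E$ and $F$ are both realized as divisors, pick a generic point $\eta$ of a component of $\tE\cap\tF$, and consider the one-parameter family of quasi-monomial valuations $v_t\in\QM_\eta(\tY,\tE+\tF)$ with weights $(1,t)$ in the $\tE,\tF$ directions. Then $v_0=\ord_E$ and $A_X(v_t)=A_X(E)+tA_X(F)$ is affine in $t$. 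The goal is to show $\hvol(v_t)<\hvol(v_0)$ for small $t>0$, contradicting the minimality of $v_0$. I would implement this via Liu's formula \eqref{e:Liu's inequality}: approximate $\fab(v_t)$ by products $\fa_k(v)\cdot\fb_{\lfloor tk\rfloor}$, where $\fb_m=\{f:\ord_F(f)\ge m\}$, and combine the Minkowski-type inequality $\mult(\fab\cdot\fbb)^{1/n}\le\mult(\fab)^{1/n}+\mult(\fbb)^{1/n}$ with subadditivity of lct to linearize $\hvol(v_t)$. The leading-order change in $\hvol$ then carries the sign of $A_X(F)-A_X(E)\ord_F(E)\le 0$, and is strict because the center of $F$ lies in $E$, forcing a genuine decrease.

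The main obstacle is precisely this volume estimate: controlling $\vol(v_t)$ along a quasi-monomial perturbation of a boundary divisorial valuation requires careful bookkeeping of the filtration $\fab(v_t)$ and matching leading terms. As a parallel route I would exploit Theorem \ref{thm:kc compute lct} applied to the truncations $\fa_k(v)$: the inequality $\hvol(v)\ge\lct(\fab(v))^n\mult(\fab(v))$, being an equality at the minimizer by Liu's formula, forces $A_X(v)=\lct(\fab(v))$, so $v$ computes its own lct. The Koll\'ar components computing $\lct(\fa_k(v))$ then produce a sequence of Koll\'ar valuations converging in normalized data to $v$; a uniqueness-type argument matching log discrepancy and multiplicity in the limit would identify $v$ with (a rescaling of) a Koll\'ar component directly.
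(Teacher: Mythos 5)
There is a genuine gap in both of your routes, and the paper's argument supplies exactly the ingredient you are missing: \emph{finite generation}. The paper's proof runs as follows: (a) a minimizer $v$ is the \emph{unique} valuation computing $\lct(\fab(v))$ (equality analysis in \eqref{e:Liu's inequality}); (b) a \emph{divisorial} valuation computing the lct of a graded sequence has finitely generated associated graded algebra, by \cite{BCHM}*{Corollary 1.4.3}; (c) finite generation gives $\fa_{mr}(v)=\fa_m(v)^r$ for some divisible $m$, so $E$ computes $\lct(\fa_m)$ for a \emph{single} ideal and is the unique divisor doing so; (d) Theorem \ref{thm:kc compute lct} produces a Koll\'ar component computing $\lct(\fa_m)$, which by uniqueness must be $E$. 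Your second route stops just short of this: knowing that $v$ computes $\lct(\fab(v))$ and that each truncation $\lct(\fa_k(v))$ is computed by some Koll\'ar component $E_k$ does not identify $v$ with any $E_k$ --- the $E_k$ may all be different divisors, a limit of Koll\'ar components need not be divisorial or a Koll\'ar component, and the uniqueness statement you have available concerns $\lct(\fab(v))$, not $\lct(\fa_k(v))$. Step (b) is precisely what lets you replace the graded sequence by one ideal so that uniqueness and Theorem \ref{thm:kc compute lct} can talk to each other.

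Your first route has two further problems. The extraction of $E$ as the unique exceptional divisor of a model with $-E$ relatively ample is \emph{not} automatic from $A_X(E)<+\infty$; it requires $E$ to be an lc place of an lc pair (e.g.\ of $(X,\fa_m^{\lct})$), which again enters through the minimizing property and \cite{BCHM} --- so the extraction step secretly uses the same input as the paper. More seriously, the perturbation estimate is not just ``careful bookkeeping'': the product approximation $\fa_k(v)\cdot\fb_{\lfloor tk\rfloor}$ together with Minkowski and subadditivity yields an upper bound of the shape $\hvol(v_t)\le\bigl(A_X(E)+tA_X(F)\bigr)^n\bigl(\vol(E)^{1/n}+t\,c\bigr)^n$, which is $\ge\hvol(v_0)$ --- the inequalities point the wrong way for producing a decrease. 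The actual first-order derivative of $\hvol$ at $t=0$ carries the sign of $A_X(F)-S(\ord_E;F)$ (an $S$-invariant, computed from the filtration that $\ord_F$ induces on $\gr_E R$), not of $A_{Y,E}(F)=A_X(F)-A_X(E)\ord_F(E)$; this is the content of the K-semistability characterization (Theorem \ref{thm:minimizer = K-ss val}), and deducing plt-ness of $(Y,E)$ from it is not immediate. So the claimed contradiction does not follow from the computation you outline.
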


This is proved in \cite{Blu-minimizer-exist}*{Proposition 4.9} and \cite{LX-stability-kc}*{Theorem C} using somewhat different arguments. The first step is to show that the divisorial minimizer $v=\ord_E$ satisfies the finite generation property (Conjecture \ref{conj:SDC}(4)), which follows from two basic observations:
\begin{enumerate}
    \item If $v$ is a minimizer of $\hvol$, then $v$ is the unique valuation that computes $\lct(\fab(v))$. This can be derived from the equality conditions in the proof of \eqref{e:Liu's inequality}, see \cite{Blu-minimizer-exist}*{Lemma 4.7} for more details.
    \item If $v$ is a divisorial valuation that computes the log canonical threshold $\lct(\fab)$ of some graded sequence of ideals, then it satisfies the finite generation property. This is essentially a consequence of \cite{BCHM}*{Corollary 1.4.3}. See the proof of \cite{LX-stability-kc}*{Lemma 3.11}.
\end{enumerate}
The finite generation property ensures that there exists some sufficiently divisible integer $m$ such that $\fa_{mr}(v)=\fa_m(v)^r$ for all $r\in \bN$. From this we deduce that the divisor $E$ also computes $\lct(\fa_m)$ and it is the unique such divisor (by the item (1)  mentioned above). Since every log canonical threshold is computed by some Koll\'ar component (Theorem \ref{thm:kc compute lct}), we see that $E$ is a Koll\'ar component.

Once we know that the divisorial minimizer comes from a Koll\'ar component $E$, we can study the minimizer in terms of the geometry of the associated log Fano pair $(E,\Delta_E)$. The results can be summarized as follows:

\begin{thm}[\cite{LX-stability-kc}*{Theorem 1.2}] \label{thm:kc minimizer K-ss}
A Koll\'ar component $E$ over a klt singularity $x\in X$ minimizes the normalized volume if and only if the log Fano pair $(E,\Delta_E)$ is K-semistable. Moreover, such a K-semistable Koll\'ar component, if it exists, is unique.
\end{thm}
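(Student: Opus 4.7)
My plan is to relate $\hvol_X(\ord_E)$ and its directional derivatives to the K-stability data of $(E,\Delta_E)$ via the adjunction $K_E+\Delta_E=A_X(E)\cdot E|_E$ on the plt blowup $\pi\colon Y\to X$. A direct calculation (using Kawamata--Viehweg vanishing exactly as in the proof of Proposition \ref{prop:tc by kc}) identifies the graded pieces $\gr^m_E R\cong H^0(E,-mE|_E)$ for sufficiently divisible $m$, yielding
\[
\hvol_X(\ord_E)=A_X(E)^n\cdot (-E|_E)^{n-1}=A_X(E)\cdot \bigl(-(K_E+\Delta_E)\bigr)^{n-1},
\]
which is precisely the quantity entering the K-stability of the log Fano pair $(E,\Delta_E)$.

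Next I would establish both implications through a derivative computation. Given a special test configuration $(\cE,\Delta_{\cE})$ of $(E,\Delta_E)$, its associated $\bN$-filtration on $\bigoplus_m H^0(E,-mE|_E)=\gr_E R$ can be combined with the grading of $\gr_E R$ to produce a two-dimensional cone of quasi-monomial valuations $v_t$ on a suitable log smooth model of $X$, with $v_0=\ord_E$. Using the formula $A_X(v_\alpha)=\sum_i\alpha_i A_X(E_i)$ together with the Okounkov body description of $\vol(v_\alpha)$, one computes
\[
\left.\frac{d}{dt}\right|_{t=0^+}\hvol_X(v_t)=c\cdot \Fut(\cE,\Delta_{\cE})
\]
for some positive constant $c$ depending only on $(E,\Delta_E)$. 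If $\ord_E$ minimizes $\hvol_X$, then every such derivative is non-negative, so $\Fut(\cE,\Delta_{\cE})\ge 0$ for every special test configuration and $(E,\Delta_E)$ is K-semistable. Conversely, if $(E,\Delta_E)$ is K-semistable, the same derivative formula combined with the convexity of $\hvol$ along the family $v_t$ shows $\ord_E$ is a local minimizer, and one then passes to global minimization using Liu's formula \eqref{e:Liu's inequality}: any competitor $v$ with $\hvol_X(v)<\hvol_X(\ord_E)$ would, after approximating by divisorial valuations and restricting to $E$ via a suitable graded sequence of $\fm_x$-primary ideals, produce a destabilizing test configuration of $(E,\Delta_E)$, contradicting K-semistability.

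For uniqueness, if $E_1$ and $E_2$ were two distinct K-semistable Koll\'ar components, then both $\ord_{E_1}$ and $\ord_{E_2}$ would be divisorial minimizers of $\hvol_X$. Working on a common log smooth model containing $E_1+E_2$ and interpolating between them by a family of quasi-monomial valuations, the strict convexity of $\hvol$ along this segment (away from its endpoints) forces $\ord_{E_1}$ and $\ord_{E_2}$ to be proportional. Since each is normalized by its log discrepancy via $A_X$, proportionality forces $E_1=E_2$.

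The main obstacle is the derivative computation in the second paragraph: identifying the directional derivative of the normalized volume along the family $v_t$ with the generalized Futaki invariant of the underlying test configuration. This requires carefully combining the Okounkov body description of volumes of quasi-monomial valuations with the adjunction on the plt blowup, and treating the orbifold structure carefully when $E$ is not Cartier on $Y$. Upgrading local to global minimization in the K-semistable direction is likewise delicate, and in practice reduces to translating the valuative criterion for K-semistability of $(E,\Delta_E)$ into an effective lower bound for $\lct(\fab)^n\cdot\mult(\fab)$ over all graded sequences $\fab$ of $\fm_x$-primary ideals on $X$.
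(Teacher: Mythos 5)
Your opening computation and the ``minimizer $\Rightarrow$ K-semistable'' direction are essentially sound and in fact follow the route of the original reference \cite{LX-stability-kc}: the identity $\hvol_X(\ord_E)=A_X(E)\cdot\bigl(-(K_E+\Delta_E)\bigr)^{n-1}$ is correct, and differentiating $\hvol$ along rays of valuations attached to special test configurations of $(E,\Delta_E)$ does recover the generalized Futaki invariant up to a positive constant. Note that this survey does not reprove the theorem that way: it derives it as the divisorial special case of Theorem \ref{thm:minimizer = K-ss val} (minimizers coincide with K-semistable valuations, defined via basis type divisors), with inversion of adjunction translating K-semistability of $\ord_E$ into K-semistability of $(E,\Delta_E)$, and with uniqueness deduced from Theorem \ref{thm:uniqueness}.

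Two steps of your argument have genuine gaps. First, the converse direction. Convexity of $\hvol$ along the particular families $v_t$ only shows that $\ord_E$ minimizes along those rays; the passage to an arbitrary competitor $v$, equivalently an arbitrary graded sequence $\fab$ in \eqref{e:Liu's inequality}, is the entire content of the theorem, and ``restricting to $E$ to produce a destabilizing test configuration'' does not work as stated, since a general $\fab$ induces no test configuration of $(E,\Delta_E)$. The actual mechanism is to degenerate $X$, together with $\fab$, to the orbifold cone over $(E,\Delta_E)$ via the Rees construction for $\ord_E$, check that $\lct$ can only drop while $\mult$ is preserved under this degeneration, and then invoke the equivariant minimization statement for cones over K-semistable Fano bases from \cites{Li-equivariant,LL-vol-minimizer-KE}; alternatively one runs the basis-type-divisor argument sketched after Theorem \ref{thm:minimizer = K-ss val}. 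Second, uniqueness. Interpolating between $\ord_{E_1}$ and $\ord_{E_2}$ by quasi-monomial valuations requires $E_1$ and $E_2$ to appear as \emph{intersecting} components of an SNC divisor on a common log smooth model; two Koll\'ar components both lie over $x$ but their divisors on a common log resolution may well be disjoint, in which case no simplex $\QM(Y,E_1+E_2)$ contains both valuations with positive weights and your segment does not exist. This is precisely why the uniqueness proof recorded in Theorem \ref{thm:uniqueness} replaces straight lines in simplices by geodesics in the space of filtrations, which exist for any pair of valuations and along which the convexity statements of Theorem \ref{thm:convexity}, including the equality analysis, actually hold.
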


Using this theorem, we can now verify one of the facts mentioned in Section \ref{ss:valuation}.

\begin{prop} \label{prop:hypsurf sing minimizer}
Let $0\in (f=0)\subseteq \bA^{n+1}$ be a klt hypersurface singularity with tangent cone $(f_k=0)\subseteq \bA^{n+1}$. Then $\mult_0$ is a valuation that minimizes the normalized volume if and only if $(f_k=0)\subseteq \bP^n$ is a K-semistable Fano variety.
\end{prop}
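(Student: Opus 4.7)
The plan is to identify $\mult_0$ with the divisorial valuation $\ord_E$ associated to the exceptional divisor of a single blowup, to identify the log Fano pair attached to $E$ with the projectivized tangent cone $(f_k=0)\subseteq\bP^n$, and then to invoke Theorems \ref{thm:div minimizer is kc} and \ref{thm:kc minimizer K-ss}.

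Let $\sigma\colon\widetilde{\bA^{n+1}}\to\bA^{n+1}$ be the blowup of the origin with exceptional divisor $E_\bA\cong\bP^n$, let $Y\subseteq\widetilde{\bA^{n+1}}$ be the strict transform of $X$, and set $\pi:=\sigma|_Y\colon Y\to X$ and $E:=Y\cap E_\bA$. A local computation gives $\sigma^*X=Y+kE_\bA$ as Cartier divisors and shows that $E$, viewed as the scheme-theoretic intersection inside $E_\bA$, is precisely $(f_k=0)\subseteq\bP^n$. For $\mult_0$ to be a valuation on $R=\cO_{X,0}$ one needs $\gr_{\fm_0}R$ to be a domain, i.e.\ the tangent cone to be integral; under this assumption $E$ is an integral divisor on $Y$, and the identity $\mult_0(g)=\ord_{E_\bA}(\sigma^*g)=\ord_E(\pi^*g)$ for $g\in R$ gives $\mult_0=\ord_E$.

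A standard adjunction computation, starting from $K_{\widetilde{\bA^{n+1}}}=\sigma^*K_{\bA^{n+1}}+nE_\bA$ and $\sigma^*X=Y+kE_\bA$, yields $K_Y=\pi^*K_X+(n-k)E$, whence $A_X(E)=n-k+1$. Since $Y$ is a hypersurface in a smooth ambient variety, $E=E_\bA|_Y$ is Cartier on $Y$, so the different in $(K_Y+E)|_E=K_E+\Delta_E$ vanishes, i.e.\ $\Delta_E=0$. Moreover $E|_E=\cO_{\bP^n}(-1)|_E$, hence $-(K_Y+E)|_E=(n-k+1)\cO_E(1)$, which matches $-K_E$ computed via the degree-$k$ hypersurface adjunction on $\bP^n$. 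Thus, whenever $E$ is a Koll\'ar component, the associated log Fano pair is $(E,\Delta_E)=((f_k=0)\subseteq\bP^n,\,0)$.

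The two directions now follow. For $(\Leftarrow)$: if $(f_k=0)\subseteq\bP^n$ is K-semistable, it is klt, so inversion of adjunction gives that $(Y,E)$ is plt; together with the $\pi$-ampleness of $-E$ (equivalently of $-(K_Y+E)$), this shows $E$ is a Koll\'ar component, and Theorem \ref{thm:kc minimizer K-ss} then yields that $\ord_E=\mult_0$ minimizes $\hvol$. For $(\Rightarrow)$: if $\mult_0=\ord_E$ minimizes $\hvol$, then since it is divisorial, Theorem \ref{thm:div minimizer is kc} forces $E$ to be a Koll\'ar component, and Theorem \ref{thm:kc minimizer K-ss} then delivers K-semistability of $((f_k=0),0)$. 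The only delicate point is confirming that the log Fano pair attached to $E$ has trivial boundary and coincides with the projectivized tangent cone as a plain Fano variety; this is exactly where Cartier-ness of $E$ on $Y$ (giving $\Delta_E=0$) enters.
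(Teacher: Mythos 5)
Your proof is correct and follows essentially the same route as the paper's: identify $\mult_0$ with $\ord_E$ for the exceptional divisor $E\cong(f_k=0)\subseteq\bP^n$ of the ordinary blowup, observe that $\Delta_E=0$, and apply Theorem \ref{thm:kc minimizer K-ss} (with Theorem \ref{thm:div minimizer is kc} supplying the Koll\'ar-component property in the forward direction). You merely spell out the adjunction computation, the identification $\mult_0=\ord_E$, and the inversion-of-adjunction step that the paper's two-line proof leaves implicit.
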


\begin{proof}
Note that $\mult_0$ is a valuation if and only if $f_k$ is irreducible. In this case, the ordinary blowup at the origin gives an exceptional divisor $E\cong (f_k=0)\subseteq \bP^n$ with $\Delta_E=0$. The result then follows from the previous theorem.
\end{proof}

\section{Geometry of minimizers} \label{s:geometry of minimizer}

The Stable Degeneration Conjecture has been proved by the works \cites{Blu-minimizer-exist,LX-stability-higher-rank,Xu-quasi-monomial,XZ-minimizer-unique,XZ-SDC} (see also \cites{LX-stability-kc,LWX-metric-tangent-cone,BLQ-convexity}). In this section, we explain some main ideas of its proof; we will also sketch the proof of Theorem \ref{thm:K-ps degeneration} (the existence of K-polystable degeneration). Throughout, we fix a klt singularity $x\in X$.

\subsection{Existence} \label{ss:existence}

We first explain why volume minimizers exist, following \cite{Blu-minimizer-exist}. 

\begin{thm}[\cite{Blu-minimizer-exist}] \label{thm:minimizer exist}
For any klt singularity $x\in X$, there exists a valuation $v_0\in \Val^*_{X,x}$ that minimizes the normalized volume function.
\end{thm}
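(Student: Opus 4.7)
The plan is to produce a minimizing sequence of divisorial valuations using Koll\'ar components, extract a limit graded sequence of ideals by a compactness argument, and then recover a minimizing valuation from the limit via Lemma~\ref{lem:irred degen -> valuation}.

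First I would use Liu's reformulation~\eqref{e:Liu's inequality} to choose $\fm_x$-primary ideals $\fa_i$ with $\lct(\fa_i)^n\mult(\fa_i)\to\hvol(x,X)$, and then apply Theorem~\ref{thm:kc compute lct} to find Koll\'ar components $E_i$ computing $\lct(\fa_i)$, i.e.\ $\lct(\fa_i)=A_X(E_i)/\ord_{E_i}(\fa_i)$. Setting $v_i:=\ord_{E_i}$, the containment $\fa_i^m\subseteq \fa_{m\cdot v_i(\fa_i)}(v_i)$ yields $v_i(\fa_i)^n\vol(v_i)\le \mult(\fa_i)$, and hence
\[
\hvol(v_i) = A_X(v_i)^n\vol(v_i) \le \left(\frac{A_X(v_i)}{v_i(\fa_i)}\right)^{\!n}\mult(\fa_i) = \lct(\fa_i)^n\mult(\fa_i) \longrightarrow \hvol(x,X).
\]
After rescaling I may assume $A_X(v_i)=1$, so that $\vol(v_i)\to \hvol(x,X)$.

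Next, I would invoke Izumi's inequality (Lemma~\ref{lem:Izumi}) to get a uniform bound $v_i\le C\cdot\ord_x$, independent of $i$. This forces the containment $\fa_m(v_i)\supseteq \fm_x^{\lceil m/C\rceil}$ for all $m$ and $i$. For each fixed $m$, the ideals $\{\fa_m(v_i)\}_i$ therefore lie in the finite-length Artinian quotient $R/\fm_x^{\lceil m/C\rceil+1}$, i.e.\ in a Noetherian family. A diagonal subsequence argument over $m\in\bN$ then extracts a subsequence along which $\fa_m(v_i)$ stabilizes in $i$ to some ideal $\fa_m^\infty$, and $\fab^\infty:=(\fa_m^\infty)_{m\in\bN}$ inherits the structure of a graded sequence of $\fm_x$-primary ideals.

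The main obstacle is to pass the minimizing property to the limit and to recover a genuine valuation. The first step is a delicate interchange-of-limits estimate showing $\lct(\fab^\infty)^n\cdot\mult(\fab^\infty)\le\hvol(x,X)$: one controls $\mult$ via the monotonicity of $\mult(\fa_m)/m^n$ in $m$ together with the uniform Izumi colength bound, and one controls $\lct$ via the observation that $v_i(\fa_m^\infty)=m$ once $\fa_m(v_i)=\fa_m^\infty$, whence $\lct(\fab^\infty)\le A_X(v_i)=1$. Combined with~\eqref{e:Liu's inequality} applied to $\fab^\infty$, this forces equality $\lct(\fab^\infty)^n\mult(\fab^\infty)=\hvol(x,X)$. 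The second step is to verify that $\fab^\infty$ arises from a single valuation: by analyzing the equality case, one shows that $\gr_{\fab^\infty} R$ must be an integral domain (any nontrivial splitting would produce, via the Rees construction, a further degeneration of strictly smaller normalized volume, contradicting that $\hvol(x,X)$ is already the infimum). Lemma~\ref{lem:irred degen -> valuation} then yields a valuation $v_\infty\in\Val^*_{X,x}$ with $\fab^\infty=\fab(v_\infty)$, and tracing through the equalities gives $\hvol(v_\infty)=\hvol(x,X)$, so $v_\infty$ is the desired minimizer.
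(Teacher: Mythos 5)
Your overall strategy---extract a limit graded sequence of ideals from a minimizing sequence and recover a minimizer from it---is the right one and matches Blum's proof in outline, but several of the steps you rely on do not work as stated. The most concrete problem is the compactness step. From $A_X(v_i)=1$ and Lemma \ref{lem:Izumi} you get $v_i\le C\cdot\ord_x$, but this yields the containment $\fa_m(v_i)\subseteq\fm_x^{\lceil m/C\rceil}$, i.e.\ the ideals are bounded from \emph{above}, which is the opposite of what your Noetherian-family argument needs. To place all the $\fa_m(v_i)$ in a fixed Artinian quotient you need $\fm_x^{\lceil rm\rceil}\subseteq\fa_m(v_i)$ with $r$ independent of $i$, equivalently a uniform positive lower bound on $v_i(\fm_x)$; this does not follow from the Izumi inequality and is exactly the content of Li's properness estimate $\hvol(v)\ge C'\cdot A_X(v)/v(\fm_x)$, which must be invoked here. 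Even granting this, the ideals of a fixed colength in an Artinian quotient form a positive-dimensional Hilbert scheme, so you cannot extract a subsequence along which $\fa_m(v_i)$ literally stabilizes; this is why \cite{Blu-minimizer-exist} replaces your diagonal argument with a \emph{generic limit} (a compatible system of very general points of the loci $H_m$ in the Hilbert scheme), at the cost of requiring the base field to be uncountable.

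The second genuine gap is that the ``delicate interchange of limits'' you defer is the actual heart of the proof, and your proposed controls do not supply it. What is needed is the uniform estimate $\mult(\fa_m(v_i))/m^n\le(1+\varepsilon)\mult(\fab(v_i))$ for all $m\ge M$ with $M$ independent of $i$; monotonicity of $\mult(\fa_m)/m^n$ only gives the inequality in the other direction, and the uniformity in $i$ is obtained in \cite{Blu-minimizer-exist} via asymptotic multiplier ideals and subadditivity ($\fa_{m\ell}(v_i)\subseteq\cJ(m\ell\cdot\fab(v_i))\subseteq\cJ(m\cdot\fab(v_i))^{\ell}\subseteq\fa_{m-1}(v_i)^{\ell}$, with a Jacobian-ideal correction in the singular case). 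Finally, your endgame is both unjustified and unnecessary: there is no reason for $\gr_{\fab^\infty}R$ to be an integral domain (the Rees-construction ``further degeneration'' you invoke requires finite generation, which is unknown at this stage), and you do not need the limit filtration to come from a single valuation. Once $\lct(\fab^\infty)^n\mult(\fab^\infty)=\hvol(x,X)$ is established, one simply takes a valuation $w$ computing $\lct(\fab^\infty)$ (which exists by \cite{JM-val-ideal-seq}), rescales so that $w(\fab^\infty)=1$, and observes $\fab^\infty\subseteq\fab(w)$, hence $\hvol(w)\le\lct(\fab^\infty)^n\mult(\fab^\infty)=\hvol(x,X)$, exactly as in the proof of \eqref{e:Liu's inequality}.
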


Take a sequence of valuations $v_k\in \Val_{X,x}^*$ ($k=1,2,\dots$) such that 
\[
\lim_{k\to \infty}\hvol(v_k)\to \hvol(x,X).
\]
We may also rescale the valuations so that $A_X(v_k)=1$ (this is necessary to force the valuations $v_k$ to lie in a compact subset of $\Val_{X,x}$). Ideally, we want to construct a minimizer $v_0$ as a limit of the sequence $v_1,v_2,\dots$. For such an argument to work, one would need to know that the normalized volume function is lower semi-continuous on the valuation space. Unfortunately, it is still an open question whether this is the case or not. 

Instead, we consider the graded sequences of valuation ideals $\fab(v_k)$. We already see in \eqref{e:Liu's inequality} that the normalized volume can also be computed using normalized multiplicities of graded ideal sequences. Moreover, by the proof of \eqref{e:Liu's inequality}, if we can find a graded sequence of ideals $\fab$ such that 
\begin{equation} \label{e:nv computed by ideal}
    \hvol(x,X) = \lct(\fab)^n\cdot \mult(\fab),
\end{equation}
then any valuation that computes $\lct(\fab)$ would be a minimizer of $\hvol$. 

Such a graded sequence $\fab$ is constructed in \cite{Blu-minimizer-exist} as a ``generic limit'' of the sequences $\fab(v_k)$ ($k=1,2,\dots$). The idea is to consider, for each $m\in \bN$, the locus $H_m$ in the Hilbert scheme that contains a Zariski dense subset which parametrizes the ``truncated'' graded sequences of ideals 
\begin{equation*} % \label{e:truncated val ideal seq}
    \fa_m(v_k)\subseteq\dots\subseteq \fa_1(v_k).
\end{equation*}
There are natural truncation maps 
\[
\pi_{m+1}\colon H_{m+1}\to H_m.
\]
% and giving a (decreasing) graded sequence of ideals is the same as giving a compatible sequence of closed points $x_m\in H_m$ such that $\pi_{m+1}(x_{m+1})=x_m$. For each $k$, the truncation (at degree $m$) of the sequence $\fab(v_k)$ of valuation ideals is parametrized by some closed point $x_m^{(k)}\in H_m$, and their closure 
% \[
% W_m=\overline{\cup_{k=1}^\infty \{x_m^{(k)}\}}\subseteq H_m
% \]
% satisfies $\pi_{m+1}(W_{m+1})\subseteq W_m$. After possibly passing to a subsequence of $v_k$, 
One can show (see \cite{Blu-minimizer-exist}*{Section 5}) that there exists a compatible sequence of closed points $x_m\in H_m$, where each $x_m$ is a very general point of $H_m$, such that $\pi_{m+1}(x_{m+1})=x_m$. They parametrize a graded sequence $\fab$ of ideals\footnote{We view it as a ``generic limit'' of the sequences $\fab(v_k)$ ($k=1,2,\dots$), since the limit point is obtained as a very general point of their Zariski closure in the Hilbert scheme.}, and the goal is to verify the identity \eqref{e:nv computed by ideal} for this $\fab$.

From the generic limit construction, we have
\[
\lct(\fa_m)^n\cdot \mult(\fa_m) = \limsup_{k\to\infty} \big( \lct(\fa_m(v_k))^n\cdot \mult(\fa_m(v_k))\big),
\]
since both functions $x_m \mapsto \lct(\fa_m)$ and $x_m\mapsto \mult(\fa_m)$ are constructible on the Hilbert scheme (in particular on $H_m$). By our choice of $v_k$, we also have
\[
\limsup_{k\to\infty} \big(\lct(\fab(v_k))^n\cdot \mult(\fab(v_k))\big) = \hvol(x,X).
\]
A moment of thought reveals that the missing ingredient is the following uniform convergence statement. 

\begin{prop} 
For any $\varepsilon>0$, there exists some positive integer $M$ such that
\[
\lct(\fa_m(v_k))^n\cdot \mult(\fa_m(v_k)) \le (1+\varepsilon)\cdot \lct(\fab(v_k))^n\cdot \mult(\fab(v_k)) 
\]
for all $m\ge M$ and all $k=1,2,\dots$.
\end{prop}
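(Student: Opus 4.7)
The plan is to bound the two ratios
\[
\frac{(m\cdot\lct(\fa_m(v_k)))^n}{\lct(\fab(v_k))^n}\qquad\text{and}\qquad \frac{\mult(\fa_m(v_k))/m^n}{\mult(\fab(v_k))}
\]
separately, each by a factor close to $1$, so that their product is $\le 1+\varepsilon$. For this, I will use the fact that along sufficiently divisible $m$, both sequences $m\cdot\lct(\fa_m(v_k))$ and $\mult(\fa_m(v_k))/m^n$ are monotonically decreasing in $m$, with limits $\lct(\fab(v_k))$ and $\mult(\fab(v_k))=\vol(v_k)$ respectively; this is a direct consequence of the subadditivity $\fa_m(v_k)^\ell\subseteq \fa_{m\ell}(v_k)$ together with the multiplicativity $\mult(\fa^\ell)=\ell^n\mult(\fa)$ of the Hilbert--Samuel multiplicity. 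In particular both ratios are $\ge 1$, so only a uniform upper bound is needed.

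The log canonical threshold factor admits a direct uniform bound. Since $v_k(\fa_m(v_k))\ge m$ and $A_X(v_k)=1$, testing $\lct$ against $v_k$ itself gives $m\cdot\lct(\fa_m(v_k))\le 1$ for all $m,k$. Meanwhile Liu's identity \eqref{e:Liu's inequality} applied to the graded sequence $\fab(v_k)$ yields
\[
\lct(\fab(v_k))^n\cdot\vol(v_k)\ge \hvol(x,X),
\]
and since $\vol(v_k)=\hvol(v_k)\to \hvol(x,X)$ by the choice of the $v_k$, we conclude $\lct(\fab(v_k))\to 1$. Thus for all $k\ge K_0=K_0(\varepsilon)$, the lct ratio is squeezed in $[1,(1+\varepsilon)^{1/(2n)}]$, uniformly in $m$.

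The multiplicity factor is the main obstacle, since although $\mult(\fa_m(v_k))/m^n\to\vol(v_k)$ pointwise, a rate uniform in $k$ does not come for free. The plan is to use Izumi's inequality (Lemma \ref{lem:Izumi}): because $A_X(v_k)=1$, there is a constant $C>0$ independent of $k$ with $v_k\le C\cdot \ord_x$, hence $\fm_x^{\lceil m/C\rceil}\subseteq \fa_m(v_k)\subseteq R$, giving the uniform upper bound $\mult(\fa_m(v_k))/m^n\le (C^{-1}+1/m)^n \mult(\fm_x)$. This confines all the ideals to a bounded family, and together with the uniform positive lower bound $\vol(v_k)\ge \hvol(x,X)>0$ should yield a uniform convergence rate of the multiplicity factor for $k\ge K_0$, via a compactness argument in the Hilbert scheme in the spirit of the generic-limit construction. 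The finitely many indices $k<K_0$ are dealt with by the pointwise convergence, enlarging $M$ accordingly; combining with the $k\ge K_0$ estimate produces the claimed uniform $M$.
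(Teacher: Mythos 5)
Your treatment of the lct factor is fine in substance (though roundabout: the single inequality $m\cdot\lct(\fa_m(v_k))\le\lct(\fab(v_k))$, which holds because $\lct(\fab)=\sup_m m\,\lct(\fa_m)$, already makes the lct ratio $\le 1$ for every $m$ and $k$ — note in particular that your claim that $m\,\lct(\fa_m(v_k))$ decreases to $\lct(\fab(v_k))$, so that this ratio is $\ge 1$, is backwards, though harmlessly so for an upper bound on the product). The genuine gap is in the multiplicity factor, which you correctly identify as "the main obstacle" but then do not actually prove. Two problems. First, the Izumi step is reversed: $v_k\le C\cdot\ord_x$ gives $\fa_m(v_k)\subseteq\fm_x^{\lceil m/C\rceil}$, hence a \emph{lower} bound $\mult(\fa_m(v_k))\ge\lceil m/C\rceil^n\mult(\fm_x)$, not the upper bound you state; to get $\fm_x^{r}\subseteq\fa_m(v_k)$ uniformly (which is what confines the truncations to a bounded family) one needs a uniform positive lower bound on $v_k(\fm_x)$, which does not follow from Lemma \ref{lem:Izumi} alone — it requires Li's properness estimate $\hvol(v)\ge C'\,A_X(v)/v(\fm_x)$ combined with the volume bound on the $v_k$.

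Second, and more fundamentally, even granting boundedness of the truncated ideals, compactness in the Hilbert scheme cannot by itself deliver a rate of convergence of $\mult(\fa_m(v_k))/m^n$ to $\mult(\fab(v_k))$ that is uniform in $k$: the limit $\mult(\fab(v_k))=\lim_{\ell}\mult(\fa_\ell(v_k))/\ell^n$ depends on the entire tail of the graded sequence, which is not determined by any finite truncation, so two sequences agreeing up to level $m$ can have very different limits. (There is also a circularity worry: the generic-limit construction is what this proposition is meant to justify.) What is needed is a quantitative comparison between $\fa_{m\ell}(v_k)$ and a power of a fixed-level ideal, uniform in $k$. The paper obtains this from asymptotic multiplier ideals: the sandwich $\fa_m(v)\subseteq\cJ(m\cdot\fab(v))\subseteq\fa_{m-A_X(v)}(v)$ together with subadditivity $\cJ(m\ell\cdot\fab)\subseteq\cJ(m\cdot\fab)^\ell$ yields (using $A_X(v_k)=1$)
\[
\fa_{m\ell}(v_k)\subseteq\fa_{m-1}(v_k)^\ell,
\]
whence $\mult(\fab(v_k))\ge\mult(\fa_{m-1}(v_k))/m^n$ for all $k$, giving the uniform rate $(m/(m-1))^n$; in the singular case one inserts the Jacobian-ideal correction to subadditivity and checks its effect on multiplicities is negligible as $m\to\infty$. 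This mechanism is absent from your proposal, so the key estimate \eqref{e:mult uniform converge} remains unproved.
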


\begin{proof}
We always have $m\cdot \lct(\fa_m)\le \lct(\fab)$, so the main question is to show
\begin{equation} \label{e:mult uniform converge}
    \frac{\mult(\fa_m(v_k))}{m^n} \le (1+\varepsilon)\cdot \mult(\fab(v_k))
\end{equation}
for large $m$. The proof of this uses asymptotic multiplier ideals. Recall that for any graded sequence of ideals $\fab$ on $X$ and any rational number $c>0$, the asymptotic multiplier ideal $\cJ(c\cdot \fab)$ (see \cite{Laz-positivity-2}*{Section 11.1} and \cite{BdFFU-log-discrepancy}*{Theorem 1.2}) is the ideal on $X$ consisting of local sections $f\in \cO_X$ such that
\[
v(f) > c\cdot v(\fab) - A_X(v)
\]
for all valuations $v\in \Val^*_{X,x}$. To illustrate the ideas, let us first assume that $x\in X$ is smooth for simplicity. For any valuation $v\in \Val^*_{X,x}$ and any $m\in\bN$, the asymptotic multiplier ideals $\cJ(m\cdot \fab(v))$ of the corresponding sequence of valuation ideals satisfy
\[
\fa_m(v)\subseteq \cJ(m\cdot \fab(v)) \subseteq \fa_{m-A_X(v)}(v),
\]
where both inclusions follow from the definition of multiplier ideals. When $x\in X$ is smooth, the asymptotic multiplier ideals also satisfy subadditivity \cite{Laz-positivity-2}*{Theorem 11.2.3}, in particular,
\[
\cJ(m\ell\cdot \fab)\subseteq \cJ(m\cdot \fab)^\ell
\]
for any $m,\ell\in\bN$. A formal consequence of these two properties, when applied to the valuations $v_k$ (rescaled so that $A_X(v_k)=1$), is that\footnote{This is also the argument behind \cite{ELS03}*{Theorem A}.}
\[
\fa_{m\ell}(v_k)\subseteq \cJ(m\ell\cdot\fab(v_k))\subseteq \cJ(m\cdot \fab(v_k))^\ell \subseteq \fa_{m-1}(v_k)^\ell.
\]
From here it is not hard to deduce \eqref{e:mult uniform converge}.

When $x\in X$ is singular, we only have a weaker subadditivity result (see \cite{Takagi-multiplier-ideal}*{Theorem 0.1} or \cite{Eis-subadd-char0-pf}*{Theorem 7.3.4}):
\[
\mathrm{Jac}_X^\ell\cdot \cJ(m\ell\cdot \fab)\subseteq \cJ(m\cdot \fab)^\ell,
\]
where $\mathrm{Jac}_X$ is the Jacobian ideal of $X$. As before this gives
\[
\mathrm{Jac}_X^\ell\cdot \fa_{m\ell}(v_k) \subseteq \fa_{m-1}(v_k)^\ell.
\]
What is important to us is that the ``correction term'' $\mathrm{Jac}_X^\ell$ is independent of the valuation $v_k$, and its effect on the multiplicity is negligible when $m\to \infty$ (the precise proof uses Teissier’s Minkowski Inequality and Li's properness estimate). We may then conclude as in the smooth case. See \cite{Blu-minimizer-exist}*{Proposition 3.7} for the technical details.
\end{proof}

\begin{rem}
The generic limit argument we sketch above requires the base field to be uncountable, since we need to choose very general points of the locus $H_m$. Using boundedness of complements, \cite{Xu-quasi-monomial} gives another proof for the existence of minimizer that works for general fields. Alternatively, once we know that the minimizer is unique up to rescaling (Theorem \ref{thm:uniqueness}) and in particular invariant under the Galois action, we can first base change to an uncountable field to find a minimizer and then Galois descend to the original base field.
\end{rem}

\begin{rem}
The proof of the lower semi-continuity of local volumes (\cite{BL-vol-lsc}, see Theorem \ref{thm:vol lsc}) follows a similar circle of ideas, but carried out in families. Roughly speaking, since the local volumes can be approximated by normalized multiplicities of the form $\lct(\fa)^n\cdot \mult(\fa)$ and the log canonical threshold function is lower semi-continuous in families (a consequence of the inversion of adjunction), the main obstruction comes from the multiplicity term, which is not lower semi-continuous in families. In fact, it is the opposite: multiplicities usually increases under specialization. Nonetheless, one can extend the argument proving Theorem \ref{thm:minimizer exist} to show that the local volume can be uniformly approximated by the normalized colengths
\[
\lct(\fa)^n\cdot \ell(\cO_X/\fa)
\]
of ideals that are bounded below by some fixed power of $\fm_x$. Here in order to ensure that the approximation is uniform in families, one needs to show that the constants in the Izumi type inequality (Lemma \ref{lem:Izumi}) is uniformly bounded in families. As these constants ultimately rely on the Koll\'ar components, this can be done by extracting a family version of Koll\'ar components. Since the colength function $\fa\mapsto \ell(\cO_X/\fa)$ is locally constant on the Hilbert scheme, and the lct part is lower semi-continuous, the lower semi-continuity of local volumes is then a direct consequence of this uniform approximation result. 
\end{rem}

\subsection{Uniqueness and K-semistability}

In \cite{LX-stability-higher-rank}, it is shown that the quasi-monomial minimizer of the normalized volume function is unique\footnote{Strictly speaking, we only have uniqueness up to rescaling, but we will not write it out every time.}, under the assumption that the minimizer has a finitely generated associated graded algebra. The first proof of the uniqueness that is independent of the other parts of the Stable Degeneration Conjecture appears in \cite{XZ-minimizer-unique}, and later \cite{BLQ-convexity} finds another argument. Both proofs rely on a notion of geodesics between valuations, and ultimately the uniqueness of the minimizer can be seen a consequence of the ``geodesic (strong) convexity'' of the volume function.

Ideally, convexity means
\[
\vol((1-t)v_0+tv_1)\le (1-t)\cdot \vol(v_0)+t\cdot \vol(v_1)
\]
for any valuations $v_0,v_1\in \Val_{X,x}^*$, except that it is not clear how to make sense of the ``valuation'' $(1-t)v_0+tv_1$. On the other hand, there is a natural way to interpret $(1-t)v_0+tv_1$ as a \emph{filtration}: for any $\lambda\in \bR_+$, we take $\fa_{\lambda,t}$ to be the $\fm_x$-primary ideal generated by those $f\in \cO_{X,x}$ such that 
\[
(1-t)\cdot v_0(f)+t\cdot v_1(f) \ge \lambda.
\]
The reader may easily verify that this defines a filtration $\fa_{\bullet,t}$ for each $t\in [0,1]$, and that $\fa_{\bullet,0}$ (resp. $\fa_{\bullet,1}$) is the filtration $\fab(v_0)$ (resp. $\fab(v_1)$) induced by $v_0$ (resp. $v_1$). We view the family $(\fa_{\bullet,t})_{t\in [0,1]}$ of filtrations as the \emph{geodesic} between $v_0$ and $v_1$. More generally, given two filtrations $\fa_{\bullet,0}$ and $\fa_{\bullet,1}$, we can define \cites{XZ-minimizer-unique,BLQ-convexity}\footnote{See also \cites{BLXZ-soliton,Reb-geodesic} for the global version of this construction.} the geodesic between them as the following family $(\fa_{\bullet,t})_{t\in [0,1]}$ of filtrations:
\[
\fa_{\lambda,t} = \sum_{(1-t)\lambda_0+t\lambda_1=\lambda} \fa_{\lambda_0,0}\cap \fa_{\lambda_1,1}.
\]
In some sense, the space of filtrations is the ``geodesic completion'' of the valuation space $\Val_{X,x}^*$. We already have an extension of the normalized volume function to the space of filtrations using normalized multiplicities \eqref{e:Liu's inequality}, and the more natural question is whether the individual terms in \eqref{e:Liu's inequality} are convex along geodesics. This is confirmed by the following statement.

\begin{thm} \label{thm:convexity}
For any $t\in [0,1]$, we have
\begin{enumerate}
    \item$($\cite{XZ-minimizer-unique}*{Theorem 3.11}$)$
    \[
    \lct(\fa_{\bullet,t})\le (1-t)\cdot \lct(\fa_{\bullet,0})+t\cdot \lct(\fa_{\bullet,1}).
    \]
    \item$($\cite{BLQ-convexity}*{Theorem 1.1}$)$
    \[
    \mult(\fa_{\bullet,t})^{-1/n}\ge (1-t)\cdot \mult(\fa_{\bullet,0})^{-1/n}+t\cdot \mult(\fa_{\bullet,1})^{-1/n}.
    \]
    Moreover, equality holds if and only if there exists $c>0$ such that $v(\fa_{\bullet,0})=c\cdot v(\fa_{\bullet,1})$ for all valuations $v\in \Val_{X,x}^*$. 
\end{enumerate}
\end{thm}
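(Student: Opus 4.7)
My plan is to treat the two parts by distinct methods, both passing through convex-geometric machinery attached to the filtrations.

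For part (2), I would use concave transforms and the Brunn-Minkowski inequality. Fix a rank-$n$ Abhyankar valuation $\nu$ centered at $x$ with $n=\dim X$; this provides a Newton-Okounkov framework for filtrations on $R=\cO_{X,x}$. Attach to each $\fm_x$-primary filtration $\fab$ its concave transform $\phi_{\fab}$, a concave function on the Okounkov cone $\Delta(R)\subseteq \bR^n$ with the property that
\[
\mult(\fab) = n!\cdot \vol\!\left(\{y\in\Delta(R) : \phi_{\fab}(y)\le 1\}\right).
\]
The geodesic formula $\fa_{\lambda,t}=\sum_{(1-t)\lambda_0+t\lambda_1=\lambda}\fa_{\lambda_0,0}\cap\fa_{\lambda_1,1}$ is designed so that $\phi_{\fa_{\bullet,t}}$ is the linear interpolation $(1-t)\phi_{\fa_{\bullet,0}}+t\phi_{\fa_{\bullet,1}}$; I would check this by a direct calculation on the definition of the concave transform, noting that the maximum in $\phi_{\fab_t}$ naturally picks out the balance condition imposed by $(1-t)\lambda_0+t\lambda_1=\lambda$. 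The Borell-Brascamp-Lieb inequality, a generalization of Brunn-Minkowski governing volumes of sublevel sets of linearly interpolated concave functions, then yields the $-1/n$-convexity $\mult(\fa_{\bullet,t})^{-1/n}\ge (1-t)\mult(\fa_{\bullet,0})^{-1/n}+t\mult(\fa_{\bullet,1})^{-1/n}$. Its equality case forces $\phi_{\fa_{\bullet,0}}$ and $\phi_{\fa_{\bullet,1}}$ to differ by a multiplicative constant, which, translated through the Okounkov dictionary, becomes the stated equality criterion $v(\fa_{\bullet,0})=c\cdot v(\fa_{\bullet,1})$ for every $v\in\Val^*_{X,x}$.

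For part (1), I would start from the variational formula $\lct(\fab)=\inf_v A_X(v)/v(\fab)$ and the Jonsson-Mustata theorem that the infimum is achieved by a quasi-monomial valuation. Pick a common log smooth model $(Y,E=\sum_i E_i)$ refining the models on which the lcts of $\fa_{\bullet,0}$ and $\fa_{\bullet,1}$ are computed, and parametrize quasi-monomial valuations by weight vectors $\alpha\in \bR_{\ge 0}^r$. The map $\alpha\mapsto A_X(v_\alpha)$ is linear, while $\alpha\mapsto v_\alpha(\fa_{\bullet,j})$ is concave piecewise-linear. A direct filtration manipulation using $v(I\cap J)\ge\max(v(I),v(J))$ and $v(\sum I_k)=\min_k v(I_k)$ gives only the harmonic-mean bound $v_\alpha(\fa_{\bullet,t})\ge \bigl((1-t)/v_\alpha(\fa_{\bullet,0})+t/v_\alpha(\fa_{\bullet,1})\bigr)^{-1}$, which is insufficient. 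To close the gap, I would upgrade this bound by exploiting that under the geodesic the concave transforms interpolate linearly (the key structural input from part (2)), so that for a suitable $\alpha$, chosen by a minimax argument over the compact simplex $\QM_\eta(Y,E)$, the valuation $v_\alpha$ simultaneously witnesses $A_X(v_\alpha)/v_\alpha(\fa_{\bullet,0})$ near $c_0$ and $A_X(v_\alpha)/v_\alpha(\fa_{\bullet,1})$ near $c_1$. Feeding this into the variational formula, and using linearity of $A_X$ in $\alpha$, produces the arithmetic-mean bound $\lct(\fa_{\bullet,t})\le (1-t)\lct(\fa_{\bullet,0})+t\lct(\fa_{\bullet,1})$.

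The main obstacle is (1). The harmonic-mean bound coming directly from filtration theory is strictly weaker than the arithmetic-mean bound that the lct statement requires, and there is no a priori reason why the optimal valuations for the three lcts should be compatible. The resolution hinges on the finer geometric fact—essentially established in (2)—that along the geodesic the convex-geometric data of the filtrations interpolate linearly, which then provides the extra structure needed to perform the minimax exchange on the dual complex. Organizing this interplay between the variational formula and the linear convex-transform picture, while carefully handling the passage between the filtration level (where only harmonic interactions are visible) and the concave-transform level (where everything is linear), is where I expect the real work of the proof to lie.
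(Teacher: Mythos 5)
The paper itself only cites the sources here (part (1) is \cite{XZ-minimizer-unique}*{Theorem 3.11}, proved via a summation formula for asymptotic multiplier ideals; part (2) is \cite{BLQ-convexity}*{Theorem 1.1}, proved via a two-dimensional Duistermaat--Heckman measure built from a basis compatible with both filtrations). Your plan for part (2) is essentially that second approach: the compatible-basis/DH-measure construction is the same data as your Okounkov-cone concave transforms, the lemma that the geodesic's transform is the linear interpolation $(1-t)\phi_0+t\phi_1$ is indeed the structural heart of the argument, and the $-1/n$-concavity plus its equality case come from a Brunn--Minkowski-type inequality (for the relevant co-convex regions; be aware that the naive Pr\'ekopa--Leindler/BBL application gives inequalities in the wrong direction, so pinning down the exact convex-geometric input still takes work).

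Part (1), however, has a genuine gap, and you have slightly misdiagnosed where it is. The harmonic-mean bound $v(\fa_{\bullet,t})\ge\bigl((1-t)/v(\fa_{\bullet,0})+t/v(\fa_{\bullet,1})\bigr)^{-1}$ is not the problem: for any single $v$ it already yields the arithmetic-mean estimate
\[
\frac{A_X(v)}{v(\fa_{\bullet,t})}\;\le\;(1-t)\,\frac{A_X(v)}{v(\fa_{\bullet,0})}+t\,\frac{A_X(v)}{v(\fa_{\bullet,1})}.
\]
The real obstruction is that to conclude you would need a single valuation $v$ that is (nearly) optimal for \emph{both} $\lct(\fa_{\bullet,0})$ and $\lct(\fa_{\bullet,1})$ simultaneously, and no minimax over a simplex $\QM_\eta(Y,E)$, nor the linearity of the concave transforms (which records the values $v(\fa_{\bullet,i})$ but says nothing about $A_X$), will produce one. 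Indeed, take $\fa_{\bullet,i}=\fab(\ord_{E_i})$ for two distinct Koll\'ar components $E_0,E_1$: each $\lct(\fa_{\bullet,i})$ is computed only by $\ord_{E_i}$, so there is no common test valuation, yet the inequality still holds. The mechanism that actually proves (1) is the summation formula for asymptotic multiplier ideals: since $\fa_{\lambda,t}$ is a sum of intersections, one gets
\[
\cJ(c\cdot\fa_{\bullet,t})\subseteq\sum_{(1-t)c_0+tc_1=c}\cJ(c_0\cdot\fa_{\bullet,0})\cap\cJ(c_1\cdot\fa_{\bullet,1}),
\]
and if $c>(1-t)\lct(\fa_{\bullet,0})+t\lct(\fa_{\bullet,1})$ then every summand has $c_0>\lct(\fa_{\bullet,0})$ or $c_1>\lct(\fa_{\bullet,1})$, hence lies in $\fm_x$; so $\cJ(c\cdot\fa_{\bullet,t})\neq\cO_X$ and $\lct(\fa_{\bullet,t})\le c$. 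This sidesteps any need to align the lct-computing valuations, which is exactly what your minimax step cannot do.
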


The statement (1) is deduced from a summation formula of asymptotic multiplier ideals, while statement (2) relies on the construction of a two-dimensional Duistermaat-Heckman measure using a compatible basis with respect to the two filtrations $\fa_{\bullet,0}$ and $\fa_{\bullet,1}$. We refer to the original articles for the relevant details, here we just explain why this theorem implies the uniqueness of $\hvol$-minimizer.

\begin{thm}[\cites{XZ-minimizer-unique,BLQ-convexity}] \label{thm:uniqueness}
Up to rescaling, there is a unique valuation $v_0$ that minimizes the normalized volume function $\hvol$.
\end{thm}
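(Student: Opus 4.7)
The plan is to exploit the geodesic convexity packaged in Theorem~\ref{thm:convexity}, reading uniqueness off as the equality case of its second part. Suppose $v_0,v_1\in\Val_{X,x}^*$ are both $\hvol$-minimizers. First I would rescale so that $A_X(v_0)=A_X(v_1)=1$. Combining $A_X(v)\ge\lct(\fab(v))$, $\vol(v)=\mult(\fab(v))$, and \eqref{e:Liu's inequality}, the chain
\[
\hvol(x,X)=A_X(v_i)^n\vol(v_i)\ge\lct(\fab(v_i))^n\mult(\fab(v_i))\ge\hvol(x,X)
\]
collapses to equality, so $\lct(\fab(v_i))=1$ and $\mult(\fab(v_i))=\hvol(x,X)$ for $i=0,1$.

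Next, form the geodesic $(\fa_{\bullet,t})_{t\in[0,1]}$ between $\fab(v_0)$ and $\fab(v_1)$. Theorem~\ref{thm:convexity}(1) gives $\lct(\fa_{\bullet,t})\le 1$, while Theorem~\ref{thm:convexity}(2) gives $\mult(\fa_{\bullet,t})\le\hvol(x,X)$, whence
\[
\lct(\fa_{\bullet,t})^n\cdot\mult(\fa_{\bullet,t})\le\hvol(x,X)
\]
for every $t\in[0,1]$. But \eqref{e:Liu's inequality} supplies the reverse inequality, so equality holds along the entire geodesic; in particular the concavity inequality in Theorem~\ref{thm:convexity}(2) is sharp for every $t$. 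Its equality clause then yields a constant $c>0$ with $v(\fab(v_0))=c\cdot v(\fab(v_1))$ for every $v\in\Val_{X,x}^*$.

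The final step is to pass from this identity of filtration-pairings back to an identity of valuations. Since $w(\fab(v))=\inf_{f\neq 0}w(f)/v(f)$ for any valuations $v,w$, and in particular $v(\fab(v))=1$, evaluating the identity at $v=v_0$ gives $1=c\cdot\inf_f v_0(f)/v_1(f)$, while evaluating at $v=v_1$ gives $\inf_f v_1(f)/v_0(f)=c$, equivalently $\sup_f v_0(f)/v_1(f)=1/c$. Thus $v_0(f)/v_1(f)$ is pinned from both sides to $1/c$, so the ratio is constant and $v_0=c^{-1}v_1$; combined with the normalization $A_X(v_0)=A_X(v_1)=1$ this forces $c=1$ and $v_0=v_1$, giving uniqueness up to rescaling.

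The analytic content of the proof is entirely contained in Theorem~\ref{thm:convexity}, especially the sharp concavity statement in its second part, which I would treat as a black box. The only subtle input on top of that is the observation that any $\hvol$-minimizer $v$ must satisfy both $A_X(v)=\lct(\fab(v))$ and $\vol(v)=\mult(\fab(v))$; this drops out of tracing the equality cases in the proof of \eqref{e:Liu's inequality} and is what makes the simultaneous normalization $\lct(\fab(v_i))=1$, $\mult(\fab(v_i))=\hvol(x,X)$ in the first step possible, so that the two convexity bounds on the geodesic saturate against the single lower bound coming from Liu's inequality.
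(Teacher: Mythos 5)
Your argument is correct and follows essentially the same route as the paper: connect the two minimizers by the geodesic of filtrations, play the convexity of $\lct$ and the strict concavity of $\mult^{-1/n}$ from Theorem~\ref{thm:convexity} against the lower bound \eqref{e:Liu's inequality}, and invoke the equality clause of Theorem~\ref{thm:convexity}(2). Your normalization $A_X(v_i)=1$ merely replaces the paper's mediant-inequality step, and your final deduction of $v_1=cv_0$ from $v(\fab(v_0))=c\cdot v(\fab(v_1))$ correctly fills in the step the paper leaves as ``one can show''.
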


Before we discuss the proof, let us mention one interesting consequence of this theorem: the finite degree formula for local volumes.

\begin{thm}[\cite{XZ-minimizer-unique}*{Theorem 1.3}] \label{thm:finite deg formula}
Let $f\colon (y\in Y)\to (x\in X)$ be a finite quasi-\'etale morphism between klt singularities. Then 
\[
\hvol(y,Y) = \deg(f) \cdot \hvol(x,X). 
\]
\end{thm}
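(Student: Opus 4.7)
The plan is to prove the two inequalities $\hvol(y,Y)\le \deg(f)\cdot\hvol(x,X)$ and $\hvol(y,Y)\ge \deg(f)\cdot\hvol(x,X)$ separately, using Liu's formula \eqref{e:Liu's inequality} throughout and, for the second, the uniqueness of the normalized volume minimizer (Theorem \ref{thm:uniqueness}). After passing to completions (or shrinking $X$), I would assume $f^{-1}(x)=\{y\}$ scheme-theoretically, so that $\deg(f)$ is realized as the local degree $d=\dim_{\bk}\cO_{Y,y}/\fm_x\cO_{Y,y}$.

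For the first inequality, I would pull back any $\fm_x$-primary ideal $\fa$ on $X$ to $\fa\cO_Y$. The quasi-\'etale property ($K_Y=f^*K_X$) gives the identity $A_Y(w)=A_X(w|_{\bk(X)^*})$ for any valuation $w\in \Val_{Y,y}^*$, hence $\lct_y(\fa\cO_Y)=\lct_x(\fa)$; finite flatness of rank $d$ at $y$ yields $\ell_{\cO_{Y,y}}(\cO_{Y,y}/\fa^k\cO_{Y,y})=d\cdot \ell_{\cO_{X,x}}(\cO_{X,x}/\fa^k)$ and thus $\mult_y(\fa\cO_Y)=d\cdot\mult_x(\fa)$. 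Combining these, $\lct_y(\fa\cO_Y)^n\mult_y(\fa\cO_Y)=d\cdot \lct_x(\fa)^n\mult_x(\fa)$, and taking the infimum over $\fa$ gives $\hvol(y,Y)\le d\cdot\hvol(x,X)$.

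For the second, harder inequality, I would first reduce to the Galois case: take the Galois closure $h\colon Z\to X$ of $\bk(Y)/\bk(X)$ (which remains quasi-\'etale by purity of the branch locus) and use multiplicativity of local degrees along $Z\to Y\to X$. Assume then that $f$ is Galois with group $G$, and let $w_0\in\Val_{Y,y}^*$ be the minimizer of $\hvol_Y$ at $y$ (existence by Theorem \ref{thm:minimizer exist}, uniqueness up to rescaling by Theorem \ref{thm:uniqueness}). For any $\sigma\in G_y$, the conjugate $\sigma^*w_0$ is again a minimizer at $y$, so $\sigma^*w_0=c_\sigma w_0$ for some $c_\sigma>0$; the multiplicativity $c_{\sigma\tau}=c_\sigma c_\tau$ and finiteness of $G_y$ force $c_\sigma=1$, making $w_0$ a $G_y$-invariant valuation. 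Set $v_0:=w_0|_{\bk(X)^*}$; the quasi-\'etale hypothesis yields $A_X(v_0)=A_Y(w_0)$. The crux is now the matching volume identity $\vol_Y(w_0)=d\cdot\vol_X(v_0)$ (where $d=|G_y|$): the $G_y$-invariance combined with Galois valuation theory forces $w_0$ to be the \emph{unique} extension of $v_0$ to $\bk(Y)$ centered at $y$, so the integral closure of $\fa_m(v_0)\cO_Y$ inside $\cO_{Y,y}$ coincides with $\fa_m(w_0)\cO_{Y,y}$; invariance of multiplicity under integral closure, combined with the flatness identity $\mult_y(\fa_m(v_0)\cO_Y)=d\cdot\mult_x(\fa_m(v_0))$, gives the desired equality after $m\to\infty$. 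It follows that
\[
\hvol(y,Y)=A_Y(w_0)^n\vol_Y(w_0)=d\cdot A_X(v_0)^n\vol_X(v_0)=d\cdot\hvol_X(v_0)\ge d\cdot\hvol(x,X).
\]

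The main obstacle I anticipate is establishing the volume identity $\vol_Y(w_0)=d\cdot\vol_X(v_0)$. This synthesizes several delicate ingredients: the Galois-theoretic uniqueness of the extension of $v_0$ centered at $y$, the preservation of asymptotic multiplicity under integral closure applied to graded sequences of ideals (not just individual ideals), and the rank-$d$ local flatness of $f$. Secondary technical points include verifying that Galois closures of quasi-\'etale covers remain quasi-\'etale, and checking the log discrepancy invariance $A_X(v_0)=A_Y(w_0)$ for general (non-divisorial) valuations, which should reduce via the retraction maps $r_{Y,E}$ to the quasi-monomial case.
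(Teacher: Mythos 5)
Your overall strategy is the same as the one behind the paper's sketch (and the proof in \cite{XZ-minimizer-unique}): the easy inequality $\hvol(y,Y)\le \deg(f)\cdot\hvol(x,X)$ by pulling back $\fm_x$-primary ideals and using Liu's formula, and the hard inequality by reducing to the Galois case, using uniqueness of the minimizer to make it Galois-invariant, and descending it to $X$. The skeleton is right, including the homomorphism argument forcing $c_\sigma=1$ and the identity $A_Y(w_0)=A_X(v_0)$.

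The genuine weak point is exactly where you anticipated trouble, and the mechanism you propose there does not work as stated. It is not true in general that $\fa_m(w_0)\cO_{Y,y}$ is the integral closure of $\fa_m(v_0)\cO_{Y,y}$: one always has $\overline{\fa_m(v_0)\cO_Y}\subseteq \fa_m(w_0)$, but the valuation ideal $\fa_m(v_0)$ of a quasi-monomial valuation of rational rank $\ge 2$ typically has Rees valuations that are \emph{not} proportional to $v_0$, so its extension's integral closure is computed by those divisorial valuations and is in general strictly smaller than $\fa_m(w_0)$. Moreover, the inclusion $\fa_m(v_0)\cO_Y\subseteq \fa_m(w_0)$ only yields $\vol_Y(w_0)\le \deg(f)\cdot\vol_X(v_0)$, which is the direction you do \emph{not} need; the content of the hard inequality is the reverse bound. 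The standard way to get it uses the $G$-module structure rather than integral closures: since $w_0$ is $G$-invariant, each $\fa_m(w_0)$ is $G$-stable, $(\cO_Y/\fa_m(w_0))^G=\cO_X/\fa_m(v_0)$ (taking invariants is exact in characteristic $0$), and by the trace formula
\[
\dim (\cO_Y/\fa_m(w_0))^G=\frac{1}{|G|}\sum_{\sigma\in G}\mathrm{tr}\big(\sigma\,|\,\cO_Y/\fa_m(w_0)\big),
\]
the terms with $\sigma\neq \mathrm{id}$ are $o(m^n)$ because the fixed locus of $\sigma$ is a proper closed subset; this gives $\vol_X(v_0)=\frac{1}{|G|}\vol_Y(w_0)$. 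Two smaller points: the colength identity $\ell(\cO_{Y,y}/\fa^k\cO_{Y,y})=d\cdot\ell(\cO_{X,x}/\fa^k)$ requires flatness, which can fail for finite maps of normal varieties, but the asymptotic version $\mult(\fa\cO_Y)=d\cdot\mult(\fa)$ that you actually need follows from the associativity formula since $\cO_Y$ is a torsion-free $\cO_X$-module of generic rank $d$; and the hypothesis should be $f^{-1}(x)=\{y\}$ set-theoretically (scheme-theoretic equality would make $f$ unramified at $y$). With these repairs your argument goes through and coincides with the intended one.
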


More generally, the finite degree formula holds for crepant Galois morphisms, i.e., Galois morphisms $f\colon \big(y\in (Y,\Delta_Y)\big)\to \big(x\in(X,\Delta)\big)$ such that $f^*(K_X+\Delta)=K_Y+\Delta_Y$. Roughly, the reason for the finite degree formula is that the unique minimizer is necessarily invariant under the Galois action, hence descends to the quotient. Moreover, the normalized volume gets divided by $\deg(f)$ as the valuation descends.

We now return to the proof of Theorem \ref{thm:uniqueness}.

\begin{proof}[Proof of Theorem \ref{thm:uniqueness}]
Suppose we have two minimizers $v_0,v_1$. Consider the filtrations $\fa_{\bullet,i}=\fab(v_i)$ ($i=0,1$) and the geodesic $\fa_{\bullet,t}$ connecting them. By \eqref{e:Liu's inequality}, we have
\[
\hvol(x,X)^{1/n} \le \frac{\lct(\fa_{\bullet,t})}{\mult(\fa_{\bullet,t})^{-1/n}},
\]
hence using Theorem \ref{thm:convexity} we obtain
\begin{align*}
    \hvol(x,X)^{1/n} & \le \frac{
    (1-t)\cdot \lct(\fa_{\bullet,0})+t\cdot \lct(\fa_{\bullet,1})
    }{
    (1-t)\cdot \mult(\fa_{\bullet,0})^{-1/n}+t\cdot \mult(\fa_{\bullet,1})^{-1/n}
    } \\
    & \le \max\left\{ \frac{\lct(\fa_{\bullet,0})}{\mult(\fa_{\bullet,0})^{-1/n}},
    \frac{\lct(\fa_{\bullet,1})}{\mult(\fa_{\bullet,1})^{-1/n}}\right\} \\
    & \le \max\{\hvol(v_0)^{1/n}, \hvol(v_1)^{1/n}\} = \hvol(x,X)^{1/n}.
\end{align*}
Thus equality holds everywhere. In particular, one can show that the equality condition in Theorem \ref{thm:convexity}(2) implies $v_1=cv_0$ for some $c>0$.
\end{proof}

We remark that our presentation so far draws heavily from \cite{BLQ-convexity}. The original proof of Theorem \ref{thm:uniqueness} in \cite{XZ-minimizer-unique} exploits the K-semistability of the minimizing valuation rather than the full convexity of the volume function. This approach has some other interesting consequences; most notably, it gives the following generalization of Theorem \ref{thm:kc minimizer K-ss}. 

\begin{thm}[\cite{XZ-minimizer-unique}*{Theorems 3.7 and 3.10}] \label{thm:minimizer = K-ss val}
A valuation $v_0\in \Val_{X,x}^*$ minimizes the normalized volume function if and only if it is K-semistable.
\end{thm}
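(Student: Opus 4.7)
My proof strategy hinges on the preservation of normalized volume under the Rees construction. For any quasi-monomial valuation $v$ on $X$ with finitely generated associated graded algebra $\gr_v R$, the Rees construction yields a $\bQ$-Gorenstein test configuration $\cX \to \bA^1$ with general fiber $X$ and special fiber the Fano cone $X_0 = \Spec(\gr_v R)$ equipped with the natural Reeb vector $\xi$ induced by the grading. Under this degeneration, the valuation ideals $\fa_\lambda(v)$ and $\fa_\lambda(\wt_\xi)$ have matching colengths, and the log discrepancies are preserved by the equivariance of the construction, yielding the key identity
\[
\hvol_X(v) = \hvol_{X_0}(\wt_\xi).
\]

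For the ``if'' direction, suppose $v_0$ is K-semistable, meaning $v_0$ is quasi-monomial with finitely generated $\gr_{v_0} R$ and the induced Fano cone $(X_0; \xi_0)$ satisfies $\hvol_{X_0}(\wt_{\xi_0}) = \hvol(x_0, X_0)$. Applying lower semi-continuity of local volumes (Theorem \ref{thm:vol lsc}) to the associated test configuration,
\[
\hvol_X(v_0) = \hvol_{X_0}(\wt_{\xi_0}) = \hvol(x_0, X_0) \le \hvol(x, X),
\]
forcing equality throughout and identifying $v_0$ as a minimizer.

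For the ``only if'' direction, let $v_0$ minimize $\hvol$. The existence, quasi-monomial, and finite generation parts of the Stable Degeneration Conjecture (Theorem \ref{thm:minimizer exist} together with \cite{Xu-quasi-monomial, XZ-SDC}) guarantee that $v_0$ produces a Fano cone $(X_0; \xi_0)$ and the preservation identity gives $\hvol_{X_0}(\wt_{\xi_0}) = \hvol(x, X)$. I would prove K-semistability by contradiction: given a hypothetical destabilizer $w \in \Val^*_{X_0, x_0}$ with $\hvol_{X_0}(w) < \hvol_{X_0}(\wt_{\xi_0})$, define a lift $\tilde w$ on $X$ by setting $\tilde w(f) := w(\mathrm{in}_{v_0}(f))$ for $f \in R \setminus \{0\}$, where $\mathrm{in}_{v_0}(f) \in \gr_{v_0} R$ denotes the initial form. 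Since $\gr_{v_0} R$ is a domain (as $v_0$ is a genuine valuation), $\tilde w$ is a well-defined valuation. Then the chain
\[
\hvol_X(\tilde w) \le \hvol_{X_0}(w) < \hvol_{X_0}(\wt_{\xi_0}) = \hvol(x, X)
\]
would contradict the definition of $\hvol(x, X)$ as an infimum, so no such $w$ exists.

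The main obstacle is establishing the lifting inequality $\hvol_X(\tilde w) \le \hvol_{X_0}(w)$. The volume comparison $\vol_X(\tilde w) = \vol_{X_0}(w)$ is cleaner: from the identity $\mathrm{in}_{v_0}(\fa_\lambda(\tilde w)) = \fa_\lambda(w)$, an iterated colength argument using the $v_0$-filtration as a graded basis for $R$ matches the Hilbert functions of the two filtrations. The log discrepancy estimate $A_X(\tilde w) \le A_{X_0}(w)$ is the most delicate ingredient; I would approach it by approximating $w$ by quasi-monomial valuations on log smooth models of $X_0$, lifting these models through the Rees construction to $\bG_m$-equivariant log smooth models of $\cX$, and then combining the explicit log discrepancy formula for quasi-monomial valuations with the semi-continuity of log discrepancies under specialization in families.
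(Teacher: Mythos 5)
Your proposal proves a different statement from the one in the paper, because you have substituted your own definition of ``K-semistable valuation'' for the one the theorem actually uses. In the paper (following \cite{XZ-minimizer-unique}), a valuation $v\in\Val_{X,x}^*$ normalized so that $A_X(v)=1$ is K-semistable if its basis type divisors are asymptotically log canonical, i.e. $A_X(w)\ge S(v;w)$ for every $w\in\Val_{X,x}^*$; this notion is intrinsic to $x\in X$ and presupposes neither quasi-monomiality nor finite generation of $\gr_v R$. Your definition --- that $\gr_{v_0}R$ is finitely generated and the resulting Fano cone $(X_0;\xi_0)$ is a K-semistable Fano cone singularity --- is a consequence of the theorem together with the rest of the Stable Degeneration Conjecture, not the hypothesis. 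With the correct definition the paper's proof is entirely different: ``minimizer $\Rightarrow$ K-semistable'' comes from computing the derivative of $\hvol$ along the geodesic of filtrations joining $v_0$ to an arbitrary $w$ and observing that non-negativity of this derivative at $v_0$ is (essentially) the inequality $A_X(w)\ge S(v_0;w)$; ``K-semistable $\Rightarrow$ minimizer'' comes from writing $\hvol_X(v)^{1/n}=A_X(v)/\vol(v)^{-1/n}$ and realizing $\vol(v)^{-1/n}$ as the asymptotic vanishing order along $v$ of basis type divisors, so that minimizers are exactly the valuations that asymptotically compute the log canonical thresholds of their own basis type divisors.

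Beyond the definitional mismatch, your ``only if'' direction is circular: you invoke the finite generation theorem of \cite{XZ-SDC} (Theorem \ref{thm:HRFG}) to build the Fano cone $(X_0;\xi_0)$, but the proof of finite generation itself relies on the K-semistability of the minimizer in the sense of Theorem \ref{thm:minimizer = K-ss val} (it is the key input to Theorem \ref{thm:minimizer is from km}, via the fact that a K-semistable valuation is asymptotically an lc place of its basis type divisors). Indeed, a main point of Theorem \ref{thm:minimizer = K-ss val} is that it is available \emph{before} quasi-monomiality or finite generation are known. What your argument does establish --- modulo the degeneration identity $\hvol_X(v_0)=\hvol_{X_0}(\wt_{\xi_0})$ and the lifting inequality $\hvol_X(\tilde w)\le \hvol_{X_0}(w)$, both of which are essentially in \cite{LX-stability-higher-rank} --- is closer to Theorem \ref{thm:SDC K-ss part}: that the stable degeneration of a minimizer is a K-semistable Fano cone, and that a valuation inducing a K-semistable Fano cone degeneration is a minimizer. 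That is a genuinely different, and logically later, statement.
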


Let us provide a brief definition of the K-semistability of valuations, which mimics the characterization of K-semistability of Fano varieties using basis type divisors. Recall from \cites{FO-delta,BJ-delta} that an ($m$-)basis type divisor on a Fano variety $V$ is a divisor of the form
\[
D = \frac{1}{mN_m} \sum_{i=1}^{N_m} \{s_i=0\}
\]
where $N_m = h^0(V,-mK_V)$ and $s_1,\dots,s_{N_m}$ is a basis of $H^0(V,-mK_V)$ (typically we choose $m\gg 0$), and that a Fano variety $V$ is K-semistable if and only if its basis type divisors are ``asymptotically log canonical'', i.e.,
\begin{equation} \label{e:A>=S}
    A_V(v)\ge S_V(v):=\lim_{m\to\infty} \sup\{v(D)\,|\,D\mbox{ is of } m\mbox{-basis type}\}
\end{equation}
for all valuations $v$ on $V$.

Suppose next that we have a valuation $v\in \Val_{X,x}^*$ over the klt singularity $x\in X$. To define its K-semistability, we rescale it so that $A_X(v)=1$ and consider $m$-basis type divisors (with respect to $v$) of the form
\[
D = \frac{1}{mN_m} \sum_{i=1}^{N_m} \{s_i=0\},
\]
where this time 
\[
N_m=\dim(\fa_m(v)/\fa_{m+1}(v))
\]
and $s_1,\dots,s_{N_m}\in \fa_m(v)$ restrict to a basis of $\fa_m(v)/\fa_{m+1}(v)$. We say the valuation $v$ is K-semistable if its basis type divisors are ``asymptotically log canonical'', i.e., for any $w\in \Val_{X,x}^*$ we have
\[
A_X(w)\ge S(v;w):=\lim_{m\to\infty} \sup\{w(D)\,|\,D\mbox{ is of } m\mbox{-basis type}\}
\footnote{A priori, this limit may not exist. It is also not clear how the numbers $N_m$ grows. To avoid these potential subtleties, the actual definition of $S(v;w)$ in \cite{XZ-minimizer-unique} uses a slight variant of basis type divisors. For the simplicity of our presentation, we will not make this distinction.}.
\]
Note that we have $A_X(v)=S(v;v)=1$ by definition, hence if the valuation $v$ is K-semistable, then it is automatically an lc place of its own basis type divisors (in the asymptotic sense). If $E$ is a Koll\'ar component over $x\in X$, then it follows from inversion of adjunction (see \cite{XZ-minimizer-unique}*{Theorem 3.6}) that the divisorial valuation $\ord_E$ is K-semistable if and only if the induced log Fano pair $(E,\Delta_E)$ is K-semistable. Thus Theorem \ref{thm:kc minimizer K-ss} can be viewed as a special case of Theorem \ref{thm:minimizer = K-ss val}.

The proof of Theorem \ref{thm:minimizer = K-ss val} naturally divides into two steps. First we need to show that the minimizers of the normalized volume function are K-semistable. This is done by analyzing the derivatives of the normalized volume function along the geodesic connecting the minimizer $v_0$ to an arbitrary valuation $w\in \Val_{X,x}^*$. The non-negativity of the derivative at the minimizer $v_0$ is (almost) exactly the condition $A_X(w)\ge S(v_0;w)$ that defines K-semistability. To show the other direction, i.e. K-semistable valuations are $\hvol$-minimizers, one interprets the normalized volume as a ``log canonical threshold'' via the identity
\[
\hvol_X(v)^{1/n}=\frac{A_X(v)}{\vol(v)^{-1/n}}.
\]
A key step is to realize the denominator $\vol(v)^{-1/n}$ as the asymptotic vanishing order along the valuation $v$ of certain basis type divisors. From this perspective, the $\hvol$-minimizers are just the valuations that asymptotically compute the log canonical thresholds of basis type divisors. Since K-semistable valuations are exactly of this kind, they minimize the normalized volume. For more details, see \cite{XZ-minimizer-unique}.

Suppose for the moment that the minimizing valuation $v_0$ is quasi-monomial and has a finitely generated associated graded algebra $\gr_{v_0} R$ (these will be verified in the next two subsections). Then as we see in Section \ref{ss:SDC statement}, we have a degeneration of $x\in X=\Spec(R)$ to $x_0\in X_0:=\Spec(\gr_{v_0}R)$, and there is an induced Reeb vector $\xi_0$ on $X_0$. Li and Xu \cite{LX-stability-higher-rank} show that what we get is a K-semistable Fano cone singularity.

\begin{thm}[\cite{LX-stability-higher-rank}] \label{thm:SDC K-ss part}
The Fano cone singularity $x_0\in (X_0;\xi_0)$ is K-semistable, and the degeneration is volume preserving, i.e. $\hvol(x,X)=\hvol(x_0,X_0)$.
\end{thm}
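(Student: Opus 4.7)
The plan is to sandwich $\hvol(x,X)$ and $\hvol(x_0,X_0)$ between the same quantity using two opposite-direction comparisons, so that both the volume-preservation statement and the K-semistability of $x_0\in (X_0;\xi_0)$ fall out simultaneously. Concretely, I will aim to prove the chain
\[
\hvol(x,X) \;\le\; \hvol(x_0,X_0) \;\le\; \hvol_{X_0}(\wt_{\xi_0}) \;\le\; \hvol_X(v_0) \;=\; \hvol(x,X),
\]
which forces equality everywhere. The middle inequality is trivial by definition of $\hvol(x_0,X_0)$ as an infimum, so the real work is in the outer two inequalities.

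For the right-most inequality $\hvol_{X_0}(\wt_{\xi_0})\le \hvol_X(v_0)$, I will compare the two ingredients of the normalized volume separately. The equality of volumes $\vol_{X_0}(\wt_{\xi_0})=\vol_X(v_0)$ is essentially tautological: under the identification of the $\Gamma$-grading on $\gr_{v_0}R$ with the weight decomposition of the $\bT=\langle\xi_0\rangle$-action on $X_0$, the valuation ideals $\fa_\lambda(\wt_{\xi_0})$ have the same finite-dimensional quotients $\fa_\lambda/\fa_{>\lambda}$ as $\fa_\lambda(v_0)$, so the multiplicities agree. For the log discrepancy, I aim to show $A_{X_0}(\wt_{\xi_0})\le A_X(v_0)$ by extending $v_0$ to a $\bG_m$-invariant valuation on the test configuration $\cX=\Spec\bigoplus_k \fa_k(v_0)t^{-k}$ and restricting to the central fiber $\cX_0=X_0$; an adjunction/inversion-of-adjunction argument along $\cX_0\subset\cX$ then yields the inequality (this is a local analogue of the standard semi-continuity of log discrepancies along a test configuration). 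For the left-most inequality $\hvol(x,X)\le \hvol(x_0,X_0)$ I will apply the lower semi-continuity of local volumes (Theorem \ref{thm:vol lsc}) to the $\bQ$-Gorenstein family $\cX\to\bA^1$ with section given by the $\bT$-fixed points, using that $\cX\to\bA^1$ is $\bQ$-Gorenstein because the Rees degeneration of a klt singularity induced by a quasi-monomial valuation preserves $\bQ$-Cartier-ness of $K$ (a consequence of the finite generation assumption).

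Once the chain of inequalities collapses to equalities, volume preservation is immediate, and K-semistability of $x_0\in (X_0;\xi_0)$ follows because $\hvol(x_0,X_0)=\hvol_{X_0}(\wt_{\xi_0})$, which is exactly the definition adopted earlier. I expect the principal obstacle to be the log-discrepancy comparison $A_{X_0}(\wt_{\xi_0})\le A_X(v_0)$: making the extension of $v_0$ to the total space $\cX$ canonical, verifying that its restriction to $\cX_0$ indeed recovers $\wt_{\xi_0}$ up to the correct scaling, and controlling how log discrepancies transform under this restriction all require the finite-generation hypothesis in an essential way, together with care about what it means to ``resolve'' the possibly irrational valuation $v_0$ in families. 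A secondary subtlety is ensuring that the lower semi-continuity theorem applies: one must check that the family $\cX\to\bA^1$ is genuinely a $\bQ$-Gorenstein family of klt singularities, which again leans on the quasi-monomial and finitely generated hypotheses on $v_0$ established in the preceding subsections.
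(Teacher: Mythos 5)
Your sandwich structure is a reasonable framing, and the right-most comparison is sound: in fact one has equalities $A_{X_0}(\wt_{\xi_0})=A_X(v_0)$ and $\vol_{X_0}(\wt_{\xi_0})=\vol_X(v_0)$, the first by an adjunction argument on the total space of the test configuration as you describe, the second by comparing graded pieces. The fatal problem is the left-most inequality. Lower semi-continuity in the Zariski topology (Theorem \ref{thm:vol lsc}) gives the \emph{opposite} direction: for a Zariski-lsc function the set $\{\hvol\le c\}$ is closed, hence if it contains the generic point of $\bA^1$ it contains everything, so $\hvol(x_0,X_0)\le\hvol(x,X)$ --- local volume can only \emph{drop} under specialization (this is exactly how the inequality part of Theorem \ref{thm:largest vol} is deduced from it). With only that direction your chain reads $\hvol(x_0,X_0)\le\hvol_{X_0}(\wt_{\xi_0})\le\hvol_X(v_0)=\hvol(x,X)$ together with $\hvol(x_0,X_0)\le\hvol(x,X)$, and nothing collapses: you cannot conclude $\hvol(x_0,X_0)=\hvol_{X_0}(\wt_{\xi_0})$, which is precisely the definition of K-semistability of $x_0\in(X_0;\xi_0)$, nor the volume preservation.

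The missing inequality $\hvol(x_0,X_0)\ge\hvol(x,X)$ is the actual content of the theorem, and it must use that $v_0$ is the \emph{minimizer}: for a general quasi-monomial valuation with finitely generated graded algebra and klt central fiber the local volume of the central fiber can be strictly smaller, so no formal semicontinuity or adjunction statement can supply it. The naive attempt via Liu's formula \eqref{e:Liu's inequality} also fails for the reason recorded in the remarks of Section \ref{ss:existence}: lifting a $\bT$-invariant ideal $\fb$ on $X_0$ to an ideal $\fa$ on $X$ gives $\lct_{X_0}(\fb)\le\lct_X(\fa)$ but $\mult(\fb)\ge\mult(\fa)$, and the two factors of $\lct^n\cdot\mult$ move in opposite directions. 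The paper's route is different: it invokes Theorem \ref{thm:minimizer = K-ss val} and shows directly that $\wt_{\xi_0}$ is a K-semistable \emph{valuation} on $X_0$, by degenerating the asymptotically log canonical basis type divisors of $v_0$ on $X$ (which have $v_0$ as an lc place) to basis type divisors on $X_0$ compatible with $\wt_{\xi_0}$, the central fibers staying (semi) log canonical because the degeneration is induced by an lc place. That, or some substitute for it, is the step you would need to supply; everything else in your outline then does follow.
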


These can also be explained using the K-semistability of the minimizer $v_0$. All we need to show is that the toric valuation $\wt_{\xi_0}$ on $X_0$ minimizes the normalized volume. By Theorem \ref{thm:minimizer = K-ss val}, this is equivalent to showing that $\wt_{\xi_0}$ is a K-semistable valuation. We know that $v_0$ is K-semistable since it is the normalized volume minimizer on $X$, hence its basis type divisors are asymptotically log canonical, and always have the valuation $v_0$ as an lc place. In general, given a log canonical pair, the degenerations induced by its lc places have semi log canonical central fibers, since the latter are orbifold cones over pairs coming from adjunction along the lc places, c.f. \cite{BLX-openness}*{Appendix A.1}. This essentially implies that the degenerations of the basis type divisors to $X_0$ remain asymptotically log canonical. It appears that what we get from these degenerations are exactly the basis type divisors on $X_0$ (with respect to $\wt_{\xi_0}$). Therefore, the toric valuation $\wt_{\xi_0}$ is K-semistable by definition.

\subsection{Quasi-monomial property}

We have seen in Section \ref{ss:existence} that the normalized volume minimizer computes the log canonical threshold of some graded sequence of ideals. Regarding such valuations, Jonsson and Musta\c{t}\u{a} have made the following conjecture.

\begin{conj}[\cite{JM-val-ideal-seq}]
Let $X$ be klt and let $\fab$ be a graded sequence of ideals on $X$ such that $\lct(\fab)<\infty$.
\begin{enumerate}
    \item\emph{(Weak version)}. There exists a quasi-monomial valuation that computes $\lct(\fab)$.
    \item\emph{(Strong version)}. Every valuation that computes $\lct(\fab)$ is quasi-monomial.
\end{enumerate}
\end{conj}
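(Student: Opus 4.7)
My plan is to attack the weak version by realizing a quasi-monomial lct-computing valuation as a limit of divisorial valuations coming from Koll\'ar components, leveraging Theorem \ref{thm:kc compute lct} together with boundedness of complements.

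First I would approximate: for each $m \in \bN$, set $\ell_m := m \cdot \lct(\fa_m)$, so $\ell_m$ is non-decreasing and converges to $\lct(\fab)$ as $m \to \infty$. By Theorem \ref{thm:kc compute lct}, for each $m$ there exists a Koll\'ar component $E_m$ over $x \in X$ satisfying
\[
\frac{A_X(E_m)}{\ord_{E_m}(\fa_m)} = \lct(\fa_m) = \frac{\ell_m}{m}.
\]
Rescaling to $v_m := \ord_{E_m}/A_X(E_m)$ gives $A_X(v_m) = 1$ and $v_m(\fa_m) \geq m/\ell_m$, so passing to the graded sequence yields $v_m(\fab) \geq \lct(\fab)^{-1} + o(1)$.

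Next I would show these Koll\'ar components lie in a bounded family. Since $(E_m, \Delta_{E_m})$ is a klt log Fano pair arising as a Koll\'ar component over the fixed klt singularity $x \in X$, Birkar's boundedness of complements supplies a fixed positive integer $N$ and, for each $m$, a $\bQ$-divisor $D_m \sim_\bQ -(K_X + D)$-type complement on $X$ with $N D_m$ Cartier and $E_m$ an lc place of $(X, D + D_m)$. The set of such $(X, D + D_m)$ is bounded, so after extracting a subsequence we may assume all $E_m$ appear on a single common log resolution $\pi\colon Y \to X$ with fixed SNC divisor $E = \sum_i E_i \supset \pi^{-1}(x)$. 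Each $v_m$ then lies in $\QM(Y, E)$, with $A_X(v_m) = 1$ placing it in the compact simplex $\Sigma := \{v \in \QM(Y,E) : A_X(v) \leq 1\}$. Extract a convergent subsequence $v_{m_k} \to v_\infty \in \Sigma$; the limit is quasi-monomial by construction, has $A_X(v_\infty) \leq 1$, and by passing to the limit satisfies $v_\infty(\fab) \geq \lct(\fab)^{-1}$, so $A_X(v_\infty)/v_\infty(\fab) \leq \lct(\fab)$. Since the reverse inequality is automatic, $v_\infty$ computes $\lct(\fab)$.

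The main obstacle is the uniform boundedness of the Koll\'ar components $E_m$ as $m \to \infty$; without it, each $E_m$ could live on its own model with unbounded complexity and no common log smooth refinement would exist. Boundedness of complements is the essential input, and even invoking it requires checking that the complements can be chosen compatibly with the constraint that $E_m$ extracts $\fa_m$-lc places rather than arbitrary ones, which is where the klt hypothesis on $x \in X$ and the $\fm_x$-primary nature of each $\fa_m$ enter crucially. The strong version (2) is substantially harder: one must rule out the possibility that $\lct(\fab)$ is computed by valuations of infinite rational rank, and the approximation-by-Koll\'ar-components scheme above produces only one quasi-monomial minimizer rather than constraining all of them. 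A plausible route would be to combine (1) with a convexity / uniqueness argument analogous to Theorem \ref{thm:convexity}, showing that the set of lct-computing valuations is affine over $\QM$ on a suitable model, but I expect this to require genuinely new ideas.
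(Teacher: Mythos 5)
Your strategy for the weak version---approximate $\lct(\fab)$ by $\lct(\fa_m)$, compute each finite level by a Koll\'ar component via Theorem \ref{thm:kc compute lct}, and control the limit using boundedness of complements---is essentially the route of Xu's proof \cite{Xu-quasi-monomial} sketched in the paper, and you are right that the strong version remains open. But your write-up has two genuine gaps at exactly the hard points. First, the assertion that ``the set of such $(X,D+D_m)$ is bounded'' is false as stated: on an affine $X$ the $N$-complements $D_m$ satisfy $ND_m\sim -NK_X$, and this linear system is infinite-dimensional, so the complements do not a priori form a bounded family. The actual argument first shows that, for lc places centered at $x$, only the complement modulo a sufficiently large power of $\fm_x$ matters, and only after this truncation does one land in a finite-type parameter space. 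This truncation step is essential and absent from your proposal.

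Second, even granting boundedness of the family of complements, you cannot extract a subsequence on which all $E_m$ live on a single common log resolution $\pi\colon Y\to X$ with a fixed SNC divisor $E$: the $D_m$ move in positive-dimensional moduli, the log resolutions and dual complexes jump along a stratification of the parameter space, and no single log smooth model dominates them all. What is really needed is the family statement that lc places of the bounded family of $N$-complements form a \emph{bounded family of simplices} of quasi-monomial valuations, together with a generic-limit (very general point of the Zariski closure of the parameters) argument showing the limit valuation still lies in one of these simplices. Your fixed-model compactness argument silently substitutes a false statement for this constructibility-in-families input. A minor further point: since $v\mapsto v(\fab)=\sup_m v(\fa_m)/m$ is only lower semicontinuous (the wrong direction for passing your inequality to the limit), the step $v_\infty(\fab)\ge \lct(\fab)^{-1}$ should be run through fixed levels, using $v(\fa_m)\ge \tfrac{1}{r}v(\fa_{mr})$ along a divisible subsequence and continuity of $v\mapsto v(\fa_m)$ on each simplex; this is routine but should be recorded.
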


The strong version of this conjecture is still open. An important breakthrough in the development of the K-stability theory is Xu's proof \cite{Xu-quasi-monomial} of the weak version of Jonsson-Musta\c{t}\u{a}'s Conjecture. An immediate corollary is the quasi-monomial property of the $\hvol$-minimizer. In this subsection, we sketch the main ideas of this proof. 

\begin{thm}[\cite{Xu-quasi-monomial}*{Theorem 1.1}]
Let $\fab$ be a graded sequence of ideals on a klt variety $X$ such that $\lct(\fab)<\infty$. Then there exists a quasi-monomial valuation $v$ that computes $\lct(\fab)$.
\end{thm}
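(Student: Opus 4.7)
The plan is to approximate $\fab$ by its truncations $\fa_m$ and extract the desired quasi-monomial valuation as a subsequential limit of divisorial valuations supplied by Kollár components. Concretely, for each $m$ I would apply Theorem \ref{thm:kc compute lct} (after localizing at the generic point of the support of $\fab$, so the situation reduces to an $\fm_x$-primary problem at a klt singularity) to produce a Kollár component $E_m$ with $\lct(\fa_m)=A_X(E_m)/\ord_{E_m}(\fa_m)$. Rescale to $v_m:=m\cdot\ord_{E_m}/\ord_{E_m}(\fa_m)$, so that $v_m(\fa_m)=m$ and $A_X(v_m)=m\cdot\lct(\fa_m)$; along a cofinal sequence of $m$, $m\cdot\lct(\fa_m)\nearrow\lct(\fab)$, so the sequence $\{A_X(v_m)\}$ is bounded. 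The target is a quasi-monomial subsequential limit $v$ with $A_X(v)=\lct(\fab)$ and $v(\fab)\geq 1$; combined with the a priori bound $A_X(v)\geq \lct(\fab)\cdot v(\fab)$, this will force $v$ to compute $\lct(\fab)$.

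The critical input is Birkar's theorem on boundedness of $N$-complements, which I would use to confine the $E_m$ to a log bounded family of birational models. Each $E_m$ is an lc place of the log canonical pair $(X,\lct(\fa_m)\cdot\fa_m)$, and on the plt blowup $\pi_m\colon Y_m\to X$ the adjunction formula produces a klt log Fano pair $(E_m,\Delta_{E_m})$ to which boundedness of complements applies. Pulling a resulting $N$-complement back to $X$ via $\pi_m$ yields a fixed integer $N$ and effective $\bQ$-divisors $D_m$ on $X$ with $N D_m$ Cartier such that $(X,D_m)$ is log canonical and $E_m$ is an lc place of $(X,D_m)$. Log boundedness of $\{(X,D_m)\}$ then ensures that, after passing to a subsequence, all $E_m$ appear as divisors on a single log smooth model $(Y,E)\to X$. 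Consequently each $v_m$ lies in the finite-dimensional simplex $\QM(Y,E)$, and the bound $A_X(v_m)\leq \lct(\fab)$ confines $\{v_m\}$ to a compact subset.

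Next, extract a convergent subsequence $v_m\to v\in \QM(Y,E)$; by continuity of $A_X$ on $\QM(Y,E)$, $A_X(v)=\lct(\fab)$. To verify $v(\fab)\geq 1$, fix $k$ and write $m=jk+r$ with $0\leq r<k$. The containment $\fa_k^j\cdot\fa_r\subseteq\fa_m$ gives $\ord_{E_m}(\fa_m)\leq j\cdot\ord_{E_m}(\fa_k)+\ord_{E_m}(\fa_r)$, while Lemma \ref{lem:Izumi}, applied with a uniform Izumi constant $C$ available on the bounded family, yields $\ord_{E_m}(\fa_r)/\ord_{E_m}(\fa_m)\leq rC\cdot \lct(\fa_m)\to 0$. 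Therefore $v_m(\fa_k)\geq k(1-o(1))$ as $m\to\infty$, and continuity of $w\mapsto w(\fa_k)$ on $\QM(Y,E)$ delivers $v(\fa_k)\geq k$ for every $k$, i.e.\ $v(\fab)\geq 1$, completing the argument.

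The hardest part will be the boundedness step: applying Birkar's $N$-complement theorem in this local, non-Fano setting requires a careful MMP/adjunction argument on each plt blowup $\pi_m$ to produce complements with uniformly bounded Cartier index, while simultaneously ensuring that the complements pull back to log canonical pairs on $X$ sharing $E_m$ as a common lc place across the family. Without this uniform boundedness, the Kollár components $E_m$ could a priori escape to more and more complicated birational models, and no subsequential limit in a finite-dimensional $\QM(Y,E)$ would be available. Once boundedness is secured, the compactness and the final asymptotic estimate relating $v(\fa_k)$ to the sequence $v_m(\fa_k)$ are comparatively routine.
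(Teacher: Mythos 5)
Your strategy is the same as the one in \cite{Xu-quasi-monomial} and sketched in the paper: truncate $\fab$, produce Koll\'ar components $E_m$ computing $\lct(\fa_m)$ via Theorem \ref{thm:kc compute lct}, use Birkar's boundedness of complements to make each $E_m$ an lc place of an $N$-complement with $N$ uniform, and pass to a limit. The normalization $v_m$ with $A_X(v_m)=m\cdot\lct(\fa_m)\to\lct(\fab)$, and the closing estimate deriving $v(\fab)\ge 1$ from subadditivity of $\fab$ together with a uniform Izumi constant, are both fine.

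The genuine gap is the sentence ``log boundedness of $\{(X,D_m)\}$ then ensures that, after passing to a subsequence, all $E_m$ appear as divisors on a single log smooth model $(Y,E)\to X$.'' Boundedness of the $N$-complements means the divisors $D_m$ are parametrized by a finite-type scheme $S$, which is in general positive-dimensional, not finite; passing to a subsequence does not collapse $S$ to a single point. A fixed log smooth model $(Y,E)$ contains only finitely many exceptional prime divisors, and the divisorial valuations lying in $\QM(Y,E)$ form a countable set (rational points of finitely many simplices), whereas the divisorial lc places of a positive-dimensional family of complements are uncountable in general (already for pairs $(\bA^2,D_t)$ with $D_t$ a varying union of lines through the origin, the weighted blowups adapted to $D_t$ give genuinely different divisors as $t$ moves). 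So no single $(Y,E)$ can absorb all the $E_m$, and the compactness argument inside one simplex never gets off the ground. What boundedness actually yields is a bounded \emph{family} of simplices: after stratifying $S$ and taking fiberwise log resolutions over each stratum, the dual complexes $\LC(X,D_s)$ fit into finitely many algebraic families of $\QM$'s, and the sequence $v_m$ lives in this family with the parameters $s_m$ converging only in the sense of a generic (very general) limit in the Zariski closure. One must then prove that this generic limit is still an lc place of some $N$-complement in the family --- hence quasi-monomial --- which is a semicontinuity statement over the parameter space and is the technical heart of \cite{Xu-quasi-monomial}. Your final paragraph correctly flags boundedness of complements as difficult, but the step your write-up actually elides is this passage from a bounded family of models to a quasi-monomial limit, not the production of the $N$-complements themselves.
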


One way to get a valuation that computes $\lct(\fab)$ is to take an $m\to\infty$ limit of valuations $v_m$ that compute $\lct(\fa_m)$. The latter are always quasi-monomial. This is of course not surprising, since every valuation is a limit of quasi-monomial valuations. Therefore, the main difficulty is to control the limit process.

A crucial ingredient is the theory of complement. Recall that a ($\bQ$-)complement of an lc pair $(X,\Delta)$ is an effective $\bQ$-divisor $D\sim_\bQ -(K_X+\Delta)$ such that $(X,\Delta+D)$ is lc. It is called an $N$-complement (for some positive integer $N$) if $N(K_X+\Delta+D)\sim 0$. A valuation $v$ on $X$ is called an lc place of the complement $D$ if $A_{X,\Delta+D}(v)=0$; such valuations are always quasi-monomial. We use $\LC(X,\Delta+D)\subseteq \Val_X$ to denote the corresponding set of lc places.

A difficult theorem of Birkar \cite{Birkar-bab-1}*{Theorem 1.7}, known as the boundedness of complements, states that if $X$ is of Fano type and $(X,\Delta)$ admits a complement, then it also has an $N$-complement for some integer $N$ that only depends on the dimension of $X$ and the coefficient of $\Delta$. This has the following consequence.

\begin{prop}
Let $x\in X$ be a klt singularity. Then there exists some positive integer $N$ depending only on the dimension of $X$ such that for any $\fm_x$-primary ideal $\fa$, any valuation computing $\lct(\fa)$ is an lc place of some $N$-complement.
\end{prop}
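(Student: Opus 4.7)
The strategy is to reduce to Birkar's boundedness of complements \cite{Birkar-bab-1} applied on a Koll\'ar component and lift the resulting complement back via adjunction. Write $n = \dim X$ and set $c := \lct(\fa)$.

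By Theorem \ref{thm:kc compute lct}, there is a Koll\'ar component $E$ over $x \in X$ computing $\lct(\fa)$, i.e.\ $A_X(E) = c \cdot \ord_E(\fa)$. Choose a sufficiently general element $f \in \fa$ with $v(f) = v(\fa)$ and $\ord_E(f) = \ord_E(\fa)$; both conditions hold on a Zariski-open locus of any finite-dimensional generating subspace of $\fa$. Set $H := \mathrm{div}(f)$. Then $(X, cH)$ is lc near $x$, both $v$ and $\ord_E$ are lc places of it, and $K_X + cH \sim_{\bQ} 0$ near $x$ (since $K_X \sim_{\bQ} 0$ on a small affine neighborhood of the klt point). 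Let $\pi \colon Y \to X$ be the plt blow-up of $E$ and write $\widetilde{H} := \pi^{-1}_* H$; then $\pi^*(K_X + cH) = K_Y + E + c\widetilde{H}$, and adjunction along $E$ yields the lc pair $(E, \Delta_E + c\widetilde{H}|_E)$ with $K_E + \Delta_E + c\widetilde{H}|_E \sim_{\bQ} 0$, where $(E, \Delta_E)$ is klt log Fano of dimension $n-1$ with boundary coefficients in the standard set $\{1 - 1/m\}$. The pullback of $v$ to $Y$ has center on $E$ and, by inversion of adjunction, corresponds to an lc place of $(E, \Delta_E + c\widetilde{H}|_E)$.

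Apply Birkar's boundedness of complements to this log Calabi--Yau pair: this produces $N = N(n-1)$ and an effective $\bQ$-divisor $B^+_E \geq c\widetilde{H}|_E$ with $(E, \Delta_E + B^+_E)$ lc and $N(K_E + \Delta_E + B^+_E) \sim 0$. Because $B^+_E$ dominates $c\widetilde{H}|_E$ and the enlarged pair is still lc, the lc place corresponding to $v$ is preserved. Lift $B^+_E$ to $Y$ using relative Kawamata--Viehweg vanishing on the plt pair $(Y, E)$, which (together with $-E$ being $\pi$-ample) makes the restriction
\[
H^0\bigl(Y, -N(K_Y + E + c\widetilde{H})\bigr) \twoheadrightarrow H^0\bigl(E, -N(K_E + \Delta_E + c\widetilde{H}|_E)\bigr)
\]
surjective locally over $X$. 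This yields an effective $\bQ$-divisor $B^+ \geq c\widetilde{H}$ on $Y$ with $B^+|_E = B^+_E$ and $N(K_Y + E + B^+) \sim 0$. Pushing forward, $D := \pi_* B^+$ satisfies $\pi^*(K_X + D) = K_Y + E + B^+$; hence $(X, D)$ is lc, $N(K_X + D) \sim 0$ near $x$, and $v$ is an lc place of $(X, D)$ via the chain of log discrepancy identifications.

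The main obstacle is the invocation of Birkar's theorem itself, a deep result that must be applied in a version suited to lc log Calabi--Yau pairs with a prescribed lc place and with $c \in (0, 1]$ an arbitrary rational; this is handled by exploiting the relation $K_E + \Delta_E + c\widetilde{H}|_E \sim_{\bQ} 0$ directly rather than trying to bound $c$ in some $\Phi(R)$. A technical secondary point is the lifting from $E$ to $Y$, which rests on vanishing theorems made available by the plt structure of $(Y, E)$ and the $\pi$-ampleness of $-E$.
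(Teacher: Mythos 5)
There is a genuine gap at the decisive step, namely the invocation of Birkar's theorem. The boundedness of complements (\cite{Birkar-bab-1}*{Theorems 1.7, 1.8}) produces an $N$-complement $B^+\ge B$ with $N$ depending only on the dimension \emph{and on a fixed finite set $R$ of rationals containing the coefficients of $B$} (more precisely $B\in\Phi(R)$). In your application the boundary $c\widetilde H|_E$ has coefficients proportional to $c=\lct(\fa)$, which ranges over all positive rationals as $\fa$ varies, so the theorem does not apply in the form you use it and there is no a priori reason the resulting $N$ is independent of $\fa$. Your remark that this is ``handled by exploiting the relation $K_E+\Delta_E+c\widetilde H|_E\sim_\bQ 0$ directly'' is precisely the point that needs proof: the statement ``an lc place of a $\bQ$-complement is an lc place of an $N$-complement'' is a theorem (\cite{BLX-openness}*{Theorem A.2}, used in \cite{Xu-quasi-monomial}), not a corollary of Birkar's result, and its proof works by extracting divisorial lc places spanning the minimal rational simplex containing the given valuation, so that Birkar's theorem can be applied to a boundary with coefficient $1$ (hence in a fixed $\Phi(R)$); one then has to argue that the whole simplex, not just its vertices, consists of lc places of the new complement. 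Nothing in your detour through the Koll\'ar component addresses this; the same difficulty simply reappears one dimension down on $(E,\Delta_E+c\widetilde H|_E)$.

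The detour also adds avoidable complications relative to the paper's route, which works directly on $X$: a valuation $v$ computing $\lct(\fa)$ is an lc place of the $\bQ$-complement given by a general member of the $\bQ$-ideal $\fa^{\lct(\fa)}$ (note that a single general $\mathrm{div}(f)$ fails when $\lct(\fa)>1$; one must average $k\gg 0$ general members), and the content of the proposition is exactly the upgrade to an $N$-complement preserving $v$ as an lc place. In your version, the phrase ``the pullback of $v$ to $Y$ corresponds to an lc place of $(E,\Delta_E+c\widetilde H|_E)$'' is not well defined: $v$ lives on $X$, not on $E$, and adjunction identifies lc \emph{centers}, not lc places, across $E$. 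This particular point is salvageable — once you have $(Y,E+B^+)$ lc with $B^+\ge c\widetilde H$ and $N(K_Y+E+B^+)\sim 0$ over $X$, monotonicity gives $A_{Y,E+B^+}(v)\le A_{Y,E+c\widetilde H}(v)=0$, hence $v$ is an lc place of the pushed-forward complement — and the lifting of sections from $E$ to $Y$ is standard for plt blowups, but neither step helps with, nor substitutes for, the missing coefficient argument above.
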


Roughly speaking, this is because any valuation computing $\lct(\fa)$ is automatically an lc place of some complement (an obvious choice is a general member of the $\bQ$-ideal $\fa^{\lct(\fa)}$), hence by Birkar's theorem we can upgrade the complement to a bounded complement.

The proposition in particular applies to the valuations $v_m$ that compute $\lct(\fa_m)$. Because the integer $N$ does not depend on the ideal $\fa_m$, modulo some sufficiently large power of the maximal ideal $\fm_x$ we can further arrange that the valuations $v_m$ are lc places of a \emph{bounded} family of $N$-complements. It follows that the limit $\lim_{m\to\infty} v_m$ is not arbitrary: it is a generic limit in a bounded family of simplices of quasi-monomial valuations. From here, we conclude that the limit valuation stays in the same family of simplices; in particular, it is quasi-monomial. In fact, the proof naturally implies a stronger statement:

\begin{thm}[\cite{Xu-quasi-monomial}] \label{thm:lc place N-comp}
Let $\fab$ be a graded sequence of ideals such that $\lct(\fab)<\infty$. Then  $\lct(\fab)$ is computed by some lc place of $N$-complement, where the integer $N$ only depends on the dimension. 

In particular, the minimizing valuation of the normalized volume is an lc place of $N$-complement.
\end{thm}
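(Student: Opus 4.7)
The plan is to produce the desired valuation as a limit of divisorial valuations computing the log canonical thresholds of the truncations $\fa_m$, and to use boundedness of complements to force this limit into a finite-dimensional simplex of quasi-monomial valuations. For each $m$, Theorem \ref{thm:kc compute lct} yields a Koll\'ar component $E_m$ over $x\in X$ such that $\ord_{E_m}$ computes $\lct(\fa_m)$. Normalize to $v_m:=\ord_{E_m}/A_X(E_m)$ so that $A_X(v_m)=1$ and $v_m(\fa_m)=1/\lct(\fa_m)$. By the proposition preceding the theorem (an application of Birkar's boundedness of complements), there exists an $N$-complement $D_m$ of $X$ near $x$, with $N$ depending only on $n=\dim X$, such that $v_m$ is an lc place of $(X,D_m)$.

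To extract a limit, I first truncate. The Izumi inequality (Lemma \ref{lem:Izumi}) combined with $A_X(v_m)=1$ gives a uniform upper bound on $v_m(\fm_x)$, so replacing $\fa_k$ by $\fa_k+\fm_x^{kM}$ for a fixed large $M$ changes neither $\lct(\fab)$ nor the asymptotic values $v_m(\fab)$, while forcing the complements $D_m$ to lie in a bounded family of effective divisors of log canonical index $N$. Parametrizing this bounded family by a finite-type scheme and applying Noetherian induction on its irreducible components, I pass to a subsequence along which the $D_m$ specialize at a very general point to a single $N$-complement $D$ carried by a common log resolution $(Y,E)\to (X,D)$, with all $v_m$ lying in a fixed simplex $\QM_\eta(Y,E)$ of lc places of $(X,D)$. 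Compactness of $\{A_X=1\}\cap\QM_\eta(Y,E)$ then yields a further subsequence $v_m\to v$ converging to a quasi-monomial valuation $v$, which is automatically an lc place of $(X,D)$ and hence of an $N$-complement.

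The main obstacle is to verify that this limit $v$ actually computes $\lct(\fab)$, since a priori a limit of valuations computing $\lct(\fa_m)$ need not compute the limit threshold. On the simplex $\QM_\eta(Y,E)$, the function $w\mapsto A_X(w)$ is linear and $w\mapsto w(\fa_k)$ is piecewise linear for each fixed $k$ (as the minimum of the linear functions $w\mapsto w(f)$ over a finite generating set of $\fa_k$), so both are continuous and pass to the limit. Using the inclusion $\fa_k^j\subseteq \fa_{jk}$, one obtains $v_m(\fa_k)\ge v_m(\fa_{jk})/j$; taking $m=jk$ and letting $j\to\infty$, together with the identity $jk\cdot\lct(\fa_{jk})\to \lct(\fab)$ from \cite{JM-val-ideal-seq}, this yields $v(\fa_k)/k\ge 1/\lct(\fab)$ for every $k$, hence $v(\fab)\ge 1/\lct(\fab)$. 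The reverse inequality $v(\fab)\le 1/\lct(\fab)$ is immediate from the definition of $\lct$ applied to $v$, so $\lct(\fab)=A_X(v)/v(\fab)$, as desired.

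For the ``in particular'' statement, recall from Section \ref{ss:div minimizer} (the equality case in the proof of \eqref{e:Liu's inequality}) that any $\hvol$-minimizer $v_0$ is the \emph{unique} valuation that computes $\lct(\fab(v_0))$. Applying the first part to the graded sequence $\fab(v_0)$ produces a quasi-monomial lc place $v$ of an $N$-complement computing $\lct(\fab(v_0))$; uniqueness then forces $v=v_0$ up to rescaling, so $v_0$ itself is an lc place of an $N$-complement.
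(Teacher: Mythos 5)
Your argument follows the same route as the survey's sketch: the valuations computing $\lct(\fa_m)$ are lc places of $N$-complements by Birkar's boundedness theorem, truncation by a power of $\fm_x$ puts these complements into a bounded family, and the limit valuation is therefore confined to a bounded family of simplices of lc places, hence is a quasi-monomial lc place of an $N$-complement. The only content you add beyond the paper's own sketch is the verification that the limit actually computes $\lct(\fab)$ (via $\fa_k^j\subseteq\fa_{jk}$ and continuity of $w\mapsto w(\fa_k)$ on the simplex --- correct, modulo arranging the convergent subsequence to contain multiples of each $k$, e.g.\ by a diagonal argument) and the deduction of the ``in particular'' clause from the uniqueness of the lct-computing valuation for $\fab(v_0)$, which is exactly the intended argument.
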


By applying a similar technique in families, one can also show that local volumes of klt singularities are constructible in families. This is indeed an important ingredient in the proof of the openness of K-semistability in families of Fano varieties.

\begin{thm}[\cite{Xu-quasi-monomial}*{Theorem 1.3}] \label{thm:constructible}
For any $\bQ$-Gorenstein family $B\subseteq \cX\to B$ of klt singularities, the function
\[
b\in B\mapsto \hvol(b,\cX_b)
\]
on $B$ is constructible with respect to the Zariski topology.
\end{thm}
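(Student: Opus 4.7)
The plan is to upgrade the lower semi-continuity of local volumes (Theorem~\ref{thm:vol lsc}) to constructibility by using Theorem~\ref{thm:lc place N-comp} to produce, in a uniformly bounded way, candidate minimizers on every fiber, and then showing that the relevant ``upper semi-continuity from above'' holds on a constructible stratification of $B$.

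First, I would apply Theorem~\ref{thm:lc place N-comp} to each fiber: there exists an integer $N=N(n)$, depending only on $n=\dim \cX_b$, such that the minimizer of $\hvol$ on every $(\cX_b,b)$ is an lc place of some $N$-complement. Because $N$ is fixed, the data of such an $N$-complement is bounded: using $N$-divisibility of $K_{\cX/B}$ (after passing to a stratification) and standard Hilbert scheme arguments applied to relative linear systems of bounded degree, one constructs a scheme $\cM\to B$ of finite type whose fiber over $b$ parametrizes $N$-complements $D$ of $(\cX_b,b)$.

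Next, I would take a simultaneous log resolution of the universal complement $\cD$ on $\cX_{\cM}\to \cM$. After stratifying $\cM=\sqcup \cM_i$ into finitely many constructible pieces, one obtains on each piece a log smooth model $\pi_i\colon \cY_i\to \cX_{\cM_i}$ whose relative dual complex has constant combinatorial type and whose components $E_j$ have log discrepancies $A_{\cX_b}(E_j)$ varying algebraically in $b$. This yields a \emph{relative} simplicial complex $\Sigma_i\to \cM_i$ of quasi-monomial valuations, on which the log discrepancy $A_{\cX_b}(v_\alpha)=\sum_j \alpha_j A_{\cX_b}(E_j)$ is piecewise linear with algebraic coefficients in $b$. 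By Theorem~\ref{thm:lc place N-comp}, for every $b\in B$ the local volume $\hvol(b,\cX_b)$ equals the infimum of $\hvol_{\cX_b}(v_\alpha)$ over the fiber of $\bigsqcup_i \Sigma_i\to B$ above $b$.

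The final and most delicate step is to show that the function $(b,\alpha)\mapsto \hvol_{\cX_b}(v_\alpha)$ is upper semi-continuous along the relative simplex $\Sigma_i$, and that its fiberwise minimum is constructible in $b$. Upper semi-continuity in $b$ of $\vol(v_\alpha)$ comes from the fact that the valuation ideals $\fa_k(v_\alpha)$ can be read off fiberwise from the relative log resolution, and colengths/multiplicities of such ideals only jump up under specialization; the log discrepancy part varies continuously. Combined with lower semi-continuity (Theorem~\ref{thm:vol lsc}), this pins down $b\mapsto \hvol(b,\cX_b)$ as both upper and lower semi-continuous on a suitable constructible stratification, hence constructible by Noetherian induction on $B$.

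The hardest part will be the family version of volume control on $\Sigma_i$: showing that $\vol(v_\alpha)$ is not only upper semi-continuous but well-behaved enough that the fiberwise infimum of $\hvol_{\cX_b}(v_\alpha)$ over the compact locus $\{A_{\cX_b}(v_\alpha)=1\}\cap \Sigma_i$ defines a constructible function of $b$. The key leverage here is that $N$-boundedness confines all candidate minimizers to a single family of bounded log smooth models, so the volume function on each simplex is computed from the same combinatorial data across the family, and the desired constructibility follows from a Chevalley-type argument applied to $\cM\to B$ once lower semi-continuity is used to rule out ``worse'' minimizers appearing in the limit.
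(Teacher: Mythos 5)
Your skeleton matches the actual strategy in outline: reduce via Theorem~\ref{thm:lc place N-comp} to monomial lc places of $N$-complements, put the $N$-complements into a bounded family over a constructible stratification of $B$, and compute $\hvol(b,\cX_b)$ as a fiberwise infimum over relative simplices of quasi-monomial valuations. Up to that point the proposal is sound.

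The genuine gap is in your final step. You assert that $(b,\alpha)\mapsto \vol_{\cX_b}(v_\alpha)$ is upper semi-continuous in $b$ because ``the valuation ideals can be read off fiberwise from the relative log resolution, and colengths/multiplicities of such ideals only jump up under specialization.'' This is unjustified: the valuation ideals $\fa_m(v_\alpha)$ of a quasi-monomial valuation varying over the base do \emph{not} form a flat family of ideals, so no Chevalley-type or specialization argument controls the direction in which their colengths move; indeed, the delicacy of exactly this multiplicity term is what makes even the lower semi-continuity of Theorem~\ref{thm:vol lsc} nontrivial. What the proof actually requires --- and what your argument is missing --- is the much stronger statement that for a family of monomial lc places of a bounded family of $N$-complements, the volumes $\vol_{\cX_b}(v_\alpha)$ (equivalently, the relevant graded dimensions) are \emph{deformation invariant}, i.e.\ locally constant in $b$ for fixed $\alpha$. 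This is a local analogue of the deformation invariance of log plurigenera \cite{HMX-BirAut} (going back to \cite{Siu-plurigenera}), and it is the essential input that turns the fiberwise infimum into a locally constant, hence constructible, function on each stratum. Replacing it by a semicontinuity claim in an unverified direction leaves the key step of the proof unproven.
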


Since every $\hvol$-minimizer is an lc place of $N$-complement, the key point is to analyze how the volume changes as the $N$-complement varies. The constructibility statement in Theorem \ref{thm:constructible} is ultimately a consequence of a local version of the deformation invariance of log plurigenera \cite{HMX-BirAut} (see also \cite{Siu-plurigenera}).

\subsection{Finite generation}

We now come to finite generation part of the Stable Degeneration Conjecture, which is the main result of \cite{XZ-SDC}.

\begin{thm}[\cite{XZ-SDC}*{Theorem 1.1}] \label{thm:HRFG}
Let $x\in X=\Spec(R)$ be a klt singularity and let $v_0$ be the minimizer of the normalized volume function $\hvol$ on $\Val_{X,x}^*$. Then the associated graded algebra $\gr_{v_0} R$ is finitely generated.
\end{thm}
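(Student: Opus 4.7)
The plan is to combine the structural constraints already established on $v_0$ with a higher-rank finite generation principle. By Theorem \ref{thm:lc place N-comp}, the quasi-monomial minimizer $v_0$ is an lc place of some $N$-complement $\Gamma$ of $x\in X$, with $N$ depending only on $n=\dim X$; so $(X,\Gamma)$ is log canonical with $K_X+\Gamma\sim_\bQ 0$. Passing to a dlt modification $\pi\colon (Y,E)\to (X,\Gamma)$, where $E$ is the sum of prime divisors on $Y$ that are lc places of $(X,\Gamma)$, we may arrange $v_0\in \QM_\eta(Y,E)$ with weights $\alpha=(\alpha_1,\dots,\alpha_r)$ on the $r$ components $E_1,\dots,E_r$ through $\eta$, where $r$ is the rational rank of $v_0$. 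Finite generation of $\gr_{v_0}R$ then reduces, by a Rees-type construction along the linear functional $\langle\alpha,\cdot\rangle$, to finite generation of the $\bN^r$-graded $R$-algebra
\[
\cR:=\bigoplus_{\mathbf{m}\in \bN^r} \pi_*\cO_Y\!\left(-\textstyle\sum_i m_i E_i\right).
\]

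For $r=1$, i.e.\ when $v_0=c\cdot \ord_{E_1}$ is a Koll\'ar component, finite generation of $\cR$ is a direct consequence of BCHM via a single $(-E_1)$-MMP over $X$, as already exploited in Section \ref{ss:div minimizer}. For $r\ge 2$, this is the local higher-rank finite generation conjecture, and its proof requires coordinating infinitely many MMPs across a full polyhedral chamber of movable classes on $Y/X$. The crucial input is the K-semistability of $v_0$ (Theorem \ref{thm:minimizer = K-ss val}): one reduces to the setting of \cite{LXZ-HRFG} (higher-rank finite generation for K-semistable log Fano pairs) via an auxiliary Koll\'ar component that dominates the relevant lc places, or via an orbifold cone construction, and then transfers the global result back to the local setting. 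K-semistability guarantees that $\alpha$ lies in the interior of a single chamber where the sheaves $\pi_*\cO_Y(-\sum_i m_i E_i)$ vary piecewise-polynomially in $\mathbf{m}$, forcing $\cR$ to be finitely generated.

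The hard part is precisely this higher-rank step. Without K-semistability, finite generation genuinely fails, as shown by the forthcoming Example \ref{exp:non f.g.}, so the proof truly needs the minimizer property and not merely the quasi-monomial structure of $v_0$. The technical heart of \cite{XZ-SDC} is a uniform control of the Zariski decompositions across the simplex $\QM_\eta(Y,E)$ near $\alpha$, obtained by combining the boundedness of complements (Theorem \ref{thm:lc place N-comp}) with a filtered version of K-semistability that allows one to upgrade the rank-one BCHM output to finitely many chambers simultaneously. Once this polyhedral chamber structure is in place, finite generation of $\cR$, and hence of $\gr_{v_0}R$, follows formally.
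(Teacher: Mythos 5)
There is a genuine gap, and it sits exactly at the step you label as a ``Rees-type construction'': finite generation of $\gr_{v_0}R$ does \emph{not} reduce to finite generation of the multigraded algebra $\cR=\bigoplus_{\mathbf{m}\in\bN^r}\pi_*\cO_Y(-\sum_i m_iE_i)$. For $f\in R$ one only has $v_0(f)\ge\sum_i\alpha_i\ord_{E_i}(f)$, and the inequality is strict whenever the minimizing exponents in the Taylor expansion at $\eta$ differ from the componentwise minima; consequently the valuation ideals $\fa_\lambda(v_0)$ are strictly finer than anything the sheaves $\pi_*\cO_Y(-\sum_i m_iE_i)$ can see, and $\gr_{v_0}R$ is not a regrading of $\cR$. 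Worse, once $v_0$ is an lc place of a complement, $\cR$ is finitely generated essentially for free (run an MMP to make $Y$ of Fano type over $X$ and apply BCHM) --- yet Example \ref{exp:non f.g.} exhibits a whole simplex $\QM(Y,E)$ of lc places of complements on which $\gr_vR$ is finitely generated \emph{only} for the divisorial $v$. If your reduction were valid, that example could not exist. For the same reason, your concluding mechanism (``K-semistability forces $\alpha$ into a chamber where $\cR$ is finitely generated'') aims at the wrong target: the obstruction was never the finite generation of $\cR$.

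Your instinct that K-semistability must enter is correct, but the way it enters in \cite{XZ-SDC} is quite different, and the local statement is not obtained by reducing to the global theorem of \cite{LXZ-HRFG} via an auxiliary cone or Koll\'ar component (a general klt singularity carries no cone structure to exploit). The actual argument has two independent halves. First, a finite generation criterion (Theorem \ref{thm:fg criterion via km}): $\gr_vR$ is finitely generated with klt $\Spec(\gr_vR)$ iff $v$ lies in $\QM(Y,E)$ of a \emph{Koll\'ar model} --- a dlt modification with $E=\pi^{-1}(x)$ and $-(K_Y+E)$ ample, the higher-rank analog of a Koll\'ar component. The proof of this criterion does not run through chambers of $\cR$; it shows that the geodesic in the space of filtrations joining the $\ord_{E_i}$ coincides with the affine line in $\QM(Y,E)$, which requires a specialization result guaranteeing that the successive degenerations of the Koll\'ar model stay irreducible and klt. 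Second, one must show the minimizer is a Koll\'ar valuation (Theorem \ref{thm:minimizer is from km}); here K-semistability of $v_0$ (Theorem \ref{thm:minimizer = K-ss val}) is used, together with basis-type divisors and the ACC for log canonical thresholds, to prove the \emph{perturbed} lc-place property of Proposition \ref{prop:QM(km) perturb boundary}: for every effective Cartier $D$ there is $\varepsilon>0$ with $v_0$ an lc place of a complement of $(X,\varepsilon D)$. It is this robustness under perturbation --- not membership in $\LC$ of a single $N$-complement, which Theorem \ref{thm:lc place N-comp} already gives and which Example \ref{exp:non f.g.} shows is insufficient --- that characterizes Koll\'ar valuations and closes the argument.
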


Instead of proving finite generation for this particular valuation, we will describe a finite generation criterion for more general valuations. To motivate such a criterion, we first revisit the argument in the divisorial case.

We have seen in Section \ref{ss:div minimizer} that if the minimizer $v_0$ is divisorial, then the associated graded algebra $\gr_{v_0} R$ is finitely generated. This can also be deduced from Theorem \ref{thm:lc place N-comp}, as \emph{divisorial} lc places of complements satisfy the finite generation property by \cite{BCHM}\footnote{The reason is that the graded algebra associated to a divisorial valuation can be written as the quotient of a certain Cox ring. When the divisor is an lc place of complement, this Cox ring is finitely generated by \cite{BCHM}.}. Since the minimizer is still an lc place of complement in the higher (rational) rank situation (Theorem \ref{thm:lc place N-comp}), one may ask:

\begin{que}
Is it true that $\gr_v R$ is finitely generated for any valuation $v\in \Val_{X,x}^*$ that is an lc place of complement?
\end{que}

Unfortunately the answer is no. Indeed, the global version of this question already has a negative answer \cites{AZ-K-adjunction,LXZ-HRFG}.

\begin{expl} \label{exp:non f.g.}
Any valuation $v$ on a projective variety $V$ induces a filtration of the section ring of an ample line bundle $L$. It is proved in \cite{LXZ-HRFG}*{Theorem 4.5} that when $V$ is Fano, $L$ is proportional to $-K_V$ and $v$ is an lc place of complement, the induced filtration is finitely generated if and only if the $S$-invariant function defined in \eqref{e:A>=S} is locally linear on the rational envelope of $v$, i.e. a simplex $\QM(Y,E)$ of smallest dimension that contains $v$. The latter condition is automatic for any divisorial valuation (since the rational envelope is just a single point), but gets highly non-trivial for higher rank valuations. It already fails for some lc places of a nodal cubic curve $C\subseteq \bP^2$, see \cite{LXZ-HRFG}*{Section 6}. By the cone construction, this provides plenty of valuations over $0\in \bA^3$ that are lc places of complements but the associated graded algebras are not finitely generated. In fact, one can even write down a simplex $\QM(Y,E)$ of lc places of complements such that $v\in \QM(Y,E)$ satisfies the finite generation property if and only if $v$ is divisorial.
\end{expl}

Recall that any two valuations $v_0,v_1$ are connected by a geodesic $(\fa_{\bullet,t})_{0\le t\le 1}$ in the space of filtrations. If both $v_0$ and $v_1$ are divisorial lc places of the same complement, then using \cite{BCHM} it is not too hard to show that the filtrations $\fa_{\bullet,t}$ along the geodesic all have finitely generated associated algebras. On the other hand, we can draw lines in any simplex $\QM(Y,E)$. The failure of the finite generation property of higher rank valuations essentially comes from the fact that these lines are not necessarily geodesics in the valuation space.

We need to find additional properties of the $\hvol$-minimizer that turn lines in its rational envelope into geodesics. We do know more about the divisorial minimizers: they are also induced by Koll\'ar components (Theorem \ref{thm:div minimizer is kc}). The following higher rank analog turns out to be a key to the proof of Theorem \ref{thm:HRFG}.

\begin{defn}[\cite{XZ-SDC}*{Definition 3.7}]
Let $x\in X$ be a klt singularity. A \emph{Koll\'ar model} of $x\in X$ is a birational model $\pi\colon (Y,E)\to X$ such that $\pi$ is an isomorphism away from $\{x\}$, $E=\pi^{-1}(x)$, $(Y,E)$ is dlt\footnote{The readers may notice that the definition in \cite{XZ-SDC} uses the qdlt (shorthand for quotient of dlt) condition rather than the dlt one. The difference is not essential, except that the qdlt version would make some technical steps easier. Since we will only focus on the more conceptual part of the proof, we ignore the difference and work with the dlt version.} and $-(K_Y+E)$ is ample.
\end{defn}

The only difference with the definition of Koll\'ar components (Definition \ref{def:kc}) is that we allow the exceptional divisor $E$ to have more than one components, i.e., we drop the rank one condition.

\begin{defn}
Let $x\in X$ be a klt singularity. We say a quasi-monomial valuation $v\in \Val_{X,x}^*$ is a \emph{Koll\'ar valuation} if there exists a Koll\'ar model $\pi\colon (Y,E)\to X$ such that $v\in \QM(Y,E)$.
\end{defn}

Using Koll\'ar models and the notion of  Koll\'ar valuations, we can finally formulate the finite generation criterion.

\begin{thm}[\cite{XZ-SDC}*{Theorem 4.1}] \label{thm:fg criterion via km}
Let $x\in X=\Spec(R)$ be a klt singularity, and let $v\in \Val_{X,x}^*$ be a quasi-monomial valuation. Then the following are equivalent.
\begin{enumerate}
    \item The graded algebra $\gr_v R$ is finitely generated and $X_v:=\Spec(\gr_v R)$ is klt.
    \item The valuation $v$ is a Koll\'ar valuation.
\end{enumerate}
\end{thm}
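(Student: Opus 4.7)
I would prove the equivalence by treating each direction separately, with $(1) \Rightarrow (2)$ being substantially harder.

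For $(2) \Rightarrow (1)$, assume $v \in \QM(Y, E)$ for a Koll\'ar model $\pi \colon (Y, E = \sum_{i=1}^r E_i) \to X$, so that $v = v_\alpha$ with weights $\alpha_i \ge 0$ (after discarding redundant components we may take $\alpha_i > 0$). The strategy is to exploit the multigraded extended Rees algebra
\[
\cR := \bigoplus_{(m_1,\dots,m_r) \in \bZ^r} \pi_* \cO_Y\Big(-\sum_i m_i E_i\Big)
\]
over $R$. Since $(Y, E)$ is dlt and $-(K_Y + E)$ is $\pi$-ample, each $-E_i$ is $\pi$-big, and \cite{BCHM} applied in the relative log Fano setting (as in the finite generation of Cox rings) yields finite generation of $\cR$. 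The filtration $\fab(v)$ is obtained from $\cR$ by the $\alpha$-weighted slicing of the $\bZ^r$-grading, so $\gr_v R$ is a quotient of an appropriate Veronese-like sub-algebra and hence finitely generated. For klt-ness of $X_v$, the $\bT = \bG_m^r$-action on $\cR$ induces a $\bT$-equivariant degeneration of $(Y, E) \to X$ whose central fiber is a dlt log Fano model of $X_v$; the orbifold-cone-type argument of Example \ref{exp:quasi-reg cone defn} then gives klt-ness of $X_v$.

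For $(1) \Rightarrow (2)$, let $r$ be the rational rank of $v$. Finite generation of $\gr_v R$ produces a $\bT = \bG_m^r$-equivariant isotrivial degeneration of $x \in X$ to $x_v \in X_v$, under which $v$ corresponds to the toric valuation $\wt_\xi$ for a Reeb vector $\xi$ in the interior of the Reeb cone of the $\bT$-action. The plan is to first build a $\bT$-equivariant Koll\'ar model on $X_v$ whose quasi-monomial simplex contains $\xi$ in its interior, then deform it back to $X$ across the family. On $X_v$, Theorem \ref{thm:kc compute lct} applied $\bT$-equivariantly (to a $\bT$-invariant $\fm_{x_v}$-primary ideal) extracts a $\bT$-equivariant Koll\'ar component $F_1$; iterating this construction on deeper $\bT$-strata yields after $r$ steps a dlt model $(Y_v, E_v = \sum E_{v,i}) \to X_v$ with $\wt_\xi$ in the interior of some simplex $\QM_{\eta_v}(Y_v, E_v)$. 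A small relative MMP then arranges $-(K_{Y_v} + E_v)$ to be ample. Because the degeneration is $\bT$-equivariant and isotrivial, $(Y_v, E_v)$ extends to a model $(\cY, \cE) \to \cX$, and the general fiber $(Y, E) \to X$ is a Koll\'ar model containing $v$ by tracking the quasi-monomial weights.

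The principal obstacle is the construction on $X_v$: iterated extraction of Koll\'ar components need not remain dlt, and ampleness of $-(K + E)$ after all extractions is not automatic. The cleanest way around this should use the full-rank $\bT$-action on $X_v$ (equivalently, that $v$ has rational rank equal to $r$) to reduce to a toric-style problem on the Reeb cone: since $X_v$ is klt with a full-rank torus action, the dlt condition and ampleness can be engineered by a sufficiently fine $\bT$-equivariant subdivision adapted to $\xi$, combined with a perturbation of the boundary coefficients. The deformation step back to $X$ is comparatively standard, requiring only openness of dlt-ness and ampleness in $\bT$-equivariant flat families.
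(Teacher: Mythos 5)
Your argument for $(2)\Rightarrow(1)$ has a fatal gap at the step where you pass from finite generation of the multigraded algebra $\cR=\bigoplus_{(m_1,\dots,m_r)\in\bZ^r}\pi_*\cO_Y(-\sum_i m_iE_i)$ to finite generation of $\gr_vR$ by ``$\alpha$-weighted slicing.'' This implication is false when the weight vector $\alpha$ is irrational: the filtration $\fab(v)$ is obtained by summing the multigraded pieces over the half-spaces $\{\sum_i\alpha_im_i\ge\lambda\}$, and finite generation of $\cR$ does not survive this operation (there is no honest Veronese subalgebra when the slope is irrational). Example \ref{exp:non f.g.} is precisely a counterexample to your step: there one has a simplex $\QM(Y,E)$ whose vertices are divisorial lc places of a fixed complement, so the corresponding multigraded Cox ring \emph{is} finitely generated by \cite{BCHM}, and yet $\gr_vR$ is finitely generated only for the divisorial $v$ in the simplex. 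If your slicing argument worked, every quasi-monomial lc place of a complement would satisfy finite generation, and Theorem \ref{thm:fg criterion via km} would be vacuous. The entire difficulty of the theorem lives in this step, and it is exactly where the Koll\'ar model hypothesis (dlt plus ampleness of $-(K_Y+E)$, not merely simultaneous extractability on a Fano type model) must be used.

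The paper's route is different and you should compare: one connects the divisorial valuations $\ord_{E_i}$ of the Koll\'ar model by the \emph{geodesic} of filtrations (where BCHM-type finite generation does apply), and then proves that this geodesic coincides with the linear segment in $\QM(Y,E)$. That identification is the real content; it requires showing that the iterated degenerations $X\rightsquigarrow X_0\rightsquigarrow X_1\rightsquigarrow\cdots$ have irreducible klt central fibers and that the Koll\'ar model itself specializes to a Koll\'ar model with each component of $E$ staying irreducible --- a delicate tie-breaking argument. The klt-ness of $X_v$ also comes out of this iterated specialization, not from an orbifold-cone computation. Your outline for $(1)\Rightarrow(2)$ (degenerate to $X_v$ with its full-rank torus action, build an equivariant Koll\'ar model there, lift back) is directionally reasonable, but as you concede, the iterated equivariant extraction and the ampleness of $-(K+E)$ are not engineered by ``subdivision and perturbation'' in any routine way; this direction is not sketched in the survey, so I only note that your reduction to the central fiber is the right first move while the construction on $X_v$ remains unsubstantiated.
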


We only sketch the proof of the implication $(2)\Rightarrow (1)$, since this is what we need for the finite generation part of the Stable Degeneration Conjecture (Theorem \ref{thm:HRFG}). Let $\pi\colon (Y,E)\to X$ be a Koll\'ar model such that $v\in \QM(Y,E)$. For simplicity, let us assume that $E$ only has two components $E_0$ and $E_1$. According to what we discuss before, the finite generation of $\gr_v R$ would follow if we can show that the geodesic $(\fa_{\bullet,t})_{t\in [0,1]}$ joining $v_0=\ord_{E_0}$ and $v_1=\ord_{E_1}$ matches the obvious line in $\QM(Y,E)$.

We divide this into two parts. First, we shall prove that the filtration $\fa_{\bullet,t}$ comes from a valuation; in other words, the induced degeneration has irreducible central fiber (see Lemma \ref{lem:irred degen -> valuation}). For this part, we observe that the induced degeneration can be decomposed into a two step degeneration by test configurations
\[
X\rightsquigarrow X_0\rightsquigarrow X_1,
\]
where the first degeneration is induced by $E_0$, while the second degeneration is induced by the specialization of $E_1$ on the degeneration $Y_0\to X_0$ of the Koll\'ar model $Y\to X$. Note that a priori $E_1$ may break into several components on $Y_0$, in which case $X_1$ will no longer be irreducible. Thus the key to this part of the proof is the following specialization result for Koll\'ar models (\cite{XZ-SDC}*{Section 4.2}).

\begin{prop}
Let $(Y,E)\to X$ be a Koll\'ar model. For any component $E'$ of $E$, let $(Y_0,E_0)\to X_0$ be the central fiber of the induced test configuration. Then:
\begin{enumerate}
    \item $X_0$ is klt.
    \item Each irreducible component of $E$ specializes to an irreducible component of $E_0$.
    \item $(Y_0,E_0)\to X_0$ is also a Koll\'ar model.
\end{enumerate}
\end{prop}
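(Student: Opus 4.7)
The plan is to lift the test configuration of $X$ to a compatible $\bG_m$-equivariant test configuration of the entire Koll\'ar model $\pi\colon (Y,E)\to X$, and then to read off (1)--(3) from the geometry of the resulting total space. For the construction, dlt-ness of $(Y,E)$ guarantees $\bQ$-factoriality of $Y$ in a neighborhood of $E$, so in particular $E'=E_1$ is $\bQ$-Cartier; one forms the extended Rees algebra associated to the filtration $\cO_Y(-kE')$, compatibly with $\bigoplus_k\fa_k(\ord_{E'})$ on $X$. This produces a proper birational morphism $\cY\to\cX$ over $\bA^1$ with $\bG_m$-action, a $\bG_m$-invariant reduced divisor $\cE$ on $\cY$ (the closure of $E\times(\bA^1\setminus\{0\})$), and by construction the central fiber is the desired $(Y_0,E_0)\to X_0$ with $E_0=\cE|_{\cY_0}$.

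The heart of the argument is to show that the total pair $(\cY,\cE+\cY_0)$ is dlt. Since $-(K_Y+E)$ is $\pi$-ample, after shrinking $X$ around $x$ one may pick a general effective $\bQ$-divisor $\Gamma$ on $Y$ with $\Gamma\sim_{\bQ,X}-(K_Y+E)$ such that $(Y,E+\Gamma)$ remains dlt and $K_Y+E+\Gamma\sim_{\bQ,X}0$. Setting $D=\pi_*\Gamma$ exhibits each component $E_i$ of $E$ as an lc place of the $\bQ$-complement $(X,D)$. The test configuration of the lc pair $(X,D)$ induced by the lc place $E'$ then has slc central fiber (in fact lc, since $X_0$ is an irreducible Fano cone) by the standard principle---invoked earlier in the excerpt, e.g.\ in the sketch of Theorem~\ref{thm:SDC K-ss part}---that lc places of complements produce lc degenerations. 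A $\bG_m$-equivariant dlt modification of this family, obtained via a $(K+\cE)$-trivial MMP over $\cX$, yields $(\cY,\cE+\cY_0)$ as a dlt pair.

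With dlt-ness of the total space in hand, the three conclusions follow. For (3), adjunction applied to $(\cY,\cE+\cY_0)$ gives $(Y_0,E_0)$ dlt; $\bG_m$-equivariance of $\pi_0\colon Y_0\to X_0$ forces $E_0=\pi_0^{-1}(x_0)$ where $x_0$ is the unique closed torus orbit on $X_0$; and restriction of $-(K_{\cY/\bA^1}+\cE)$---which is $\cX$-ample by construction---to the central fiber gives the required $X_0$-ampleness of $-(K_{Y_0}+E_0)$. For (2), the $\bQ$-factorial dlt total space $\cY$ has the same relative Picard rank over $\cX$ as $Y$ has over $X$ (namely the number $s$ of components of $E$); restricting to central fibers preserves this rank, so $(Y_0,E_0)$ has $s$ prime boundary components, each arising as the restriction of one of the $\cE_i$. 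Finally, (1) is immediate from (3): once $(Y_0,E_0)\to X_0$ is a Koll\'ar model, klt-ness of $X_0$ follows from the same argument that shows any singularity admitting a Koll\'ar model is klt, combining dlt-ness of $(Y_0,E_0)$ with the $X_0$-ampleness of $-(K_{Y_0}+E_0)$.

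The principal obstacle is the dlt-ness of the total space $(\cY,\cE+\cY_0)$. Passing from an lc property on the central fiber to a genuine dlt pair in family, while controlling the support of the boundary so that each $\cE_i$ remains prime, requires a delicate $\bG_m$-equivariant $(K+\cE)$-trivial MMP over $\cX$. The $\pi$-ampleness of $-(K_Y+E)$ is what makes this MMP terminate and converge to the desired model, but tracking how each component $E_i$ propagates through the MMP---and verifying that the end product matches the explicit Rees construction---is where the main technical work is concentrated.
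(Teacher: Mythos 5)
The survey itself does not prove this proposition: it only records that the argument is ``a delicate application of the tie-breaking method in birational geometry'' and defers to \cite{XZ-SDC}*{Section 4.2}. Measured against that, your outline has the right global shape (degenerate the whole model $(Y,E)$ together with a complement and control the total space), but it contains genuine gaps at exactly the points where the work is, and it omits the one technique the paper names as essential. The crux of your argument is the dlt-ness of $(\cY,\cE+\cY_0)$, and your route to it is circular: a $\bG_m$-equivariant dlt modification or $(K+\cE)$-trivial MMP over $\cX$ produces \emph{some} dlt model, but it changes the space, and there is no argument that its central fiber coincides with the $(Y_0,E_0)$ produced by your Rees construction --- you flag this yourself, but flagging the gap does not close it. The Rees construction itself also needs justification: the extended Rees algebra of $\cO_Y(-kE')$ degenerates $Y$ to the normal cone $C_{E'}Y$, whose irreducibility, normality, and properness and birationality over $X_0$ are not formal when $E$ is reducible and $-E'|_{E'}$ is not ample; and dlt does not imply $\bQ$-factoriality, so even the $\bQ$-Cartier status of the single component $E'$ requires passing first to a $\bQ$-factorial (q)dlt model as in \cite{XZ-SDC}.

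The argument for (2) is the most serious problem. Restriction to the central fiber does not preserve relative Picard rank, and even if one knew $\rho(Y_0/X_0)$ equals the number $s$ of components of $E$, that count would not force each $\cE_i|_{\cY_0}$ to be prime: a priori one $\cE_i$ could break into several components while another restricts to something non-exceptional, and one would also need to know in advance that $Y_0$ is normal, $\bQ$-factorial, and an isomorphism over $X_0\setminus\{x_0\}$ --- which is part of what is being proved. The mechanism that actually delivers (2), and that upgrades ``lc'' to ``klt'' in (1), is tie-breaking: one perturbs the complement by a small general ample divisor so that the lc places of the perturbed pair over $x$ are exactly the valuations in $\QM(Y,E)$ (in particular each $\ord_{E_i}$ becomes an isolated, hence rigid, lc place over its center), and this rigidity together with the geodesic description of the induced filtrations is what survives the degeneration and forces each specialization to stay irreducible, while the strict positivity gained from the ample perturbation is what makes $X_0$ klt rather than merely a component of an slc central fiber. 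Without a perturbation step of this kind, your degeneration-of-complements principle only yields that $(X_0,D_0)$ is semi-log-canonical, which establishes neither (1) nor (2).
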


In particular, using this proposition we may conclude that $X_1$ is irreducible and even klt. The proof of the proposition itself is a delicate application of the tie-breaking method in birational geometry. In general, if $E$ has $r$ components, then the degeneration induced by $\fa_{\bullet,t}$ would be decomposed into $r$-steps and we need to apply the above proposition inductively, but overall the main idea stays the same.

To this end, we conclude that the geodesic joining Koll\'ar valuations (with respect to the same Koll\'ar model) lies in the valuations space. The next step is to show that the geodesic is the obvious line. This actually holds in a more general setting (see \cite{XZ-SDC}*{Lemma 4.8}):

% For the other part, the main observation is that the filtration which induces this test configuration exactly lies on the geodesic that connects $a\cdot \ord_{E_1}$ and $b\cdot\ord_{E_2}$. Thus the key proposition to this part of the proof is the following (\cite{XZ-SDC}*{Lemma 4.8}).

\begin{prop}
Let $v_0,v_1\in \QM(Y,E)$ be quasi-monomial valuations in the same simplex, and let $(\fa_{\bullet,t})_{t\in [0.1]}$ be the geodesic connecting $v_0$ and $v_1$. Suppose that for some $t\in[0,1]$ the filtration $\fa_{\bullet,t}$ is induced by a valuation $w$. Then $($under some mild assumptions$)$ we have $w=(1-t)\cdot v_0+t\cdot v_1\in \QM(Y,E)$.
\end{prop}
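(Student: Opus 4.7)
The plan is to identify $w$ with the quasi-monomial valuation $\tilde v := v_{(1-t)\alpha^0+t\alpha^1}\in \QM(Y,E)$, where $\alpha^0, \alpha^1$ are the weight vectors of $v_0, v_1$ in local coordinates $y_1,\dots,y_r$ at the generic point $\eta$ of the common stratum containing both valuations. The easy direction $w\le \tilde v$ comes from the observation that on each monomial $y^\beta$ we have $\tilde v(y^\beta) = (1-t)v_0(y^\beta) + tv_1(y^\beta)$, and since each $v_\alpha(f)$ is the minimum of $\langle \alpha, \beta\rangle$ over $\beta$ in the support of the formal expansion of $f$ at $\eta$, superadditivity of $\min$ yields $\tilde v(f) \geq (1-t) v_0(f) + t v_1(f)$ for every $f \in R$. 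Consequently, if $f = \sum_j g_j$ with $g_j \in \fa_{\lambda_0^j, 0}\cap \fa_{\lambda_1^j, 1}$ and $(1-t)\lambda_0^j + t \lambda_1^j \geq \lambda$, then $\tilde v(g_j)\ge\lambda$, whence $\tilde v(f)\ge\lambda$. Thus $\fa_{\lambda, t} \subseteq \fa_\lambda(\tilde v)$, and the assumption $\fa_{\bullet, t} = \fa_\bullet(w)$ produces $w\le \tilde v$ pointwise.

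For the reverse inequality $w\ge\tilde v$, I would produce, for each $f \in R$ with $\tilde v(f) \ge \lambda$, a decomposition $f = \sum_j g_j$ in $R$ with $g_j \in \fa_{\lambda_0^j, 0}\cap \fa_{\lambda_1^j, 1}$ and $(1-t)\lambda_0^j + t\lambda_1^j \ge \lambda$. In the completion $\hat\cO_{Y,\eta}$ this is immediate from the formal expansion $f = \sum_\beta c_\beta y^\beta$: each nonzero monomial automatically satisfies $\langle (1-t)\alpha^0+t\alpha^1, \beta\rangle \ge \lambda$ because $\tilde v(f)$ is the minimum of these quantities, and such a monomial obviously lies in $\fa_{\langle\alpha^0,\beta\rangle, 0}\cap \fa_{\langle\alpha^1,\beta\rangle, 1}$. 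Under the mild assumptions of the proposition, in particular $\fm_x$-primarity of the filtrations involved, this formal decomposition can be realized modulo every power $\fm_x^N$ by genuine elements of $R$ via a Noetherian approximation, and passing to the limit in $N$ yields $\fa_\lambda(\tilde v)\subseteq \fa_{\lambda, t}$. Combining the two containments, $\fa_\lambda(w) = \fa_\lambda(\tilde v)$ for all $\lambda>0$, so $w = \tilde v\in \QM(Y, E)$.

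The main obstacle is the lifting step in the preceding paragraph. The monomial decomposition of $f$ naturally lives in $\hat\cO_{Y,\eta}$ rather than in $R$, and one cannot naively substitute each $y^\beta$ by a regular function on $X$ without possibly disturbing the bi-weight bounds on individual pieces. The resolution is to lift all formal monomials of bounded $(v_0,v_1)$-weight simultaneously, en bloc, up to a chosen $\fm_x$-adic error, and then exploit $\fm_x$-primarity to absorb the error into higher-order terms of the same bi-filtration $\fa_{\lambda,0}\cap \fa_{\mu,1}$. This is the place where the ``mild assumptions'' must intervene and where they are presumably calibrated to make the Noetherian approximation available.
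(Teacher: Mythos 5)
Your first containment is correct: the superadditivity of $\min$ does give $\tilde v \ge (1-t)v_0+tv_1$ pointwise, hence $\fa_{\lambda,t}\subseteq \fa_\lambda(\tilde v)$ and $w\le \tilde v$. The problem is the reverse containment, and the gap there is not a technicality that Noetherian approximation can repair — it is the entire content of the proposition. Notice that your argument for $\fa_\lambda(\tilde v)\subseteq \fa_{\lambda,t}$ never uses the hypothesis that $\fa_{\bullet,t}$ is induced by a valuation. If it were valid, it would prove unconditionally that $\fa_\bullet(v_t)=\fa_{\bullet,t}$ for every $t$, i.e.\ that every line segment in $\QM(Y,E)$ is a geodesic. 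This is false: as discussed around Example \ref{exp:non f.g.}, there are simplices $\QM(Y,E)$ of lc places of complements over $0\in\bA^3$ in which the geodesic joining the (divisorial) vertices has finitely generated associated graded algebras at every $t$, while $\gr_{v_t}R$ fails to be finitely generated for irrational $t$; so the line and the geodesic must differ. The failure of ``lines are geodesics'' is precisely the source of the non-finite-generation phenomenon, and any correct proof must invoke the assumption that $\gr_{\fa_{\bullet,t}}R$ is a domain (equivalently, the multiplicativity of $w$).

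Concretely, the lifting step breaks because the bi-homogeneous pieces of the formal expansion live in $\widehat{\cO}_{Y,\eta}\cong \bk(\eta)[\![y_1,\dots,y_r]\!]$, and $R$ is not dense in $\cO_{Y,\eta}$ when $\pi\colon Y\to X$ is a nontrivial blowup with a positive-dimensional stratum (the relevant case): for instance, on the blowup of a point the function $x_2/x_1$ restricts to a nonconstant function on the exceptional stratum, whereas every element of $R$ restricts to a constant. Thus the individual pieces $\sum_{\beta}c_\beta y^\beta$ of fixed bi-weight, with $c_\beta\in\bk(\eta)$, are in general not approximable by elements of $R$ at all; the discrepancy is not an $\fm_x$-adically small error to be absorbed, but exactly the difference between $\fa_\lambda(v_t)$ and $\fa_{\lambda,t}$. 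What one does get for free from the definition of the geodesic is the lower bound $w\ge (1-t)v_0+tv_1$ pointwise (since $f\in\fa_{v_0(f),0}\cap\fa_{v_1(f),1}$), so the hypothesis sandwiches $w$ between the non-valuation function $(1-t)v_0+tv_1$ and the valuation $v_t$; closing this gap is where the proof of \cite{XZ-SDC}*{Lemma 4.8} genuinely uses that $w$ is a valuation, and it is also why the survey remarks that a priori it is not even clear that $w$ lies in $\QM(Y,E)$.
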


In other words, whenever the geodesic intersects the valuations space, the intersection point is the obvious one in the corresponding simplex $\QM(Y,E)$. We remark that a priori it is not even clear why the intersection point lies in $\QM(Y,E)$. % (although it's also hard to imagine why it does not).

% Once we identified the valuation that induces the test configuration \eqref{e:tc from wt=(a,b) 1-ps}, the proof of Theorem \ref{thm:fg criterion via km} concludes as follow. Assume that $v$ is the valuation with weights $a_1,a_2$ along $E_1,E_2$. Since the line segment in $\QM(Y,E)$ connecting $E_1$ and $E_2$ is also the geodesic between them, the valuation ideals $\fa_\lambda (v)$ are generated by those $f\in R$ such that
% \[
% a_1\cdot \ord_{E_1}(f)+a_2\cdot \ord_{E_2}(f)\ge \lambda.
% \]
% As a result, we can realize the graded algebra $\gr_v R$ as a quotient of some Cox ring on $Y$. Since the latter is finitely generated by \cite{BCHM}, the finite generation of $\gr_v R$ follows.

To apply the finite generation criterion from Theorem \ref{thm:fg criterion via km} to the $\hvol$-minimizer, we still need the next result.

\begin{thm} \label{thm:minimizer is from km}
Let $x\in X$ be a klt singularity and let $v_0\in \Val_{X,x}^*$ be the minimizer of the normalized volume function $\hvol$. Then $v_0$ is a Koll\'ar valuation. % there exists a Koll\'ar model $\pi\colon (Y,E)\to X$ such that $v_0\in \QM(Y,E)$.
\end{thm}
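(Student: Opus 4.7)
The plan is to combine the boundedness of complements for the minimizer (Theorem \ref{thm:lc place N-comp}) with a relative minimal model program to extract a Koll\'ar model, using the K-semistability of $v_0$ (Theorem \ref{thm:minimizer = K-ss val}) together with the uniqueness of the minimizer (Theorem \ref{thm:uniqueness}) to ensure that the resulting model has the right exceptional locus.

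By Theorem \ref{thm:lc place N-comp}, $v_0$ is a quasi-monomial lc place of some $N$-complement $D$ of $X$ at $x$, where $N$ depends only on $\dim X$. Let $\Sigma$ be the rational envelope of $v_0$, i.e. the smallest simplex of quasi-monomial lc places of $(X,D)$ centered at $x$ that contains $v_0$, realized as $\QM_\eta(Y_0,E_0)$ for some log smooth model $(Y_0,E_0=\sum_{i=1}^r E_i)$. Each $E_i$ is a prime divisor over $x$ that is an lc place of $(X,D)$. By standard MMP techniques (BCHM applied to the lc pair $(X,D)$), one can arrange a proper birational morphism $\pi\colon Y\to X$, an isomorphism outside $\{x\}$, extracting precisely $E_1,\dots,E_r$, such that $(Y,E+\tD)$ is dlt, where $E=\sum_{i=1}^r E_i$ and $\tD$ is the strict transform of $D$. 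In particular $(Y,E)$ is dlt and $v_0\in \QM(Y,E)$.

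Since $K_Y+E+\tD=\pi^*(K_X+D)\sim_\bQ 0$ over $X$, we have $-(K_Y+E)\sim_\bQ \tD$ over $X$. A relative $(K_Y+E)$-MMP over $X$ is thus equivalent to an anti-MMP for $\tD$ over $X$; by BCHM it terminates with a good minimal model, and after passing to the relative ample model we arrive at $\pi'\colon (Y',E')\to X$ with $(Y',E')$ dlt and $-(K_{Y'}+E')$ being $\pi'$-ample. If we can guarantee that $\pi'^{-1}(x)=E'$ and that no component of $E$ is contracted along the way, then $(Y',E')$ is a Koll\'ar model containing $v_0$ in its dual simplex, and the theorem follows.

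The main obstacle is showing that no component $E_i$ of $E$ gets contracted during this MMP. This is where K-semistability of $v_0$ is essential: if some $E_i$ were contracted, the MMP step would produce an alternative filtration on $R$ refining (or incomparable with) $\fab(v_0)$, whose associated valuation on the filtration space would have normalized volume no larger than $\hvol(v_0)$. Combined with the uniqueness of the minimizer (Theorem \ref{thm:uniqueness}) and the fact that $v_0$ has strictly positive weight on each $E_i$ by minimality of $\Sigma$, this yields a contradiction. To make the argument rigorous one invokes the geodesic convexity of $\hvol$ (Theorem \ref{thm:convexity}) along the one-parameter family of filtrations arising from the MMP step, which would force strict decrease of $\hvol$ unless the contraction is trivial. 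The ancillary condition $\pi'^{-1}(x)=E'$ can be secured by choosing the $N$-complement $D$ in sufficiently general position within the bounded family provided by Theorem \ref{thm:lc place N-comp}, so that any components of $D$ passing through $x$ are controlled and contribute nothing to the fiber over $x$ on the final model.
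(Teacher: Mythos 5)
Your starting point is too weak, and the paper's own Example \ref{exp:non f.g.} shows why: there exist quasi-monomial valuations over $0\in\bA^3$ that are lc places of complements but whose associated graded algebras are \emph{not} finitely generated, hence (by Theorem \ref{thm:fg criterion via km}) are not Koll\'ar valuations. So knowing from Theorem \ref{thm:lc place N-comp} that $v_0$ is an lc place of one $N$-complement $D$, extracting the rational envelope on a dlt model of $(X,D)$, and running a relative $-(K_Y+E)$-MMP cannot in general terminate in a Koll\'ar model containing $v_0$ in its dual complex — for a generic lc place of a complement no such model exists at all. The correct characterization, which is Proposition \ref{prop:QM(km) perturb boundary} in the paper, is strictly stronger than being an lc place of a single complement: $v$ is a Koll\'ar valuation if and only if for \emph{every} effective Cartier divisor $D$ on $X$ there is some $\varepsilon>0$ such that $v$ is an lc place of a complement of the perturbed pair $(X,\varepsilon D)$. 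The paper then verifies this perturbed property for the minimizer using its K-semistability: one takes basis type divisors maximizing the coefficient of $D$, writes them as $\varepsilon D+\Gamma$, and passes from the asymptotic statement to a finite level using the ACC for log canonical thresholds. Your proposal never establishes (or even formulates) this perturbed condition.

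The place where you try to compensate — arguing that K-semistability and uniqueness of the minimizer prevent any $E_i$ from being contracted in the MMP — is not an argument. You assert that a contraction "would produce an alternative filtration on $R$ \dots whose associated valuation would have normalized volume no larger than $\hvol(v_0)$," but no such filtration is constructed, and Theorem \ref{thm:convexity} concerns geodesics between two given filtrations, not the discrete birational modifications of an MMP; there is no mechanism linking a flip or divisorial contraction on $Y$ to a normalized-volume comparison. Moreover, even granting that no $E_i$ is contracted, you would still need the relative ample model to satisfy $\pi'^{-1}(x)=E'$ and to remain (q)dlt along the strata carrying $v_0$; your suggestion to secure this by choosing $D$ "in general position" is not available, since $D$ is constrained to admit $v_0$ as an lc place. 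These are exactly the difficulties that the machinery of special complements in \cite{XZ-SDC}*{Section 3.3} is designed to handle, by making the relevant conditions insensitive to the choice of log smooth model and stable under perturbation of the boundary.
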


If we assume that $\gr_v R$ is finitely generated, Theorem \ref{thm:minimizer is from km} can be deduced from the fact that the minimizer $v$ is the unique valuation that computes $\lct(\fab(v))$. In the divisorial case, this is exactly the argument we use in Section \ref{ss:div minimizer}. Since we don't know finite generation yet, we need to find a different argument, even in the divisorial case.

The idea is that many properties of Koll\'ar components are stable under perturbation. Conversely, we can detect whether a given divisor is a Koll\'ar component ``by perturbation''. As a typical example, we have the following characterization of Koll\'ar component.

\begin{lem} \label{lem:kc perturb boundary}
A prime divisor $E$ over a klt singularity $x\in X$ is a Koll\'ar component if and only if for any effective Cartier divisor $D$ on $X$, there exists some $\varepsilon\in \bQ_{>0}$ such that $E$ is an lc place of some complement of $(X,\varepsilon D)$.
\end{lem}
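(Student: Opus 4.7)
The plan is to prove the two directions separately. The ``only if'' direction is a construction on the plt blowup, while the ``if'' direction is harder and requires an MMP-style extraction.

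For the ``only if'' direction, I would start from the plt blowup $\pi\colon Y\to X$ of the Koll\'ar component $E$, and fix an effective Cartier divisor $D$ on $X$. For any sufficiently small $\varepsilon\in \bQ_{>0}$, the pair $(Y,E+\varepsilon\pi^*D)$ remains plt and $-(K_Y+E+\varepsilon\pi^*D)$ remains $\pi$-ample (by openness of plt and of ampleness). Adjunction along $E$ then produces a klt log Fano pair $(E,\Delta_E+\varepsilon(\pi^*D)|_E)$, which admits a $\bQ$-complement $B_E\geq 0$ by the existence of complements for log Fano pairs (a standard application of \cite{BCHM}). The next step is to lift $B_E$ to a divisor $B_Y\geq 0$ on $Y$ with $B_Y\sim_{\bQ,\pi}-(K_Y+E+\varepsilon\pi^*D)$ and $B_Y|_E=B_E$, using Kawamata--Viehweg vanishing to ensure the restriction map on sections of sufficiently divisible multiples is surjective. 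Inversion of adjunction would then show that $(Y,E+\varepsilon\pi^*D+B_Y)$ is lc, and pushing forward by $\pi$ yields a complement $(X,\varepsilon D+\pi_*B_Y)$ of $(X,\varepsilon D)$ with $E$ as an lc place.

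For the ``if'' direction, I would first apply the perturbation hypothesis to some effective Cartier divisor $D$ to obtain $\varepsilon\in \bQ_{>0}$ and $\Gamma\geq 0$ with $B:=\varepsilon D+\Gamma\sim_\bQ -K_X$, $(X,B)$ lc, and $A_{X,B}(E)=0$. For any small $\delta\in \bQ_{>0}$, the scaled pair $(X,(1-\delta)B)$ is klt with $A_{X,(1-\delta)B}(E)=\delta A_X(E)<1$, so \cite{BCHM}*{Corollary~1.4.3} produces a $\bQ$-factorial birational morphism $\pi\colon Y\to X$ extracting only $E$. Since $\rho(Y/X)=1$ and $E$ is $\pi$-exceptional, the negativity lemma forces $-E$ to be $\pi$-ample; hence $-(K_Y+E)=-\pi^*K_X-A_X(E)\,E$ is also $\pi$-ample.

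The main obstacle will be verifying the plt property of $(Y,E)$. From the identity $\pi^*(K_X+B)=K_Y+\pi_*^{-1}B+E$ (since $E$ is an lc place), one derives
\[
A_{Y,E}(F)=A_{X,B}(F)+\ord_F(\pi_*^{-1}B)
\]
for every divisor $F$ over $Y$. Thus $(Y,E)$ fails to be plt precisely when there is some $F\neq E$ exceptional over $Y$ that is an lc place of $(X,B)$ whose center avoids the support of $\pi_*^{-1}B$. To rule this out I would exploit the flexibility of the hypothesis: by replacing $D$ with a larger effective Cartier divisor whose pullback meets the centers of all potentially problematic $F$'s (which on any fixed log resolution form a finite set, since $A_{Y,E}(F)=0$ forces $A_X(F)=A_X(E)\,\ord_F(E)$), and reapplying the hypothesis to the enlarged $D$, I aim to arrange that every exceptional lc place other than $E$ has center in the support of $\pi_*^{-1}B$. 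This tie-breaking step---converting the ``for any $D$'' flexibility into a geometric constraint on $Y$---is the most delicate part of the argument.
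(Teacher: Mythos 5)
Your overall route is the same as the paper's: the forward direction via the plt blowup together with the existence of complements for plt log Fano pairs (adjunction to $E$, a complement on $(E,\Delta_E+\cdots)$, lifting by vanishing, inversion of adjunction), and the converse via the \cite{BCHM} extraction of $E$ followed by a perturbation argument upgrading lc to plt. However, as written the forward direction has a step that fails: when $x\in\Supp D$ the pair $(Y,E+\varepsilon\pi^*D)$ is \emph{never} plt, because $\pi^*D=\pi_*^{-1}D+\ord_E(D)\cdot E$ and so the coefficient of $E$ in the boundary is $1+\varepsilon\,\ord_E(D)>1$. What must be added is the strict transform $\varepsilon\pi_*^{-1}D$ (this is exactly what the paper does); with that replacement your construction goes through, and the pushforward is still a complement of $(X,\varepsilon D)$ because the crepant pullback of $(X,\varepsilon D+\Gamma)$ to $Y$ is $(Y,E+\varepsilon\pi_*^{-1}D+\pi_*^{-1}\Gamma)$ whenever $A_{X,\varepsilon D+\Gamma}(E)=0$.

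In the converse, your identification of the obstruction is correct: the identity $A_{Y,E}(F)=A_{X,B'}(F)+\ord_F(\pi_*^{-1}B')$ holds on the fixed model $Y$ for \emph{every} complement $B'$ having $E$ as an lc place, so $A_{Y,E}(F)=0$ forces the center of $F$ on $Y$ to not be contained in $\Supp\pi_*^{-1}B'\supseteq\Supp\pi_*^{-1}(\varepsilon D)$. The gap is in how you rule such $F$ out. Requiring that the \emph{pullback} of $D$ meet the centers is the wrong condition: the essential case is a center $Z\subseteq E$ (all such centers map to $x$), and then $\pi^*D\supseteq\ord_E(D)\cdot E\supseteq Z$ automatically once $x\in\Supp D$, which yields no contradiction. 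What you need is $Z\subseteq\Supp\pi_*^{-1}D$. For centers not contained in $E$ this is easy, since $\pi$ is an isomorphism at their generic points and one may take $D$ containing their images. For $Z\subseteq E$ one must exhibit $D=(g=0)$ whose strict transform contains $Z$, i.e.\ a $g\in R$ whose initial form in $\gr_E R$ vanishes along $Z$; producing this uses the identification of $Y$ with $\Proj_X$ of the Rees algebra of $\ord_E$ (available because $-E$ is $\pi$-ample) and is the real content of the tie-breaking that your sketch leaves open. (Two smaller points: the relation should read $A_X(F)=(2-A_X(E))\ord_F(E)$ rather than $A_X(E)\ord_F(E)$, and the finiteness you need is that of the lc \emph{centers} of $(Y,E)$ --- the set of lc places themselves is typically infinite.)
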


Essentially, we try to perturb the property that every Koll\'ar component is an lc place of some complement. The global version of this statement is \cite{Xu-K-stability-survey}*{Theorem 4.12}.

\begin{proof}
If $E$ is a Koll\'ar component and $Y\to X$ is the plt blowup, then $(Y,E)$ has a complement since the pair is plt and $-(K_Y+E)$ is ample. Taking pushforward we get a complement on $X$ with $E$ as an lc place. Both the plt and the ampleness condition are preserved if we add a small multiple of the strict transform of $D$ to the pair $(Y,E)$, thus the same statement holds for $(X,\varepsilon D)$. Conversely, if $E$ is an lc place of complement, then by \cite{BCHM} one can find a birational model $Y\to X$ with exceptional divisor $E$ such that $(Y,E)$ is lc and $-(K_Y+E)$ is ample over $X$. If these conditions are preserved after adding a small boundary, then $(Y,E)$ is in fact plt and thus $E$ becomes a Koll\'ar component.
\end{proof}

Similarly, we can formulate a higher rank analog that characterizes Koll\'ar valuations ``by perturbation''. 

\begin{prop} \label{prop:QM(km) perturb boundary}
Let $x\in X$ be a klt singularity and let $v\in \Val_{X,x}^*$ be a quasi-monomial valuation. Then the following are equivalent.
\begin{enumerate}
    \item The valuation $v$ is a Koll\'ar valuation. % There exists a Koll\'ar model $(Y,E)\to X$ such that $v\in \QM(Y,E)$.
    \item For any effective Cartier divisor $D$ on $X$, there exists some $\varepsilon\in\bQ_{>0}$ such that $v$ is an lc place of some complement of $(X,\varepsilon D)$.
\end{enumerate}
\end{prop}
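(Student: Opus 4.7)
\textbf{Plan for Proposition \ref{prop:QM(km) perturb boundary}.} Both implications proceed by perturbation arguments analogous to those in the proof of Lemma \ref{lem:kc perturb boundary}, with the additional technical complication that $v$ corresponds to an entire simplex of divisors rather than a single one.

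For $(1)\Rightarrow(2)$, let $\pi\colon(Y,E)\to X$ be a Kollár model with $v\in \QM(Y,E)$. Since $(Y,E)$ is dlt and $-(K_Y+E)$ is $\pi$-ample, a standard construction combining Shokurov-type complement theory with relative Kawamata--Viehweg vanishing produces an effective $\bQ$-divisor $\Gamma_Y$ with $(Y,E+\Gamma_Y)$ lc, $K_Y+E+\Gamma_Y\sim_{\bQ,X}0$, and $\Gamma_Y$ chosen transverse to the strata of $E$, so that every valuation in $\QM(Y,E)$ (including $v$) is an lc place. Given an effective Cartier divisor $D$ on $X$ with strict transform $\tilde D$, for all sufficiently small $\varepsilon\in\bQ_{>0}$ the pair $(Y,E+\varepsilon\tilde D)$ remains dlt and $-(K_Y+E+\varepsilon\tilde D)$ remains $\pi$-ample, so we can repeat the construction to obtain $\Gamma'_Y\sim_{\bQ,X}-K_Y-E-\varepsilon\tilde D$ with $(Y,E+\varepsilon\tilde D+\Gamma'_Y)$ lc and $v$ still an lc place. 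Pushing forward via $\pi_*$ and using crepancy yields an effective $\bQ$-divisor $\Gamma=\pi_*\Gamma'_Y$ on $X$ with $K_X+\varepsilon D+\Gamma\sim_\bQ 0$, $(X,\varepsilon D+\Gamma)$ lc, and $v$ an lc place, proving $(2)$.

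For $(2)\Rightarrow(1)$, the strategy is to extract the entire QM simplex carrying $v$ into a single birational model and then perturb to achieve the Kollár conditions. Choose $D$ very ample on $X$; by $(2)$ there exist $\varepsilon\in\bQ_{>0}$ and a $\bQ$-complement $\Gamma$ of $(X,\varepsilon D)$ with $A_{X,\varepsilon D+\Gamma}(v)=0$. Because the lc place set of a fixed lc log Calabi--Yau pair is a union of faces of QM simplices of log smooth models, we can choose $(Y_0,F_0=\sum_{i=1}^r F_i)$ with $v\in\QM_\eta(Y_0,F_0)$ in the interior of the simplex and with each $F_i$ an lc place of $(X,\varepsilon D+\Gamma)$. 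By a relative $\bQ$-factorial dlt extraction (a standard application of \cite{BCHM}), one obtains a projective birational morphism $\pi\colon Y\to X$, an isomorphism over $X\setminus\{x\}$, whose exceptional locus is exactly $E=\sum_{i=1}^r E_i$ (the strict transforms of the $F_i$), such that $\bigl(Y,E+\pi^{-1}_*(\varepsilon D+\Gamma)\bigr)$ is dlt and
\[
K_Y+E+\pi^{-1}_*(\varepsilon D+\Gamma)\sim_{\bQ,X}0.
\]
In particular $(Y,E)$ is itself dlt and
\[
-(K_Y+E)\sim_{\bQ,X}\varepsilon\tilde D+\tilde\Gamma,
\]
which is effective and, since $D$ was chosen ample, $\pi$-big. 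A final $-(K_Y+E)$-MMP over $X$ with scaling, of which every step is automatically crepant with respect to the log Calabi--Yau pair above, terminates on a relatively ample model $(Y',E')$; crepancy forces each $E_i$ to survive, so $v\in\QM(Y',E')$ and $\pi'\colon(Y',E')\to X$ is the desired Kollár model.

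The principal obstacle is the ampleness step: ensuring that after extraction one reaches a model on which $-(K_Y+E)$ itself is $\pi$-ample, not merely anti-nef as a summand of a log Calabi--Yau boundary. This is precisely where hypothesis $(2)$ is used in full strength --- the freedom to choose $D$ ample supplies the extra positivity carried by $\varepsilon\tilde D$, which drives the MMP to terminate on an ample model without contracting any $E_i$. A parallel difficulty is to arrange the dlt extraction of the full simplex $\{F_1,\dots,F_r\}$ at once; this requires the $F_i$ to be simultaneously lc places of a single complement, which again is guaranteed by $(2)$ after an appropriate choice of log smooth model.
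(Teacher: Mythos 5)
The implication $(1)\Rightarrow(2)$ as written has a genuine gap, and it is exactly the difficulty the paper flags just before stating the proposition. You claim that for small $\varepsilon$ the pair $(Y,E+\varepsilon\widetilde{D})$ remains dlt on the \emph{fixed} Koll\'ar model. This fails whenever the strict transform $\widetilde{D}$ contains an lc center of $(Y,E)$ --- in particular whenever $\widetilde{D}$ passes through the generic point $\eta$ of the stratum supporting $v$, since then any $w\in\QM_\eta(Y,E)$ has $A_{Y,E+\varepsilon\widetilde{D}}(w)=-\varepsilon\, w(\widetilde{D})<0$ and the pair is not even lc. Because $D$ is an \emph{arbitrary} effective Cartier divisor, this situation cannot be avoided by genericity. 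In that case the correct complement of $(X,\varepsilon D)$ must assign strictly positive log discrepancy to some of the $E_i$ (so that $v$ is an lc place while some vertices of its simplex are not), and producing it forces one to change the model. This is precisely why \cite{XZ-SDC} introduces special complements and monomial lc places with respect to log smooth models and proves the proposition by showing that (1) and (2) are each equivalent to a third condition phrased in those terms, which is designed to be robust under replacing the model. An argument that never leaves the original Koll\'ar model cannot close this gap.

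The direction $(2)\Rightarrow(1)$ is closer in spirit to a workable argument (extract the simplex, pass to the ample model of $-(K_Y+E)$), but the steps you assert are exactly the hard points rather than consequences of standard machinery. (i) For the extracted model to satisfy $E=\pi^{-1}(x)$ with $\pi$ an isomorphism elsewhere, the divisors $F_1,\dots,F_r$ must all have center $\{x\}$; vertices of a QM simplex of lc places containing $v$ in its relative interior need not have zero-dimensional centers, and you give no mechanism to arrange this. (ii) ``Crepancy forces each $E_i$ to survive'' is an assertion, not a proof: the MMP is trivial for $K_Y+E+\varepsilon\widetilde{D}+\widetilde{\Gamma}$ and can a priori contract exceptional divisors; one needs an explicit negativity/bigness argument using the coefficients $\varepsilon\,\ord_{E_i}(D)+\ord_{E_i}(\Gamma)$. (iii) The ample model of $-(K_Y+E)$ need not be dlt (this is why \cite{XZ-SDC} actually works with qdlt models, as the paper's footnote notes), and (iv) even if every $E_i$ survives, you must show the stratum $\eta$ survives so that $v\in\QM(Y',E')$ on the final model; MMP steps can break the relevant intersection of the $E_i$. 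These four points, none of which is addressed beyond naming them as ``obstacles,'' constitute the actual content of the paper's proof.
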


Unfortunately, the proof of the higher rank version is much harder. One reason is that we no longer have a canonical blowup that ``extract'' the higher rank valuation. The Koll\'ar models only serve as approximations of this extraction, and they are no longer Koll\'ar models if we add a small boundary divisor (e.g. the dlt condition may fail). This should not be surprising. After all, if we think of $\QM(Y,E)$ (where $(Y,E)$ is a Koll\'ar model) as a ``neighbourhood'' of the valuation $v$, then after perturbation, we should only expect properties of $v$ to hold after ``shrinking the neighbourhood'', which in practice means we should switch to a different Koll\'ar model.

To overcome these difficulties, we rely on the idea of \emph{special complements}, which was originally introduced in \cite{LXZ-HRFG} to attack the Fano version of the Higher Rank Finite Generation Conjecture.

\begin{defn}[special complements]
Let $x\in X$ be a klt singularity and let $\pi\colon (Y,E)\to X$ be a log smooth model. A \emph{special complement} of $x\in X$ (with respect to $(Y,E)$) is a complement $\Gamma$ such that $\pi_*^{-1}\Gamma\ge G$ for some effective ample $\bQ$-divisor $G$ on $Y$ whose support does not contain any stratum of $(Y,E)$.

Any valuation $v\in \QM(Y,E)\cap \LC(X,\Gamma)$ is called a \emph{monomial lc place} of the special complement $\Gamma$ (with respect to $(Y,E)$).
\end{defn}

Intuitively, the log smooth model is a log resolution of the Koll\'ar model, while conversely, the Koll\'ar model is the ample model of the log smooth model. The special complement condition can be regarded as a birational version of the log Fano condition in the definition of Koll\'ar models, while monomial lc places of special complements are the birational analog of monomial valuation on Koll\'ar models. The conditions are specifically designed so that the definition is not sensitive to the particular choice of the log smooth model nor the special complements. This offers room for perturbation. The proof of Proposition \ref{prop:QM(km) perturb boundary} now proceeds by showing that the two conditions in the statement are both equivalent to a third one:
\begin{enumerate}
    \setcounter{enumi}{2}
    \item The valuation $v$ is a monomial lc place of some special complement $\Gamma$ (with respect to some log smooth model $(Y,E)$).
\end{enumerate}
For details, see \cite{XZ-SDC}*{Section 3.3}.

Finally, we need to verify the equivalent conditions in Proposition \ref{prop:QM(km) perturb boundary} in order to finish the proof of Theorem \ref{thm:minimizer is from km}. This is accomplished by the following result.

\begin{prop}
Let $x\in X$ be a klt singularity and let $v_0\in \Val_{X,x}^*$ be the minimizer of the normalized volume function. Then for any effective Cartier divisor $D$ on $X$, there exists some $\varepsilon\in\bQ_{>0}$ such that $v_0$ is an lc place of some complement of $(X,\varepsilon D)$.
\end{prop}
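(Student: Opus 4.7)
After shrinking $X$ around $x$ so that $K_X\sim_\bQ 0$, the statement is equivalent to producing an effective $\bQ$-divisor $\Gamma'\geq\varepsilon D$ with $(X,\Gamma')$ lc and $v_0(\Gamma')=A_X(v_0)$ for some $\varepsilon>0$; then $\Gamma:=\Gamma'-\varepsilon D$ is the sought complement of $(X,\varepsilon D)$ with $v_0$ as an lc place. Note that any such $\Gamma'$ is automatically $\bQ$-Cartier and hence $\bQ$-principal after further localization, so no additional linear-equivalence constraint needs to be imposed.

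By Theorem \ref{thm:lc place N-comp}, we have an $N$-complement $\Gamma_0$ of $(X,0)$ with $v_0$ as an lc place, giving $(X,\Gamma_0)$ lc and $v_0(\Gamma_0)=A_X(v_0)$. A naive attempt is $\Gamma':=(1-\varepsilon'')\Gamma_0+\varepsilon D$ with $\varepsilon''=\varepsilon v_0(D)/A_X(v_0)$, which enforces $v_0(\Gamma')=A_X(v_0)$ and $\Gamma'\geq\varepsilon D$ for free; however, the global lc condition then requires $w(\Gamma_0)/w(D)\geq v_0(\Gamma_0)/v_0(D)$ at every other lc place $w$ of $(X,\Gamma_0)$, which generally fails—especially when $v_0$ has higher rational rank and hence sits in the interior of the simplex of lc places of $\Gamma_0$, where it cannot be extremal for the linear functional $w\mapsto w(D)/w(\Gamma_0)$.

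To overcome this, I would exploit the K-semistability of $v_0$ (Theorem \ref{thm:minimizer = K-ss val}) to enlarge the pool of available complements beyond the single divisor $\Gamma_0$. For any $\eta>0$ and $m\gg 0$, the $m$-basis type divisors $D_m$ associated to $v_0$ satisfy $v_0(D_m)$ close to $A_X(v_0)$ and $(X,(1-\eta)D_m)$ lc, and the basis spanning $\fa_m(v_0)/\fa_{m+1}(v_0)$ can be taken compatibly with $D$ (preferring sections vanishing along $D$) so that $D_m$ acquires a positive—albeit small—coefficient along $D$. A suitable convex combination of $\Gamma_0$, $D_m$, and $\varepsilon D$, with the coefficients fine-tuned so as to preserve both $v_0(\Gamma')=A_X(v_0)$ and $\Gamma'\geq\varepsilon D$, then becomes the candidate.

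The main obstacle will be the global lc verification: since $v_0$ has zero log discrepancy with respect to $\Gamma_0$, there is no slack at $v_0$, and one must ensure the perturbation does not violate lc at other valuations where $D$ has large valuation relative to $\Gamma_0$. The K-semistability of $v_0$ controls the relevant behavior only asymptotically, and needs to be combined with Birkar's uniform boundedness of complements to obtain effective constants. The cleanest framework in which to carry this out is the theory of \emph{special complements} introduced in \cite{LXZ-HRFG} and adapted in Section~3.3 of \cite{XZ-SDC}: rather than working with a single basis type divisor, one produces a special complement on an appropriate log smooth model $(Y,E)$ whose support contains $\varepsilon D$ and which has $v_0$ as a monomial lc place, and then pushes forward to $X$ to obtain the desired $\Gamma'$. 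The flexibility inherent in the choice of log smooth model—as emphasized in the discussion preceding the proposition—is exactly what allows this perturbation to be carried out.
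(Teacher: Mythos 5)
Your strategy is essentially the one the paper (following \cite{XZ-SDC}*{Lemma 3.2}) uses: exploit the K-semistability of $v_0$ (Theorem \ref{thm:minimizer = K-ss val}), take $m$-basis type divisors for $v_0$ built from a basis of $\fa_m(v_0)/\fa_{m+1}(v_0)$ chosen compatibly with $D$ so as to maximize the coefficient of $D$, write the result as $\varepsilon D+\Gamma$, and argue that asymptotically this gives a log canonical pair with $v_0$ as an lc place. Your diagnosis of why the naive convex combination with a single $N$-complement fails is also correct and is exactly the reason the basis-type-divisor route is needed.

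Two points of divergence are worth flagging. First, the step you correctly identify as ``the main obstacle'' --- upgrading the asymptotic statement (lc only in the limit $m\to\infty$, with $v_0$ an lc place only asymptotically) to an actual complement at a finite level --- is closed in the paper not by special complements or by Birkar's boundedness of complements, but by the ACC of log canonical thresholds \cite{HMX-ACC}: once the log canonical thresholds of the divisors $\varepsilon_m D+\Gamma_m$ converge to $1$ from below along a sequence with coefficients in a suitable DCC set, ACC forces them to equal $1$ for $m$ large, which is what produces a genuine complement rather than an almost-complement. The special complements machinery you invoke is used in the paper for the \emph{subsequent} step (Proposition \ref{prop:QM(km) perturb boundary}, converting the perturbed complement condition into the Koll\'ar valuation property), not for this proposition. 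Second, you assert that the maximal coefficient of $D$ in an $m$-basis type divisor is positive but small; the paper explicitly notes that ``some calculations are needed to show that it is positive,'' and this positivity (essentially a lower bound on the asymptotic maximal vanishing order along $D$ of compatible bases, coming from the Cartier hypothesis on $D$) is a real ingredient, not a formality. Neither issue invalidates your plan, but as written the proposal stops short of a proof precisely at the two places where the actual argument has content.
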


This is proved in  \cite{XZ-SDC}*{Lemma 3.2}, and the argument in \emph{loc. cit.} also naturally gives an explicit value of $\varepsilon$. The proof essentially exploits the K-semistability property of the minimizer (Theorem \ref{thm:minimizer = K-ss val}). The idea is that asymptotically a K-semistable valuation is an lc place of their basis type divisors (which are asymptotically log canonical). If we choose a basis type divisor that maximizes the coefficient of $D$, and write the basis type divisor as $\varepsilon D+\Gamma$, then at least in the limit we get the desired coefficient $\varepsilon$ (some calculations are needed to show that it is positive) and the complement $\Gamma$. It remains to replace the limit by a finite level approximation, and this can be done using the ACC of log canonical threshold \cite{HMX-ACC}.

\subsection{K-polystable degeneration}

At this point, we have finished the proof of the Stable Degeneration Conjecture (Conjecture \ref{conj:SDC}). In particular, any klt singularity $x\in X=\Spec(R)$ has a degeneration to a K-semistable Fano cone singularity $x_0\in (X_0;\xi_0)$ induced by the minimizer $v_0\in \Val_{X,x}^*$ of the normalized volume function. To complete the two-step degeneration, it remains to construct the K-polystable degeneration of $x_0\in (X_0;\xi_0)$.

% For simplicity, we will drop the subscript $\cdot_0$ in this subsection and denote the Fano cone singularity as $x\in (X;\xi)$.

Before we sketch the ideas, it would be helpful to first review some argument in the case of vector bundles, where the analog of a polystable degeneration is the Jordan-H\"older filtration of a slope semistable vector bundle. The key to the existence of this filtration is the Schreier refinement theorem, stating that any two filtrations with semistable graded pieces of the same slopes have a common refinement. The Jordan-H\"older filtration is then obtained as the finest filtration of this kind. 

The construction of the K-polystable degeneration, see Theorem \ref{thm:K-ps degeneration}, is a generalization of this basic strategy. Its proof heavily relies on the following analog of the Schreier refinement theorem.

\begin{thm}[\cite{LWX-metric-tangent-cone}*{Theorem 4.1}] \label{thm:common K-ss degen}
Suppose that $x_i\in (X_i;\xi_i)\,(i=1,2)$ are two K-semistable degenerations of the Fano cone singularity $x_0\in (X_0;\xi_0)$. Then they have a common K-semistable degeneration $y\in (Y;\xi_Y)$.
\end{thm}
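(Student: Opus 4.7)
The strategy I would pursue is a ``common refinement'' construction mirroring the Schreier refinement theorem for semistable filtrations. Write $R_0:=\cO(X_0)$, and let $\cF_1,\cF_2$ be the $\bT_0=\langle\xi_0\rangle$-equivariant $\bZ$-filtrations on $R_0$ whose Rees algebras realize the two given special test configurations $\cX_i\to\bA^1$. I would form the bigraded filtration
\[
\cF^{(m_1,m_2)}R_0:=\cF_1^{m_1}R_0\cap\cF_2^{m_2}R_0
\]
and consider the associated $\bN^2$-graded Rees algebra $\cR$. Assuming $\cR$ is finitely generated, $\Spec(\cR)\to\bA^2$ becomes a $\bT_0\times\bG_m^2$-equivariant flat family whose central fiber is $Y:=\Spec(\gr_\cF R_0)$, and whose restrictions to the coordinate axes recover $\cX_1$ and $\cX_2$. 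Equipping $Y$ with the Reeb vector $\xi_Y\in N(\bT_0)_\bR$ inherited from $\xi_0$ gives a Fano cone that degenerates to $X_1$ and to $X_2$ by switching off the two $\bG_m$-factors in turn; these are the required K-semistable degenerations once K-semistability of $(Y;\xi_Y)$ is established.

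To verify K-semistability of $(Y;\xi_Y)$, I would compare local volumes using the lower semi-continuity of Theorem \ref{thm:vol lsc} together with the volume-preservation statement of Theorem \ref{thm:SDC K-ss part}: first, semi-continuity along either coordinate axis of $\bA^2$ yields $\hvol(y,Y)\leq\hvol(x_i,X_i)=\hvol(x_0,X_0)$; second, computing $\hvol(\wt_{\xi_Y})$ directly from the bigraded structure (the toric valuation $\wt_{\xi_Y}$ induces the trivial self-degeneration, so its normalized volume is read off from the action of $\xi_0$ through the $\bN^2$-grading) should furnish the matching lower bound $\hvol(x_0,X_0)$. This forces $\wt_{\xi_Y}$ to minimize $\hvol_Y$ and thereby makes $(Y;\xi_Y)$ K-semistable, by Theorem \ref{thm:minimizer = K-ss val}.

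The principal obstacle is the finite generation of $\cR$. While each $\cF_i$ individually is finitely generated as the filtration of a test configuration, the intersection $\cF$ is a higher-rank filtration, and finite generation fails in general in this setting (Example \ref{exp:non f.g.}). To push the argument through, I would bring in the geodesic $\fa_{\bullet,t}$ between $\cF_1$ and $\cF_2$ together with the convexity package of Theorem \ref{thm:convexity}: since both endpoint filtrations arise from K-semistable degenerations with equal volumes $\hvol(x_0,X_0)$, one hopes to force equality throughout the segment, so that the equality clause of Theorem \ref{thm:convexity}(2) pins down a common underlying structure on $\cF_1$ and $\cF_2$. Combining this rigidity with the uniqueness of the volume minimizer (Theorem \ref{thm:uniqueness}) applied $\bT_0$-equivariantly and with the Koll\'ar-model characterization of finite generation (Theorem \ref{thm:fg criterion via km}), one would realize $\cF$ as the filtration associated with a two-dimensional simplex of quasi-monomial valuations on a $\bT_0$-equivariant Koll\'ar model of $x_0\in X_0$, at which point finite generation follows. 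Upgrading the convexity equality to an actual geometric common refinement on a Koll\'ar model is the technical heart of the argument and the step most likely to require new input.
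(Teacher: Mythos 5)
Your candidate for the common degeneration is the right one and matches the paper: the doubly graded algebra $R'=\gr_{\fa_{\bullet,2}}\gr_{\fa_{\bullet,1}}R_0\cong\gr_{\fa_{\bullet,1}}\gr_{\fa_{\bullet,2}}R_0$ obtained from the intersection bifiltration, with $\xi_Y$ inherited from $\xi_0$. However, both of the substantive steps have gaps.

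First, the K-semistability verification does not close. Lower semi-continuity (Theorem \ref{thm:vol lsc}) gives $\hvol(y,Y)\le\hvol(x_0,X_0)$, and your direct computation gives $\hvol_Y(\wt_{\xi_Y})=\hvol(x_0,X_0)$; combining these yields only $\hvol(y,Y)\le\hvol_Y(\wt_{\xi_Y})$, which is true by definition of the infimum and says nothing. What K-semistability requires is the reverse inequality $\hvol(y,Y)\ge\hvol(x_0,X_0)$, i.e.\ that the local volume does \emph{not} drop under this particular degeneration, and semi-continuity points the wrong way for that. The paper instead exploits the hypothesis that each $v_i$ induces a K-semistable degeneration, which is equivalent to the vanishing of the generalized Futaki invariant, $A_{X_0}(v_i)=S(\wt_{\xi_0};v_i)$; one then degenerates $\langle\xi_0\rangle$-invariant basis type divisors compatible with the $v_i$ (asymptotically log canonical, with $v_i$ as lc places) and checks that log canonicity survives the degeneration, so that $\wt_{\xi_Y}$ is a K-semistable valuation on $Y$ in the sense of Theorem \ref{thm:minimizer = K-ss val}.

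Second, the finite generation argument is aimed at the wrong target. The filtrations $\fa_{\bullet,1},\fa_{\bullet,2}$ come from \emph{divisorial} valuations $v_i=\ord_{E_i}$ over $X_0$ (they are $\bZ$-filtrations of test configurations), and neither is an $\hvol$-minimizer on $X_0$ --- the minimizer there is $\wt_{\xi_0}$ itself. So the normalized volume is not constant along the geodesic joining them, the equality clause of Theorem \ref{thm:convexity}(2) and the uniqueness of Theorem \ref{thm:uniqueness} never engage, and there is no reason to expect the rigidity you want. The machinery of Koll\'ar valuations and Theorem \ref{thm:fg criterion via km} is likewise designed for a single higher-rank valuation, not for the rank-two bifiltration $\cF$. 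The paper's actual route is more elementary: realize $R'$ as a quotient of the Cox ring $\bigoplus_{m_1,m_2}\pi_*\cO_Y(-m_1E_1-m_2E_2)$ and apply \cite{BCHM}, which applies once one produces an effective $\bQ$-divisor $D$ on $X_0$ with $(X_0,D)$ lc and $A_{X_0,D}(E_i)<1$ for both $i$ (so that both $E_i$ are simultaneously extracted on a model of Fano type over $X_0$). That auxiliary $D$ is manufactured as a limit of basis type divisors simultaneously compatible with both filtrations, and this is exactly where the identity $A_{X_0}(v_i)=S(\wt_{\xi_0};v_i)$ --- i.e.\ the K-semistability of the two given degenerations --- is used. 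Your write-up never uses that hypothesis in the finite generation step, which is a sign the argument cannot be complete as stated.
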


Assuming this result, let us explain how to construct the K-polystable degeneration. First note that Theorem \ref{thm:common K-ss degen} immediately implies the uniqueness of the K-polystable degeneration. Indeed, if $x_i\in (X_i;\xi_i)\,(i=1,2)$ are two K-polystable degenerations of the Fano cone singularity $x_0\in (X_0;\xi_0)$, then they have a common K-semistable degeneration $y\in (Y;\xi_Y)$. However, because both $x_i\in (X_i;\xi_i)$ are K-polystable, their K-semistable degenerations are isomorphic to themselves. Thus we get 
\[
\big(x_1\in (X_1;\xi_1)\big) \cong \big(y\in (Y;\xi_Y)) \cong \big(x_2\in (X_2;\xi_2)\big),
\]
which gives the uniqueness.

Next we prove the existence of the K-polystable degeneration. Suppose that $x_0\in (X_0;\xi_0)$ is not K-polystable. By definition, this means that there exists a K-semistable degeneration $x_1\in (X_1;\xi_1)$ that is not isomorphic to $x_0\in (X_0;\xi_0)$. If $x_1\in (X_1;\xi_1)$ is still not K-polystable, we can find a further degeneration $x_2\in (X_2;\xi_2)$ and continue. The key is to make this process stop after finitely many steps. 

A discrete invariant that grows under this procedure is the dimension of the maximal torus. Note that the automorphism group $\Aut(x\in (X;\xi))$ of a Fano cone singularity (i.e. the group of $\langle \xi\rangle$-equivariant automorphisms of the singularity $x\in X$) is an algebraic group. We denote by $\bT_i$ the maximal torus of $\Aut(x_i\in (X_i;\xi_i))$, which is well-defined up to conjugation.

\begin{claim*}
If $x_i\in (X_i;\xi_i)$ is not K-polystable, then it has a $\bT_i$-equivariant K-semistable degeneration $x_{i+1}\in (X_{i+1};\xi_{i+1})$ that is not isomorphic to $x_i\in (X_i;\xi_i)$. Moreover, $\dim \bT_{i+1} > \dim \bT_i$.
\end{claim*}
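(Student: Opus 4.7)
The claim has two parts: (i) constructing a non-trivial $\bT_i$-equivariant K-semistable degeneration, and (ii) showing it strictly enlarges the automorphism torus. The key inputs will be Theorem \ref{thm:common K-ss degen} (common refinement of K-semistable degenerations) and the standard observation that a test configuration endows its central fiber with an extra $\bG_m$-action that commutes with every symmetry of the total family.

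The starting point for (i) is that since $x_i\in(X_i;\xi_i)$ is not K-polystable, by definition there exists a K-semistable degeneration $x'\in(X';\xi')$ with $X'\not\cong X_i$, realized by some $\langle\xi_i\rangle$-equivariant special test configuration $\cX\to\bA^1$. The plan is to average this degeneration over $\bT_i$. For each $g\in\bT_i$, translating by $g$ on the generic fiber produces a test configuration $g\cdot\cX$ whose central fiber is again (abstractly) $(X';\xi')$, so another K-semistable degeneration of $x_i$. Picking finitely many one-parameter subgroups generating $\bT_i$ and iteratively invoking Theorem \ref{thm:common K-ss degen} on the corresponding translates, I would produce a single K-semistable degeneration whose induced filtration on $\cO_{X_i,x_i}$ is invariant under a Zariski-dense subgroup of $\bT_i$, hence under all of $\bT_i$. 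Call its central fiber $x_{i+1}\in(X_{i+1};\xi_{i+1})$. The non-triviality $X_{i+1}\not\cong X_i$ follows because $X_{i+1}$ further dominates some translate of $X'$, which already fails to be isomorphic to $X_i$.

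For (ii), the $\bT_i$-equivariant test configuration from (i) carries, on its central fiber $X_{i+1}$, a $\bG_m$-action coming from the scaling on $\bA^1$; $\bT_i$-equivariance of the total family forces this $\bG_m$ to commute with $\bT_i$. Hence one obtains a homomorphism $\bT_i\times\bG_m\to\Aut(x_{i+1}\in(X_{i+1};\xi_{i+1}))$, whose image sits inside some maximal torus which we may take to be $\bT_{i+1}$. If this new $\bG_m$ were already contained in $\bT_i$, the test configuration would be trivialized by the $\bT_i$-action on $X_i$, forcing $X_{i+1}\cong X_i$ as $\bT_i$-Fano cones and contradicting (i). Therefore $\dim\bT_{i+1}\ge\dim\bT_i+1$.

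The main obstacle I anticipate is the careful execution of (i): making the averaging over $\bT_i$ precise, ensuring the iterated common refinement terminates at a \emph{genuinely} $\bT_i$-equivariant test configuration (not merely invariant under finitely many translates), and controlling non-triviality throughout. A conceptually cleaner alternative would be to first establish an equivariant reformulation of K-polystability for Fano cones, namely that $(X_i;\xi_i)$ is K-polystable if and only if it admits no non-trivial $\bT_i$-equivariant K-semistable degeneration, which should follow from the $\bT_i$-invariance of the unique $\hvol$-minimizer (Theorem \ref{thm:uniqueness}). With such a reformulation in hand, (i) becomes immediate and only the $\bG_m$-extraction argument of (ii) remains.
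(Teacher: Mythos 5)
You have correctly identified Theorem \ref{thm:common K-ss degen} as the engine of the proof, and your treatment of part (ii) (the extra $\bG_m$ from the grading of the test configuration, commuting with $\bT_i$, which cannot lie in $\bT_i$ without trivializing the degeneration) matches the paper's argument at the same level of rigor. The problem is in part (i): the mechanism you propose for achieving $\bT_i$-equivariance does not work. You suggest forming the translates $g\cdot\cX$ for finitely many $g$ (or for ``finitely many one-parameter subgroups'') and iterating Theorem \ref{thm:common K-ss degen} on them. But the output of that theorem is a further degeneration $\gr_{\fa_{\bullet,2}}\gr_{\fa_{\bullet,1}}R_0$ of the two central fibers; it is not a filtration of $R_0$ that is invariant under the group elements used, and invariance under finitely many elements of a positive-dimensional torus would in any case not give invariance under all of $\bT_i$. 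A one-parameter subgroup $\rho$ contributes a whole one-parameter \emph{family} of translates $\rho(t)\cdot\cX$, and to absorb it you must pass to the limit $t\to 0$ of this family. The paper's way of implementing this limit algebraically is precisely the step you are missing: for each $\rho\colon\bG_m\to\bT_i$ one takes the common degeneration of the given test configuration with the \emph{product} test configuration of $x_i\in(X_i;\xi_i)$ induced by the weight filtration of $\rho$ (whose central fiber is $x_i\in(X_i;\xi_i)$ itself). The resulting degeneration $x_i\rightsquigarrow y$ is then $\rho(\bG_m)$-equivariant by construction, and one repeats over a finite set of one-parameter subgroups generating $\bT_i$. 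So the correct recipe pairs the given degeneration against the torus's own weight filtrations, not against finitely many translated copies of itself.

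Your proposed ``conceptually cleaner alternative'' does not rescue this: the statement that $(X_i;\xi_i)$ is K-polystable if and only if it has no nontrivial $\bT_i$-equivariant K-semistable degeneration \emph{is} part (i) of the claim, so invoking it is circular; and the uniqueness of the $\hvol$-minimizer (Theorem \ref{thm:uniqueness}) concerns the valuation $\wt_{\xi_i}$ on $X_i$ and says nothing directly about which destabilizing test configurations can be made torus-equivariant. Finally, your justification of non-triviality (``$X_{i+1}$ dominates some translate of $X'$'') has the direction reversed --- $X_{i+1}$ is a further degeneration \emph{of} $X'$ --- but this is harmless since, as both you and the paper note, non-isomorphism with $x_i\in(X_i;\xi_i)$ ultimately follows from the strict jump $\dim\bT_{i+1}>\dim\bT_i$.
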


The second part of the claim actually follows from the first, since we clearly have $\dim \bT_{i+1} \ge \dim \bT_i$, and through the graded algebra description of $X_{i+1}$ we get an additional $\bG_m$-action on $x_{i+1}\in (X_{i+1};\xi_{i+1})$ from the grading. Since the dimension of the maximal torus is at most the dimension of the singularity (otherwise the torus action is not effective), the claim implies that the K-semistable degeneration process necessarily stops after finitely many steps.

It remains to construct the equivariant K-semistable degeneration in the above claim. We start with any  test configuration that degenerates $x_i\in (X_i;\xi_i)$ to some non-isomorphic K-semistable Fano cone singularity $x_{i+1}\in (X_{i+1};\xi_{i+1})$. The idea is to use Theorem \ref{thm:common K-ss degen} to find a ``toric degeneration'' of this test configuration. Note that for any one parameter subgroup $\rho\colon \bG_m\to \bT_i$, we also have a product test configuration of $x_i\in (X_i;\xi_i)$ induced by the weight filtration of the $\bG_m$-action. The central fiber of the product test configuration is just $x_i\in (X_i;\xi_i)$ itself. By Theorem \ref{thm:common K-ss degen}, we have a common degeneration as illustrated by the following diagram
\[
\xymatrix{
x_i\in (X_i;\xi_i) \ar@{~>}^{\bG_m}[r] \ar@{~>}[d] & x_i\in (X_i;\xi_i) \ar@{~>}[d] \\
x_{i+1}\in (X_{i+1};\xi_{i+1}) \ar@{~>}[r] & y\in (Y,\xi_Y).
}
\]
In some sense we view the right column as the ``toric degeneration'' of the left column. In fact, in the proof of Theorem \ref{thm:common K-ss degen}, this is what happens at the level of filtrations. By construction, the degeneration
\[
x_i\in (X_i;\xi_i) \rightsquigarrow y\in (Y,\xi_Y)
\]
is equivariant with respect to the chosen one parameter subgroup $\rho(\bG_m)$. It also inherits the torus action on the original degeneration 
\[
x_i\in (X_i;\xi_i) \rightsquigarrow x_{i+1}\in (X_{i+1};\xi_{i+1}).
\]
If we replace $x_{i+1}\in (X_{i+1};\xi_{i+1})$ by $y\in (Y,\xi_Y)$ and repeat this construction for a finite collection of one parameter subgroups that generate $\bT_i$, we will eventually get the desired $\bT_i$-equivariant K-semistable degeneration. This proves the claim and we have finished the construction of the K-polystable degeneration assuming Theorem \ref{thm:common K-ss degen}.

We now return to sketch a proof of Theorem \ref{thm:common K-ss degen}. Recall that test configurations of $x_0\in X_0=\Spec(R_0)$ are given by filtrations of $R_0$. In particular, there are filtrations $\fa_{\bullet,i}$ ($i=1,2$) whose associated graded algebra gives $X_i$, i.e., 
\[
X_i=\Spec(\gr_{\fa_{\bullet,i}} R_0).
\]
These filtrations have equivalent refinements, namely, each filtration $\fa_{\bullet,i}$ induces a filtration on the associated graded algebra $\gr_{\fa_{\bullet,j}} R_0$ of the other, and the induced filtrations satisfy
\[
\gr_{\fa_{\bullet,2}} \gr_{\fa_{\bullet,1}} R_0 \cong \gr_{\fa_{\bullet,1}} \gr_{\fa_{\bullet,2}} R_0.
\]
Denote this (doubly graded) algebra by $R'$. Then $\Spec(R')$ is the obvious candidate of the common degeneration. 

To make this strategy work, we need to show that $R'$ is finitely generated. Note that both filtrations $\fa_{\bullet,i}$ are induced by some \emph{divisorial} valuations $v_i=\ord_{E_i}$, and we may realize $R'$ as a quotient of the Cox ring
\[
\bigoplus_{m_1,m_2\in\bN} \pi_*\cO_Y(-m_1 E_1-m_2 E_2),
\]
where $\pi\colon Y\to X$ is a birational model that extracts both divisors $E_i$. The general results from \cite{BCHM} tell us that this Cox ring is finitely generated if we can find an effective $\bQ$-divisor $D$ on $X$ such that $(X,D)$ is lc and $A_{X,D}(E_i)<1$ for both $i$, because these conditions imply that the two divisors $E_i$ can be simultaneously extracted on a model $Y$ that is of Fano type over $X$, and Cox rings on Fano type varieties are finitely generated by \cite{BCHM}.

We haven't used the assumption that both valuations $v_i$ induce K-semistable degenerations of $x_0\in (X_0;\xi_0)$. It turns out that this condition is equivalent to the vanishing of the generalized Futaki invariant, or
\begin{equation} \label{e:A=S for K-ss degen}
    A_{X_0}(v_i)=S(\wt_{\xi_0};v_i),
\end{equation}
see \cite{LWX-metric-tangent-cone}*{Lemma 3.1 and proof of Theorem 4.1}. In other words, if we consider $\bT=\langle\xi_0 \rangle$-invariant basis type divisors that are compatible with $v_i$\footnote{Namely, the corresponding basis is $\bT=\langle\xi_0 \rangle$-invariant and is compatible with the filtration induced by $v_i$. Such basis type divisors maximize the vanishing order along $v_i$ and therefore asymptotically compute $S(\wt_{\xi_0};v_i)$.}, then asymptotically they are log canonical (because the Fano cone singularity $x_0\in (X_0;\xi_0)$ is K-semistable) and have the valuation $v_i$ as an lc place (because of the identity \eqref{e:A=S for K-ss degen}). By choosing basis type divisors that are simultaneously compatible with both $v_i$\footnote{Given two filtrations on a vector space, there is always a simultaneously compatible basis. See e.g. \cite{AZ-K-adjunction}*{Lemma 3.1} or \cite{BE-compatible-basis}*{Proposition 1.14}).}, in the limit we would get the desired auxiliary divisor $D$.

It then follows from the previous discussion that $R'$ is finitely generated, and we get a common degeneration to $y\in (Y:=\Spec(R');\xi_Y)$, where $\xi_Y$ is the induced Reeb vector. It remains to check that the Fano cone singularity $y\in (Y,\xi_Y)$ is K-semistable. Roughly speaking, this is because the degenerations are induced by lc places of basis type divisors, hence the degenerations of basis type divisors remain log canonical. Alternatively, it follows from the vanishing of the generalized Futaki invariants, a property that passes on to the induced degenerations $x_i\in (X_i;\xi_i) \rightsquigarrow y\in (Y,\xi_Y)$. 

\section{Boundedness of singularities} \label{s:bdd}

One of the recent achievements in K-stability of Fano varieties is the construction of the K-moduli space, a proper moduli space that parametrizes K-polystable Fano varieties. A detailed account on this topic is \cite{Xu-K-book}. Among other things, the content of the K-moduli theorem can be summarized as follows.

\begin{thm} \label{thm:K-moduli}
For any positive integer $n$ and any positive real number $\varepsilon$, there exists a projective moduli space parametrizing K-polystable Fano varieties of dimension $n$ and anti-canonical volume at least $\varepsilon$.
\end{thm}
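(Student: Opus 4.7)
The plan is to construct the space in the statement as the good moduli space, in the sense of Alper, of a finite type Artin stack $\mathcal{M}^{\mathrm{Kss}}_{n,\varepsilon}$ parametrizing $\bQ$-Gorenstein families of K-semistable $\bQ$-Fano varieties of dimension $n$ with anti-canonical volume at least $\varepsilon$, and then to establish properness and projectivity of the resulting coarse space. The five ingredients to assemble are: boundedness of the underlying family, openness of K-semistability, separatedness (S-completeness), properness ($\Theta$-reductivity), and ampleness of a CM-type polarization.

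First I would establish boundedness of the family of K-semistable $\bQ$-Fano varieties of dimension $n$ with $(-K_X)^n \ge \varepsilon$; this is due to Jiang, building on Birkar's BAB theorem. Combined with the openness of K-semistability in $\bQ$-Gorenstein families of Fanos, proved via the quasi-monomial theorem of Xu and the constructibility of $\delta$-invariants (the global analog of the chain of ideas leading to Theorem \ref{thm:constructible}), one can cut out $\mathcal{M}^{\mathrm{Kss}}_{n,\varepsilon}$ as an open substack of a suitable quotient of a Hilbert scheme, obtaining a finite type Artin stack with affine diagonal.

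Next I would invoke the Alper--Halpern-Leistner--Heinloth existence criterion for good moduli spaces, which reduces properness of the good moduli space to verifying S-completeness and $\Theta$-reductivity. S-completeness amounts to the uniqueness of K-polystable degenerations within an $S$-equivalence class, and is proved by the same strategy as in Theorem \ref{thm:K-ps degeneration}: two K-polystable degenerations of a K-semistable Fano admit a common K-semistable degeneration, constructed from simultaneously compatible basis type divisors, hence must coincide. $\Theta$-reductivity requires that every K-semistable Fano admits a special test configuration to a K-polystable one, which is the \textbf{main technical obstacle}. Following the template of Section~\ref{s:geometry of minimizer}, one produces the degeneration from the minimizer of $\delta$-invariant (or equivalently, from a suitable optimal destabilizing filtration), and the crux is to show that this filtration is finitely generated. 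This is the higher rank finite generation theorem of \cite{LXZ-HRFG}, proved by the special complement machinery that is the global counterpart of Theorem \ref{thm:HRFG}; once in hand, iterating the K-semistable degeneration procedure as in the proof of Theorem \ref{thm:K-ps degeneration} terminates because the dimension of the automorphism torus strictly increases at each step.

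Finally, I would upgrade properness to projectivity by showing that the descent of the CM $\bQ$-line bundle to $\mathcal{M}^{\mathrm{Kps}}_{n,\varepsilon}$ is ample. Following Codogni--Patakfalvi and Xu--Zhuang, one shows that the CM line bundle has strictly positive degree on any proper curve in the moduli not parametrizing an isotrivial family, by relating its degree to a generalized Futaki invariant and using that every non-isotrivial test configuration out of a K-polystable Fano has strictly positive Donaldson--Futaki invariant; an appropriate Nakai--Moishezon criterion on good moduli spaces then yields ampleness. Combining boundedness, openness, S-completeness, $\Theta$-reductivity, and ampleness of the CM polarization produces the asserted projective moduli space.
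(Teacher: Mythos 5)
The paper does not actually prove this theorem: it is quoted as a summary of the K-moduli theorem from the literature, with the reader referred to \cite{Xu-K-book} (and, for the boundedness ingredient, to the references of Theorem \ref{thm:Fano bdd}). So there is no in-paper argument to compare against; what you have written is an outline of the standard proof assembled from Jiang (boundedness), Xu and Blum--Liu--Xu (openness), Alper--Blum--Halpern-Leistner--Xu and Blum--Halpern-Leistner--Liu--Xu (existence and properness of the good moduli space), Liu--Xu--Zhuang (higher rank finite generation), and Codogni--Patakfalvi and Xu--Zhuang (CM positivity), and that architecture is correct.

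One point is misstated, though. $\Theta$-reductivity is the condition that a map $\Theta_R\setminus\{0\}\to \mathcal{M}$ over a DVR $R$ extends to $\Theta_R$, i.e.\ that a special test configuration of the generic fiber of a family of K-semistable Fanos extends to the whole family; it is \emph{not} the statement that every K-semistable Fano degenerates to a K-polystable one --- the latter is a consequence of the existence of the good moduli space (closed points in the fibers), not an input to it. The higher rank finite generation theorem of \cite{LXZ-HRFG} enters at a different place than you indicate: it is the key to \emph{properness} of the good moduli space, via the reduction of Blum--Halpern-Leistner--Liu--Xu to the finite generation of the filtration attached to the valuation minimizing $\delta$ (the optimal destabilization), which is the valuative-criterion step of extending a punctured family across a DVR. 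S-completeness and $\Theta$-reductivity themselves are handled by the uniqueness-of-common-degeneration arguments you describe (the global analog of Theorem \ref{thm:K-ps degeneration}) together with finite generation results that only require \cite{BCHM}, since the relevant test configurations there are special. With that reassignment of roles, your outline matches the proof in the literature.
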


There should be a local analog of the K-moduli for klt singularities. In general, klt singularities may have infinite dimensional deformation spaces, so we certainly need to restrict the class of singularities we consider in order to have a reasonably behaved moduli space. The Stable Degeneration Conjecture and the surrounding stability theory of klt singularities suggest the following refinement of the local-to-global correspondence.

\medskip

\begin{tabularx}{0.9\textwidth} { 
  >{\centering\arraybackslash}X 
  | >{\centering\arraybackslash}X 
  }
    global & local \\
    \hline
    K-semi/polystable Fano varieties $V$ & K-semi/polystable Fano cone singularities $x\in (X;\xi)$ \\
    \hline
    anti-canonical volume $(-K_V)^{\dim V}$ & local volume $\hvol(x,X)$
\end{tabularx}

\medskip

In particular, it seems reasonable to expect that for any positive integer $n$ and any real number $\varepsilon>0$, there exists a projective moduli space parametrizing K-polystable Fano cone singularities of dimension $n$ and anti-canonical volume at least $\varepsilon$. While many parts of K-moduli theory should carry over to the local setting, the boundedness part remains quite mysterious. In this section, we discuss what is known so far about boundedness of klt singularities and what are the challenges.

We say that a given set $\cS$ of klt singularities is \emph{bounded} if there exists a $\bQ$-Gorenstein family $B\subseteq \cX\to B$ of klt singularities such that every singularity $x\in X$ in the set $\cS$ is isomorphic to some fiber of $B\subseteq \cX\to B$\footnote{We do not require that all the fibers belong to the set $\cS$.}. For boundedness of Fano cone singularities, we will also require that there is a fiberwise torus action on the family $B\subseteq \cX\to B$ such that the Reeb vectors lie in the Lie algebra of the acting torus\footnote{It is quite likely that this additional condition is automatic once the underlying singularities are bounded. This is related to the following question: suppose $\cX\to B$ is a flat (but not necessarily projective) family of algebraic varieties and assume that there is a Zariski dense subset $B_0$ of $B$ such that the fibers over $B_0$ admit a $\bT$-action for some fixed torus $\bT$; then does the family $\cX\to B$ have a fiberwise torus action, possibly after replacing $B$ by a dense open subset?}. The following is a more precise formulation of the Boundedness Conjecture for Fano cone singularities.

\begin{conj}[\cite{XZ-SDC}*{Conjecture 1.7}] \label{conj:bdd}
Fix a positive integer $n$ and some real number $\varepsilon>0$. Then the set of $n$-dimensional K-semistable Fano cone singularities $x\in (X;\xi)$ with local volume $\hvol(x,X)\ge \varepsilon$ is bounded.
\end{conj}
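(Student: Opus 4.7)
The plan is to reduce Conjecture~\ref{conj:bdd} to the K-moduli theorem for log Fano pairs (Theorem~\ref{thm:K-moduli}) via the cone construction, first handling the quasi-regular case and then passing to arbitrary Reeb vectors by an equivariant perturbation argument.

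\textbf{Quasi-regular case.} I would start by assuming $\xi$ is quasi-regular, so $\langle\xi\rangle\cong\bG_m$. Then $V=\Proj(R)$ carries an orbifold boundary $\Delta_V=\sum_r(1-\frac{1}{r})\Delta_r$ making $(V,\Delta_V)$ a klt log Fano pair (Example~\ref{exp:quasi-reg cone defn}), and by the local-to-global correspondence recalled in Example~\ref{exp:cone minimizer}, K-semistability of $(X;\xi)$ is equivalent to K-semistability of $(V,\Delta_V)$. A direct computation along the lines of the orbifold cone construction relates $\hvol(x,X)$ to the anticanonical volume $(-(K_V+\Delta_V))^{\dim V}$ multiplied by an explicit factor depending on the Cartier index of $-(K_V+\Delta_V)$ and on $\xi$. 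Granting a uniform upper bound $r\le N=N(n,\varepsilon)$ on the orbifold indices (so the coefficient set of $\Delta_V$ is DCC) and the resulting lower bound on the anticanonical volume, Theorem~\ref{thm:K-moduli} applied to log Fano pairs would bound $(V,\Delta_V)$; reconstructing $X$ as the affine cone $C_a(V,L)$ with $L=-r(K_V+\Delta_V)$ for $r$ in a bounded set would then give boundedness of the quasi-regular members of the family, together with the tautological fiberwise $\bG_m$-action recording $\xi$.

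\textbf{Irregular case via perturbation.} For an arbitrary $\xi\in\ft^+_\bR$, I would approximate by rational directions $\xi_k\to\xi$. The Fano cone $(X;\xi_k)$ need not be K-semistable, but applying Conjecture~\ref{conj:SDC} to the $\hvol$-minimizer on $(X;\xi_k)$ and then Theorem~\ref{thm:K-ps degeneration} produces a K-polystable quasi-regular Fano cone $(X_k;\xi_k')$ as its two-step degeneration. Using the continuity of $\xi\mapsto\hvol_X(\wt_\xi)$ in the Reeb cone, the minimization property $\hvol_X(\wt_{\xi_k})\ge\hvol(x_k,X_k)$, and the fact that the two-step degeneration preserves local volume (Theorem~\ref{thm:SDC K-ss part}), one gets $\hvol(x_k,X_k)\ge\varepsilon/2$ for $k\gg 0$. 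Thus each $(X_k;\xi_k')$ lies in the bounded family produced above, and a $\bT$-equivariant limiting argument---where $\bT$ is a maximal torus of $\Aut(x,X)$ containing $\langle\xi\rangle$, and $\xi_k$ is chosen rational in $N(\bT)_\bQ$---would realize $(X;\xi)$ itself as a point of a bounded family equipped with a fiberwise $\bT$-action.

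\textbf{Main obstacles.} The hardest step, by a wide margin, is the orbifold-index bound $r\le N(n,\varepsilon)$ in the quasi-regular case: without it, the coefficient set of $\Delta_V$ is not DCC and Theorem~\ref{thm:K-moduli} is not available. Controlling this index appears to require genuinely new input---morally, a local analogue of the index conjecture for K-semistable Fano varieties, bounding the Cartier index of $K_V+\Delta_V$ in terms of $\hvol(x,X)$ and $\dim X$. Partial results of this nature underlie the toric \cite{MS-bdd-toric} and three-dimensional \cite{LMS-bdd-dim-3} cases but are currently out of reach in general. A secondary obstacle is the limiting argument in the irregular case: passing from the bounded family of $(X_k;\xi_k')$ to the original $(X;\xi)$ requires resolving the open question mentioned in the footnote of \S\ref{s:bdd} on whether fiberwise torus actions over a Zariski-dense subfamily extend to the whole family. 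If either obstacle is too rigid to attack head-on, a plausible intermediate target is the $\bT$-equivariant version of Conjecture~\ref{conj:bdd} with $\dim\bT$ bounded below, since then one gains enough toric input to bypass the indices via a combinatorial analysis of the Reeb cone.
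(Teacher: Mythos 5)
This statement is a \emph{conjecture} (\cite{XZ-SDC}*{Conjecture 1.7}); the paper offers no proof and explicitly states that, beyond dimension three, hypersurface singularities, and complexity at most one, the general case remains wide open. Your proposal is therefore not a proof but a reduction to precisely the sub-problem that the paper itself identifies as the central obstruction. In Section \ref{s:bdd} the paper carries out the computation you describe for orbifold cones, $\hvol(o,C_a(V,L)) = r\cdot\vol(-K_V)$ with $-K_V\sim_\bQ rL$, and then observes that the factor $r$ (the Weil index) has no a priori bound even for K-semistable $V$ --- the weighted projective spaces $\bP(a_0,\dots,a_n)$ show the resulting compactifications are genuinely unbounded. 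So ``granting a uniform upper bound $r\le N(n,\varepsilon)$'' is not a technical lemma to be supplied later; it is equivalent in difficulty to the conjecture itself in the quasi-regular case. The paper's actual partial progress takes a different route: Theorem \ref{thm:K-ss cone bdd criterion} trades the index bound for an upper bound on $\mldk(x,X)$, proved via effective birationality of $|-mK_{\oX}|$ on projective orbifold cone compactifications, and the known cases are exactly those where $\mldk$ can be controlled by classification. You may find it more productive to aim at Conjecture \ref{conj:mld^K bdd if vol>=epsilon} than at the index bound directly.

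Your irregular-case reduction also has an internal flaw. The $\hvol$-minimizer on $X$ is $\wt_\xi$ itself (by K-semistability of $(X;\xi)$), so the stable degeneration of the klt singularity $x\in X$ is $X$ again with the same irregular Reeb vector; applying Conjecture \ref{conj:SDC} ``to $(X;\xi_k)$'' does not produce quasi-regular K-semistable cones $(X_k;\xi_k')$ approximating $(X;\xi)$, because the minimizer does not depend on the choice of rational direction $\xi_k$, and the nearby quasi-regular polarizations $(X;\xi_k)$ of the \emph{same} $X$ are simply not K-semistable in general. Moreover, even if one did obtain such degenerations, boundedness of a family of degenerations of $X$ does not imply boundedness of $X$: degenerations lose information, and there is no general mechanism for reconstructing the total space from the central fiber within a bounded family. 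The correct use of rational approximations of $\xi$ (as in the sketch of Theorem \ref{thm:K-ss cone bdd criterion}) is to compactify $X$ itself along each rational direction and control $\vol(-(K_{\oX}+D))$, not to replace $X$ by its degenerations.
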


A variant of this conjecture is the special boundedness conjecture \cite{HLQ-vol-ACC}, whose weaker version predicts that $n$-dimensional klt singularities $x\in X$ with $\hvol(x,X)\ge \varepsilon$ are bounded up to special degenerations. There is also a related ACC conjecture for local volumes, stating that in any fixed dimension, the set of possible local volumes are discrete away from zero. They both follow from the Stable Degeneration Conjecture and the boundedness conjecture above, as the stable degeneration of a klt singularity preserves the local volume (Theorem \ref{thm:SDC K-ss part}). For the ACC conjecture, we also need the constructibility of local volumes in $\bQ$-Gorenstein families (Theorem \ref{thm:constructible}).

It is not hard to verify the Boundedness Conjecture in dimension two. In fact, klt singularities in dimension two are the same as quotient singularities, and K-semistable Fano cone singularities are the linear quotients, i.e. they are isomorphic to $\bC^2/G$ for some finite group $G\subseteq GL(2,\bC)$ that does not contain any pseudoreflections. By the finite degree formula (Theorem \ref{thm:finite deg formula}), we see that their local volume is $\frac{4}{|G|}$, hence there are only finitely many isomorphism classes if the local volume is bounded away from zero. When the dimension is at least three, a full classification of klt singularities is no longer available, and the Boundedness Conjecture becomes much harder.

Let us also draw some comparison with the corresponding boundedness result for Fano varieties, which is also part of the K-moduli theorem (Theorem \ref{thm:K-moduli}).

\begin{thm}[\cites{Jia-boundedness,LLX-nv-survey,XZ-minimizer-unique}] \label{thm:Fano bdd}
Fix a positive integer $n$ and some real number $\varepsilon>0$. Then the set of $n$-dimensional K-semistable Fano variety $V$ with volume $(-K_V)^n\ge \varepsilon$ form a bounded family.
\end{thm}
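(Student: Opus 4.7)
The plan is to reduce the statement to Birkar's theorem on boundedness of complements combined with a BAB-style boundedness for Fano varieties admitting a bounded complement. The K-semistability hypothesis enters twice: once to bound the anti-canonical volume from above, and once to produce a uniform lower bound for the alpha invariant.

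First, I would establish the Fujita--Liu inequality $(-K_V)^n \leq (n+1)^n$ for any K-semistable Fano variety $V$ of dimension $n$; this can be regarded as the global counterpart of Theorem \ref{thm:largest vol}, and follows from K-semistability by comparing the log canonical threshold of a general member of $|-mK_V|$ with its $S$-invariant (alternatively, via the cone construction of Example \ref{exp:cone minimizer} together with the volume upper bound for the vertex). Combined with the hypothesis $(-K_V)^n \geq \varepsilon$, this pins the anti-canonical degree to the bounded interval $[\varepsilon, (n+1)^n]$.

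Second, I would exploit the valuative characterization of K-semistability, namely $\delta(V)\geq 1$, together with the elementary inequality $\delta(V)\leq (n+1)\alpha(V)$, to conclude the Odaka--Sano bound $\alpha(V)\geq \tfrac{1}{n+1}$. This furnishes, uniformly across the class under consideration, a lower bound for the log canonical threshold of any effective $\bQ$-divisor $\bQ$-linearly equivalent to $-K_V$, and in particular rules out any ``very singular'' anti-pluricanonical section appearing along the family.

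Third, with the volume bounds and the alpha invariant bound in hand, Birkar's theorem on the boundedness of $N$-complements produces an integer $N=N(n,\varepsilon)$ and an effective $\bQ$-divisor $D\sim_\bQ -K_V$ with $ND$ integral such that $(V,D)$ is log canonical and $(V,(1-\eta)D)$ is klt for some $\eta=\eta(n,\varepsilon)>0$. Birkar's BAB for Fano varieties carrying a bounded log canonical complement then shows that the pairs $(V,D)$---and a fortiori the underlying Fano varieties $V$---vary in a bounded family. The main obstacle, and the deepest input in the argument, is this third step: the construction of the bounded complement together with the ensuing BAB-type boundedness ultimately rests on the full machinery Birkar developed in the course of his proof of the original BAB conjecture.
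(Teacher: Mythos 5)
Your overall architecture is the standard one behind the first cited reference: deduce from K-semistability a uniform lower bound on the alpha invariant via $\delta(V)\le (n+1)\alpha(V)$, and then invoke Jiang's boundedness theorem for Fano varieties with $\alpha$-invariant and anti-canonical volume both bounded from below. (The survey itself gives no proof; the alternative route through \cites{LLX-nv-survey,XZ-minimizer-unique} derives the same alpha bound from the normalized volume of the cone vertex before feeding it into the same boundedness statement.) Steps one and two of your plan are correct, although the Fujita upper bound $(-K_V)^n\le (n+1)^n$ is not actually needed as an input — in Jiang's theorem the volume upper bound is a consequence, not a hypothesis.

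The genuine issue is in your third step. ``Birkar's BAB for Fano varieties carrying a bounded log canonical complement'' is not a theorem, and the hypothesis is vacuous: by Birkar's boundedness of complements, \emph{every} klt Fano variety of dimension $n$ admits an $N(n)$-complement, so carrying one imposes no restriction and certainly cannot imply boundedness (weighted projective spaces already give unbounded families of klt Fano varieties all admitting $N$-complements for fixed $N$). Nor can one appeal directly to BAB, since K-semistable Fano varieties are not known \emph{a priori} to be uniformly $\varepsilon$-lc — that is only known \emph{a posteriori}, as a consequence of boundedness. The correct statement to invoke is precisely Jiang's theorem: the set of $n$-dimensional klt Fano varieties $V$ with $\alpha(V)\ge \varepsilon_1$ and $\vol(-K_V)\ge\varepsilon_2$ is bounded. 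Its proof is a genuine argument in its own right (creation of isolated non-klt centers to establish birational boundedness of $|-mK_V|$ for bounded $m$, followed by a descent from birational boundedness to boundedness in which the alpha bound controls the singularities of the auxiliary log pairs on the bounded model), resting on Birkar's theory of complements and effective birationality but not reducible to a one-line application of BAB. With your step three replaced by a citation of that theorem, the argument is complete.
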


Consider the special case of the Boundedness Conjecture concerning orbifold cones $o\in C_a(V,L)$. From Example \ref{exp:cone minimizer}, we know that the orbifold cone singularity is K-semistable if and only if the base $V$ is a K-semistable Fano variety. While it is very tempting to relate the Boundedness Conjecture in this case to the boundedness of Fano varieties, a direct computation shows that the local volume $\hvol(o,C_a(V,L))$ is only a multiple of the anti-canonical volume of $V$. Namely, we have
\[
\hvol(o,C_a(V,L)) = r\cdot \vol(-K_V)
\]
where $r>0$ is the rational number satisfying $-K_V\sim_\bQ rL$. The largest possible value of $r$ we can get as we vary the Weil divisor $L$ is called the Weil index of the Fano variety $V$. Note that the Weil index of a Fano variety can be arbitrarily big in a fixed dimension; for example, the Weil index of the weighted projective space $\bP(a_0,a_1,\dots,a_n)$ is $a_0+a_1+\dots+a_n$. Even if we assume that the Fano variety is K-semistable, there does not seem to be any particular reason for the Weil index to be bounded. Thus already in this special case, it is not clear how to deduce Conjecture \ref{conj:bdd} from Theorem \ref{thm:Fano bdd}. In some sense, the presence of the Weil index is one of the major difficulties in the study of the Boundedness Conjecture for klt singularities.

Some partial progress on the Boundedness Conjecture have been made in \cites{HLQ-vol-ACC,MS-bdd-toric,LMS-bdd-dim-3,Z-mld^K-1,Z-mld^K-2}. In particular, the conjecture is known for hypersurface singularities, for threefold singularities, and for singularities of complexity at most one. The works \cites{Z-mld^K-1,Z-mld^K-2} also introduce an approach to the Boundedness Conjecture through the minimal log discrepancies of Koll\'ar components.

\begin{defn}[\cite{Z-mld^K-1}]
Let $x\in X$ be a klt singularity. The minimal log discrepancy of Koll\'ar components, denoted $\mldk (x,X)$, is the smallest log discrepancy $A_X(E)$ as $E$ varies among all Koll\'ar components over $x\in X$.
\end{defn}

One of the main results of \cites{Z-mld^K-1,Z-mld^K-2} is the following boundedness criterion.

\begin{thm} \label{thm:K-ss cone bdd criterion}
Fix a positive integer $n$ and consider a set $\cS$ of $n$-dimensional K-semistable Fano cone singularities. Then $\cS$ is bounded if and only if there exist some $\varepsilon,A>0$ such that
\[
\hvol(x,X)\ge \varepsilon \quad \mathit{and} \quad \mldk(x,X)\le A
\]
for all $x\in (X;\xi)$ in $\cS$.
\end{thm}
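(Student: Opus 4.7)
The proof naturally splits, with the content in the sufficiency.

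For the necessity, suppose $\cS$ is realized by a $\bQ$-Gorenstein family $B\subseteq \cX\to B$ of klt singularities over a finite-type base. The function $b\mapsto \hvol(b,\cX_b)$ is constructible by Theorem \ref{thm:constructible}, hence takes only finitely many (positive) values and is uniformly bounded below by some $\varepsilon>0$. For the upper bound on $\mldk$, I would proceed by Noetherian induction on $B$: over a dense open $B_0\subseteq B$, spread out the plt blowup of a single fiber to obtain a relative family of Koll\'ar components, giving a uniform bound on $\mldk$ over $B_0$; then iterate on the complement.

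For the sufficiency, fix $x\in (X;\xi)$ in the hypothesized class together with a Koll\'ar component $E$ with $A_X(E)\le A$. Let $\pi\colon Y\to X$ be the plt blowup and $(E,\Delta_E)$ the klt log Fano pair obtained by adjunction. Since $\pi(E)=\{x\}$, we have $-(K_E+\Delta_E)=-A_X(E)\cdot E|_E$. Kawamata-Viehweg vanishing on $Y$ (as in the proof of Proposition \ref{prop:tc by kc}) gives $\vol_{X,x}(\ord_E)=(-E|_E)^{n-1}$, whence
\[
\hvol_X(\ord_E)=A_X(E)^n\cdot (-E|_E)^{n-1}=A_X(E)\cdot (-K_E-\Delta_E)^{n-1}.
\]
Combined with $\varepsilon\le \hvol(x,X)\le \hvol_X(\ord_E)$ and $A_X(E)\le A$, this yields the volume lower bound $(-K_E-\Delta_E)^{n-1}\ge \varepsilon/A$. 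The plan is then to show that the log Fano pairs $(E,\Delta_E)$ obtained this way lie in a bounded family, reconstruct $x\in X$ from the graded algebra $\bigoplus_k \pi_*\cO_Y(-kE)$ (which, using Birkar's boundedness of complements for $(E,\Delta_E)$, is generated in uniformly bounded degrees), and extract the Reeb vector $\xi$ from the resulting $\bN$-grading by a standard Hilbert scheme argument.

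The principal obstacle is the boundedness of the pairs $(E,\Delta_E)$. A direct application of BAB is blocked because the coefficients of the different $\Delta_E$ (of the form $1-1/m$) form only a DCC but not a finite set, and no uniform $\varepsilon'$-lc estimate on $(E,\Delta_E)$ is immediately visible from the hypotheses. The key idea, following the strategy of \cites{Z-mld^K-1,Z-mld^K-2}, is to exploit the K-semistability of $x\in(X;\xi)$: restricting to $E$ the basis-type divisors on $X$ that are compatible with the Reeb filtration transfers the asymptotic log-canonicity of the basis-type divisors to $(E,\Delta_E)$, yielding a lower bound on its $\delta$-invariant (equivalently an $\varepsilon'$-lc estimate) depending only on $\varepsilon$ and $A$. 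Combined with the volume lower bound, this places $(E,\Delta_E)$ in a bounded family via the log Fano version of BAB, completing the argument.
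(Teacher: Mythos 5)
Your necessity argument (constructibility of $b\mapsto\hvol(b,\cX_b)$ plus spreading out a plt blowup over a dense open set and applying Noetherian induction) is correct and is the expected one. The sufficiency is where the theorem lives, and there you take a genuinely different route from the actual proof while leaving the decisive steps as assertions. The proof in \cites{Z-mld^K-1,Z-mld^K-2} does not attempt to bound the pair $(E,\Delta_E)$ at all: it chooses a rational Reeb vector approximating $\xi$, compactifies $X$ to the associated projective orbifold cone $\oX$ with boundary $D$ at infinity, uses K-semistability and $\hvol(x,X)\ge\varepsilon$ to control $\vol(-(K_{\oX}+D))$, and then proves an effective birationality statement (\cite{Z-mld^K-2}*{Proposition 3.8}): for some $m=m(n,\varepsilon,A)$ the system $|-mK_{\oX}|$ is birational and restricts to an embedding on $X$. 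The hypothesis $\mldk(x,X)\le A$ is consumed by that effective birationality argument, not by BAB applied to a Koll\'ar component.

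The first gap in your route is the $\varepsilon'$-lc estimate for $(E,\Delta_E)$, which you correctly identify as the principal obstacle but do not actually overcome. The divisor $E$ is an arbitrary Koll\'ar component with $A_X(E)\le A$, not the minimizing one, so $(E,\Delta_E)$ need not be K-semistable and you cannot invoke the boundedness of K-semistable log Fano pairs. Restricting Reeb-compatible basis type divisors can at best control a multigraded, moving delta-type invariant of $E$ in the style of \cite{AZ-K-adjunction}; converting that into log discrepancy bounds for $(E,\Delta_E)$ with respect to $-(K_E+\Delta_E)$ (and hence into coefficient bounds for $\Delta_E$) is a substantial missing argument, not a formality. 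What you need is essentially that $x\in X$ admits a $\delta(n,\varepsilon,A)$-plt blowup; this is the content of Conjecture \ref{conj:delta-plt blowup}, which is known only in dimension at most three, and Section \ref{s:question} explicitly remarks that neither of Conjectures \ref{conj:bdd} and \ref{conj:delta-plt blowup} is known to imply the other. So your key intermediate step is at least as hard as the theorem, and plausibly strictly harder.

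The second gap is the reconstruction. Even granting log boundedness of $(E,\Delta_E)$, what you bound directly is the central fiber $X_0=\Spec(\gr_E R)$, the orbifold cone over $(E,\Delta_E)$ polarized by $-E|_E$; that is special boundedness, not boundedness. To recover $X$ itself you must additionally (a) bound $-E|_E$ as a Weil divisor class in $\Cl(E)$, not merely as the $\bQ$-divisor $-\tfrac{1}{A_X(E)}(K_E+\Delta_E)$ — torsion in the class group and the denominator of $A_X(E)$ both intervene, and this is precisely the Weil index difficulty flagged in Section \ref{s:bdd} — and (b) bound the extension data of the filtration $k\mapsto\pi_*\cO_Y(-kE)$, i.e., the deformation from $X_0$ back to $X$. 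Neither point follows from boundedness of complements on $(E,\Delta_E)$, and both would need separate arguments.
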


The idea of the proof comes from the following observation. Given a K-semistable Fano cone singularity $x\in (X;\xi)$, each rational Reeb vector on $X$ induces a projective orbifold cone compactification $\oX$ of $X$. As the Reeb vector approximates the K-semistable polarization $\xi$, the volumes $\vol(-(K_{\oX}+D))$ of the log Fano pair $(\oX,D)$ (where $D=\oX\setminus X$ is the divisor at infinity) approximates the local volume $\hvol(x,X)$ of the singularity. In particular, the anti-canonical volume of $\oX$ is bounded. One should note that the compactification $\oX$ is not unique, and in general not bounded, as illustrated by the following example.

\begin{expl}
Let $a_1,\dots,a_n\in \bN^*$ be pairwise coprime integers. Then $\xi=(a_1,\dots,a_n)\in \bN^n$ gives a polarization of the Fano cone singularity $0\in\bA^n$; it generates the $\bG_m$-action with weights $a_1,\dots,a_n$ on the coordinates. This endows $\bA^n$ with an affine orbifold cone structure $C_a(V,L)$ where $V=\bP(a_1,\dots,a_n)$ and $L=\cO_V(1)$. The associated projective orbifold cone is $\oX=\bP(1,a_1,\dots,a_n)$, which do not form a bounded family as the weights $a_i$'s vary. % By choosing appropriate weights, we can even ensure that the normalized volume $\hvol(\wt_\xi)=\frac{(a_1+\dots+a_n)^n}{a_1\dots a_n}$ is fixed.
\end{expl}

Nonetheless, one can still extract some weaker boundedness in the above example: at least the linear system $|-K_{\oX}|$ always defines a birational map that is an embedding at the vertex $[1:0:\dots:0]$. In fact, if $[s:x_1:\dots:x_n]$ are the weighted homogeneous coordinates of $\oX$, then for every $i\in \{1,\dots,n\}$ there exists some $k_i\in \bN$ such that $s^{k_i} x_i\in H^0(-K_{\oX})$ (this is possible because $s$ has weight $1$); it is not hard to see that the sub linear system spanned by $s^{k_i} x_i$ ($i=0,\dots,n$) is base point free and restricts to an embedding on the affine chart $\bA^n=\oX\setminus (s=0)$.

In general, we have an effective birationality result (\cite{Z-mld^K-2}*{Proposition 3.8}): there exists a positive integer $m$ depending only on $\hvol(x,X)$ and $\mldk(x,X)$ such that $|-mK_{\oX}|$ induces a birational map that restricts to an embedding on $X$. This implies the boundedness of the Fano cone singularity $x\in (X;\xi)$.

In some situations, one can use classification results to verify the boundedness of $\mldk$ and hence prove the Boundedness Conjecture \ref{conj:bdd} using Theorem \ref{thm:K-ss cone bdd criterion}. This is the case for singularities of complexity at most one \cite{LMS-bdd-dim-3}, and for threefold singularities whose local volumes are bounded away from zero \cite{Z-mld^K-1}. However, in general it is not yet clear what to expect about the behaviour of $\mldk$. 

\section{Questions and future directions} \label{s:question}

In this last section, we collect some conjectures and open questions about the stability and boundedness of klt singularities, hoping that the readers will become motivated to work on
some of them.

\subsection{Boundedness}

One of the major challenges in this topic is the Boundedness Conjecture (Conjecture \ref{conj:bdd}). We restate it here for the readers' convenience.

\begin{conj}[Boundedness]
Fix a positive integer $n$ and some real number $\varepsilon>0$. Then the set of $n$-dimensional K-semistable Fano cone singularities $x\in (X;\xi)$ with local volume $\hvol(x,X)\ge \varepsilon$ is bounded.
\end{conj}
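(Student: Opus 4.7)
My plan is to invoke Theorem~\ref{thm:K-ss cone bdd criterion}, so the problem reduces to producing a uniform constant $A=A(n,\varepsilon)>0$ with $\mldk(x,X)\le A$ for every K-semistable Fano cone singularity $x\in(X;\xi)$ of dimension $n$ with $\hvol(x,X)\ge\varepsilon$. The K-semistability hypothesis forces $\wt_\xi$ to minimize $\hvol$, so by Theorem~\ref{thm:minimizer is from km} it is a Koll\'ar valuation: I can fix a Koll\'ar model $\pi\colon(Y,E=\sum_{i=1}^{r}E_i)\to X$ with $\wt_\xi\in\QM(Y,E)$ and weight vector $\alpha=(\alpha_1,\dots,\alpha_r)$, so that $A_X(\wt_\xi)=\sum_i\alpha_i A_X(E_i)$ and $A_X(\wt_\xi)^n\cdot\vol(\wt_\xi)\ge\varepsilon$.

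\textbf{Extracting a bounded Koll\'ar component.} The next step would be to pass from this Koll\'ar model to a single Koll\'ar component $E'$ over $x\in X$ with $A_X(E')\le C(n)\cdot A_X(\wt_\xi)$. I would perturb $\alpha$ to a nearby rational ray $\alpha'\in\bQ_{>0}^r$ and use Proposition~\ref{prop:QM(km) perturb boundary} together with the special-complement technology of \cite{XZ-SDC} to realize the corresponding divisorial valuation $v'$ as a monomial lc place of a special complement; the divisorial-minimizer/Koll\'ar-component dictionary from Section~\ref{ss:div minimizer} (in particular Theorem~\ref{thm:div minimizer is kc}) together with the qdlt-specialization results used in the proof of Theorem~\ref{thm:fg criterion via km} should then upgrade $v'$ to a genuine Koll\'ar component whose log discrepancy is bounded linearly in terms of $A_X(\wt_\xi)$ with a constant depending only on $n$. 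Granting this, bounding $\mldk$ reduces to a uniform upper bound on $A_X(\wt_\xi)$.

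\textbf{Main obstacle.} The hard part will be exactly this upper bound on $A_X(\wt_\xi)$, equivalently a positive lower bound on $\vol(\wt_\xi)$. In the quasi-regular case $X=C_a(V,L)$ with $(V,\Delta_V)$ a K-semistable log Fano and Weil index $r$ defined by $-(K_V+\Delta_V)\sim_{\bQ}rL$, we have $\vol(\wt_\xi)=L^{n-1}=\vol(-(K_V+\Delta_V))/r^{n-1}$, so this is exactly the Weil-index problem flagged after Theorem~\ref{thm:Fano bdd}. My preferred route is to pair the K-moduli theorem (Theorem~\ref{thm:K-moduli}) with an \emph{effective index bound} for K-semistable log Fano pairs of bounded dimension and bounded anti-canonical volume --- conjecturally of ACC type --- proved via effective birationality of $|{-mK_V}|$ combined with alpha-invariant estimates exploiting K-semistability. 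The fully irrational case is even more delicate: one would reduce to the quasi-regular setting by approximating $\xi$ inside the Reeb cone of the automorphism torus of the K-polystable degeneration (Theorem~\ref{thm:K-ps degeneration}), tracking $\hvol$ through its lower semi-continuity and constructibility (Theorems~\ref{thm:vol lsc} and~\ref{thm:constructible}). The true sticking point is that no canonical rational approximation of $\xi$ is available, so any effective Weil-index bound has to be robust under small perturbation of the Reeb vector --- and engineering such a robust bound is, to my mind, where the whole conjecture stands or falls.
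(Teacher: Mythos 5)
The statement you are addressing is Conjecture~\ref{conj:bdd}, which the paper presents as an \emph{open problem} (``the general case remains wide open''), so there is no proof in the paper to compare against --- and your proposal does not close the gap either. Your opening reduction, namely invoking Theorem~\ref{thm:K-ss cone bdd criterion} so that it suffices to bound $\mldk(x,X)$ from above by a constant $A(n,\varepsilon)$, is valid and is exactly the route the paper itself suggests; but the resulting statement is precisely Conjecture~\ref{conj:mld^K bdd if vol>=epsilon}, which is also open. At best you have reduced one open conjecture to another, which the paper already does explicitly in Section~\ref{s:question}.

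The genuine gap is the one you flag yourself as the ``main obstacle,'' and it is not a technicality but the known core difficulty. In the quasi-regular case $X=C_a(V,L)$ one has $\hvol(x,X)=r\cdot\vol(-(K_V+\Delta_V))$ with $-(K_V+\Delta_V)\sim_\bQ rL$, and the hypothesis $\hvol(x,X)\ge\varepsilon$ gives no upper bound on the Weil index $r$, hence no lower bound on $\vol(\wt_\xi)$ and no upper bound on $A_X(\wt_\xi)$ (which, note, is only well defined after fixing a normalization of $\xi$, since $\hvol$ is rescaling invariant). The paper explicitly identifies the unboundedness of the Weil index as ``one of the major difficulties,'' and your proposed remedy --- an ``effective index bound for K-semistable log Fano pairs\dots conjecturally of ACC type'' --- is itself an unproved conjecture, closely related to the question on Weil indices of weakly special Fano varieties posed at the end of Section~\ref{s:question}. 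There are also unestablished steps earlier in your argument: extracting from the Koll\'ar model of $\wt_\xi$ a single Koll\'ar component $E'$ with $A_X(E')\le C(n)\cdot A_X(\wt_\xi)$ does not follow from Theorem~\ref{thm:div minimizer is kc} or Proposition~\ref{prop:QM(km) perturb boundary} as stated, and the irrational-rank approximation you sketch at the end is not controlled by Theorems~\ref{thm:vol lsc} or~\ref{thm:constructible}. In short, your text is a reasonable research program consistent with the paper's own discussion (and with the partial results for threefolds, hypersurfaces, and complexity one), but it is not a proof.
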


As discussed in Section \ref{s:bdd}, the Boundedness Conjecture has several interesting consequences. Some of these might be easier to study. The first one is the Special Boundedness Conjecture.

\begin{conj}[Special Boundedness]
Fix a positive integer $n$ and some real number $\varepsilon>0$. Then the set of $n$-dimensional klt singularities $x\in X$ with local volume $\hvol(x,X)\ge \varepsilon$ is bounded up to special degenerations.
\end{conj}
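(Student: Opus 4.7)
The plan is to deduce Special Boundedness as a direct consequence of the Stable Degeneration Conjecture (Conjecture \ref{conj:SDC}) together with the Boundedness Conjecture for K-semistable Fano cone singularities (Conjecture \ref{conj:bdd}). Concretely, given any $n$-dimensional klt singularity $x\in X$ with $\hvol(x,X)\ge\varepsilon$, the Stable Degeneration Conjecture produces a canonical special test configuration, induced by the (unique, quasi-monomial) minimizer of $\hvol$ on $X$, whose central fiber is a K-semistable Fano cone singularity $x_0\in (X_0;\xi_0)$.

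The key steps are then as follows. First, invoke Theorem \ref{thm:SDC K-ss part} to conclude that this special degeneration preserves the local volume, i.e.\ $\hvol(x_0,X_0)=\hvol(x,X)\ge\varepsilon$. Hence every K-semistable central fiber arising this way satisfies the hypotheses of Conjecture \ref{conj:bdd}. Second, apply Conjecture \ref{conj:bdd} to conclude that the collection of all such $x_0\in(X_0;\xi_0)$ is bounded as Fano cone singularities; forgetting the Reeb structure then gives a bounded $\bQ$-Gorenstein family of klt singularities into which every $X_0$ embeds as a fiber. Since by construction each $x\in X$ in the original set specially degenerates to some member of this bounded family, we obtain the desired boundedness up to special degenerations.

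The main obstacle is that Conjecture \ref{conj:bdd} is currently open in general, known only in low dimensions or under extra structural hypotheses (see Section \ref{s:bdd}); the Special Boundedness statement thus inherits the same conditional status. A strategy that sidesteps the full Boundedness Conjecture is to verify the criterion of Theorem \ref{thm:K-ss cone bdd criterion} directly for the K-semistable central fibers produced by the SDC: the lower bound on $\hvol$ is automatic by the volume-preservation argument above, so the remaining task is to establish a uniform upper bound on $\mldk(x_0,X_0)$ in terms of $n$ and $\varepsilon$, for those $x_0$ that arise as stable degenerations. Even this narrower question appears quite difficult; controlling $\mldk$ along the SDC degeneration would likely have to combine the torus-equivariant structure of Fano cone singularities with the extra rigidity furnished by the K-semistability of the central fiber (e.g.\ the characterization in Theorem \ref{thm:minimizer = K-ss val}), and is where I expect the bulk of the work to lie.
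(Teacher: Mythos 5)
Your reduction is exactly the one the paper itself gives: the statement is an open conjecture, and the paper's only ``proof'' is the observation in Section \ref{s:bdd} that it follows from the Stable Degeneration Conjecture together with Conjecture \ref{conj:bdd}, using Theorem \ref{thm:SDC K-ss part} to see that the stable degeneration preserves the local volume. You correctly flag that the argument remains conditional on Conjecture \ref{conj:bdd} (or on bounding $\mldk$ via Theorem \ref{thm:K-ss cone bdd criterion}), which is precisely where the paper leaves matters as well.
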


Since the multiplicity and embedded dimension of a singularity are bounded in a given family and are non-decreasing under specialization, another consequence of the Boundedness Conjecture is the boundedness of these invariants.

\begin{conj}
Fix a positive integer $n$ and some real number $\varepsilon>0$. Then there exists some constant $M$ depending only on $n,\varepsilon$ such that for all $n$-dimensional klt singularities $x\in X$ with local volume $\hvol(x,X)\ge \varepsilon$, we have $\mult_x X\le M$ and the embedded dimension of $x\in X$ is also at most $M$.
\end{conj}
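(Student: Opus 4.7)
The plan is to reduce the boundedness of multiplicity and embedded dimension to Conjecture \ref{conj:bdd} via the stable degeneration. Given an $n$-dimensional klt singularity $x\in X$ with $\hvol(x,X)\geq \varepsilon$, I would first apply the Stable Degeneration Conjecture (Conjecture \ref{conj:SDC}) to obtain a test configuration with general fiber $X$ and central fiber a K-semistable Fano cone singularity $x_0\in (X_0;\xi_0)$ of the same dimension $n$. By Theorem \ref{thm:SDC K-ss part}, this degeneration preserves the local volume, so $\hvol(x_0,X_0)=\hvol(x,X)\geq \varepsilon$.

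The next step is to transfer the invariants across this degeneration. The test configuration $\cX\to \bA^1$ carries a natural section along which the closed point $x$ of the general fiber specializes to $x_0$ in the central fiber (recall that the $\fm_x$-primary condition on the filtration is exactly what ensures this). By the classical upper semi-continuity of multiplicity and of the fiberwise dimension of $\fm/\fm^2$ in flat families with a section, one obtains
\[
\mult_x X \,\leq\, \mult_{x_0}X_0, \qquad \mathrm{embdim}(x,X) \,\leq\, \mathrm{embdim}(x_0,X_0).
\]
It therefore suffices to bound $\mult$ and embedded dimension for K-semistable Fano cone singularities of dimension $n$ and local volume at least $\varepsilon$.

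At this point the Boundedness Conjecture (Conjecture \ref{conj:bdd}) supplies a $\bQ$-Gorenstein family $B\subseteq \cY\to B$ with $B$ of finite type whose fibers include every such K-semistable Fano cone singularity. On the Noetherian base $B$ the functions $b\mapsto \mult_b\cY_b$ and $b\mapsto \mathrm{embdim}(b,\cY_b)$ are upper semi-continuous and constructible, hence attain only finitely many values; their maximum supplies the desired constant $M=M(n,\varepsilon)$.

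The main obstacle is of course Conjecture \ref{conj:bdd} itself, which is still open in dimension $\geq 4$; once it is granted, the argument above is essentially formal, the only additional input being the standard fact that multiplicity and embedded dimension can only jump up under specialization. It is worth noting that the same strategy in fact deduces boundedness of $\mult_x X$ and $\mathrm{embdim}(x,X)$ from the weaker \emph{Special Boundedness} Conjecture, since multiplicity and embedded dimension are insensitive to further isotrivial degeneration in the same direction: all one needs is that $x \in X$ special-degenerates to a singularity drawn from a bounded family, not that the degeneration be canonical.
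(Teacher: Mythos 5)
Your argument is exactly the reduction the paper intends: it states this conjecture as a consequence of Conjecture \ref{conj:bdd}, justified in one sentence by the facts that multiplicity and embedded dimension are bounded in a given family and non-decreasing under specialization, which is precisely your combination of the stable degeneration (with the correct direction of semicontinuity, $\mult_x X\le \mult_{x_0}X_0$) and boundedness of the K-semistable Fano cone degenerations. You are also right, and appropriately explicit, that the whole argument remains conditional on the open Boundedness Conjecture (or its special-boundedness weakening), so there is nothing to add.
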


Apart from the stable degenerations, there are other ways to produce special degenerations of klt singularities. Essentially, special test configurations are in one-to-one correspondence with Koll\'ar components. By the Borisov-Alexeev-Borisov Conjecture (now a theorem of Birkar \cites{Birkar-bab-1,Birkar-bab-2}), the Koll\'ar components (viewed as log Fano varieties) belong to a bounded family if and only if their minimal log discrepancies (mld) are bounded away from zero. This motivates a stronger version of the Special Boundedness Conjecture (see \cite{HLQ-vol-ACC}*{Conjecture 1.6}).

\begin{conj} \label{conj:delta-plt blowup}
Fix a positive integer $n$ and some real number $\varepsilon>0$. Then there exists some constant $\delta>0$ depending only on $n,\varepsilon$ such that any $n$-dimensional klt singularities $x\in X$ with local volume $\hvol(x,X)\ge \varepsilon$ admits a $\delta$-plt blowup.
\end{conj}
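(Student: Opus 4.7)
The plan is to combine the Stable Degeneration Conjecture with the theory of bounded complements and the boundedness criterion from Theorem \ref{thm:K-ss cone bdd criterion}. Given $x\in X$ with $\hvol(x,X)\ge \varepsilon$, the two-step stable degeneration produces a K-polystable Fano cone $y\in (Y;\xi_Y)$ with $\hvol(y,Y)=\hvol(x,X)\ge \varepsilon$. Since a $\delta$-plt blowup is detected by log discrepancies on the exceptional divisor, and since both steps of the stable degeneration are induced by quasi-monomial valuations, one expects to be able to transfer a $\langle \xi_Y\rangle$-equivariant Kollár component on the central fiber back to a Kollár component of $x\in X$ with a controlled loss in log discrepancies, using the lower semi-continuity of mld in $\bQ$-Gorenstein families of lc pairs. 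It thus suffices to produce a $\delta_0$-plt blowup of a K-semistable Fano cone singularity $(Y;\xi_Y)$ with local volume at least $\varepsilon$.

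For the Fano cone case, the first move will be to approximate $\xi_Y$ by a rational Reeb vector $\xi$ producing a quasi-regular Fano cone structure. By Example \ref{exp:quasi-reg cone defn}, this yields a Seifert $\bG_m$-bundle $Y\setminus\{y\}\to V$ whose zero section is a Kollár component $E$ with $(E,\Delta_E)=(V,\Delta_V)$ a K-semistable log Fano pair, provided the approximation is close enough that K-semistability is preserved. The log discrepancy $A_Y(E)$ can be bounded in terms of $n$ and $\varepsilon$ via the Izumi-type inequality of Lemma \ref{lem:Izumi}. If $(V,\Delta_V)$ could be shown to lie in a bounded family, then it would automatically be $\delta$-lc for some $\delta=\delta(n,\varepsilon)$ by Birkar's BAB theorem, and the zero section would furnish the desired $\delta$-plt blowup.

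The principal obstacle here is precisely the Weil index problem highlighted in Section \ref{s:bdd}: the anti-canonical volume of $(V,\Delta_V)$ equals $\hvol(y,Y)$ divided by the Weil index $r$ of the polarization, and $r$ can be arbitrarily large as the rational approximation $\xi$ of $\xi_Y$ is refined. Theorem \ref{thm:Fano bdd} therefore does not apply directly, and one must find a way to choose the approximation so that $r$ stays bounded. Two complementary routes around this obstacle suggest themselves. First, by Theorem \ref{thm:K-ss cone bdd criterion}, a uniform upper bound on $\mldk(y,Y)$ would yield boundedness of the Fano cone itself, and a $\delta$-plt blowup would follow. Second, by Theorem \ref{thm:lc place N-comp}, the normalized volume minimizer $v_0$ is an lc place of some $N$-complement $\Gamma$ with $N=N(n)$, and applying the special complement machinery of \cite{XZ-SDC} and \cite{LXZ-HRFG}, one can hope to realize $v_0$ inside a simplex $\QM(Y,E)$ arising from a Koll\'ar model whose vertices have bounded log discrepancy. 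The hard part is then to show that for at least one such vertex $E$, the different $\Delta_E$ yields a $\delta$-lc pair with $\delta$ depending only on $n$ and $\varepsilon$; this appears to require combining the K-semistability of $v_0$ with ACC-type inputs on log Fano pairs in fixed dimension, and is ultimately intertwined with the full Boundedness Conjecture.
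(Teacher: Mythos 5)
The statement you are addressing is Conjecture \ref{conj:delta-plt blowup}, which is open: the paper offers no proof and only records that it has been verified up to dimension three in \cites{HLQ-vol-ACC,Z-mld^K-1}. Your text is therefore a strategy sketch rather than a proof, and several of its steps do not go through as stated. First, the reduction to the central fiber of the stable degeneration invokes ``lower semi-continuity of mld in $\bQ$-Gorenstein families,'' which is itself a well-known open conjecture; and even granting it, transferring a $\delta$-plt blowup from the special fiber of a test configuration back to the general fiber with a uniform loss in $\delta$ is not justified. Second, in the quasi-regular approximation step, K-semistability of the base $(V,\Delta_V)$ is \emph{not} preserved when you perturb $\xi_Y$ to a nearby rational Reeb vector: the equivalence between K-semistability of the cone and of the base (Example \ref{exp:cone minimizer}) applies to the cone polarization itself, and for an irregular $\xi_Y$ no rational perturbation need yield a K-semistable base. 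Third, the claim that $A_Y(E)$ for the zero section ``can be bounded in terms of $n$ and $\varepsilon$ via Lemma \ref{lem:Izumi}'' is unfounded: the Izumi inequality bounds the valuation from above by $C\cdot A_X(v)\,\ord_x$, hence gives at best a \emph{lower} bound on log discrepancies (with a constant depending on the singularity), and no upper bound on the log discrepancy of a Koll\'ar component. A uniform upper bound on $\mldk$ in terms of $n$ and $\varepsilon$ is precisely Conjecture \ref{conj:mld^K bdd if vol>=epsilon}, which the paper lists as open.

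Your closing sentence concedes that the argument ``is ultimately intertwined with the full Boundedness Conjecture,'' and that is the correct assessment; note moreover that the paper explicitly remarks that while Conjectures \ref{conj:bdd} and \ref{conj:delta-plt blowup} each imply the Special Boundedness Conjecture, it is not known that either implies the other, so even the full Boundedness Conjecture would not obviously yield the $\delta$-plt statement. What is salvageable from your sketch is the reduction (modulo the transfer issue above) to producing uniformly plt Koll\'ar components on K-semistable Fano cones of volume at least $\varepsilon$, which is essentially how the cited low-dimensional verifications proceed, using classification results; in general the Weil index obstruction you correctly identify remains unresolved, and no complete argument is currently available.
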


Here we say the klt singularity $x\in X$ admits a $\delta$-plt blowup if there exists a plt blowup $Y\to X$ of a Koll\'ar component $E$ such that the pair $(Y,E)$ is $\delta$-plt, i.e., $A_{Y,E}(F)>\delta$ for all prime divisors $F$ that are exceptional over $Y$. Conjecture \ref{conj:delta-plt blowup} has been verified up to dimension three, see \cites{HLQ-vol-ACC,Z-mld^K-1}. 

We remark that while Conjectures \ref{conj:bdd} and \ref{conj:delta-plt blowup} both imply the Special Boundedness Conjecture, it is not clear whether any of these two implies the other.

Another consequence of the Boundedness Conjecture is the discreteness of the local volume away from zero. Sometimes this is also referred to as the ACC Conjecture for local volumes\footnote{If we consider klt pairs $(X,D)$ with DCC coefficients, then their local volumes are expected to form an ACC set, hence the name of the conjecture.}. See \cite{LX-cubic-3fold}*{Question 4.3} and \cite{LLX-nv-survey}*{Question 6.12}.

\begin{conj}[ACC] \label{conj:ACC}
Fix a positive integer $n$. Then the set of all possible local volumes of $n$-dimensional klt singularities are discrete away from zero.
\end{conj}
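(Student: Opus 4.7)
The plan is to deduce Conjecture \ref{conj:ACC} from the Boundedness Conjecture \ref{conj:bdd} via the Stable Degeneration Conjecture together with the constructibility of local volumes (Theorem \ref{thm:constructible}). Fix any $\varepsilon > 0$ and let $\cS_\varepsilon$ denote the set of $n$-dimensional klt singularities $x\in X$ with $\hvol(x,X)\ge \varepsilon$. It suffices to show that the set
\[
V_\varepsilon := \{\hvol(x,X) \mid (x\in X)\in \cS_\varepsilon\}\subseteq [\varepsilon,n^n]
\]
is finite, since Theorem \ref{thm:largest vol} gives the upper bound $n^n$ and the conclusion for every $\varepsilon > 0$ is equivalent to discreteness away from zero.

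The first step is a reduction to the Fano cone case. By the Stable Degeneration Conjecture (now a theorem), every $x\in X\in \cS_\varepsilon$ admits a stable degeneration to a K-polystable Fano cone singularity $y\in(Y;\xi_Y)$, and by Theorem \ref{thm:SDC K-ss part} this degeneration preserves the local volume. Hence $V_\varepsilon$ coincides with the set of local volumes attained by $n$-dimensional K-polystable (in particular K-semistable) Fano cone singularities with local volume $\ge \varepsilon$. Assuming Conjecture \ref{conj:bdd}, all such Fano cones lie in a bounded family, i.e.\ they are realized as fibers of a single $\bQ$-Gorenstein family $B\subseteq \cX\to B$ of klt singularities over a scheme $B$ of finite type. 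By Theorem \ref{thm:constructible}, the function $b\mapsto \hvol(b,\cX_b)$ is constructible on the Noetherian scheme $B$, hence attains only finitely many values. Therefore $V_\varepsilon$ is finite, proving ACC.

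The main obstacle is evidently the Boundedness Conjecture itself, which at present is open in general and known only in dimension two, for hypersurface singularities, for threefold singularities, and for singularities of complexity at most one. By the criterion of Theorem \ref{thm:K-ss cone bdd criterion}, for K-semistable Fano cone singularities the missing input is a uniform upper bound on $\mldk(x,X)$ in terms of $n$ and $\varepsilon$, which in turn should follow from a uniform understanding of how Koll\'ar components (especially those extracting the minimizer $v_0$) degenerate in families. One natural intermediate target is Conjecture \ref{conj:delta-plt blowup} ($\delta$-plt blowup), from which boundedness of $\mldk$ follows for free; combining a uniform $\delta$-plt blowup with the effective birationality argument sketched in \cite{Z-mld^K-2} would then close the loop.

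As a fallback, one may hope to bypass full boundedness and establish the weaker finiteness of $V_\varepsilon$ directly, in the spirit of the ACC theorem for log canonical thresholds \cite{HMX-ACC}. Concretely, writing $\hvol(x,X) = \lct(\fab)^n\cdot \mult(\fab)$ for an $\fm_x$-primary graded sequence $\fab$ computing the invariant, one could try to combine ACC for log canonical thresholds (applied to members of bounded families of $N$-complements provided by Theorem \ref{thm:lc place N-comp}) with a compactness/constructibility argument for the multiplicity term. This approach however would still require controlling the denominators arising from the Weil index phenomenon highlighted in Section \ref{s:bdd}, which is precisely the source of difficulty that blocks Conjecture \ref{conj:bdd}. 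For this reason I expect the Boundedness Conjecture (or some cleverly weakened substitute that is still strong enough for a constructibility conclusion) to be the genuine bottleneck.
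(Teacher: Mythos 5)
Your conditional reduction is exactly the one the paper itself gives in Section \ref{s:bdd}: stable degeneration preserves the local volume (Theorem \ref{thm:SDC K-ss part}), the Boundedness Conjecture \ref{conj:bdd} puts the resulting K-semistable Fano cones in a single $\bQ$-Gorenstein family, and constructibility (Theorem \ref{thm:constructible}) then yields finitely many values in $[\varepsilon,n^n]$. Since the statement is an open conjecture, neither you nor the paper proves it unconditionally, and you correctly identify the Boundedness Conjecture as the genuine missing ingredient.
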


By Theorem \ref{thm:largest vol}, the largest local volume is achieved by a smooth point. Assuming the above ACC conjecture, a natural question is what should be the second largest local volume. A natural prediction is given by the ODP volume gap conjecture, see \cite{SS-two-quadric}*{Conjecture 5.5} and \cite{LX-cubic-3fold}*{Conjecture 4.5}.

\begin{conj}[ODP volume gap]
The second largest volume of an $n$-dimensional klt singularity is $2(n-1)^n$, and it is achieved only by the ordinary double point.
\end{conj}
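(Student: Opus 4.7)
The approach combines the stable degeneration machinery of the preceding sections with a ``second largest volume'' problem for K-semistable log Fano varieties of one lower dimension.

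First I would reduce to the case of a K-polystable Fano cone singularity. Given an $n$-dimensional klt singularity $x\in X$ with $\hvol(x,X)\ge 2(n-1)^n$, the Stable Degeneration Conjecture (now proved; Conjecture~\ref{conj:SDC}) together with Theorem~\ref{thm:K-ps degeneration} produces a volume-preserving two-step degeneration of $x\in X$ to a K-polystable Fano cone $y\in(Y;\xi_Y)$. Since $\hvol\le n^n$ with equality only at smooth points (Theorem~\ref{thm:largest vol}), the smooth case is separated off, and it suffices to show: any singular K-polystable Fano cone $y\in(Y;\xi_Y)$ of dimension $n$ with $\hvol(y,Y)\ge 2(n-1)^n$ is isomorphic to the ODP.

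Next I would analyze the minimizer $v_0$, which is a Koll\'ar valuation by Theorem~\ref{thm:minimizer is from km}. In the divisorial case $v_0=\ord_E$ for a K-semistable Koll\'ar component $E$, and the adjunction $K_E+\Delta_E=A\cdot E|_E$ (with $A:=A_Y(E)$) combined with Kawamata--Viehweg vanishing as in the proof of Proposition~\ref{prop:tc by kc} yields
\[
\hvol(y,Y)\;=\;A\cdot\vol\bigl(-(K_E+\Delta_E)\bigr).
\]
The problem thus reduces to the following volume gap in dimension $n-1$: for any K-semistable log Fano pair $(E,\Delta_E)$ arising as the base of a Koll\'ar component of log discrepancy $A$,
\[
A\cdot\vol\bigl(-(K_E+\Delta_E)\bigr)\;\le\;2(n-1)^n,
\]
with equality forcing $A=n-1$ and $(E,\Delta_E)=(Q^{n-1},0)$ for a smooth quadric $Q^{n-1}\subseteq\bP^n$. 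For the ODP one directly verifies $A=n-1$ and $\vol(-K_{Q^{n-1}})=2(n-1)^{n-1}$, recovering $\hvol=2(n-1)^n$. In the range $A\le 1$ the Fujita--Liu bound $\vol(-(K_E+\Delta_E))\le n^{n-1}$ suffices (since $n^{n-1}<2(n-1)^n$ for $n\ge 3$), so the essential input is a sharp second-largest-volume statement for $(n-1)$-dimensional K-semistable log Fanos in the range $1<A\le n$.

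If $v_0$ has rational rank $r\ge 2$, then it lies on some Koll\'ar model $\pi\colon(Y',E'=E_1'+\cdots+E_r')\to Y$. Using the specialization result for Koll\'ar models from the proof of Theorem~\ref{thm:fg criterion via km}, one obtains a chain of volume-preserving K-semistable degenerations by successively degenerating along the components $E_i'$, ultimately reaching a Fano cone whose minimizer is divisorial; the divisorial analysis then applies. The strict convexity of $\mult(\fa_{\bullet,t})^{-1/n}$ along geodesics in Theorem~\ref{thm:convexity}(2) further suggests that genuinely higher rank minimizers produce strict inequality, so equality in $\hvol(y,Y)\ge 2(n-1)^n$ should only be attained from a divisorial minimizer.

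The main obstacle is the conditional inequality above, a sharp ``second largest volume'' theorem for K-semistable log Fanos weighted by the log discrepancy $A$; even the unweighted special case is a well-known open problem outside low dimensions. One concrete strategy is to combine Birkar's boundedness of complements with the effective birationality techniques of \cite{Z-mld^K-2} and the boundedness criterion of Theorem~\ref{thm:K-ss cone bdd criterion} to confine Koll\'ar components with near-maximal $A\cdot\vol(-(K_E+\Delta_E))$ to a bounded family, thereby reducing the gap problem to a finite check. In dimension $n=3$ this ought to be accessible from the classification of K-semistable log del Pezzo pairs together with \cite{LMS-bdd-dim-3}, but the general case appears to lie substantially beyond current techniques.
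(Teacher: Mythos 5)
This statement is an open conjecture in the paper (attributed to \cite{SS-two-quadric} and \cite{LX-cubic-3fold}); the survey offers no proof and in fact remarks that even the existence of \emph{some} volume gap below $n^n$ already seems nontrivial. So there is nothing in the paper to compare your argument against, and, as you acknowledge yourself, what you have written is not a proof: it is a reduction to a sharp weighted ``second largest volume'' statement for K-semistable log Fano pairs in dimension $n-1$, which is at least as open as Conjecture \ref{conj:K-ss 2nd vol}. The parts of the reduction that do work are correct and standard: the stable degeneration to a K-semistable Fano cone is volume-preserving by Theorem \ref{thm:SDC K-ss part}, Theorem \ref{thm:largest vol} isolates the smooth case, and the identity $\hvol(\ord_E)=A_X(E)\cdot\vol\bigl(-(K_E+\Delta_E)\bigr)$ for a K-semistable Koll\'ar component, together with the verification $\hvol=2(n-1)^n$ for the ordinary double point, is right.

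Beyond the admitted open input, there are concrete gaps. First, your restriction of the essential range to $1<A\le n$ is unjustified: from $\hvol=A^n\vol(\ord_E)\le n^n$ one gets no upper bound on $A$, because $\vol(\ord_E)$ can be arbitrarily small; this is precisely the ``unbounded Weil index'' obstruction emphasized in Section \ref{s:bdd}, so your target inequality must be established for Koll\'ar components of a priori unbounded log discrepancy, where even boundedness of the relevant family (via Theorem \ref{thm:K-ss cone bdd criterion} or Conjecture \ref{conj:bdd}) is unknown. Second, the higher-rank case is not actually handled: the strict convexity in Theorem \ref{thm:convexity}(2) concerns geodesics of filtrations, not affine segments in a simplex $\QM(Y,E)$, and it does not by itself show that a genuinely higher-rank minimizer forces $\hvol<2(n-1)^n$; nor does the proposed chain of degenerations along the components of a Koll\'ar model obviously terminate at a cone with divisorial minimizer. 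Third, for the uniqueness assertion you must pass back from ``the stable degeneration of $x\in X$ is the ODP'' to ``$x\in X$ is the ODP''; this needs the unobstructedness of the ODP's deformations (so that the general fiber of the test configuration is either smooth or again an ODP) and is absent from the sketch. None of this is fatal to the strategy as a research program, but the proposal should not be presented as a proof of the conjecture.
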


By \emph{loc. cit.}, this conjecture implies that the K-moduli space of cubic hypersurfaces coincides with their GIT moduli space. On the other hand, the existence of a volume gap already seems nontrivial.

\begin{conj}
There exists some constant $\varepsilon>0$ such that the only $n$-dimensional klt singularity with local volume at least $(1-\varepsilon)n^n$ is the smooth point.
\end{conj}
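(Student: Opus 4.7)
The plan is to argue by contradiction: suppose for every $k\ge 1$ there is an $n$-dimensional non-smooth klt singularity $(x_k,X_k)$ with $\hvol(x_k,X_k)\ge (1-1/k)n^n$. By the Stable Degeneration Conjecture (Conjecture \ref{conj:SDC}) together with Theorem \ref{thm:K-ps degeneration}, each $(x_k,X_k)$ admits a two-step volume-preserving degeneration to a K-polystable Fano cone singularity $(y_k,Y_k;\xi_k)$, so that $\hvol(y_k,Y_k)\to n^n$. Each step is a $\bG_m$-equivariant flat degeneration (using the finite generation part of SDC), hence if the resulting $(y_k,Y_k)$ were smooth then smoothness of the total space would propagate to the general fiber, forcing $(x_k,X_k)$ to be smooth as well. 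We may therefore assume each $(y_k,Y_k)$ is non-smooth, and the conjecture reduces to the analogous gap statement for K-polystable Fano cone singularities.

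First I would treat the quasi-regular case in which $\xi_k$ generates a one-parameter subgroup, so that $Y_k=C_a(V_k,L_k)$ is an orbifold cone over a K-polystable log Fano pair $(V_k,\Delta_k)$ of dimension $n-1$, polarized by an integral ample Weil divisor $L_k$ with $-(K_{V_k}+\Delta_k)\sim_\bQ r_k L_k$. A direct computation based on Example \ref{exp:cone minimizer} gives
\[
\hvol(y_k,Y_k)=r_k\cdot \vol(-(K_{V_k}+\Delta_k)).
\]
Combining the sharp Fujita-type bound $\vol(-(K_V+\Delta))\le n^{n-1}$ for K-semistable log Fano pairs of dimension $n-1$ with its gap refinement (the second-largest such volume is strictly less than $n^{n-1}$), the hypothesis $\hvol(y_k,Y_k)\to n^n$ forces $(V_k,\Delta_k)\cong (\bP^{n-1},0)$ for $k\gg 0$. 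Writing $L_k=a_kH$ with $H=\cO_{\bP^{n-1}}(1)$ and $a_k\in\bN^*$, the formula above yields $\hvol(y_k,Y_k)=n^n/a_k$, so $a_k\ge 2$ is excluded and $Y_k\cong\bA^n$, contradicting non-smoothness. This argument already gives the explicit constant $\varepsilon\ge 1/2$ in the quasi-regular subcase.

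The hard part will be the irregular case, where $\langle\xi_k\rangle$ is a torus of rank at least two and no single Fano base is available. The strategy I would pursue is to invoke the boundedness criterion Theorem \ref{thm:K-ss cone bdd criterion} by establishing a uniform upper bound $\mldk(y_k,Y_k)\le C(n)$ whenever $\hvol(y_k,Y_k)$ is sufficiently close to $n^n$. Once boundedness of the family $\{(y_k,Y_k;\xi_k)\}$ is secured, the constructibility (Theorem \ref{thm:constructible}) and lower semi-continuity (Theorem \ref{thm:vol lsc}) of local volumes produce a Zariski limit $(y_\infty,Y_\infty)$ with $\hvol(y_\infty,Y_\infty)=n^n$, which must be $(0,\bA^n)$ by Theorem \ref{thm:largest vol}; openness of smoothness in the bounded family then yields the contradiction. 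A natural way to bound $\mldk$ is to rationalize $\xi_k$ to a nearby $\xi_k'\in\ft^+_\bR$ of rational slope (using continuity of $\wt_\xi\mapsto\hvol_{Y_k}(\wt_\xi)$ along the Reeb cone), perform the associated quasi-regular degeneration, and transport a Kollár component from the quasi-regular setting back to $Y_k$. The delicate point, comparable in difficulty to the effective birationality estimates of \cites{Z-mld^K-1,Z-mld^K-2}, is that K-semistability and the $\mldk$ bound must be controlled simultaneously through this perturbation; this appears to be the main obstacle.
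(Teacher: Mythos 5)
This statement is an open conjecture in the paper (the text even remarks that ``the existence of a volume gap already seems nontrivial''), so there is no proof of record to compare against; what you have written is a program rather than a proof, and it contains two genuine gaps.

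The decisive one is in the quasi-regular case, where you claim an explicit constant. From $\hvol(y_k,Y_k)=r_k\cdot \vol(-(K_{V_k}+\Delta_k))\to n^n$ together with the Fujita bound $\vol(-(K_{V_k}+\Delta_k))\le n^{n-1}$ and a gap below $n^{n-1}$, you conclude $(V_k,\Delta_k)\cong(\bP^{n-1},0)$. This is a non sequitur: the inequalities only give a \emph{lower} bound $r_k\ge (1-\varepsilon)n$, and nothing prevents $r_k$ from being very large while $\vol(-(K_{V_k}+\Delta_k))$ is correspondingly tiny, with the product still close to $n^n$. Ruling this out is exactly the Weil-index obstruction that Section \ref{s:bdd} identifies as ``one of the major difficulties'' in relating local boundedness to Theorem \ref{thm:Fano bdd}, and it resurfaces as the open question at the end of Section \ref{s:question} on whether the Weil index of weakly special Fano varieties can be arbitrarily large. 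Discreteness of the set $\{\vol(-(K_V+\Delta))\}$ does not control the product $r\cdot\vol$ when $r$ is unbounded, so the claimed $\varepsilon\ge 1/2$ in the quasi-regular subcase is not established. (A secondary point: the ``gap refinement'' you invoke must be for K-semistable \emph{log Fano pairs} with standard coefficients in dimension $n-1$, not just for Fano varieties as in Theorem \ref{thm:Fano bdd}; this is believed and arguably known, but it needs a citation, and the equality case $\vol=n^{n-1}\Rightarrow(\bP^{n-1},0)$ needs one too.)

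The second gap is the irregular case, which you candidly flag as the hard part: your route requires a uniform bound $\mldk(y_k,Y_k)\le C(n)$ for K-semistable Fano cones of near-maximal volume, which is an instance of Conjecture \ref{conj:mld^K bdd if vol>=epsilon} and is open; moreover, perturbing $\xi_k$ to a rational Reeb vector does not preserve K-semistability, so transporting a Koll\'ar component through that degeneration is not a routine step. The reduction at the start (SDC plus Theorem \ref{thm:K-ps degeneration}, volume preservation, and openness of smoothness in the flat degeneration) is sound and is the natural first move, but as it stands both remaining cases rest on unresolved difficulties that are themselves highlighted as open problems in the paper.
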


The ODP volume gap conjecture also has a global analog. Recall that by Theorem \ref{thm:Fano bdd}, the volumes of K-semistable Fano varieties are known to be discrete away from zero. A theorem of Fujita \cite{Fuj-largest-vol-Pn} says that the projective space has the largest volume among them in any fixed dimension.

\begin{conj}[Second largest volume] \label{conj:K-ss 2nd vol}
The second largest anti-canonical volume of an $n$-dimensional K-semistable Fano variety is $2n^n$, and it is achieved only by $\bP^1\times \bP^{n-1}$ and the smooth quadric hypersurface in $\bP^{n+1}$.
\end{conj}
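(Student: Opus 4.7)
The plan is to combine boundedness of K-semistable Fano varieties with volume bounded below (Theorem~\ref{thm:Fano bdd}) and properness of the K-moduli space (Theorem~\ref{thm:K-moduli}) with a rigidity principle sharpening Fujita's volume bound $\vol(-K_V)\le (n+1)^n$. The strategy reduces the conjecture to two subsidiary statements: first, a \emph{volume gap} asserting that any K-semistable Fano variety $V$ of dimension $n$ with $\vol(-K_V)>2n^n$ must be isomorphic to $\bP^n$; and second, an \emph{extremal classification} at $\vol(-K_V)=2n^n$ identifying $V$ with either the smooth quadric $Q_n\subseteq \bP^{n+1}$ or $\bP^1\times \bP^{n-1}$. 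Together these give the conjecture.

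For the volume gap, the natural starting point is the Fujita-type inequality $A_V(E)\ge S_V(E)$ (the global analog of \eqref{e:A>=S}) applied to the exceptional divisor $E$ of the ordinary blowup at a general smooth point; this recovers Fujita's bound with equality only for $\bP^n$. To sharpen the equality case into a genuine gap at $2n^n$, I would test the inequality against a richer family of valuations, for instance weighted blowups with weights $(1,\ldots,1,k)$ for $k\ge 2$, or Koll\'ar components extracted from non-generic tangent directions, each yielding a bound of the form $\vol(-K_V)\le c(V,E)$; simultaneous equality for a sufficiently rich family should force $V\cong \bP^n$. An alternative route uses equivariant K-moduli degeneration: by properness, $V$ specially degenerates equivariantly to a K-polystable Fano $V_0$ of the same volume with maximal torus action, reducing the gap to the toric case, which becomes a finite combinatorial check on reflexive polytopes with centered barycenter (characterizing K-semistability) and normalized volume $>2n^n$.

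For the extremal classification, one first shows that the Picard rank of $V$ is at most $2$, using Mori theory combined with the volume constraint. In the Picard rank $1$ case, writing $-K_V\sim_\bQ rH$ with $H$ ample and $r^n H^n=2n^n$, a singular Kobayashi--Ochiai type classification identifies $(V,H)$ with $(Q_n,\cO(1))$. In the Picard rank $2$ case, one analyzes the two Mori contractions of $V$ and shows that K-semistability together with the volume constraint forces a product structure $V\cong \bP^1\times \bP^{n-1}$. The main obstacle is the rigidity step in the volume gap: a direct valuation-theoretic proof would require a Fujita-type inequality strictly improving the bound away from the $\bP^n$ equality case, which seems to require new ideas, while the equivariant K-moduli route hinges on controlling how volumes propagate under equivariant degeneration and on a combinatorial volume gap for toric K-polystable Fanos. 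A secondary obstacle is extending Kobayashi--Ochiai to K-semistable klt (possibly singular) Fanos of high Fano index, since the classical proofs rely on smoothness of a generic anticanonical section. A complementary viewpoint is via the cone construction of Example~\ref{exp:cone minimizer}: cones over K-semistable Fanos give K-semistable Fano cone singularities, so a sufficiently strong local volume gap (e.g.~a refined version of the ODP volume gap) would translate directly into the desired global gap, tying this conjecture to the local gap problems discussed in Section~\ref{s:question}.
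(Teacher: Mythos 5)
This statement is Conjecture \ref{conj:K-ss 2nd vol}, an \emph{open conjecture}: the paper offers no proof, explicitly calls the presence of two extremal varieties ``mysterious,'' and even states the toric sub-case as a separate open problem immediately afterwards. So there is no argument in the paper to compare yours against, and your proposal should be judged as a research program rather than a proof. As a program it is reasonable and identifies the right sub-problems (a volume gap above $2n^n$ plus a classification at the extremal value), but it does not close any of them, and two of the proposed routes have concrete defects beyond the obstacles you already flag.

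First, the reduction ``by properness, $V$ degenerates equivariantly to a K-polystable $V_0$ with maximal torus action, reducing the gap to the toric case'' is not valid: the K-polystable degeneration preserves the volume, but $V_0$ carries only the maximal torus of its own automorphism group, which is typically trivial (most K-polystable Fanos have finite automorphism group), so $V_0$ need not be toric and there is no reduction to a polytope computation. Even restricted to toric Fanos the check is not the finite one you describe, since K-semistable $\bQ$-Gorenstein toric Fanos are not classified by reflexive polytopes, and the paper records this toric case as itself conjectural. Second, the valuation-theoretic route supplies no mechanism for a \emph{gap}: Fujita's argument via $A_V(E)\ge S_V(E)$ at the blowup of a smooth point gives $\vol(-K_V)\le (n+1)^n$ with equality only for $\bP^n$, and testing additional valuations produces more inequalities, but nothing in the proposal explains why simultaneous near-equality across a family of test valuations forces $V\cong\bP^n$ once $\vol(-K_V)>2n^n$; this quantitative rigidity is precisely the missing idea. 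Third, the cone-construction route is essentially circular: for a \emph{smooth} Fano $V\not\cong\bP^n$ every local volume of $V$ equals $n^n$, so local-to-global comparison inequalities cannot by themselves see below $(n+1)^n$, and the local gap statements you would invoke (the ODP volume gap and its refinements) are open conjectures that the paper presents as being of comparable or greater difficulty. The Picard-rank and singular Kobayashi--Ochiai steps in your extremal classification are likewise unproved, though they are secondary to the gap itself. In short: the decomposition is sensible, but each branch terminates in an open problem, so this is a plan of attack rather than a proof.
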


An interesting (but also mysterious) feature of the global version is that there are two Fano varieties with second largest volume. On the other hand, because one of them, $\bP^1\times \bP^{n-1}$, is toric, the toric case of Conjecture \ref{conj:K-ss 2nd vol} is also interesting by its own. It might be approachable using combinatorial argument and will provide further evidence for Conjecture \ref{conj:K-ss 2nd vol}.

\begin{conj}
Among $n$-dimensional K-semistable toric Fano variety, $\bP^1\times \bP^{n-1}$ has the second largest anti-canonical volume.
\end{conj}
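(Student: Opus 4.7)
My plan is to recast the conjecture as a sharp inequality on convex polytopes with lattice-primitive facet normals and centroid at the origin, and then to prove it via a refinement of Fujita's K-semistable volume bound.

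\emph{Reformulation.} By the Wang--Zhu criterion (extended to the $\bQ$-Fano setting), an $n$-dimensional K-semistable toric Fano $X$ is the same datum as a convex polytope $P\subset M_{\bR}\cong\bR^n$ with primitive lattice inner facet normals $v_1,\dots,v_d\in N$, with $0$ in its interior, and with barycenter at the origin; here $(-K_X)^n=n!\cdot\vol(P)$. In these terms, Fujita's sharp bound reads $\vol(P)\le (n+1)^n/n!$ with equality only for the reflexive $\bP^n$-simplex. The conjecture then becomes: any such $P$ with $\vol(P)>2n^n/n!$ must be the $\bP^n$-simplex, and any $P$ with $\vol(P)=2n^n/n!$ must be lattice-equivalent to $[-1,1]\times\Delta_{n-1}$, the polytope of $\bP^1\times\bP^{n-1}$.

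\emph{Dichotomy on the number of facets.} A polytope $P$ as above with exactly $n+1$ facets corresponds to a weighted projective space $X=\bP(a_0,\dots,a_n)$. In this case the $v_i$ satisfy a unique (up to scaling) relation $\sum a_iv_i=0$, and a direct computation shows that the barycenter vanishes only if the weights $a_i$ coincide, hence $X=\bP^n$ with $\vol(-K_X)=(n+1)^n$. Thus from now on one may assume $P$ has at least $n+2$ facets, so the goal reduces to showing $\vol(P)\le 2n^n/n!$ under this assumption.

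\emph{Refined Fujita inequality.} The heart of the proof is a sharpening of Fujita's estimate. His argument applies the inequality $S_X(\ord_{x_0})\ge\frac{n}{n+1}\vol(-K_X)^{1/n}$ at a smooth point $x_0$ and combines it with K-semistability $S_X(\ord_{x_0})\le A_X(\ord_{x_0})=n$. For our purpose I would like to replace the constant $n/(n+1)$ by $2^{-1/n}$ under the ``$\ge n+2$ facets'' assumption, which yields $\vol(-K_X)\le 2n^n$. A natural approach is to slice $P$ by hyperplanes perpendicular to the inner normal $v$ of a distinguished extra facet $F$: the slice volumes $A(s)=\vol_{n-1}(P\cap\{\langle\cdot,v\rangle=s\})$ form a log-concave function of $s\in[-1,\tau]$ by Brunn--Minkowski, and the centroid condition forces the first moment of $A$ to vanish. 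Combining Fujita's inductive $(n-1)$-dimensional bound on the slice at $s=-1$ (which is itself a $\bQ$-Fano toric polytope) with a one-dimensional comparison using the log-concavity of $A$ should produce the sharp improvement. In the equality case, Brunn--Minkowski becomes an equality, forcing all slices to be translates of each other so that $P=[-1,1]\times F$; the centroid condition then pins down the one-dimensional factor as $[-1,1]$ and, combined with the $(n-1)$-dimensional Fujita equality, forces $F=\Delta_{n-1}$, the $\bP^{n-1}$-simplex.

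\emph{Main obstacle.} The refined inequality is the hard part: without using lattice primitivity of the $v_i$, one cannot hope to jump from $(n+1)^n$ down to $2n^n$, since the $\bP^n$-simplex can be perturbed (among centered convex polytopes with more facets) so as to preserve the centroid condition while keeping the volume arbitrarily close to $(n+1)^n/n!$. The lattice structure enters through the slice $F$ at $s=-1$, which is a lattice polytope in the affine sublattice of the extra facet, and whose ``index'' provides the integer defect responsible for the gap. If this direct polytopal approach proves too delicate, an alternative is to induct on $n$ via the equivariant K-stability theory of toric Fanos, combined with the boundedness criterion of Theorem~\ref{thm:K-ss cone bdd criterion} applied to the affine cone over $X$, to obtain a structural classification in the extremal volume range.
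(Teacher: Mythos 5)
This statement is one of the open conjectures collected in the final section of the paper; the paper offers no proof (it only remarks that the toric case ``might be approachable using combinatorial argument''), so there is no argument of the paper to compare yours against. The only question is whether your proposal actually closes the problem, and it does not: it is a plan of attack whose decisive step is left open.

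Your reformulation via the barycenter criterion and $(-K_X)^n=n!\cdot\vol(P)$ is the correct starting point, but the ``refined Fujita inequality'' $\vol(P)\le 2n^n/n!$ for centered polytopes with at least $n+2$ facets \emph{is} the content of the conjecture, and the sketch does not establish it. Two concrete problems with the slicing strategy: first, the facet slice $F=P\cap\{\langle\cdot,v\rangle=-1\}$ is a lattice polytope but is in general \emph{not} the moment polytope of a K-semistable toric Fano --- its barycenter in its own affine lattice need not vanish --- so Fujita's $(n-1)$-dimensional bound does not apply to it and the proposed induction has no valid input; without a centering condition the volume of a facet of a reflexive-type polytope is unbounded. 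Second, log-concavity of $A(s)$ on $[-1,\tau]$ together with the vanishing of $\int sA(s)\,ds$ only yields comparison bounds with non-sharp constants, and, as you yourself concede, the lattice-primitivity of the normals must enter to produce the factor $2$; the proposal does not say how beyond an unspecified ``index defect.'' The equality analysis suffers the same circularity: one can only read off $P=[-1,1]\times F$ from equality in Brunn--Minkowski after the inequality has been proved with that exact extremizer. A smaller but real error: a centered polytope with exactly $n+1$ facets corresponds to a fake weighted projective space $\bP^n/G$, not necessarily to $\bP^n$, so your dichotomy as stated is false; these cases do satisfy the desired bound since their volume is $(n+1)^n/|G|\le(n+1)^n/2<2n^n$, but they must be treated rather than dismissed. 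In short, the conjecture remains open after your argument, with the gap concentrated precisely where you flagged it.
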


Going back to the Boundedness Conjecture, if we compare it with Theorem \ref{thm:K-ss cone bdd criterion}, we are naturally led to the following speculation, see \cite{Z-mld^K-1}*{Conjecture 1.7}.

\begin{conj} \label{conj:mld^K bdd if vol>=epsilon}
Fix a positive integer $n$ and some real number $\varepsilon>0$. Then there exists some constant $A$ depending only on $n,\varepsilon$ such that 
\[
\mldk(x,X)\le A
\]
for any $n$-dimensional klt singularity $x\in X$ with $\hvol(x,X,\Delta)\ge \varepsilon$.
\end{conj}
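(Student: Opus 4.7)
The plan is to reduce, via the Stable Degeneration Conjecture, to the case of K-semistable Fano cone singularities, and then to produce a Koll\'ar component of bounded log discrepancy by quasi-regular approximation of the Reeb vector. By Conjecture \ref{conj:SDC} and Theorem \ref{thm:SDC K-ss part}, the klt singularity $x\in X$ degenerates via a $\bQ$-Gorenstein test configuration to a K-semistable Fano cone singularity $x_0\in (X_0;\xi_0)$ with $\hvol(x_0,X_0)=\hvol(x,X)\ge \varepsilon$. The first step is to establish the inequality $\mldk(x,X)\le \mldk(x_0,X_0)$, i.e., a form of upper semicontinuity of $\mldk$ in $\bQ$-Gorenstein families. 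A natural route is to show that a Koll\'ar component on the central fiber lifts to one on the general fiber by deforming the associated plt blowup, using that both the plt condition and the relative ampleness of $-(K_Y+E)$ are open conditions in $\bQ$-Gorenstein families. Granted this reduction, it suffices to bound $\mldk(x_0,X_0)$ when $x_0\in (X_0;\xi_0)$ is a K-semistable Fano cone with local volume at least $\varepsilon$.

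For the Fano cone case, I would approximate $\xi_0$ by a rational Reeb vector $\xi\in \ft^+_\bR$, which endows $X_0$ with a quasi-regular structure $X_0\cong C_a(V_\xi,L_\xi)$ as in Example \ref{exp:quasi-reg cone defn}, where $(V_\xi,\Delta_{V_\xi})$ is a klt log Fano pair satisfying $-(K_{V_\xi}+\Delta_{V_\xi})\sim_\bQ r_\xi L_\xi$. The zero section of the associated Seifert $\bG_m$-bundle is then a Koll\'ar component $E_\xi$ over $x_0\in X_0$ with $A_{X_0}(E_\xi)=r_\xi$. The problem thus reduces to finding, for each such $X_0$, a rational approximation $\xi$ for which the index $r_\xi$ is bounded by $A=A(n,\varepsilon)$, while the base $(V_\xi,\Delta_{V_\xi})$ stays in a family that is itself bounded.

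The main obstacle lies entirely in bounding $r_\xi$. By Theorem \ref{thm:K-ss cone bdd criterion}, an unconditional bound of this form is equivalent to the full Boundedness Conjecture \ref{conj:bdd}, so any direct attack must inject genuinely new input and cannot be purely circular. A promising angle is to exploit the K-semistability of the toric valuation $\wt_{\xi_0}$: by Theorem \ref{thm:minimizer = K-ss val} it is asymptotically an lc place of its basis type divisors, and by Theorem \ref{thm:lc place N-comp} those basis type divisors can be upgraded to $N$-complements of $X_0$ with $N=N(n)$. One can then try to choose the rational approximation $\xi$ so that $E_\xi$ is extracted as a divisorial lc place of such a bounded complement; combined with BAB-type boundedness for $(V_\xi,\Delta_{V_\xi})$, this would force $r_\xi$ to be bounded. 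Implementing this requires delicate control over the combinatorics of rational approximation in the Reeb cone, so that the lc-place extracted as $\xi$ rationalizes lies in a bounded family --- this is precisely the step that has resisted progress beyond low dimensions or bounded complexity \cites{LMS-bdd-dim-3,Z-mld^K-1,Z-mld^K-2}.
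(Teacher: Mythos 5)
The statement you are asked to prove is Conjecture \ref{conj:mld^K bdd if vol>=epsilon}, which the paper states as an \emph{open problem} (it is introduced with ``we are naturally led to the following speculation''); the paper contains no proof of it, so there is nothing to compare your argument against. More importantly, your proposal is not a proof: by your own admission, the decisive step --- bounding the index $r_\xi$ of a quasi-regular approximation of the Reeb vector, equivalently bounding $\mldk(x_0,X_0)$ for K-semistable Fano cones of local volume at least $\varepsilon$ --- is left open, and you correctly observe that via Theorem \ref{thm:K-ss cone bdd criterion} such a bound is essentially equivalent to the Boundedness Conjecture \ref{conj:bdd} itself. A strategy that terminates in ``this is precisely the step that has resisted progress'' is a research plan, not a proof, and should be presented as such.

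Two further points on the reductions you do carry out. First, the claimed inequality $\mldk(x,X)\le \mldk(x_0,X_0)$ under the stable degeneration is asserted, not proved: lifting a Koll\'ar component from the central fiber of a test configuration to the general fiber is not a formal openness statement. The plt blowup $Y_0\to X_0$ is not a priori the fiber of a family of blowups over the base of the test configuration; one would have to deform the extraction itself (e.g.\ via the graded algebra $\bigoplus_m \pi_*\cO_{Y_0}(-mE_0)$ or a family version of the tie-breaking/extraction arguments of \cite{BCHM}) and then verify that the plt condition, the relative ampleness of $-(K_Y+E)$, \emph{and} the log discrepancy are preserved. This semicontinuity of $\mldk$ is itself a nontrivial open question in general. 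Second, your final paragraph conflates two different objects: the basis type divisors appearing in the K-semistability of $\wt_{\xi_0}$ (Theorem \ref{thm:minimizer = K-ss val}) concern lc places of complements of $X_0$ \emph{as a klt singularity}, whereas $E_\xi$ is determined by the torus action and the choice of rational $\xi$; there is no mechanism offered that forces $E_\xi$ to be an lc place of a bounded complement, nor would that alone bound $A_{X_0}(E_\xi)=r_\xi$ (lc places of $N$-complements can have arbitrarily large log discrepancy). The honest conclusion is that the conjecture remains open and your proposal does not close it.
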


Shokurov has conjectured that the set of minimal log discrepancies (mld) satisfies the ACC \cite{Sho-mld-conj}. In particular, there should be an upper bound on the mlds that only depends on the dimension. This is known as the boundedness (BDD) conjecture for mld. By analogy, we are tempted to ask whether the same holds for $\mld^K$, and in particular, whether the lower bound on the local volume is really necessary in Conjecture \ref{conj:mld^K bdd if vol>=epsilon}.

\begin{que}[ACC and BDD for $\mldk$]
Fix a dimension $n$. Does the set of $\mldk$ of $n$-dimensional klt singularities satisfy the ACC? Is there a constant $A$ depending only on $n$ such that 
\[
\mldk(x,X)\le A
\]
for any $n$-dimensional klt singularity $x\in X$?
\end{que}

Perhaps what makes this question hard to study is the lack of understanding for Koll\'ar components that minimizes the log discrepancy function.

\begin{que}
Is there an intrinsic way to tell whether a given Koll\'ar component computes $\mldk$?
\end{que}

There are some klt singularities with unique Koll\'ar component. They are characterized by the property that the induced log Fano pair $(E,\Delta_E)$ on the Koll\'ar component $E$ (see Section \ref{s:kc}) is weakly special, i.e. $(E,\Delta_E+D)$ is log canonical for any effective $\bQ$-divisor $D\sim_\bQ -(K_E+\Delta_E)$. Consider orbifold cones over weakly special Fano varieties as a special case. Their $\mldk$ are closely related to the Weil index\footnote{Recall that the Weil index of a Fano variety $V$ is the largest integer $q$ such that $-K_V\sim_\bQ qL$ for some Weil divisor $L$ on $V$.} of the Fano varieties. We may ask:

\begin{que}
Fix a dimension $n$. Can the Weil index of $n$-dimensional weakly special Fano varieties be arbitrarily big?
\end{que}

\subsection{Local volumes}

The local volume is a delicate invariant of a klt singularity, and it is still quite mysterious how it behaves under the steps of the minimal model program, especially flips.

\begin{que}
Does the local volume satisfy some type of monotonicity under flips?
\end{que}

It is not clear what kind of monotonicity should be there. On one hand, since the flip improves the singularity in general, we may hope that the local volume increases under flips. On the other hand, one can also find toric flips $X_1\dashrightarrow X_2$ such that
\[
\min_{x_1\in X_1}\hvol(x_1,X_1)>\min_{x_2\in X_2}\hvol(x_2,X_2).
\]
It is possible that the correct formulation of the monotonicity should involve some motivic version of the local volumes.

The local volumes are also expected to relate to singularity invariants in positive characteristics. Given a klt singularity $x\in X$ in characteristic $0$, we may consider its reduction $x_p\in X_p$ modulo a prime $p$. From \cites{Har-klt=F-reg,HW-klt-pair-F-reg}, we know that the mod $p$ reduction $x_p\in X_p$ is strongly $F$-regular when $p\gg 0$. An interesting invariant of a strongly $F$-regular singularity $x\in X$ in positive characteristic is its $F$-signature $s(x,X)$ (see \cites{HL-F-signature-def,Tuc-F-signature-exist}), and a folklore question in commutative algebra is to find geometric interpretations of $\lim_{p\to \infty} s(x_p,X_p)$. Partly motivated by this question, a comparison result between the local volume and the $F$-signature is conjectured in \cite{LLX-nv-survey}*{Section 6.3.1}. Here we state a modified version.

\begin{conj} \label{conj:compare F-sign and vol}
For any $n$-dimensional klt singularity $x\in X$ in characteristic $0$, let $x_p\in X_p$ be its reduction mod $p\gg 0$. Then
\[
\liminf_{p\to\infty} s(x_p,X_p)\ge \frac{\hvol(x,X)}{n^n}.
\]
% for some positive constant $c(n)$ that only depends on the dimension $n$.
\end{conj}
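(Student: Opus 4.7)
\emph{Proof plan.} The strategy is to compare both invariants through ideal-theoretic normalized multiplicities, which interact well with reduction modulo $p$. Liu's formula \eqref{e:Liu's inequality} reads
\[
\hvol(x,X) = \inf_{\fa} \lct(\fa)^n \cdot \mult(\fa),
\]
where the infimum runs over $\fm_x$-primary ideals. Fix such an ideal $\fa$ in characteristic zero and consider its reduction $\fa_p$ for $p \gg 0$. Two standard facts will be used: the multiplicity is preserved, $\mult(\fa_p) = \mult(\fa)$ (by flatness of the reduction), and the F-pure threshold converges to the log canonical threshold, $\lim_{p\to\infty} \mathrm{fpt}(\fa_p) = \lct(\fa)$, by the work of Hara--Yoshida and Musta\c{t}\u{a}--Takagi--Watanabe. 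Granted these, the conjecture reduces, ideal by ideal, to establishing a characteristic $p$ lower bound of the form
\[
s(R_p) \;\ge\; \frac{\mathrm{fpt}(\fa_p)^n \cdot \mult(\fa_p)}{n^n},
\]
valid for every $\fm_{x_p}$-primary ideal $\fa_p$ at a strongly F-regular singularity. Indeed, taking $\liminf_{p\to\infty}$ and then $\inf$ over $\fa$ in characteristic zero would recover the conjectured bound.

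This F-signature lower bound is the crux of the argument. It is tight at the smooth case, where $s(R_p) = 1$, $\mathrm{fpt}(\fm_p) = n$, $\mult(\fm_p) = 1$, and can be viewed as a local, F-singularity analog of the de Fernex--Ein--Musta\c{t}\u{a} volume inequality that underlies Liu's formula. I would attempt to prove it by expressing $s(R_p)$ in the Polstra--Tucker manner as an asymptotic normalized colength
\[
s(R_p) = \lim_{e\to\infty} \frac{\lambda(R_p/J_e)}{p^{en}},
\]
where the $J_e$ are the Frobenius splitting ideals. The definition of $\mathrm{fpt}(\fa_p)$ gives, for each $e$, an inclusion of the form $\fa_p^{\lceil (p^e - 1)/\mathrm{fpt}(\fa_p)\rceil} \subseteq J_e$ (with appropriate modifications in the singular case), and combining this with a Minkowski-type inequality for multiplicity would yield the desired colength estimate.

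The hardest technical point will be controlling the error in the containment $\fa_p^{\bullet}\subseteq J_e$ when $R_p$ is singular, where subadditivity of asymptotic test ideals fails without a correction term involving the Jacobian ideal of $X_p$ (exactly as in the subadditivity analysis in Section~\ref{ss:existence}). To handle this, I would adapt the uniform approximation estimates used in the generic limit proof of Blum, ensuring that the correction terms contribute negligibly to the multiplicity as $e\to\infty$. Should this direct attack prove intractable, the fallback strategy is to use the stable degeneration: by Conjecture~\ref{conj:SDC} and Theorem~\ref{thm:SDC K-ss part}, $x \in X$ degenerates, preserving $\hvol$, to a K-semistable Fano cone $x_0 \in (X_0;\xi_0)$, and assuming lower semi-continuity of the asymptotic F-signature under $\bQ$-Gorenstein degenerations one reduces to Fano cones. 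There the $\langle\xi_0\rangle$-equivariance permits an explicit combinatorial (weighted Hilbert--Kunz) computation of both sides, where the identification $\lim_p s(x_{0,p},X_{0,p}) = \hvol(x_0,X_0)/n^n$ should follow from an analysis in the spirit of the toric case.
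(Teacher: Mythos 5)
The statement you are trying to prove is Conjecture \ref{conj:compare F-sign and vol}, which is an \emph{open conjecture} in this survey (a modification of a question from \cite{LLX-nv-survey}); the paper offers no proof, so there is nothing to compare your argument to except its own internal correctness. Unfortunately, the crux of your plan is false. You propose to establish, for \emph{every} $\fm_{x_p}$-primary ideal $\fa_p$, the bound $s(R_p)\ge \mathrm{fpt}(\fa_p)^n\cdot\mult(\fa_p)/n^n$. Already for $R=\bk[\![x,y]\!]$ (so $s(R_p)=1$, $n=2$) and $\fa=(x,y^k)$ one has $\lct(\fa)=1+1/k$ and $\mult(\fa)=k$, hence $\lct(\fa)^2\mult(\fa)=k+2+1/k$, which exceeds $n^n=4$ as soon as $k\ge 2$ and is unbounded in $k$; since $\mathrm{fpt}(\fa_p)\to\lct(\fa)$, your inequality fails badly for $p\gg 0$. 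The confusion is one of direction: the de Fernex--Ein--Musta\c{t}\u{a} inequality is a \emph{lower} bound $\lct(\fa)^n\mult(\fa)\ge n^n$ on the normalized multiplicity of an arbitrary ideal, and $\hvol(x,X)$ is the \emph{infimum} of these quantities. Individual ideals can have arbitrarily large normalized multiplicity, so no invariant of the ring alone (such as $s(R_p)$) can dominate $\mathrm{fpt}(\fa_p)^n\mult(\fa_p)/n^n$ uniformly in $\fa_p$. At best one could hope for $s(R_p)\ge \frac{1}{n^n}\inf_{\fa_p}\mathrm{fpt}(\fa_p)^n\mult(\fa_p)$, i.e.\ with the infimum inside, which is essentially the conjecture itself and not an "ideal by ideal" reduction.

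The mechanism you propose also points the wrong way. A containment of the form $\fa_p^{\lceil \mathrm{fpt}(\fa_p)p^e\rceil+1}\subseteq \fm^{[p^e]}\subseteq J_e$ gives $\lambda(R_p/J_e)\le\lambda\bigl(R_p/\fa_p^{\lceil \mathrm{fpt}(\fa_p)p^e\rceil+1}\bigr)$, hence an \emph{upper} bound $s(R_p)\le \mathrm{fpt}(\fa_p)^n\mult(\fa_p)/n!$ after normalizing — this is the known Blickle--Schwede--Tucker-type comparison. To bound $s(R_p)$ from \emph{below} you would need containments $J_e\subseteq(\cdots)$, for which neither the definition of $\mathrm{fpt}$ nor subadditivity of test ideals provides any input. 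The fallback via stable degeneration is likewise a chain of further open problems: semicontinuity of $\liminf_p s(x_p,X_p)$ under $\bQ$-Gorenstein degeneration is not known, and the asserted identity $\lim_p s = \hvol/n^n$ for K-semistable Fano cones is again a special case of the conjecture (and is only expected as an inequality in general, equality being special to quotient and smooth points). In short, the proposal does not close the gap between the two invariants; the genuinely missing idea — a characteristic-$p$ mechanism producing \emph{lower} bounds on Frobenius splitting numbers from valuation- or ideal-theoretic data — is exactly what makes this conjecture open.
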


The right hand side is also known as the volume density of the singularity. It is not hard to see that the inequality becomes an equality when $x\in X$ is smooth. A weaker conjecture would replace the constant $n^n$ by the existence of some positive dimensional constant. If the (weaker) conjecture is true, it will give a positive answer to \cite{CRST-F-reg-pi-1}*{Question 5.9}, which asks whether the $F$-signatures $s(x_p,X_p)$ have uniform lower bounds as $p\to\infty$.

One motivation for Conjecture \ref{conj:compare F-sign and vol} is the finite degree formula for $F$-signature, which is reminiscent of the finite degree formula for local volumes (Theorem \ref{thm:finite deg formula}).

\begin{thm}[\cite{CRST-F-reg-pi-1}*{Theorem B}]
Let $f\colon (y\in Y)\to (x\in X)$ be a finite quasi-\'etale morphism between strongly $F$-regular singularities. Then 
\[
s(y,Y) = \deg(f) \cdot s(x,X). 
\]
\end{thm}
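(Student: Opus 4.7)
The plan is to mimic, in the $F$-singularity world, the same philosophy used for the finite degree formula for local volumes (Theorem \ref{thm:finite deg formula}): exploit that quasi-\'etale covers preserve all the relevant dualizing structure. Setting $R=\widehat{\cO}_{X,x}$ and $S=\widehat{\cO}_{Y,y}$, my starting point would be the splitting-ideal description of the $F$-signature,
\[
s(R)\;=\;\lim_{e\to\infty}\frac{\ell_R(R/I_e(R))}{p^{ed}},
\]
where $I_e(R)$ is the ideal of elements $r\in R$ such that $\varphi(F^e_*r)\in\fm$ for every $\varphi\in \Hom_R(F^e_*R,R)$. Everything then reduces to comparing the splitting ideals of $R$ and $S$ and passing to lengths.

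First, I would exploit the key input of the quasi-\'etale hypothesis: since $f$ is \'etale in codimension one and both varieties are normal, the relative canonical vanishes, and concretely the trace map induces a canonical isomorphism $\Hom_R(S,R)\cong S$ of $S$-modules. Tensoring with the Frobenius and applying Hom-tensor adjunction upgrades this to a canonical isomorphism
\[
\Hom_S(F^e_*S,\,S)\;\cong\;\Hom_R(F^e_*R,\,R)\otimes_{R}F^e_*S,
\]
from which I would extract the base-change identity $I_e(S)=I_e(R)\cdot S$. This is the step that most cleanly mirrors the Galois-invariance argument used for Theorem \ref{thm:finite deg formula}.

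To conclude, I would run a length computation. Since $S$ is a finite reflexive $R$-module of generic rank $\deg(f)$ and $R/I_e(R)$ is Artinian, one has
\[
\ell_R\big(S/I_e(R)\cdot S\big)\;=\;\deg(f)\cdot \ell_R(R/I_e(R))+O(1),
\]
with an error $O(1)$ (bounded independently of $e$) accounting for the codimension $\ge 2$ discrepancy between $S$ and $R^{\oplus\deg(f)}$. Converting between lengths over $R$ and $S$ via the residue-field-degree factor and letting $e\to\infty$ then yields the formula, since the error is negligible against $p^{ed}$.

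The hard part will be making the splitting-ideal compatibility $I_e(S)=I_e(R)\cdot S$ airtight: one must verify that every $S$-linear map $F^e_*S\to S$ arises, via the trace, from some $R$-linear map $F^e_*R\to R$ extended $S$-linearly, and that this correspondence matches splitting ideals on the nose rather than up to some conductor-type correction. The vanishing of the different (which is exactly what quasi-\'etaleness provides) is the precise ingredient that prevents such a correction from appearing.
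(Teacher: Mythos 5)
First, a point of comparison: the survey does not prove this statement at all; it is quoted directly from \cite{CRST-F-reg-pi-1}, so your proposal can only be measured against the argument in that reference. Your overall architecture --- the trace map trivializing $\Hom_R(S,R)$ via the vanishing of the different, a resulting correspondence between $p^{-e}$-linear maps on $R$ and on $S$, and a length count against the generic rank $\deg(f)$ --- does match the actual proof. However, the pivotal step, the exact base-change identity $I_e(S)=I_e(R)\cdot S$, is false. Take $p$ odd, $S=\bk[\![x,y]\!]$ with maximal ideal $\mathfrak{n}=(x,y)$, $G=\{\pm 1\}$ acting by sign, and $R=S^G$, a degree-$2$ quasi-\'etale extension of strongly $F$-regular local rings. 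Here $I_e(S)=\mathfrak{n}^{[p^e]}=(x^{p^e},y^{p^e})$, and this ideal is not extended from $R$ at all: any $G$-invariant element of $(x^{p^e},y^{p^e})$ has order at least $p^e+1$, since an element of order exactly $p^e$ has nonzero leading form $ax^{p^e}+by^{p^e}$, which is anti-invariant because $p^e$ is odd; hence for any ideal $J\subseteq R$ with $J\subseteq I_e(S)$ one has $J\cdot S\subseteq\mathfrak{n}^{p^e+1}$, which misses $x^{p^e}$. What \cite{CRST-F-reg-pi-1} actually establish is the contraction identity $I_e(S)\cap R=I_e(R)$, with the containment $I_e(R)\cdot S\subseteq I_e(S)$ typically strict. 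Your proposed deduction from the $\Hom$ base change does not repair this, because membership in $I_e(S)$ is tested against the values of the trace-compatible extensions on all of $F^e_*S$, not merely on $F^e_*R$.

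The consequence is that your argument, made airtight, proves only half the theorem. The containment $I_e(R)\cdot S\subseteq I_e(S)$ together with your length count (whose error term, incidentally, is $O(p^{e(d-1)})$ coming from the cokernel of an inclusion $R^{\oplus\deg f}\hookrightarrow S$, not $O(1)$ --- still negligible) gives $\ell_S(S/I_e(S))\le\ell_S(S/I_e(R)S)\approx\deg(f)\cdot\ell_R(R/I_e(R))$, i.e.\ $s(y,Y)\le\deg(f)\cdot s(x,X)$. The reverse inequality is the substantive one (it is what bounds $\deg(f)$ by $s(x,X)^{-1}$ and hence controls the local \'etale fundamental group), and assuming the asymptotic version of $I_e(S)=I_e(R)\cdot S$ in that direction is assuming the theorem. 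In \cite{CRST-F-reg-pi-1} this half is where the real work lies: one shows that trace-compatible extension and contraction of $p^{-e}$-linear maps preserve the property of being surjective (i.e.\ of splitting Frobenius), which yields $I_e(S)\cap R=I_e(R)$, and one then compares the number of free summands of $F^e_*S$ over $S$ with its number of free summands over $R$. Your proposal needs this missing mechanism to close.
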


Note that \cite{CRST-F-reg-pi-1}*{Theorem 4.4} proves a much more general finite degree formula for crepant morphisms between strongly $F$-regular pairs. In contrast, the finite degree formula for local volumes is currently restricted to \emph{Galois} morphisms. It would be necessary to resolve this discrepancy.

\begin{conj}
Let $f\colon \big(y\in (Y,\Delta_Y)\big)\to \big(x\in(X,\Delta)\big)$ be a finite surjective morphism between klt pairs such that $f^*(K_X+\Delta)=K_Y+\Delta_Y$. Then
\[
\hvol(y,Y,\Delta_Y) = \deg(f) \cdot \hvol(x,X,\Delta).
\]
\end{conj}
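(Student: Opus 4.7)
The plan is to reduce to the established crepant Galois case by passing to the Galois closure. Let $L$ be the Galois closure of $K(Y)/K(X)$ and let $Z$ be the normalization of $X$ (equivalently of $Y$) in $L$. We obtain a tower $Z \xrightarrow{g} Y \xrightarrow{f} X$ with $h := f\circ g\colon Z \to X$ Galois with group $G := \Gal(L/K(X))$ and $g$ Galois with group $H := \Gal(L/K(Y))$; in particular $[G:H] = \deg(f)$. After shrinking $X$ around $x$ we arrange that $y$ is the unique preimage of $x$ in $Y$ and pick a preimage $z \in Z$ of $y$. Define $\Delta_Z$ by $h^*(K_X+\Delta) = K_Z + \Delta_Z$; since $f$ is crepant, this same $\Delta_Z$ automatically satisfies $g^*(K_Y+\Delta_Y) = K_Z + \Delta_Z$. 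If we can verify that $(Z,\Delta_Z)$ is a klt pair, the crepant Galois finite degree formula applied to $h$ and to $g$ yields
\[
\hvol(z,Z,\Delta_Z) \;=\; |G|\cdot\hvol(x,X,\Delta) \;=\; |H|\cdot\hvol(y,Y,\Delta_Y),
\]
whence $\hvol(y,Y,\Delta_Y) = [G:H]\cdot\hvol(x,X,\Delta) = \deg(f)\cdot\hvol(x,X,\Delta)$.

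The heart of the argument is thus checking that $(Z,\Delta_Z)$ is klt, the main subtlety being effectiveness of $\Delta_Z = g^*\Delta_Y - R_g$, where $R_g$ is the ramification divisor of $g$. This amounts to showing that the coefficient of $\Delta_Y$ at every codimension one component over which $g$ ramifies with index $e$ is at least $1-1/e$. The difficulty is that the Galois closure of a non-Galois cover can raise ramification indices to the lcm of the ram indices of $f$ at distinct preimages, which may strictly exceed each individual ram index of $f$; the crepant condition on $f$ alone (guaranteeing coefficients $\geq 1-1/e_f$ for the ram indices $e_f$ of $f$) is then insufficient. A possible workaround is to enlarge $\Delta$ by a small multiple $\varepsilon B$ of the branch divisor $B \subseteq X$, making the Galois closure crepant over the perturbed pair, apply the Galois formula to the perturbation, and pass to the limit $\varepsilon \to 0$ using continuity (or at least a semicontinuity argument) of the normalized volume under perturbation of the boundary.

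An alternative route bypassing the Galois closure exploits the uniqueness of the minimizer (Theorem \ref{thm:uniqueness}) together with the equivalence of minimizers and K-semistable valuations (Theorem \ref{thm:minimizer = K-ss val}). Let $v_Y$ be the minimizer on $(Y,\Delta_Y)$ and set $v_X := v_Y|_{K(X)}$; the crepant identity yields $A_X(v_X) = A_Y(v_Y)$. The plan is to prove that $v_X$ is K-semistable on $(X,\Delta)$ by lifting test valuations from $X$ to $Y$ and checking the crepant compatibility of basis type divisors under pullback, so that $v_X$ is automatically the minimizer on $X$; and then to establish the exact volume scaling $\vol_Y(v_Y) = \deg(f)\cdot\vol_X(v_X)$. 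The inequality $\vol_Y(v_Y)\le\deg(f)\cdot\vol_X(v_X)$ follows from the inclusion $\fa_\lambda(v_X)\cdot\cO_Y\subseteq\fa_\lambda(v_Y)$ and the fact that $f_*\cO_Y$ has generic rank $\deg(f)$ over $\cO_X$. The reverse inequality should rest on the uniqueness of the extension of $v_X$ to $K(Y)$ in the local setup where $y$ is the unique preimage of $x$, combined with an Izumi-type asymptotic comparison between $\fa_\lambda(v_Y)$ and $\fa_\lambda(v_X)\cdot\cO_Y$.

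The main obstacle in either route is the mismatch between the locally non-Galois ramification of $f$ and the fundamentally Galois-invariant nature of the arguments available: making the Galois closure crepant, or equivalently establishing the exact volume scaling for the unique extension of $v_X$ in the non-Galois case, is the core technical step that does not follow from the Galois methods of \cite{XZ-minimizer-unique}.
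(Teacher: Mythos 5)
This statement appears in the paper as an \emph{open conjecture}, not a theorem: the paper has no proof of it, and it explicitly flags the very obstruction you run into, namely that the finite degree formula is currently only known for crepant \emph{Galois} morphisms, and that upon passing to the Galois closure ``the boundary divisor may have negative coefficients'' (the paper even suggests that a genuine resolution may require a stability theory for sub-pairs). So your proposal should be judged as a research plan rather than a proof, and as a proof it has a genuine gap --- one you yourself identify, to your credit, but do not close.

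Concretely, neither workaround succeeds. For the Galois-closure route: if $f$ ramifies over a component $D$ of the branch divisor with indices $e_1,\dots,e_k$ at the primes of $Y$ above $D$, then crepancy of $f$ together with effectiveness of $\Delta_Y$ only forces $\mathrm{coeff}_D\Delta\ge 1-1/\max_i e_i$, while effectiveness of $\Delta_Z$ on the Galois closure requires $\mathrm{coeff}_D\Delta\ge 1-1/e_Z$ with $e_Z$ divisible by every $e_i$ (e.g.\ $e_1=2$, $e_2=3$ needs $5/6$ but you only have $2/3$). The deficit is a fixed positive number, so adding $\varepsilon B$ with $\varepsilon\to 0$ never makes $\Delta_Z$ effective; the perturbation needed is non-infinitesimal, it changes the local volume at $x$ by a non-negligible amount whenever $B$ passes through $x$, and it can destroy the klt condition. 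For the second route, the two decisive steps --- that $v_X:=v_Y|_{\bk(X)}$ is a K-semistable valuation on $(X,\Delta)$, and the exact scaling $\vol_Y(v_Y)=\deg(f)\cdot\vol_X(v_X)$ --- are asserted as a ``plan'' with no argument, and they are exactly where the non-Galois difficulty reappears: the inclusion $\fa_\lambda(v_X)\cdot\cO_Y\subseteq\fa_\lambda(v_Y)$ gives only one inequality, and without a group action there is no averaging/descent mechanism to force the reverse one or to transport basis-type divisors crepantly in both directions. In short, the core technical step you name at the end is precisely the open problem; the statement remains unproved.
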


One obvious subtlety is that if we pass to the Galois closure, the boundary divisor $\Delta_Y$ may have negative coefficients. Perhaps there is some possibility of developing a stability theory for sub-pairs.

Guided by Conjecture \ref{conj:compare F-sign and vol}, it seems reasonable to believe that many nice properties of the local volume (and even the stability theory itself) carry over to positive  characteristics. For example, one can ask:

\begin{que}
Fix a dimension $n$ and some real number $\varepsilon>0$. Is the set of strongly $F$-regular singularities $x\in X$ (in characteristic $p$) with $F$-signature $s(x,X)\ge \varepsilon$ bounded up to special degenerations?
\end{que}

\begin{que}
In a fixed dimension $n$ and characteristic $p$, are the possible values of $F$-signatures discrete away from zero? What is the second largest $F$-signature?  
\end{que}
 
% As a consequence of the Stable Degeneration Theorem, we know that the local volumes of klt singularities are always algebraic numbers \cite{XZ-SDC}*{Theorem 1.3}. One may ask whether the same holds for $F$-signatures.

% \begin{que}
% Is the $F$-signature always an algebraic number?
% \end{que}

\subsection{Miscellaneous}

There are some basic properties of the normalized volume function that are still not fully understood. The following question is taken from \cite{Li-normalized-volume}.

\begin{que}
Is the normalized volume function lower semi-continuous on the valuation space?
\end{que}

A formal arc through a singularity $x\in X$ is a morphism $\phi\colon \Spec (\bk[\![t]\!])\to X$ such that $\phi(0)=x$. The arc space of the singularity, which parameterizes the formal arcs, is an essential tool in the theory of motivic integration and is also quite useful in the study of invariants in birational geometry, see e.g. \cites{Mus-jet-scheme-cpi,Mus-lct-jet-scheme,EMY-mld-arc-sp,ELM-arc-sp-contact-loci} for some applications of this kind. A natural question (communicated to us by Chenyang Xu) is whether the local volumes of singularities have interpretations through the arc space. Note that \cite{dFM-vol-arc} have defined volumes for subsets of the arc space.

\begin{que}
Can the local volume be defined using invariants of the arc space?
\end{que}

\begin{rem}
In a somewhat related direction, one can also ask whether the local volume only depends on the contact geometry of the link of the klt singularity. The answer is no in general. The reason is that there are smooth families of Fano manifolds whose general fibers are K-semistable while some special fibers are K-unstable (one explicit example is the family $2.26$ of Fano threefolds, see \cite{ACC+-Fano3}*{Section 5.10}). By Example \ref{exp:cone minimizer}, this implies that the local volume is not constant on the corresponding family of cones. On the other hand, since the original family is smooth, the fibers are symplectomorphic and therefore the links of the cone singularities have isomorphic contact structure.
\end{rem}

On Fano varieties, \cite{LXZ-HRFG}*{Theorem 4.5} relates the finite generation property of lc places of complements to the linearity of the $S$-invariants. In the applications to explicit examples, this is the easiest way to check finite generation. There might be a local analog.

\begin{que}
Find finite generation criterion (possibly in terms of $S$-invariant or other geometric conditions) for general lc places of complements of a klt singularity.
\end{que}

Since Koll\'ar valuations are the higher rank versions of Koll\'ar components, we may ask whether some of the known properties of Koll\'ar components have higher rank analog. For example:

\begin{que}
For any graded sequence $\fab$ of $\fm_x$-primary ideals on a klt singularity $x\in X$, is the log canonical threshold $\lct(\fab)$ always computed by some Koll\'ar valuation?
\end{que}

% Since the Boundedness Conjecture for klt singularities holds in dimension $3$, from the discussions in Section \ref{s:bdd} we obtain the following special case as a corollary.

% \begin{prop}
% Let $\varepsilon>0$. Then the set of K-semistable del Pezzo surfaces $S$ such that 
% \[
% \mathrm{ind}(S)\cdot (-K_S)^2 \ge \varepsilon
% \]
% form a bounded family. Here $\mathrm{ind}(S)$ is the Weil index of $S$.
% \end{prop}

% Note that the proof of the Boundedness Conjecture in dimension $3$ relies on the partial classification of klt singularities. We would like to know:

% \begin{que}
% Is there a direct proof of this proposition using global argument?
% \end{que}

% Finally, it may be helpful to compute the local volumes in some explicit examples, such as threefold klt singularities. 

% \begin{que}
% What are the possible local volumes of $3$-dimensional klt singularities?
% \end{que}

\bibliography{ref}

\end{document}